\newcommand{\ubld}{{\mbox{\bf u}}}
\newcommand{\gap}{\vspace{0.1in}}
\newcommand{\abld}{{\mbox{\bf a}}}
\newcommand{\zbld}{{\mbox{\bf z}}}
\newcommand{\Hbld}{{\mbox{\bf H}}}
\newcommand{\dbld}{{\mbox{\bf d}}}
\newcommand{\wbld}{{\mbox{\bf w}}}
\newcommand{\wt}{\widetilde}
\newcommand{\wh}{\widehat}
\newcommand{\ol}{\overline}
\newcommand{\W}{\mathcal W}
\newcommand{\Wcal}{\mathcal W}
\newcommand{\Ical}{\mathcal I}
\newcommand{\Scal}{\mathcal S}
\newcommand{\Ecal}{\mathcal E}
\newcommand{\Ucal}{\mathcal U}
\newcommand{\Gcal}{\mathcal{G}}
\newcommand{\Pcal}{\mathcal{P}}
\newcommand{\Vcal}{\mathcal V}
\newcommand{\alphabf}{\boldsymbol\alpha}
\newcommand{\betabf}{\boldsymbol\beta}
\newcommand{\zetabf}{\boldsymbol\zeta}
\newcommand{\varphibf}{\boldsymbol\varphi}
\newcommand{\deltabf}{\boldsymbol\delta}
\newcommand{\mycut}[1]{{}}
\newcommand{\argmin}{\operatornamewithlimits{\arg\min}}
\newcommand{\setnewlength}[2]{\newlength{#1}\setlength{#1}{#2}}
\newlength{\defaultfboxruletodo}
\newlength{\defaultfboxseptodo}
\newlength{\boxwidthtodo}
\setnewlength{\fboxruletodo}{0.5pt} \setnewlength{\fboxseptodo}{3mm}
\newtheorem{theorem}{Theorem}[section] %[section]
\newtheorem{lemma}{Lemma}[section] %[section]
\newtheorem{corollary}{Corollary}[section] %[section]
\newtheorem{proposition}{Proposition}[section] %[section]
\newtheorem{definition}{Definition}[section] %[section]
\newtheorem{remark}{Remark}[section]%[section]
\begin{document}

\title{Nonconvex, Fully Distributed  Optimization  based CAV Platooning Control under Nonlinear Vehicle Dynamics}

%\date{}

\author{
 Jinglai Shen\thanks{Jinglai Shen is with Department of Mathematics and
    Statistics, University of Maryland Baltimore County, MD 21250, USA. Email:
    {\tt\small shenj@umbc.edu}.},  \ \ \ \
 Eswar Kumar H. Kammara\thanks{Eswar Kumar H. K. is with Department of Mathematics and
    Statistics, University of Maryland Baltimore County, MD 21250, USA. Email:
    {\tt\small eswar1@umbc.edu}.}, \ \ \ \
   Lili Du\thanks{Lili Du is with Department of Civil and Coastal Engineering,
University of Florida, Gainesville, FL 32608, USA. Email: {\tt\small lilidu@ufl.edu}.}
     }

  \maketitle

%%%%%%%%%%%%%%%%%%%%%%%%%%%%%%%%%%%%%%%%%%%%%%%%%%%%%%%%%%%%%%%%%%%%%%%%%%%%
%
\begin{abstract}
CAV platooning technology has received considerable attention in the past few years, driven by the next generation smart transportation systems.
%
%Many distributed optimization and control schemes have been proposed for CAV platooning.
%
Unlike most of the existing platooning methods that focus on linear vehicle dynamics of CAVs, this paper considers nonlinear vehicle dynamics and develops fully distributed optimization based CAV platooning control schemes via the model predictive control (MPC) approach for a possibly heterogeneous CAV platoon.
The nonlinear vehicle dynamics leads to several major difficulties in distributed algorithm development and control analysis and design. Specifically, the underlying MPC optimization problem is  nonconvex and densely coupled. Further, the closed loop dynamics becomes a time-varying nonlinear system subject to external perturbations, making closed loop stability analysis rather complicated.
%
%due to the non-existence of a closed form expression of an optimal solution.
%
To overcome these difficulties, we formulate the underlying MPC optimization problem as a locally coupled, albeit nonconvex, optimization problem and develop a sequential convex programming based fully distributed scheme for a general MPC horizon. Such a scheme can be effectively implemented for real-time computing using operator splitting methods.
To analyze the closed loop stability,  we apply various tools from global implicit function theorems, stability of linear time-varying systems, and Lyapunov theory for input-to-state  stability to show that the closed loop system is locally input-to-state stable uniformly in all small coefficients pertaining to the nonlinear dynamics.
%
%Control design and stability analysis is carried out to achieve desired traffic transient performance %and asymptotic stability.
%
Numerical tests on homogeneous and heterogeneous CAV platoons demonstrate the effectiveness of the proposed fully distributed schemes and CAV platooning control.
\end{abstract}

%%%\hfill\break%
\noindent\textit{Keywords}: Connected and autonomous vehicle, car following control, distributed algorithm, nonconvex optimization, input-to-state stability, Lyapunov stability theory

\mycut{
There is a surging interest in CAV platooning, supported by advanced sensing, vehicle communication, and portable computing technologies.

Various distributed optimization or control schemes have been developed for CAV platooning. However, the existing distributed schemes require either centralized data processing or centralized computation in at least one step of their schemes, referred to as partially distributed schemes. In this paper, we develop fully distributed optimization based CAV platooning control under the linear vehicle dynamics via the model predictive control approach with a general prediction horizon. The distributed schemes developed in this paper do not require centralized data processing or centralized computation through the entire schemes. To develop such the schemes, we propose a new formulation of the objective function and a decomposition method that decomposes the densely coupled central objective function into the sum of several locally coupled functions whose coupling satisfies the network topology constraint. We then exploit the formulation of locally coupled optimization and operator splitting methods to develop fully distributed schemes. Control design and stability analysis is carried out to achieve desired traffic transient performance and asymptotic stability. Numerical tests demonstrate the effectiveness of the proposed fully distributed schemes and CAV platooning control.
}

%--------------------------------------------------------------------------
%
\section{Introduction}

Inspired by the next generation smart transportation systems, connected and autonomous vehicle (CAV) technologies emerge and offer tremendous opportunities to reduce traffic congestion and improve road safety and traffic efficiency in all aspects, through innovative traffic flow control and operations.
%
%Supported by advanced sensing, vehicle communication, and portable computing technologies, CAVs can sense, %share, and process real-time mobility data and conduct cooperative or coordinated driving. This has led to %a surging interest in self-driving technologies.
%
 Among a variety of  CAV technologies, vehicle platooning technology links a group of CAVs through cooperative acceleration or speed control. Different from many other CAV technologies that mainly focus on neighborhood traffic efficiency and individual vehicle's safety, the vehicle platooning technology focuses on  system efficiency and safety. Specifically, by using the vehicle platooning technology, adjacent group members of a CAV platoon can travel safely at a higher speed with smaller spacing. This will increase lane capacity, improve traffic flow efficiency, and reduce congestion, emission, and fuel consumption \cite{bergenhem2012overview, kavathekar2011vehicle}.

%
% receives substantial attention. Specifically, the vehicle platooning technology  links a group of CAVs %through cooperative acceleration or speed control.
%

Extensive research on CAV platooning control has been conducted, and many approaches have been proposed, e.g., adaptive cruise control (ACC) \cite{kesting2008adaptive, li2011model, marsden2001towards, vander2002effects, zhou2017rolling}, cooperative adaptive cruise control (CACC)  \cite{shladover2015cooperative, Shladover2012, VanA2006, zhao2020distributionally}, and platoon centered vehicle platooning control \cite{GongDu_TRB18, GShenDu_TRB16, wang2019real, wang2014rolling}. The ACC and CACC approaches aim to improve an individual vehicle's safety and mobility as well as string stability instead of system performance of the entire platoon, although simulations and field experiments demonstrate that they do enhance system performance to some extent.
%
%Recently developed in \cite{GongDu_TRB18, GShenDu_TRB16},
%
On the other hand, the recently developed platoon centered approach seeks to optimize the platoon's transient traffic dynamics for a smooth traffic flow and to achieve stability and other desired long-time dynamical behaviors. This approach can significantly improve system performance and efficiency of the entire platoon \cite{GShenDu_TRB16, wang2019real}. Despite this advantage, the platoon centered CAV platooning approach often encounters large-scale optimization or optimal control problems that require efficient numerical solvers for real-time computation \cite{wang2019real}.
 Distributed optimization techniques provide a favorable solution for the platoon centered approach. Supported by  portable computing capability of each vehicle and vehicle-to-vehicle (V2V) communication \cite{wang2016cooperative}, distributed computation can handle high computation load efficiently,
 %%%
 is more flexible to communication network topologies,  and is more robust to communication delays or network malfunctions \cite{mesbahi2010graph, wang2016cooperative}. In this paper, we focus on the platoon centered CAV platooning via distributed optimization.

Various distributed control or optimization schemes have been proposed for CAV platooning \cite{wang2019real, wang2016cooperative, zhao2020distributionally, zhou2019distributed}. These schemes can be classified into two types: partially distributed schemes, and fully distributed schemes. Partially distributed schemes are referred  to as those schemes  that either require all vehicles to exchange information with a central component for centralized data processing or perform centralized computation in at least one step \cite{KNS_SIOPT11}, whereas fully distributed schemes do not require centralized data processing or carry out centralized computation through the entire schemes \cite{ShenEswarDu_Arx20}.
%
%as well as many other existing  distributed or decentralized schemes \cite{KNS_SIOPT11}
%
%
The former type includes \cite{GongDu_TRB18, GShenDu_TRB16}. In particular,
model predictive control (MPC) based CAV platooning is developed in \cite{GShenDu_TRB16} and implemented by  partially distributed schemes.
%
%by exploiting transportation, control and optimization methodologies.
%
%These control schemes take vehicle constraints, transient dynamics and and asymptotic dynamics of the %entire platoon into account, and can be computed in a partially distributed manner.
%
The paper \cite{GongDu_TRB18} extends these distributed schemes to a mixed traffic flow consisting of both CAVs and human-driven vehicles.
The second type includes the recent paper \cite{ShenEswarDu_Arx20}, which develops fully distributed schemes for CAV platooning under the linear vehicle dynamics. Compared with partially distributed schemes, fully distributed schemes do not need data synchronization or a central computing equipment, and they impose less restriction on vehicle communication networks and can be easily implemented on a wide range of vehicle networks; see \cite{ShenEswarDu_Arx20} for more details.

In spite of the abovementioned progress, most of the existing research considers the linear vehicle dynamics \cite{GShenDu_TRB16, GongDu_TRB18, ShenEswarDu_Arx20, wang2019real}. Although the linear vehicle dynamics is suitable for smaller passenger vehicles, nonlinear dynamic effects, e.g, aerodynamic drag, friction, and rolling resistance, play a non-negligible role in trucks, heavy duty vehicles, and other types of CAVs. %
%To the best of our knowledge,
%
Motivated by the lack of research for nonlinear vehicle dynamics, this paper aims to develop fully distributed optimization based and platoon centered CAV platooning under nonlinear vehicle dynamics over a general vehicle communication network.
To achieve this goal, we propose a general $p$-horizon MPC model subject to the nonlinear vehicle dynamics of the CAVs and various physical or safety constraints. Several new challenges arise for the MPC horizon $p\ge 2$ when the nonlinear vehicle dynamics is considered. First, the underlying MPC optimization problem gives rise to a  densely coupled, {\em nonconvex} optimization problem, where both the objective function and constraints are nonconvex. This is very different from the linear vehicle dynamics treated in \cite{ShenEswarDu_Arx20}, for which a convex MPC model is obtained so that various convex distributed optimization schemes can be used.
Second, a local optimal solution to the MPC is characterized by a highly sophisticated nonlinear equation and does not attain a closed-form expression. Hence, the closed loop system is defined by a time-varying nonlinear dynamical system, whose right-hand side has no closed form expression, subject to non-vanishing external disturbances. These pose a difficulty in closed loop stability analysis and control design. To address these challenges, we exploit new techniques for distributed algorithm development and control analysis and design, which constitute main contributions of this paper.

The major contributions of this paper are summarized as follows:
\begin{itemize}
  \item [(1)] To develop fully distributed schemes for the  nonconvex  MPC optimization problem when $p\ge 2$, we first formulate the underlying densely coupled MPC optimization problem as a locally coupled, albeit nonconvex, optimization problem using a decomposition method recently developed  for the linear CAV dynamics \cite{ShenEswarDu_Arx20}. Furthermore, we propose a sequential convex programming (SCP) \cite{ZSLu_Technote2013} based distributed scheme to solve the locally coupled optimization problem. This SCP based scheme solves a sequence of convex,  quadratically constrained quadratic programs (QCQPs) that approximate the original nonconvex program at each iteration; such a convex QCQP can be efficiently solved using (generalized) Douglas-Rachford method or other operator splitting methods \cite{DavisYin_SVA17} in the fully distributed manner. The  SCP based distributed scheme converges to a stationary point, which often coincides or is close to an optimal solution, under mild assumptions.

  \item [(2)] To analyze the closed loop dynamics, we first formulate the closed loop system as a tracking system defined by a time-varying, nonlinear dynamical system subject to non-vanishing external disturbances. The right-hand side of this nonlinear dynamical system depends on a local optimal solution to the underlying MPC optimization problem, which does not attain a closed-form expression. By exploiting global implicit function theorems, we show that this (local) optimal solution is an implicit smooth function of state variables for all sufficiently small parameters pertaining to the nonlinear dynamic effects. We then apply stability theory of linear time-varying systems and Lyapunov theory for input-to-state stability to show that for all sufficiently small parameters pertaining to the nonlinear dynamics terms, the closed loop system is locally input-to-state stable provided that the corresponding linear closed loop dynamics under the linear vehicle dynamics (or equivalently when the abovementioned parameters are zero) is Schur stable.

  %
  %Fully distributed schemes are developed for solving the above-mentioned convex QCQP arising from the %MPC model using the techniques of locally coupled optimization and operator splitting methods. %Specifically, by introducing copies of local coupling variables of each vehicle, an augmented %optimization model is formulated with an additional consensus constraint. A generalized %Douglas-Rachford splitting method based distributed scheme is developed, where only local information %exchange is needed, leading to a fully distributed scheme. Other operator splitting method based %distributed scheme are also discussed.
  %

  \item [(3)] For real-time implementation of the proposed fully distributed schemes, initial guess warm-up techniques are developed. Besides, a further stability analysis shows that steady state errors  of spacing exist in  the close loop dynamics but can be made small by choosing suitable weights in the MPC model while ensuring the input-to-state stability and satisfactory performance of transient dynamics. Extensive numerical tests have been carried out for three types of CAV platoons in different scenarios: a homogeneous small-size CAV platoon, a heterogeneous medium-size CAV platoon, and a homogeneous large-size CAV platoon. The numerical results illustrate the effectiveness of the proposed distributed scheme and CAV platooning control under the nonlinear vehicle dynamics.
 %
 % The new formulation of the weight matrices and objective function leads to different closed loop %dynamics in comparison with that in \cite{GShenDu_TRB16}. Besides, since a general $p$-horizon MPC is %considered, it calls for new stability analysis of the closed loop dynamics. We perform detailed %stability analysis and choose suitable weight matrices for desired traffic transient performance for a %general horizon length $p$. In particular, we prove that up to a horizon of $p = 3$, the closed loop %dynamic matrix is Schur stable. Extensive numerical tests are carried out to test the proposed %distributed schemes under different MPC horizon $p$'s and to evaluate the closed loop stability and %performance.
%
\end{itemize}

The paper is organized as follows. Section~\ref{sect:dynamics_constraint_topology} introduces the nonlinear vehicle dynamics, state and control constraints, and vehicle communication networks. Sequential feasibility and properties of the constraint sets are established in Section~\ref{sect:seq_feasibility}; these properties lay a ground for distributed optimization.
 A MPC model with a general prediction horizon $p$ is proposed in Section~\ref{sect:MPC_formulation} and is formulated as a nonconvex constrained  optimization problem.
 Section~\ref{sect:dist_scheme} develops sequentially convex programming based fully distributed schemes for the densely coupled nonconvex  MPC optimization problem. Control design and stability analysis for the closed loop dynamics is given in Section~\ref{sect:control_analysis}, and numerical tests and their results are presented in Section~\ref{sect:numerical_results}. Finally, conclusions are made in Section~\ref{sect:conclusion}.

%-------------------------------------------------------------------------------
%
\section{Vehicle Dynamics, Constraints, and Communication Topology} \label{sect:dynamics_constraint_topology}

We consider a platoon consisting of heterogeneous vehicles (e.g., cars and trucks) on a roadway, where the (uncontrolled) leading vehicle is labeled by the index 0 and its $n$  following CAVs are labeled by the indices $i=1, \ldots, n$, respectively. Let $x_i, v_i$ denote the longitudinal position and speed of the $i$th vehicle, respectively. Let $\tau>0$ be the sampling time, and each time interval is given by $[k\tau, (k+1)\tau)$ for $k \in \mathbb Z_+:=\{0, 1, 2, \ldots\}$.  We introduce vehicle dynamical models as follows. %%%; each model describes vehicle movement dynamics by involving different levels of realities.

We first introduce the following nonlinear vehicle dynamical model which captures aerodynamic drag, friction, and rolling resistance \cite{ZhengLiBorrelliH_TCS16}:
%%\begin{subequations} \label{eqn:d_model_longit}
%
\begin{subequations} \label{eqn:d_model_longit}
\begin{eqnarray} %%%{align}
  x_i(k+1) & = & x_i(k) + \tau v_i(k)+ \frac{\tau^2}{2} \big( u_i(k) - c_{2, i} \cdot v^2_i(k) - c_{3, i} \cdot g\big), \\ %%\qquad
  v_i(k+1)  & = & v_i(k) + \tau\big( u_i(k) - c_{2, i} \cdot v^2_i(k) - c_{3, i} \cdot g\big),
\end{eqnarray}
 %%{align}
\end{subequations}
where $u_i(k)$ denotes the desired driving/braking acceleration treated as the control input.
$c_{2, i} \cdot v^2_i(k)$ characterizes the deceleration due to aerodynamic drag with the coefficient $c_{2,i}>0$, and $c_{3, i} \cdot g$ characterizes friction and rolling resistance with $g=9.8 m/s^2$ being the gravity constant and $c_{3, i}>0$ being the rolling friction coefficient.  For different vehicles, the coefficients $ c_{2, i}, c_{3, i}$ can be different.
%
%The above model can be extended by including the lateral dynamics.
%

The coefficients $c_{2,i}$ and $c_{3, i}$ in model (\ref{eqn:d_model_longit}) are usually small for many different types of cars or road conditions. For example, $c_{2, i}$ typically ranges from $2.5\times 10^{-4}/m$ to $4.5\times 10^{-4}/m$, and  $c_{3, i}$ typically ranges from 0.006 to 0.015 \cite{ZhengLiBorrelliH_TCS16}. Since these coefficients are small, the nonlinear terms in (\ref{eqn:d_model_longit}) are often neglected in system-level studies.
This yields the following widely adopted double-integrator linear  model:
%
%with the acceleration $u_i(k)$ as the control input:
%
\begin{eqnarray} \label{eqn:model_longit_double_integrator}
  x_i(k+1) \, = \, x_i(k) + \tau v_i(k) + \frac{\tau^2}{2} u_i(k), \qquad
  v_i(k+1)  \, = \, v_i(k) + \tau u_i(k).
\end{eqnarray}
The model (\ref{eqn:model_longit_double_integrator}) is suitable for small-size passenger cars, while model (\ref{eqn:d_model_longit}) can be used for medium-size or large-size vehicles, e.g., trucks and heavy-duty vehicles.  These models are all well studied and widely accepted in the literature.
%
%\cite{kavathekar2011vehicle, lin2012optimal, zheng2016distributed, Swaroop1999, swaroop1996string}.
%

\gap

\noindent {\bf State and control constraints.} \ Each vehicle in a platoon is subject to several important state and control constraints. For each $i=1, \ldots, n$,
\begin{itemize}
  \item [(i)] \underline{Control constraint}: %for the model (\ref{eqn:model_longit_double_integrator}),
$ a_{i, \min} \le u_i \le a_{i, \max}$, where
$a_{i, \min}<0$ and $a_{i, \max}>0$ are pre-specified  acceleration or deceleration bounds for the $i$th vehicle;
%The similar control constraints hold for (\ref{eqn:d_model_longit_full}) and (\ref{eqn:d_model_longit});

 \item [(ii)] \underline{Speed constraint}: $v_{\min} \le v_i \le v_{\max}$, where $0\le v_{\min}<v_{\max}$ are pre-specified bounds on longitudinal speed for the $i$th vehicle;

 \item [(iii)] \underline{Safety distance constraint}: this constraint guarantees sufficient spacing between neighboring vehicles to  avoid collision even if the leading vehicle comes to a sudden stop.
This gives rise to the safety distance constraint of the following form:
 \begin{equation}
     x_{i-1} - x_{i}  \, \ge \, L_i + r_i \cdot v_i - \frac{(v_i-v_{\min})^2}{2a_{i,\min}}, \label{eqn:safety_constraint}
\end{equation}
where $L_i>0$ is a constant depending on vehicle length, and $r_i>0$ is the reaction time of %%the $i$th %
vehicle $i$.
\end{itemize}
In the above constraints, the acceleration/decelerations bounds as well as the vehicle length $L_i$ and the reaction time $r_i$ can be different for different types of vehicles. Further,  constraints (i) and (ii) are decoupled across vehicles, whereas the safety distance constraint (iii) is state-control coupled since such a constraint involves  control inputs of two vehicles. This yields challenges to distributed computation.

\gap

\noindent {\bf Communication network topology.}
In this paper, we consider a general communication network whose topology is modeled by a  graph $\Gcal(\mathcal V, \mathcal E)$,
 where $\mathcal V=\{1, 2, \ldots, n\}$ is the set of nodes where the $i$th node corresponds to the $i$th CAV, and $\mathcal E$ is the set of edges connecting two nodes in $\mathcal V$.
 Let $\mathcal N_i$ denote the set of neighbors of node $i$, i.e., $\mathcal N_i =\{ j \, | \, (i, j) \in \mathcal E\}$.
The following assumption on the communication network topology is made throughout the paper:
\begin{itemize}
  \item [$\bf A.1$] The graph $\mathcal G(\mathcal V, \mathcal E)$ is undirected and connected. Further, two neighboring vehicles form a  bidirectional edge of the graph, i.e., $(1, 2), (2, 3), \ldots, (n-1, n) \in \mathcal E$.
\end{itemize}
Since the graph is undirected,  for any $i, j\in \mathcal V$ with $i \ne j$, $(i, j) \in \mathcal E$ means that there exists an edge between node $i$ and node $j$. In other words, vehicle $i$ can receive information from  vehicle $j$ and send information to vehicle $j$, and so does vehicle $j$.
The above setting given by $\bf A.1$ includes many widely used communication networks of CAV platoons, e.g., immediate-preceding,  multiple-preceding, and  preceding-and-following \cite{ZhengLiBorrelliH_TCS16}.
%
%In this paper, we always assume that two neighboring vehicles form a  bidirectional edge of the %graph, i.e., $(1, 2), (2, 3), \ldots, (n-1, n) \in \mathcal E$.
%
We also assume that the first vehicle can receive $x_0$, $v_0$ and $u_0$ from the leading vehicle.

\mycut{
In this paper, we consider a general communication network topology modeled by a {\bf connected and undirected graph} $\Gcal(\mathcal V, \mathcal E)$,
 where $\mathcal V=\{1, 2, \ldots, n\}$ is the set of nodes corresponding to the $i$th vehicle, and $\mathcal E$ is the set of bidirectional edges connecting two nodes in $\mathcal V$, i.e., if two nodes are connected by an edge in $\mathcal E$, then two vehicles represented by the two nodes can exchange information.
{\bf We always assume that
two neighboring vehicles form a  bidirectional edge of the graph, i.e., $(1, 2), (2, 3), \ldots, (n-1, n) \in \mathcal E$.
}
We also assume that the first vehicle can obtain $x_0$, $v_0$ and $u_0$ from the leading vehicle.
This setting includes many typical communication network topologies  of a CAV platoon, e.g., immediate-preceding,  multiple-preceding, and  preceding-and-following \cite{ZhengLiBorrelliH_TCS16}.
 Particularly, for $i, j\in \mathcal V$ with $i \ne j$, $(i, j) \in \mathcal E$ means that there exists a bidirectional edge between node $i$ and node $j$ (in other words, nodes $i$ can receive information from  node $j$ and send information to node $j$).
 Let $\mathcal N_i$ denote the set of neighbors of node $i$.
}

%---------------------------------------------------------------------------------
%
\section{Sequential Feasibility and Properties of Constraint Sets} \label{sect:seq_feasibility}

As indicated in \cite{GShenDu_TRB16}, the constraint set of the underlying MPC optimization problem  at time $k$ (cf. Section~\ref{sect:MPC_formulation}) depends on the position and speed of the vehicles at times $0,1, \ldots, k-1$. A fundamental question is whether the constraint set is nonempty at each time along a system trajectory for an arbitrary feasible initial condition at $k=0$,
provided that
$(u_0 (k ), v_0(k ))$ of the leading vehicle satisfies the acceleration and speed constraints for all $k \in \mathbb Z_+$. If the answer is affirmative,  the system is {\em sequentially feasible} \cite{GShenDu_TRB16}. The sequential feasibility has been shown for a CAV platoon under the linear vehicle dynamics \cite{GShenDu_TRB16}.
In what follows, we establish the sequential feasibility under the nonlinear vehicle dynamics (\ref{eqn:d_model_longit}).
%
%the linear vehicle dynamics (\ref{eqn:model_longit_double_integrator}) and , respectively.
%

%--------------------------------------------------------------------------
%
\subsection{Sequential Feasibility}
%

%
%For notational convenience, we set $c_{2, 0}=c_{3, 0}=0$; this is because $u_0(k)$ is the actual %acceleration of the leading vehicle instead of its control.
%

For notational convenience, define $a_i(k, u_i(k)):= u_i(k)  - c_{2, i} v^2_i(k) - c_{3, i} g$ for given $(v_i(k), u_i(k))$'s.
It is noted that  $u_0(k)$ is the actual acceleration of the leading vehicle at time $k$ instead of its control. Hence, we set $c_{2, 0}=c_{3, 0}=0$ for notational convenience.
 When $k$ is fixed, we often write  it as $a_i(u_i)$ to emphasize the dependence of $a_i$ on $u_i$.
 Then the nonlinear vehicle dynamics given by (\ref{eqn:d_model_longit}) can be written as
\[
  x_i(k+1) \, = \, x_i(k) + \tau v_i(k) + \frac{\tau^2}{2} a_i(k, u_i(k)), \qquad
  v_i(k+1)  \, = \, v_i(k) + \tau a_i(k, u_i(k)).
\]

Given $(x_i, v_i)^n_{i=0}$ and  $u_0$, we introduce the following constraint set on the control $u$ subject to the nonlinear vehicle dynamics and the state and control constraints:
\[
  \Wcal((x_i, v_i)^n_{i=0}, u_0) \, : =  \Big\{ u \in \mathbb R^n \, | \, a_{i, \min} \le u_i \le a_{i, \max}, \, v_{\min} \le v_i + \tau  a_i(u_i) \le v_{\max},  \, h_{i}(u) \le 0, \, i=1, \ldots, n \Big\},
\]
where the function $h_i$'s are given by
\begin{align}
h_{i}(u) & \, := \, L_i + r_i ( v_i + \tau a_i(u_i) ) - \frac{(v_i + \tau a_i(u_i) - v_{\min})^2}{2 a_{i, \min}} +
 (x_i - x_{i-1}) + \tau ( v_i - v_{i-1}) \notag \\
  & \qquad + \frac{\tau^2}{2}[ a_{i}(u_i) - a_{i-1}(u_{i-1})], \label{eqn:h_i_func}
\end{align}
and by abusing notation, $a_i(u_i) :=  u_i  - c_{2, i} v^2_i - c_{3, i} g$ for each $i=0,1, \ldots, n$.
Further,  we define the following functions that describe the safety distances between two adjacent vehicles on their current position and speed:
\begin{eqnarray*}
%p_{1, j}((x_i, v_i)^n_{i=0}) & := & L_1 + (x_j - x_{j-1}), \\
%p_{2, j}((x_i, v_i)^n_{i=0}) & := & L_2 + r_2 v_j + (x_j - x_{j-1}), \\
%
p_{j}((x_i, v_i)^n_{i=0}) & := & L_j + r_j v_j  - \frac{(v_j - v_{\min})^2}{2 a_{j, \min}} + (x_j - x_{j-1}), \qquad \ \forall \ j=1, \ldots, n.
\end{eqnarray*}
%
%These functions describe the safety distances between two adjacent vehicles on their current %position and speed.
%
To simplify notation, we often write these functions as $ p_{j}$  when $(x_i, v_i)^n_{i=0}$ and  $u_0$ are fixed in the subsequent development.
The sequential feasibility holds if $\Wcal((x_i, v_i)^n_{i=0}, u_0)$ is nonempty for any given feasible $(x_i, v_i)^n_{i=0}$ and $u_0$,
 i.e., $a_{0, \min} \le u_0 \le a_{0,\max}$, $v_{\min}\le v_0 \le v_{\max}$, $v_{\min} \le v_0+ \tau u_0 \le v_{\max}$, $v_{\min} \le v_i \le v_{\max}$ and $p_{i}((x_i, v_i)^n_{i=0})  \le 0$ for all $i=1, \ldots, n$.

\begin{proposition} \label{prop:seq_feasibility_nonlinear}
Consider the nonlinear vehicle dynamics given by (\ref{eqn:d_model_longit}). Suppose the nonnegative constants $c_{2,i}, c_{3,i}$ are such that $c_{2, i}v_{\max}^{2} + c_{3, i} g \leq a_{i, \max} $  and $r_i \ge \tau$ for each $i=1, \ldots, n$. Then the system is sequentially feasible for an arbitrary feasible initial condition.
\end{proposition}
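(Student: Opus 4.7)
The plan is to argue by induction on the time index $k$, the base case $k=0$ being the hypothesized initial feasibility. For the inductive step, fix $k\ge 0$ and assume the state $(x_i,v_i)_{i=0}^{n}$ together with the leader input $u_0$ satisfies every feasibility condition listed just before the proposition, in particular $p_j\le 0$ for each $j=1,\ldots,n$. It then suffices to exhibit an explicit control $u^{*}=(u_1^{*},\ldots,u_n^{*})\in\Wcal((x_i,v_i)_{i=0}^{n},u_0)$, since the state produced at $k+1$ will automatically satisfy the same standing hypotheses and the induction closes.

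For the construction I would pass to the net-acceleration variable $\alpha_i:=a_i(u_i)=u_i-c_{2,i}v_i^{2}-c_{3,i}g$, in which the control bound, the speed bound, and the safety function $h_i$ all take a clean affine-plus-quadratic form. Set $\alpha_0:=u_0$, and for $i=1,\ldots,n$ define $\alpha_i^{*}$ cascade-wise as the projection of $\alpha_{i-1}^{*}$ onto the admissible interval
$$\Bigl[\,\max\!\bigl(a_{i,\min}-c_{2,i}v_i^{2}-c_{3,i}g,\tfrac{v_{\min}-v_i}{\tau}\bigr),\ \min\!\bigl(a_{i,\max}-c_{2,i}v_i^{2}-c_{3,i}g,\tfrac{v_{\max}-v_i}{\tau}\bigr)\,\Bigr],$$
and recover $u_i^{*}=\alpha_i^{*}+c_{2,i}v_i^{2}+c_{3,i}g$. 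The standing hypothesis $c_{2,i}v_{\max}^{2}+c_{3,i}g\le a_{i,\max}$ makes the upper endpoint nonnegative uniformly in $v_i\in[v_{\min},v_{\max}]$, so this interval is nonempty and contains $0$; hence the projection is well-defined and automatically delivers the control bound~(i) and the speed bound~(ii), with $v_i(k+1)=v_i+\tau\alpha_i^{*}\in[v_{\min},v_{\max}]$.

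The remaining and principal obstacle is the safety inequality $h_i(u^{*})\le 0$ of~(iii). Subtracting $p_i$ from $h_i$ and expanding with $v_i(k+1)=v_i+\tau\alpha_i^{*}$ reduces the claim to
$$r_i\,\alpha_i^{*}-\frac{\alpha_i^{*}\bigl(2(v_i-v_{\min})+\tau\alpha_i^{*}\bigr)}{2\,a_{i,\min}}+\frac{\tau}{2}\bigl(\alpha_i^{*}-\alpha_{i-1}^{*}\bigr)\;\le\;(v_{i-1}-v_i)-\frac{p_i}{\tau},$$
where the right-hand side carries the nonnegative margin $-p_i/\tau$ from the inductive hypothesis. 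I would prove this by case analysis on whether the projection activates the lower endpoint, the upper endpoint, or neither. In the inactive case, $\alpha_i^{*}=\alpha_{i-1}^{*}$ kills the cascade term, and the inequality follows from a completion-of-squares argument in which the hypothesis $r_i\ge\tau$ supplies exactly the slack needed to absorb the braking-distance term $(v_i-v_{\min})^{2}/(2|a_{i,\min}|)$ hidden inside $-p_i/\tau$. In the lower-activation case, $\alpha_i^{*}\le\alpha_{i-1}^{*}$ gives a favorable sign to the cascade term, and the explicit value of $\alpha_i^{*}$ (either $(v_{\min}-v_i)/\tau$ or $a_{i,\min}-c_{2,i}v_i^{2}-c_{3,i}g$) makes the left-hand side controllable directly by the speed or acceleration bounds; the upper-activation case is symmetric. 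I expect this completion-of-squares step to be the delicate piece, and observe that the nonlinear coefficients $c_{2,i},c_{3,i}$ enter only through the shift of the admissible interval, which is precisely why $c_{2,i}v_{\max}^{2}+c_{3,i}g\le a_{i,\max}$ together with $r_i\ge\tau$ is all that is required.
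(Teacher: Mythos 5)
Your framing is fine as far as it goes: the induction on $k$, the passage to net accelerations $\alpha_i=a_i(u_i)$, the nonemptiness of the clipped interval (which indeed contains $0$ under $c_{2,i}v_{\max}^2+c_{3,i}g\le a_{i,\max}$), and your reduced form of the safety inequality all agree with the paper's reduction of $h_i$ via $p_i\le 0$. The fatal step is the cascaded-projection control itself, specifically the inactive case. Take $i=1$ with leader data $v_0=v_{\min}$, $u_0=0$ (feasible, since $v_0+\tau u_0=v_{\min}$), and $v_1=v_{\max}$, $p_1=0$. Vehicle $1$'s admissible interval has upper endpoint $\min\bigl(a_{1,\max}-c_{2,1}v_{\max}^2-c_{3,1}g,\,(v_{\max}-v_{\max})/\tau\bigr)=0$ and nonpositive lower endpoint, so $0$ lies in the interval and your projection returns $\alpha_1^*=\alpha_0^*=0$. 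Your reduced inequality then reads $0\le (v_{\min}-v_{\max})-0<0$, which is false; directly, $h_1=p_1+\tau(v_1-v_0)=\tau(v_{\max}-v_{\min})>0$. Matching the predecessor's net acceleration freezes the relative speed, but the gap still shrinks by $\tau(v_1-v_0)>0$ while the speed-dependent safety margin is unchanged, so no completion of squares can close the inactive case, and $r_i\ge\tau$ supplies no slack there. (Two smaller errors: lower activation clips $\alpha_i^*$ \emph{up}, so $\alpha_i^*\ge\alpha_{i-1}^*$ and the cascade term has the unfavorable sign, opposite to what you assert; and since $a_{i,\min}<0$ the braking-distance term enters $-p_i/\tau$ with a negative sign, so it is a liability inside your right-hand margin, not an asset.)

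The missing idea is to decouple from the predecessor and brake maximally: the paper takes $\alpha_i=a_{i,\min}$ when $v_i+\tau a_{i,\min}\ge v_{\min}$ and $\alpha_i=(v_{\min}-v_i)/\tau$ otherwise, i.e., the most decelerating point of your interval rather than the predecessor-tracking point. Safety is then verified through the auxiliary quadratic $q_i(w)=v_i+(\tfrac{\tau}{2}+r_i)w-\tfrac{(v_i-v_{\min})}{a_{i,\min}}w-\tfrac{\tau w^2}{2a_{i,\min}}$ and the key claim that, whenever $p_i\le 0$ and $v_j+\tau a_j\ge v_{\min}$ for all $j$, one has $h_i(a_i,a_{i-1})\le \tau\bigl[q_i(a_i)-v_{\min}\bigr]$; the predecessor enters only through the one-sided bound $\tfrac{v_{i-1}+(v_{i-1}+\tau a_{i-1})}{2}\ge v_{\min}$, which is exactly why the construction can be decoupled. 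The hypothesis $r_i\ge\tau$ is used precisely to get $q_i(a_i)\le v_{\min}$ in the clipped case $a_i=(v_{\min}-v_i)/\tau$, and $c_{2,i}v_{\max}^2+c_{3,i}g\le a_{i,\max}$ only to keep $u_i\le a_{i,\max}$ after adding back $c_{2,i}v_i^2+c_{3,i}g$. If you replace your projection target $\alpha_{i-1}^*$ by the lower endpoint of your interval, your case analysis collapses to the paper's two cases and the argument goes through.
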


\begin{proof}
  We present some technical preliminaries first. For each given $v_{i}$ satisfying $v_{\min} \le v_{i} \le v_{\max}$, define the continuous function $q_{i}: \mathbb{R} \rightarrow \mathbb{R}$ as
  \begin{equation} \label{eqn:q_function}
  q_{i}(w) \, := \, v_{i} + \bigg(\frac{\tau}{2} + r_i \bigg) \cdot w  - \frac{v_{i} - v_{\min}}{a_{i, \min}} w  - \frac{\tau w^{2}}{2a_{i, \min}} , \quad \forall \hspace{2mm} i = 1, \ldots, n.
  \end{equation}
 Moreover, in view of the definition of $h_i$ given by (\ref{eqn:h_i_func}), we write $h_i$ as $h_i(a_i, a_{i-1})$ by slightly abusing notation.
  We  claim the following result:
  \begin{align*}
    \mbox{{\bf Claim}}: & \ \ \mbox{Given any feasible $(x_i, v_i)^n_{i=0}$ and $u_0$, assume that there exist $a_i$'s such that } \\
     & \mbox{ $v_i+\tau a_i \ge v_{\min}, \forall \, i=0,1, \ldots, n$. If $q_{i}(a_{i}) \le v_{\min}$, then $h_{i}(a_i, a_{i-1}) \leq 0$ for all $i = 1, \ldots, n$.}
  \end{align*}
   To prove this claim, we first show that $ \frac{v_{i-1} + (v_{i-1} + \tau a_{i-1})}{2} \ge v_{\min}$ for each $i=1, \ldots, n$. Clearly, we
    deduce via $v_{0} \ge v_{\min}$ and $v_{0} + \tau a_{0} \ge v_{\min}$ that $ \frac{v_{i-1} + (v_{i-1} + \tau a_{i-1})}{2} \ge v_{\min}$ when $i=1$. For each $i \ge 2$, it follows from the feasibility of $(x_i, v_i)^n_{i=0}$ that  $v_{i-1} \ge v_{\min}$. This, along with the assumption that $v_{i-1} + \tau a_{i-1} \ge v_{\min}$, yields $ \frac{v_{i-1} + (v_{i-1} + \tau a_{i-1})}{2} \ge v_{\min}$ for each $i \ge 2$. Further, for each $i=1, \ldots, n$,  we obtain, using $p_{i} \leq 0$, that
\begin{eqnarray*}
           h_{i}(a_i, a_{i-1}) & = &  L + r_i ( v_i + \tau a_i ) - \frac{(v_i + \tau a_i - v_{\min})^2}{2 a_{i, \min}} +
 (x_i - x_{i-1}) + \tau ( v_i - v_{i-1}) + \frac{\tau^{2}}{2}(a_{i} - a_{i-1})\\
             & = & \underbrace{\big[ L + r_i v_i - \frac{(v_{i} - v_{\min})^{2}}{2 a_{i, \min}}+ (x_i - x_{i-1}) \big]}_{=p_{i}}\\
             & & \quad  + \tau \Big[ \, r_i a_i + (v_i - v_{i-1}) + \frac{\tau}{2} (a_i - a_{i-1}) - \frac{\tau a_{i}^{2}}{2a_{i, \min}} - \frac{a_{i}}{a_{i,\min}}(v_{i} - v_{\min})\, \Big] \\
             & \le & \tau \Big[ \, v_i + \big(r_i + \frac{\tau}{2} \big) a_i - \frac{a_{i}}{a_{i, \min}} \big(v_{i} - v_{\min} \big) - \frac{\tau a_{i}^{2}}{2 a_{i, \min}} - \frac{v_{i-1} + (v_{i-1} + \tau a_{i-1})}{2} \, \Big] \\
             & = & \tau \Big[ \, q_{i}(a_{i})  - \frac{v_{i-1} + (v_{i-1} + \tau a_{i-1})}{2} \, \Big] \\ %%\, \le \, \tau \Big[ q_{i}(a_{i}) - v_{\min}\Big],
            %
            % \\
             &\le & \tau \Big[ q_{i}(a_{i}) - v_{\min}\Big],
         \end{eqnarray*}
 where the last inequality follows from $ \frac{v_{i-1} + (v_{i-1} + \tau a_{i-1})}{2} \ge v_{\min}$ for each $i=1, \ldots, n$.
Since $q_{i}(a_{i}) \le v_{\min}$, we have $ h_{i}(a_i, a_{i-1}) \le 0$ for each $i=1, \ldots, n$, completing the proof of the claim.

 With the help of the above result, we prove that there exists $u=(u_i)^n_{i=1} \in \Wcal((x_i, v_i)^n_{i=0}, u_0)$ for any feasible $(x_i, v_i)^n_{i=0}$ and $u_0$. For this purpose,  consider the following choice of $u_i$'s for a feasible $v_i$: %%%for each $i=1, \ldots, n$,
 \begin{equation} \label{eqn:feasible_u_i}
       u_i \, : = \, \left\{ \begin{array}{lcc} a_{i, \min} + c_{2,i} v^{2}_{i} + c_{3, i}g, & \mbox{ if } \  v_{i} + \tau a_{i, \min} \ge v_{\min} \\ \frac{v_{\min} - v_{i}}{\tau} + c_{2, i} v^{2}_{i} + c_{3, i} g, & \mbox{ if } \  v_{i} + \tau a_{i, \min} < v_{\min} \end{array} \right., \quad \forall \ i=1, \ldots, n.
 \end{equation}

We show that  the above $u =(u_i)^n_{i=1}\in \Wcal((x_i, v_i)^n_{i=0}, u_0)$ by considering the following two cases for each fixed $i$:
  \begin{itemize}
  \item [(C.1)] $v_{i} + \tau a_{i, \min} \ge v_{\min}$. In this case,  $a_{i}(u_i) = a_{i, \min}<0$.
\begin{itemize}
\item [(a)] Control constraint. In view of $c_{2, i}, c_{3, i} > 0$, we have $u_{i} > a_{i, \min}$. By the assumption that $c_{2, i} v^{2}_{\max} + c_{3, i}g \le a_{i, \max}$, we have $u_{i} = a_{i, \min} + c_{2, i} v^{2}_{i} + c_{3, i}g \le c_{2, i} v^{2}_{\max} + c_{3, i}g \le a_{i, \max}$. Thus $a_{i, \min} < u_{i} \le a_{i, \max}$.
\item [(b)] Speed constraint. By virtue of $a_i(u_i)=a_{i, \min}$, $\tau a_{i, \min} < 0$, and the assumption $v_{i} + \tau a_{i, \min} \ge v_{\min}$, we have $v_{\min} \le v_{i} + \tau a_{i, \min} < v_{i} \le v_{\max}$. Thus $v_{\min} \le  v_i + \tau a_i(u_i) = v_{i} + \tau a_{i, \min} < v_{\max}$.
%
%\item [(c)] Safety distance constraint. It follows from (\ref{eqn:q_function}) that that %$q_{i}(a_{i}(u_i)) = q_{i}(a_{i, \min}) = r \cdot a_{i, \min} + v_{\min} < v_{\min}$. By the %claim proved above,  $h_{i} \le 0$.
%
\end{itemize}

  \item [(C.2)] $v_{i} + \tau a_{i, \min} < v_{\min}$. In this case,  $a_{i}(u_i) = \frac{v_{\min} - v_{i}}{\tau} \le 0$.
  \begin{itemize}
  \item [(a)] Control constraint. Since $c_{2, i}, c_{3, i} > 0$ and $v_{\min}> v_{i} + \tau a_{i, \min}$, we have $u_{i} > a_{i}(u_i) = \frac{v_{\min} - v_{i}}{\tau}  > \frac{v_{i} + \tau a_{i, \min} - v_{i}}{\tau} = a_{i, \min}$. By the assumption that $c_{2, i} v^{2}_{\max} + c_{3, i}g \le a_{i, \max}$ and using $a_i(u_i)=\frac{v_{\min} - v_{i}}{\tau} \le 0$, we also have $u_{i} \le c_{2, i} v^{2}_{i} + c_{3, i}g \le a_{i, \max} $. Thus $a_{i, \min} < u_{i} \le a_{i, \max}$.
  \item [(b)] Speed constraint. Since $v_{i} + \tau a_{i}(u_i) = v_{\min}$ and $a_i(u_i) \le 0$, we have $v_{\min} =v_{i} + \tau a_{i}(u_i) \le v_{\max}$.
  \end{itemize}
  \end{itemize}
 These results show that the $a_i(u_i)$'s satisfy $v_i+\tau a_i(u_i) \ge v_{\min}, \forall \, i=0,1, \ldots, n$ such that all the assumptions in the claim proved above hold.

 To show that the proposed $u$ given by (\ref{eqn:feasible_u_i}) satisfies the safety distance constraints, we consider cases (C.1) and (C.2) separately. For the former case, note that $q_{i}(a_{i}(u_i)) = q_{i}(a_{i, \min}) = r_i \cdot a_{i, \min} + v_{\min} < v_{\min}$. By the claim proved above,  $h_{i}(a_i, a_{i-1}) \le 0$. For the latter case, in light of $v_{i} = v_{\min} - \tau a_{i}(u_i)$, we have
  \[
    q_{i}(a_{i}(u_i)) = v_{\min} + (r_i - \tau)\cdot a_{i}(u_i) - \frac{v_{i} - v_{\min}}{a_{i, \min}} a_{i}(u_i) + \frac{\tau}{2} a_{i}(u_i) \Big[1 - \frac{a_{i}(u_i)}{a_{i, \min}} \Big].
    \]
   Since $r_i \ge \tau $, $a_{i}(u_i) \le 0$ and $v_{i} - v_{\min} \ge 0 $, we have $(r_i - \tau) \cdot a_{i}(u_i) \le 0$ and $\frac{-(v_{i} - v_{\min})}{a_{i, \min}} a_{i}(u) \le 0$. It has been shown in part (a) of (C.2) that $a_{i}(u) > a_{i, \min}$. This yields  $\frac{\tau}{2} a_{i}(u_i) [1 - \frac{a_{i}(u_i)}{a_{i, \min}}] \le 0$. We thus conclude that $q_{i}(a_{i}(u_i)) \le v_{\min}$, leading to $h_{i}(a_i, a_{i-1}) \le 0$.
  Consequently, $u \in \Wcal((x_i, v_i)^n_{i=0}, u_0)$.
\end{proof}

%-----------------------------------------------------------------------------
%
\subsection{Nonempty Interior of the Constraint Sets}

Consider the non-polyhedral constraint set arising from the nonlinear vehicle dynamics subject to the control, speed, and safety distance constraints. We show that under  mild assumptions, this constraint set has nonempty interior. This property is critical for the Slater's constraint qualification in optimization.

\begin{proposition} \label{prop:interior_nonlin_dynamics}
Consider the nonlinear vehicle dynamics (\ref{eqn:d_model_longit}). Suppose the nonnegative constants $c_{2,i}, c_{3,i}$ are such that $c_{2, i}v_{\max}^{2} + c_{3, i} g < a_{i, \max} $ and $r_i \ge \tau$ for each $i=1, \ldots, n$. For any feasible $(x_i, v_i)^n_{i=0}$ and $u_0$, if $v_0>v_{\min}$ and $v_0+\tau u_0>v_{\min}$, then $\Wcal((x_i, v_i)^n_{i=0}, u_0)$
        has nonempty interior.
\end{proposition}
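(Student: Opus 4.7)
My plan is to exhibit an explicit point in $\Wcal((x_i,v_i)_{i=0}^n,u_0)$ at which every defining inequality is strict. The key observation is that the strengthened assumptions $v_0>v_{\min}$, $v_0+\tau u_0>v_{\min}$ and $c_{2,i}v_{\max}^2+c_{3,i}g<a_{i,\max}$ furnish exactly enough slack to perturb the feasible $u$ produced in Proposition~\ref{prop:seq_feasibility_nonlinear} into the interior. Concretely, I would replace definition~(\ref{eqn:feasible_u_i}) by
\[
  u_i \,:=\, \left\{\begin{array}{ll} a_{i,\min}+\epsilon_i+c_{2,i}v_i^2+c_{3,i}g, & v_i+\tau a_{i,\min}\ge v_{\min},\\[2pt] \dfrac{v_{\min}-v_i}{\tau}+\epsilon_i+c_{2,i}v_i^2+c_{3,i}g, & v_i+\tau a_{i,\min}<v_{\min},\end{array}\right.
\]
with small parameters $\epsilon_i>0$ to be chosen inductively in $i=1,\ldots,n$.

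Verifying the strict versions of the control and speed constraints for this $u$ is a routine modification of the two cases (C.1) and (C.2) in Proposition~\ref{prop:seq_feasibility_nonlinear}: the strict inequality $c_{2,i}v_{\max}^2+c_{3,i}g<a_{i,\max}$ is precisely what keeps $u_i<a_{i,\max}$ strict after the $\epsilon_i$ perturbation, while the $+\epsilon_i$ pushes $u_i$ strictly above $a_{i,\min}$ and pushes $v_i+\tau a_i(u_i)$ strictly above $v_{\min}$; the strict upper speed bound $v_i+\tau a_i(u_i)<v_{\max}$ follows because $\max(v_i+\tau a_{i,\min},v_{\min})<v_{\max}$ whenever $v_i\le v_{\max}$.

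The heart of the argument is the safety constraint $h_i<0$. From the computation inside the proof of the claim in Proposition~\ref{prop:seq_feasibility_nonlinear} one has the identity
\[
  h_i(a_i,a_{i-1}) \,=\, p_i+\tau\Big[q_i(a_i)-A_{i-1}\Big],\qquad A_{i-1}:=\tfrac{v_{i-1}+(v_{i-1}+\tau a_{i-1})}{2},
\]
so since $p_i\le 0$ by feasibility, it suffices to arrange $q_i(a_i)<A_{i-1}$. I would prove $A_{i-1}>v_{\min}$ strictly by induction on $i$: the base case $i=1$ is the hypothesis $v_0>v_{\min}$ and $v_0+\tau u_0>v_{\min}$; the inductive step follows from $v_{i-1}\ge v_{\min}$ combined with the strict lower speed inequality $v_{i-1}+\tau a_{i-1}(u_{i-1})>v_{\min}$ established for the perturbed $u_{i-1}$. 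Proposition~\ref{prop:seq_feasibility_nonlinear} already gives $q_i(a_i^{(0)})\le v_{\min}$ at the unperturbed value $a_i^{(0)}$, so continuity of $q_i$ yields $q_i(a_i)<A_{i-1}$ for all sufficiently small $\epsilon_i$; the $\epsilon_i$'s are then chosen sequentially to satisfy all three strict inequalities at once.

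The main obstacle is case (C.2), where one computes $q_i'(a_i^{(0)})=\tau/2+r_i>0$, so $q_i$ is strictly increasing at $a_i^{(0)}$ and $q_i(a_i^{(0)}+\epsilon_i)$ can exceed $v_{\min}$. The resolution is that the inductive strict inequality $A_{i-1}>v_{\min}$ provides a positive gap $A_{i-1}-v_{\min}>0$ into which $q_i(a_i)$ may safely rise, so no contradiction arises and the induction closes.
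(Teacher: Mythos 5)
Your proposal is correct and takes essentially the same route as the paper's proof: both perturb the explicit feasible control of Proposition~\ref{prop:seq_feasibility_nonlinear} by small, inductively chosen $\epsilon_i>0$, and both close the induction through the strict bound $\tfrac{v_{i-1}+(v_{i-1}+\tau a_{i-1})}{2}>v_{\min}$, seeded by $v_0>v_{\min}$ and $v_0+\tau u_0>v_{\min}$ and propagated via the strict lower speed bound $v_{i-1}+\tau a_{i-1}(\wh u_{i-1})>v_{\min}$ of the perturbed controls. Your explicit identity $h_i=p_i+\tau\big[q_i(a_i)-A_{i-1}\big]$ and the observation $q_i'(a_i^{(0)})=\tau/2+r_i>0$ in case (C.2) merely make transparent the same gap argument the paper invokes via continuity of $q_i$ and $h_i$.
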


\begin{proof}
Given a feasible $(x_i, v_i)^n_{i=0}$ and $u_0$ such that $a_{0, \min} \le u_0 \le a_{0, \max}$,  $v_{\min}< v_0 \le v_{\max}$,  $v_{\min} < v_0+ \tau a_0 \le v_{\max}$, $v_{\min} \le v_i\le v_{\max}$ and $p_{i} \le 0$ for all $i=1, \ldots, n$, we show  that for each $i=1, \ldots, n$, there exists $\wh u_i$ satisfying $a_{i, \min}< \wh u_i < a_{i, \max}$, $v_{\min} < v_i + \tau a_i(\wh u_i) < v_{\max}$, and $h_{i}(\wh u_{i-i}, \wh u_i )<0$, where $\wh u:=(\wh u_1, \ldots, \wh u_n)^T \in \mathbb R^n$. We prove this result by induction on $i$ as follows.

  Consider $i=1$ first. Let $u_1$ be given by (\ref{eqn:feasible_u_i}). By the given assumptions and the proof of Proposition~\ref{prop:seq_feasibility_nonlinear}, we obtain the following:
  \begin{itemize}
     \item [(C.1)] If $v_1+ \tau a_{1, \min} \ge v_{\min}$, then $a_{i, \min}<u_1\le c_{2, 1} v^2_{\max} + c_{3, 1} g < a_{i, \max}$, $v_{\min} \le v_1 + \tau a_1(u_1) = v_1 + \tau a_{1, \min} < v_{\max}$, and $q_1(a_1(u_1)) = r_1 a_{i, \min} + v_{\min}< v_{\min}$. Since $q_1(\cdot)$ given by (\ref{eqn:q_function}) is continuous, there exists a constant $\varepsilon>0$ such that $\wh u_1 := u_1 + \varepsilon$ satisfies $a_{i, \min}<\wh u_1 < a_{i, \max}$, $v_{\min} < v_1 + \tau a_1(\wh u_1) < v_{\max}$, and $q_1(a_1(\wh u_1))< v_{\min}$. It follows from the Claim given in the proof of Proposition~\ref{prop:seq_feasibility_nonlinear} that $h_{1}(\wh u_1)\le \tau [ q_1(a_1(\wh u_1))  - v_{\min}] <0$.

     \item [(C.2)] If $v_1+ \tau a_{1, \min} < v_{\min}$, then $a_{1}(u_1) = \frac{v_{\min} - v_{1}}{\tau}$,  $a_{i, \min}<u_1\le c_{2, 1} v^2_{\max} + c_{3, 1} g < a_{i, \max}$, $v_{\min} = v_1 + \tau a_1(u_1) < v_{\max}$, and $q_1(a_1(u_1)) \le v_{\min}$.
         As shown in the proof for the Claim in  Proposition~\ref{prop:seq_feasibility_nonlinear},
         \[
          h_{1}(u_1) \le \tau \Big[ \, q_{1}(a_{1} (u_1))  - \frac{v_{0} + (v_{0} + \tau u_0)}{2} \, \Big] < \tau \Big[ \, q_{1}(a_{1} (u_1))  - v_{\min} \, \Big] \le 0,
         \]
         where we use the assumptions that $v_0>v_{\min}$ and $v_0+\tau u_0> v_{\min}$. Hence, $h_1(u_1)<0$.
         By the continuity of $q_1(\cdot)$ and $h_1(\cdot)$, we see that there exists a small constant $\varepsilon>0$ such that $\wh u_1 := u_1 + \varepsilon$ satisfies $a_{i, \min}<\wh u_1 < a_{i, \max}$, $v_{\min} < v_1 + \tau a_1(\wh u_1) < v_{\max}$, and $h_{1}( \wh u_1)<0$.
   \end{itemize}

 Now assume that for each $i$ with $1\le i \le n-1$,  there exists $\wh u_j$ satisfying $a_{i, \min}< \wh u_j < a_{i, \max}$, $v_{\min} < v_j + \tau a_j(\wh u_j) < v_{\max}$, and $h_{j}(\wh u_j, \wh u_{j-1})<0$ for all $j=1, \ldots, i$. Consider $i+1$ as follows. As before, let $u_{i+1}$ be given in (\ref{eqn:feasible_u_i}), and consider two cases:
 \begin{itemize}
   \item [(C.1')] $v_{i+1}+ \tau a_{i, \min} \ge v_{\min}$. By the proof of Proposition~\ref{prop:seq_feasibility_nonlinear} and a similar argument for (C.1) given above, we deduce that there exists a constant $\varepsilon>0$ such that $\wh u_{i+1}:=u_{i+1} + \varepsilon$ satisfies the desired results.

   \item [(C.2')] $v_{i+1}+ \tau a_{i, \min} < v_{\min}$. Similarly, $a_{i, \min}<u_{i+1} < a_{i, \max}$, $v_{\min} = v_{i+1} + \tau a_{i+1}(u_{i+1}) < v_{\max}$, and $q_{i+1}(a_{i+1}(u_{i+1})) \le v_{\min}$, where $a_{i+1}(u_{i+1}) = \frac{v_{\min} - v_{i+1}}{\tau}$. Moreover,
       \[
          h_{i+1}(\wh u_{i}, u_{i+1}) \le \tau \Big[ \, q_{i+1}(a_{i+1} (u_{i+1}))  - \frac{v_{i} + (v_{i} + \tau a_i(\wh u_i))}{2} \, \Big] < \tau \Big[ \, q_{i+1}(a_{i+1} (u_{i+1}))  - v_{\min} \, \Big] \le 0,
         \]
        where the strict inequality follows from the assumption that $v_i\ge v_{\min}$ and the induction hypothesis $ v_{i} + \tau a_i(\wh u_i)) > v_{\min}$. Hence, $h_{i+1}(\wh u_{i}, u_{i+1}) <0$.
        Then by the similar argument for (C.2) above, we see that there exists a constant $\varepsilon>0$ such that $\wh u_{i+1}:=u_{i+1} + \varepsilon$ yields the desired results.
 \end{itemize}
 Consequently,  by the induction principle, there exists a vector $\wh u$ in the interior of $\Wcal((x_i, v_i)^n_{i=0}, u_0)$.
\end{proof}

In light of the above result, we make the following assumptions throughout the rest of the paper
unless otherwise stated:
\begin{itemize}
  \item [$\bf A.2$] For each $i=1, \ldots, n$, the nonnegative constants $c_{2,i}, c_{3,i}$ satisfy $c_{2, i}v_{\max}^{2} + c_{3, i} g < a_{i, \max} $ and the reaction time $r_i$ satisfies $r_i \ge \tau$. Further, $(v_0(k), u_0(k))$ is feasible with $v_0(k)>v_{\min}$ for all $k\in \mathbb Z_+$.
\end{itemize}
It will be shown in Corollary~\ref{coro:nonempty_interior_general_p} that under this assumption,  the constraint set of a general $p$-horizon model predictive control model has nonempty interior.

\mycut{
Using the above proposition, we show that under $\bf A.2$, the constraint set of the $p$-horizon MPC model has nonempty interior for a general MPC horizon $p \in \mathbb N$.

\begin{corollary} \label{coro:nonempty_interior_general_p}
   Suppose the assumption $\bf A.2$ holds. Then for any $p \in \mathbb N$,
  the constraint set of the $p$-horizon MPC optimization problem (\ref{eq:MPC_opt_model}) has nonempty interior  at each $k$.
\end{corollary}

\begin{proof}
 Fix an arbitrary $k \in \mathbb Z_+$. Since $v_0(k)>v_{\min}$, it follows from Proposition~\ref{prop:interior_nonlin_dynamics} that there exists a vector denoted by $\wh u(k)$ in the interior of the set $\Wcal((x_i(k), v_i(k))^n_{i=0}, u_0(k))$. Let $x_i(k+1)$ and $v_i(k)$ be generated by $\wh u(k)$ (and $(x_i(k), v_i(k))^n_{i=0}, u_0(k)$). Since $v_0(k+1)=v_0(k)>v_{\min}$, we deduce via Proposition~\ref{prop:interior_nonlin_dynamics}
  again that there exists a vector denoted by $\wh u(k+1)$ in the interior of the constraint set $\Wcal((x_i(k+1), v_i(k+1))^n_{i=0}, u_0(k+1))$. Continuing this process in $p$-steps, we derive the existence of an interior point in the constraint set of the $p$-horizon MPC model (\ref{eq:MPC_opt_model}).
\end{proof}
}

%%%\newpage

%
%\gap
%
%\gap
%

%------------------------------------------------------------------------------------
%
\section{Model Predictive Control for CAV Platooning} \label{sect:MPC_formulation}

We consider the model predictive control (MPC) approach for CAV platooning, and we use the same formulation given in \cite[Section 3]{ShenEswarDu_Arx20}. To be self-contained, this MPC formulation is presented as follows.
Let $\Delta$ be the desired constant spacing between two adjacent vehicles, and $(x_0, v_0, u_0)$ be the position, speed, and control input of the leading vehicle, respectively. Define the following vectors:
(i) the relative spacing error $z(k):=\big( x_0-x_1-\Delta, \ldots, x_{n-1}-x_n-\Delta\big)(k) \in \mathbb R^n$; (ii) the relative speed between adjacent vehicles $z'(k):=\big( v_0-v_1, \ldots, v_{n-1}-v_n\big)(k) \in \mathbb R^n$; and (iii) the control input $u(k):=\big(u_1, \ldots, u_n\big)(k)\in \mathbb R^n$.
Further, let $w_i(k):= u_{i-1}(k) - u_i(k)$ for each $i=1, \ldots, n$, and $w(k):= \big( w_1, \ldots, w_n \big)(k) \in \mathbb R^n$, representing the difference of control input between adjacent vehicles. Hence, for any $k \in \mathbb Z_+$, $u(k) = - S_n w(k) + u_0(k) \cdot \mathbf 1$, where $\mathbf 1 :=(1, \ldots, 1)^T$ is the vector of ones, and
\begin{equation} \label{eqn:matrix_S_n}
    S_n \, := \, \begin{bmatrix}
1&0&0&  \hdots &0\\
1 &1& 0&  \hdots &0\\
\vdots &\vdots & \ddots  & \ddots & \vdots\\
1 & 1 &\hdots& 1  & 0\\
1 & 1 &  \hdots & 1 &1 \\
\end{bmatrix} \in \mathbb R^{n\times n}, \qquad \quad S^{-1}_n \, = \, \begin{bmatrix} 1& & & & \\
 -1 & 1& & &   \\
 &  \ddots & \ddots &  &  \\
   & & -1 & 1 &  \\
   &  &  & -1 & 1\\
\end{bmatrix} \in \mathbb R^{n\times n}.
\end{equation}

%%\gap

Given a prediction horizon $p\in \mathbb N$, the $p$-horizon MPC control is determined by solving the following constrained optimization problem at each $k\in \mathbb Z_+$, involving all vehicles' control inputs for given feasible state $(x_i(k), v_i(k))^n_{i=1}$ and $(v_0(k), u_0(k))$ at time $k$ subject to the nonlinear vehicle dynamics (\ref{eqn:d_model_longit}):
\begin{align}
 & \ \mbox{minimize}  \ \ J(u(k), \ldots, u(k+p-1) )  :=  \label{eq:MPC} \\
 & \ \frac{1}{2} \sum^p_{s=1} \Big(  \underbrace{\tau^2 u^T(k+s-1) S^{-T}_n Q_{w, s} S^{-1}_n  u(k+s-1)}_{\mbox{ride comfort}} +   \underbrace{z^T(k+s) Q_{z, s} z(k+s) + (z'(k+s))^T Q_{z', s} z'(k+s)}_{\mbox{traffic stability and smoothness}} \Big)  \notag
\end{align}
subject to: for each $i=1, \ldots, n$ and each $s=1, \ldots, p$,
\begin{eqnarray}
  a_{i, \min}  & \le  u_i(k+s-1) \ \le  a_{i, \max}, & \qquad v_{\min}  \, \le \, v_i(k+s) \ \le \ v_{\max}, \label{eqn:MPC:u_constriant} \\
  &   x_{i-1}(k+s)-x_i(k+s)  & \ge   L_i + r_i \cdot v_i(k+s)  - \frac{(v_i(k+s)-v_{\min})^2}{2a_{i, \min}},  \label{eqn:MPC:safety_constraint_TypeIII}
 %%%\quad \forall \, i=1, \ldots, n, \ s=1, \ldots, p,
 %
%\squeezeupsmall
%
\end{eqnarray}
where $Q_{z, s}$, $Q_{z', s}$ and $Q_{w, s}$ are $n\times n$ symmetric positive semidefinite weight  matrices to be discussed soon. Note that when $p>1$, $(x_0(k+s+1), v_0(k+s+1), u_0(k+s))$  are unknown at time $k$ for $s=1, \ldots, p-1$. We  assume that $u_0(k+s)=u_0(k)$ for all $s=1, \ldots, p-1$ and use these $u_0(k+s)$'s and the vehicle dynamics model (\ref{eqn:d_model_longit}) to predict $(x_0(k+s+1), v_0(k+s+1))$ for $s=1, \ldots, p-1$.  Note that $u_0(k)$ represents the actual acceleration of the leading vehicle at time $k$.

%
%\begin{remark} \rm \label{remark:objective_func} {\bf ??? Similar to the linear one?}
%The three terms in the objective function $J$ intend to minimize traffic flow oscillations via mild %control:
%
%the first term  penalizes the magnitude of control, whereas the second and last terms penalize the  %variations of the relative spacing and relative speed, respectively. The presence of the matrix $S^{-1}_n$ %in the first term is due to the coupled vehicle dynamics through the CAV platoon. To illustrate this, let %$\wt w(k+s-1):=w(k+s-1) - u_0(k) \cdot \mathbf e_1$ for $s=1, \ldots, p$. Thus  $\wt w(k+s-1) = - S^{-1}_n %u(k+s-1)$ for each $s=1, \ldots, p$.  Therefore, the first term in $J$ satisfies $ \tau^2 u^T(k+s-1)  %S^{-T}_n Q_{w, s} S^{-1}_n u(k+s-1) =  \tau^2 \wt w^T(k+s-1) \, Q_{w, s} \, \wt w(k+s-1)$ for each $s$.
%\end{remark}
%

The physical interpretation of the three terms of the objective function $J$ can be found in \cite[Remark 3.1]{ShenEswarDu_Arx20}. Further, The presence of the matrix $S^{-1}_n$ in the first term is due to the coupled vehicle dynamics through the CAV platoon; see \cite[Remark 3.1]{ShenEswarDu_Arx20}.
%
%
%The weight matrices $Q_{z, s}$, $Q_{z', s}$, and $Q_{w, s}$, $s=1, \ldots, p$ determine transient and %asymptotic dynamics, and they depend on vehicle network topologies and can be chosen by stability analysis %and transient dynamics criteria of the closed loop system.
%
To develop fully distributed schemes for a general class of vehicle network topologies and to facilitate control design and analysis, we make the following assumption on the weight matrices $Q_{z, s}$, $Q_{z', s}$, and $Q_{w, s}$ through the rest of the paper:
\begin{itemize}
  \item [$\bf A.3$] For each $s=1, \ldots, p$, $Q_{z, s}$ and $Q_{z', s}$ are diagonal and positive semidefinite (PSD), and $Q_{w, s}$ is diagonal and positive definite (PD).
\end{itemize}

More discussions on the class of weight matrices specified in $\bf A.3$ can be found in \cite[Section 3]{ShenEswarDu_Arx20}. We show below that under the assumption $\bf A.2$, the constraint set of the $p$-horizon MPC model has nonempty interior at each $k$ for an arbitrary MPC horizon $p \in \mathbb N$.

\begin{corollary} \label{coro:nonempty_interior_general_p}
   Suppose the assumption $\bf A.2$ holds. Then for any $p \in \mathbb N$,
  the constraint set of the $p$-horizon MPC optimization problem (\ref{eq:MPC}) has nonempty interior  at each $k$.
\end{corollary}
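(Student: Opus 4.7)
The plan is to prove the corollary by finite induction over the time steps of the MPC prediction horizon, applying Proposition~\ref{prop:interior_nonlin_dynamics} repeatedly to propagate a strictly feasible control through the $p$-step window. Fix an arbitrary $k \in \mathbb Z_+$. The base case is immediate: assumption $\bf A.2$ gives $v_0(k)>v_{\min}$ and $(v_0(k), u_0(k))$ feasible, so Proposition~\ref{prop:interior_nonlin_dynamics} yields a control $\hat u(k)$ lying in the interior of the one-step set $\Wcal((x_i(k), v_i(k))^n_{i=0}, u_0(k))$.

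For the inductive step, suppose $\hat u(k), \ldots, \hat u(k+s-1)$ with $1 \le s \le p-1$ have been constructed so that each $\hat u(k+j)$ is interior to the one-step constraint set at time $k+j$, where the CAV states $(x_i(k+j), v_i(k+j))^n_{i=1}$ are generated recursively by the nonlinear dynamics (\ref{eqn:d_model_longit}) driven by the previous $\hat u$'s, and the leader states are propagated via the MPC convention $u_0(k+j)=u_0(k)$. Because $\hat u(k+j)$ is interior, the speed, control, and safety-distance constraints all hold strictly at time $k+j+1$, so $(x_i(k+s), v_i(k+s))^n_{i=1}$ is feasible; combined with $\bf A.2$ ensuring $v_0(k+s)>v_{\min}$, Proposition~\ref{prop:interior_nonlin_dynamics} applies again and delivers $\hat u(k+s)$ interior to the one-step set at time $k+s$. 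After $p$ iterations the concatenated sequence $(\hat u(k), \ldots, \hat u(k+p-1))$ satisfies every constraint in (\ref{eqn:MPC:u_constriant})--(\ref{eqn:MPC:safety_constraint_TypeIII}) with strict inequality, and continuity of the constraint functions then gives a full neighborhood of this sequence in $\mathbb R^{np}$ lying inside the $p$-horizon feasible set.

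The main obstacle I expect is the propagation of the strict leader hypothesis $v_0 > v_{\min}$ through the prediction horizon, since Proposition~\ref{prop:interior_nonlin_dynamics} invokes this condition at every intermediate time and the predicted leader trajectory need not coincide with the observed one. Assumption $\bf A.2$ is stated for all $k \in \mathbb Z_+$ precisely to cover these predicted states (using the MPC convention that $u_0(k+s)=u_0(k)$ and $c_{2,0}=c_{3,0}=0$), so this is absorbed into the hypothesis rather than requiring a separate dynamical argument. Once that point is granted, the remainder is routine bookkeeping verifying that interior controls produce interior successors along the forward recursion.
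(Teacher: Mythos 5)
Your proof is correct and follows essentially the same route as the paper's: both arguments fix $k$, apply Proposition~\ref{prop:interior_nonlin_dynamics} to get an interior control at time $k$, propagate the states forward under that control, and repeat the application of the proposition $p$ times through the horizon, with $\bf A.2$ supplying $v_0>v_{\min}$ at each intermediate step. Your explicit handling of the predicted-versus-observed leader speed and the closing continuity remark are just slightly more careful renderings of steps the paper leaves implicit.
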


\begin{proof}
The proof is similar to \cite[Corollary 3.1]{ShenEswarDu_Arx20}.
 Fix an arbitrary $k \in \mathbb Z_+$. Since $v_0(k)>v_{\min}$, by Proposition~\ref{prop:interior_nonlin_dynamics}, there exists a vector denoted by $\wh u(k)$ in the interior of the set $\Wcal((x_i(k), v_i(k))^n_{i=0}, u_0(k))$. Let $x_i(k+1)$ and $v_i(k)$ be generated by $\wh u(k)$ (and $(x_i(k), v_i(k))^n_{i=0}, u_0(k)$). Since $v_0(k+1)>v_{\min}$, we deduce via Proposition~\ref{prop:interior_nonlin_dynamics}
  again that there exists a vector denoted by $\wh u(k+1)$ in the interior of the constraint set $\Wcal((x_i(k+1), v_i(k+1))^n_{i=0}, u_0(k+1))$. Continuing this process in $p$-steps, we derive the existence of an interior point in the constraint set of the $p$-horizon MPC model (\ref{eq:MPC}).
\end{proof}

%
%{\bf ??? Similar to the linear one?} The reasons for considering the class of diagonal PSD or PD weight %matrices are three folds: (i) diagonal matrices have clear physical meaning in transportation so that the %selection of such matrices is more accessible to transportation engineers; (ii) this class of weight %matrices facilitates the development of fully distributed schemes for general vehicle network topologies; %and (iii) stability and performance analysis is relatively simpler (although still nontrivial) when using %this class of weight matrices. The detailed discussions of choosing diagonal, PSD or PD weight matrices %for satisfactory closed loop dynamics will be given in Section~\ref{sect:control_analysis}.
%

%-----------------------------------------------------------------------------
%
\subsection{Constrained Optimization Model under the Nonlinear Vehicle Dynamics} \label{subsect:optim_nonlinear_dynamics}

In this subsection, we discuss the constrained optimization model (\ref{eq:MPC})  arising from the MPC at each time $k$ under the nonlinear vehicle dynamics (\ref{eqn:d_model_longit}) with the positive parameters $c_{2,i}$ and $c_{3, i}$.
For notational simplicity, define the parameter vectors  $\varphibf_d:=(c_{2, 1}, \ldots, c_{2, n}) \in \mathbb R^n_+$ and $\varphibf_f:=(c_{3, 1}, \ldots, c_{3, n}) \in \mathbb R^{n}_+$, where the subscripts $d$ and $f$ denote the drag and friction respectively. Further, $\varphibf:=(\varphibf_d, \varphibf_f)\in \mathbb R^{2n}_+$. For notational convenience, we set $c_{2, 0}=c_{3, 0}=0$; this is because $u_0(k)$ is the actual acceleration of the leading vehicle instead of its control.

Consider the constrained MPC optimization model (\ref{eq:MPC}) at an arbitrary but fixed time $k \in \mathbb Z_+$.
%
%%To exploit the structure of the objective function, we rewrite the objective function as %follows.
%
Let $\ubld(k):=(\ubld_1(k), \ldots, \ubld_n(k))\in \mathbb R^{np}$ with $\ubld_i(k):=( u_i(k), \ldots, u_i(k+p-1))\in \mathbb R^{p}$.
Recall that for each $i=1, \ldots, n$ and $j=0, \ldots, p-1$,
\[
    a_i\big(k+j, u_i(k), \ldots, u_i(k+j) \big) \, = \, u_i(k+j) - c_{2, i} v^2_i(k+j) - c_{3, i} g,
\]
where we note that $v_i(k+j)$ depends on $u_i(k), \ldots, u_i(k+j-1)$ for $j\ge 1$. Specifically, for $p>1$,
\begin{align*}
  a_i(k, u_i(k)) & = u_i(k) - c_{2, i} v^2_i(k) - c_{3, i} g, \\
  a_i(k+1, u_i(k), u_i(k+1)) %%%& = u_i(k+1) - c_{2, i} v^2_i(k+1) - c_{3, i} g \\
    &  = u_i(k+1) - c_{2, i} \big[ v_i(k) + \tau a_i(k, u_i(k)) \big]^2 - c_{3, i} g, \\
   \vdots \quad &  \qquad \vdots \qquad \qquad \qquad \vdots \\
  a_i\big(k+p-1, u_i(k), \ldots, u_i(k+p-1) \big) & = u_i(k+p-1) -  c_{2, i} \Big[ v_i(k) + \tau \sum^{p-2}_{s=0} a_i(k+s, u_i(k), \ldots, u_i(k+s) ) \Big]^2 \\
   & \qquad - c_{3, i} g
\end{align*}
By slightly abusing the notation, we may denote each $a_i\big(k+j, u_i(k), \ldots, u_i(k+j) \big)$ by $a_i(k+j, \mathbf u_i(k))$.

Define for each $i=1, \ldots, n$ and $j=0, 1, \ldots, p-1$,
\[
 b_i(k+j, \ubld_{i-1}(k), \ubld_i(k) ) \,  := \, a_{i-1}(k+j, \ubld_{i-1}(k)) - a_{i}(k+j,\ubld_i(k) ),
 % \\
 %  & = \big[ u_{i-1}(k+s) - c_{2, i-1} v^2_{i-1}(k+s) - c_{3, i-1} g \big]  - \big[ u_{i}(k+s) - %c_{2, i} v^2_i(k+s) - c_{3, i} g \big]
\]
where $a_0(k+j, \mathbf u_0(k)) := u_0(k)$ for all $j=0, 1, \ldots, p-1$ due to $\mathbf u_0(k) := u_0(k) \cdot \mathbf 1$.
It follows from the nonlinear vehicle dynamics (\ref{eqn:d_model_longit}) that for each $i=1, \ldots, n$ and $j=1, \ldots, p$,
\begin{eqnarray}
 z_i(k+j) &  = & z_i(k) + j \tau z'_i(k) + \tau^2 \sum^{j-1}_{s=0} \frac{2(j-s)-1}{2} b_i(k+s, \ubld_{i-1}(k), \ubld_i(k) ), \label{eqn:z_k_j} \\
 z'_i(k+j) & = & z'_i(k) + \tau \sum^{j-1}_{s=0} b_i(k+s, \ubld_{i-1}(k), \ubld_i(k) ). \label{eqn:z'_k_j}
\end{eqnarray}

For a fixed $k \in \mathbb Z_+$, define for each $i=1, \ldots, n$,
\[
 \abld_i(\ubld_i(k)):=\Big(a_i\big(k, u_i(k)\big), \, a_i\big(k+1, u_i(k), u_i(k+1) \big), \, \ldots,  \, a_i\big(k+p-1, u_i(k), \ldots, u_i(k+p-1) \big) \Big).
\]
 In what follow, we often omit $k$ in $\ubld_i(k)$  when $k$ is fixed. Further, define the function $\abld:\mathbb R^{np} \rightarrow \mathbb R^{np}$ as $\abld(\ubld):=\big( \abld_1(\ubld_1), \ldots, \abld_n(\ubld_n) \big)$. Note that if $\varphibf=(\varphibf_d, \varphibf_f)=(c_{2,i}, c_{3,i})^n_{i=1}=0$, then $\abld(\ubld) = \ubld$ for all $\ubld \in \mathbb R^{np}$.
We introduce more notation.  Define the following matrices:
\[
   \ol Q_w := \mbox{diag}\Big( \, Q_{w, 1}, \, \ldots \, Q_{w, p} \, \Big) \in \mathbb R^{np \times np}, \qquad
  \mathbf S^{-1} \, := \, \mbox{diag}\Big( \, \underbrace{S^{-1}_n, \, \ldots, \, S^{-1}_n }_{p-\mbox{copies}} \, \Big) \in \mathbb R^{np \times np}.
\]
Furthermore, let $E \in \mathbb R^{np \times np}$ be the permutation matrix whose $(i, j)$-entry is given by
\begin{equation} \label{eqn:E_matrix02}
  E_{i, j} = \left\{ \begin{array}{ll} 1 &  \mbox{if} \ \  i=n\cdot k+s, \ j=p\cdot (s-1)+k+1, \quad \mbox{for} \ \ k=0, \ldots, p-1, \ s=1,\ldots, n; \\ 0, & \mbox{otherwise}. \end{array} \right.
\end{equation}
Clearly,  $E=I_n$ when $p=1$, and
\[
    \begin{bmatrix} u(k) \\ u(k+1) \\ \vdots \\ u(k+p-1) \end{bmatrix} \, = \, E \begin{bmatrix} \mathbf u_1 \\ \mathbf u_2 \\ \vdots \\ \mathbf u_n \end{bmatrix} \, = \, E \ubld.
\]
Using these matrices, it is easy to verify that the following term in the objective function $J$ in (\ref{eq:MPC}) satisfies
\[
\left( \mathbf S^{-1}\begin{bmatrix} u(k)\\ \vdots \\ u(k+p-1) \end{bmatrix} \right)^T \, \ol Q_{w}  \, \left( \mathbf S^{-1} \begin{bmatrix} u(k)\\ \vdots \\ u(k+p-1) \end{bmatrix} \right) \, = \, \ubld^T \underbrace{E^T \mathbf S^{-T} \ol Q_w \mathbf S^{-1} E}_{:=\Psi} \, \ubld.
\]
where $\Psi \in \mathbb R^{np \times np}$ is symmetric PD when $\bf A.3$ holds. Therefore, the  objective function $J$ in the MPC model (\ref{eq:MPC}) becomes
\begin{eqnarray*}
  J(\ubld) & = &  J(u(k), \ldots, u(k+p-1) )   \\
  & = &
   \frac{1}{2} \Big[
    \sum^p_{s=1} z^T(k+s) Q_{z, s} z(k+s) + (z'(k+s))^T Q_{z', s} z'(k+s) \Big] + \frac{\tau^2}{2} \ubld^T \Psi \, \ubld \\
  & = & \frac{1}{2} \Big[
    \sum^p_{s=1} z^T(k+s) Q_{z, s} z(k+s) + (z'(k+s))^T Q_{z', s} z'(k+s) \Big] + \frac{\tau^2}{2}\abld^T(\ubld) \Psi \,\abld(\ubld) \\
    & & \qquad +     \frac{\tau^2}{2} \Big(\ubld^T \Psi \ubld - \abld^T(\ubld) \Psi \abld(\ubld)\Big).
\end{eqnarray*}
 In light of the expressions for $z(k+j)$ and $z'(k+j)$ given by (\ref{eqn:z_k_j})-(\ref{eqn:z'_k_j}), it follows from the similar argument in \cite[Section 3.1]{ShenEswarDu_Arx20} that the objective function
\[
   J(\ubld) \, = \, \frac{1}{2} \abld^T(\ubld) W \abld(\ubld) + c^T \abld(\ubld) + \gamma + \frac{\tau^2}{2} \Big(\ubld^T \Psi \ubld - \abld^T(\ubld) \Psi \abld(\ubld)\Big),
\]
where $W\in \mathbb R^{np\times np}$, $c\in \mathbb R^{np}$, and $\gamma \in \mathbb R$. In fact, $W=E^T \mathbf S^{-T}  \Theta \mathbf S^{-1} E$ for a symmetric PSD matrix $\Theta$ whose blocks are diagonal; see \cite[Section 3.1]{ShenEswarDu_Arx20} for the closed-form expression of $W$. In particular, under the assumption $\bf A.3$, $W$ is a positive definite (PD) matrix that only depends on $Q_{z, s}, Q_{z', s}$ and $Q_{w, s}$, $s=1, \ldots, p$ \cite[Lemma 3.1]{ShenEswarDu_Arx20}.
In addition, the linear term in $J(\ubld)$ can be written as
$
    c^T \abld(\mathbf u) \, = \, \sum^n_{i=1} c^T_{\Ical_i} \abld_i(\mathbf u_i),
$
where $c_{\Ical_i}$ is the subvector of $c$ corresponding to $\abld_i(\ubld_i)$. By \cite[Lemma 3.2]{ShenEswarDu_Arx20}, the subvector $c_{\Ical_i}$ depends only on $z_i(k), z'_i(k), z_{i+1}(k), z'_{i+1}(k)$'s for $i=1, \ldots, n-1$,
$c_{\Ical_n}$ depends only on $z_n(k), z'_n(k)$, and  only $c_{\Ical_1}$ depends on $u_0(k)$. These properties are important for developing fully distributed schemes later on.

To characterize the constraints, let the matrix $S_p \in \mathbb R^{p \times p}$ be defined in the same way as in (\ref{eqn:matrix_S_n}) with $n$ replaced by $p$, and $(S_p \mathbf u_i)_0 :=0$.
Recall that for each $i=1, \ldots, n$ and $j=1, \ldots, p$,
\[
v_i(k+j) = v_i(k) + \tau \sum^{j-1}_{s=0} a_i(k+s, \ubld_i(k))
 = v_i(k) + \tau \big(S_p \, \abld_i(\ubld_i) \big)_j.
\]
Further, $ x_{i-1}(k+j) - x_i(k+j) = z_i(k+j)+\Delta$  depends only on $\ubld_i(k)$ and $\ubld_{i-1}(k)$  as shown in (\ref{eqn:z_k_j}). Hence, we see that for each $i=1, \ldots, n$ and  each $j=1, \ldots, p$, the safety distance constraint is given by:
\[
  \big( H_i(\ubld_{i-1} (k), \ubld_i(k)) \big)_j := L_i  + r_i \cdot v_i(k+j) - \frac{ (v_i(k+j) - v_{\min} )^2}{ 2 a_{i, \min} } - [ x_{i-1}(k+j) - x_i(k+j)] \, \le \, 0.
\]
Note that $H_1(\cdot)$ depends only on $\ubld_1(k)$  although it is written in the above form for notational convenience.
%
%
%Let the matrix $S_p \in \mathbb R^{p \times p}$ be defined in the same way as in %(\ref{eqn:matrix_S_n}) with $n$ replaced by $p$, and $(S_p \mathbf u_i)_0 :=0$.
%
Combining the above results, the MPC model (\ref{eq:MPC}) is formulated as the following optimization problem:
\begin{equation}
\begin{array}{ll}
\mbox{minimize} & J(\ubld):=  \frac{1}{2} \abld^T(\ubld) \big( W - \tau^2 \Psi \big) \abld(\ubld) + c^T \abld(\ubld) + \gamma+ \frac{\tau^2}{2} \ubld^T \Psi \ubld, \label{eq:MPC_opt_model} \\  [0.06in]
%
%%[0.12in]
\mbox{subject to} &  \ubld_i \in \mathcal X_i, \quad v_{\min} \le v_i(k) + \tau \big( S_p \, \abld_i(\ubld_i) \big)_s \le v_{\max},  \\ [0.06in]
 & \quad (H_i( \ubld_{i-1}, \ubld_i ))_s \le 0, \quad \forall \, i=1,\ldots, n, \quad \forall \, s=1, \ldots, p,
\end{array}
\end{equation}
where $\mathcal X_i:=\{ \mathbf u_i \in \mathbb R^p \, | \, a_{i, \min} \mathbf 1 \le \mathbf u_i \le a_{i, \max} \mathbf 1 \}$ for each $i=1, \ldots, n$.
It can be shown via the expressions of $W$ and $\Psi$ given in \cite[Section 3.1]{ShenEswarDu_Arx20} that $W-\tau^2 \Psi$ is PSD.
 When $p=1$, (\ref{eq:MPC_opt_model}) is clearly a convex optimization problem.
When $p>1$, since all but the first entry functions of $\abld_i(\cdot)$ are nonconvex in $\ubld_i$ for each $i$, it is easy to verify that the objective function $J$ is nonconvex, and the velocity and safety distance constraints are nonconvex.
 Hence, when $p>1$, (\ref{eq:MPC_opt_model}) yields a nonconvex optimization problem.  
Since $J$ is continuous,  each $\mathcal X_i$ is compact, and the other constraints are defined by continuous functions, the optimization problem in (\ref{eq:MPC_opt_model}) has a (possibly non-unique) solution.
Moreover, the objective function $J$ is densely coupled, and the safety distance constraint function $\big( H_i(\ubld_{i-1}, \ubld_i) \big)_j$ not only depends on $\ubld_i$ but also on $\ubld_{i-1}$ of the $(i-1)$-th vehicle, and thus is locally coupled with its neighboring vehicles. This coupling structure, together with the nonconvexity of the optimization problem (\ref{eq:MPC_opt_model}), leads to many challenges in developing fully distributed schemes.

%\gap
%
%%{\bf Simplified objective function and constraints}

%------------------------------------------------------------------------------------
%
\section{Fully Distributed Algorithms for Coupled Nonconvex MPC Optimization Problem} \label{sect:dist_scheme}

In this section, we develop fully distributed algorithms for solving the underlying coupled, nonconvex optimization problem (\ref{eq:MPC_opt_model}) at each time $k \in \mathbb Z_+$. To achieve this goal, several new techniques are exploited: the formulation of locally coupled (albeit nonconvex) optimization, sequential convex programming, and operator splitting methods.

%-------------------------------------------------------------
%
\subsection{Formulation of MPC Optimization Problem as Locally Coupled Optimization}

Since the safety distance constraint of each vehicle $i$ is coupled with its neighboring vehicle $(i-1)$ whereas the acceleration and velocity constraints are decoupled, the constraints of the MPC optimization problem (\ref{eq:MPC_opt_model}) are locally coupled \cite{HuXiaoLiu_CDC18}.
Motivated by distributed computation for locally coupled {\em convex} optimization \cite{HuXiaoLiu_CDC18, ShenEswarDu_Arx20}, we show below that (\ref{eq:MPC_opt_model}) can be formulated as a locally coupled {\em nonconvex} optimization problem.

%
%One of major techniques for developing fully distributed schemes for the underlying optimization %problem given by (\ref{eq:Multi_user}) is to formulate it as a locally coupled convex  %optimization problem \cite{HuXiaoLiu_CDC18}.
%
%To be self-contained, we briefly describe its formulation.
%

Consider a multi-agent network of $n$ agents whose communication is characterized by a connected and undirect graph $\mathcal G(\mathcal V, \mathcal E)$, where $\mathcal V=\{1, \ldots, n \}$ is the set of agents, and $\mathcal E$ is the set of edges. Let $\mathcal N_i$ denote the set of neighbors of agent $i$, i.e., $\mathcal N_i =\{ j \, | \, (i, j) \in \mathcal E\}$. Let $\{ \Ical_1, \ldots, \Ical_n\}$ be a disjoint union of the index set $\{1, \ldots, N \}$. Hence,  $(x_{\Ical_i} )^n_{i=1}$ forms a partition of $x \in \mathbb R^N$, and each $x_{\Ical_i}$ is called a local variable of agent $i$.
For each $i$, define $\wh x_i :=\big(x_{\Ical_i}, (x_{\Ical_j})_{j \in \mathcal N_i} \big) \in \mathbb R^{n_i}$. Therefore, $\wh x_i$ contains the local variable $x_{\Ical_i}$ and the variables from its neighboring agents (or the so-called locally coupled variables). The locally coupled (possibly nonconvex) optimization problem is given by:
\begin{equation} \label{eqn:local_coupled_opt_formulation}
 (P): \qquad \min_{x \in \mathbb R^N} J(x) :=  \sum^n_{i=1} J_i(\wh x_i),, \qquad  \mbox{ subject to } \qquad \wh x_i \in \mathcal C_i, \quad \forall \ i=1, \ldots, n,
\end{equation}
where $J_i:\mathbb R^{n_i} \rightarrow \mathbb R $,  and $\mathcal C_i \subseteq \mathbb R^{n_i}$ is a closed set  for each $i$. When $(P)$ is a convex optimization problem,  the paper \cite{HuXiaoLiu_CDC18} formulates $(P)$ as a separable consensus convex optimization problem by
introducing copies of locally coupled variables for each agent and imposing certain consensus constraints on these copies; see \cite{HuXiaoLiu_CDC18} for details and \cite{ShenEswarDu_Arx20} for its application to CAV platooning control under linear vehicle dynamics.

%
%is an extended-valued, proper, and lower semicontinuous convex function for each $i$.
%
%Clearly, each $J_i$ is locally coupled such that the problem $(P)$ bears the name of ``locally %coupled convex optimization''.
%
%Although the problem $(P)$ is seemingly unconstrained, it does include constrained convex %optimization since $J_i$ may contain the indicator function of a closed convex set. To impose the %locally coupled convex constraint explicitly, the problem $(P)$ can be equivalently written as:
%\begin{equation} \label{eqn:local_coupled_opt_formulation}
%  (P'): \qquad \min_{x \in \mathbb R^N} \sum^n_{i=1} \wh J_i(\wh x_i), \quad \mbox{subject to } %\quad \wh x_i \in \mathcal C_i, \quad \forall \ i=1, \ldots, n,
%\end{equation}
%where for each $i$, $\wh J_i:\mathbb R^{n_i} \rightarrow \mathbb R$ is a real-valued convex %function, and $\mathcal C_i \subseteq \mathbb R^{n_i}$ is a closed convex set.
%

%By introducing copies of the locally coupled variables for each agent and imposing certain %consensus constraints on these copies, the paper \cite{HuXiaoLiu_CDC18} formulates the problem %$(P')$ (or equivalently $(P)$) as a separable consensus convex optimization problems. Under %suitable assumptions, Douglas-Rachford and other operator splitting based distributed schemes are %developed; details can be found in \cite{HuXiaoLiu_CDC18}.
%

The framework of a locally coupled optimization problem requires that both its objective function and constraints are expressed in a locally coupled manner satisfying the communication network topology constraint. However, the objective function in the underlying MPC optimization problem  (\ref{eq:MPC_opt_model}) is densely coupled. As indicated in \cite[Section 4]{ShenEswarDu_Arx20} for convex optimization, this difficulty can be overcome by using certain matrix decomposition techniques. Specifically, it is shown in \cite[Lemma 4.1]{ShenEswarDu_Arx20} that under the assumption $\bf A.3$, the PSD or PD matrix $W\in \mathbb R^{np \times np}$
 in (\ref{eq:MPC_opt_model}) can be decomposed as $W= \sum^n_{s=1} \wt W^s$, where all $\wt W^s \in \mathbb R^{np \times np}$ are PSD and satisfy  the following conditions:
%\begin{itemize}
%   \item [(i)]
  %% \begin{linenomath}
   \[
      \wt W^1 \, = \, \begin{bmatrix} (\wt W^1)_{1,1} & (\wt W^1)_{1,2} &  & &  \\
     (\wt W^1)_{2, 1} & (\wt W^1)_{2, 2} &  & &  \\
     & &   0 & \cdots & 0 &  \\
      & &  \vdots  &  \cdots & \vdots & \\
     &  &  0 & \cdots & 0
    \end{bmatrix}, \quad
    \wt W^n \, = \, \begin{bmatrix}
       0 & \cdots & 0 &  & & \\
      \vdots  &  \cdots & \vdots & & & \\
       0 & \cdots & 0 & & \\
    &  & & (\wt W^n)_{n-1,n-1} & (\wt W^n)_{n-1,n}   \\
    &  & & (\wt W^n)_{n, n-1} & (\wt W^n)_{n, n}  \\
    \end{bmatrix},
   \]
%
  % \item [(ii)]
%
 and  for each $s=2, \ldots, n-1$,
  %% \begin{linenomath}
     \[
        \wt W^s \, = \, \begin{bmatrix}
         {\mathbf 0}_{(i-2)p\times (i-2)p}  & & & & \\
         %  0 & \cdots & 0 &   & & & & \\
         %  \vdots  &  \cdots & \vdots & & & & & \\
         %  \underbrace{0 & \cdots & 0}_{(i-2)p-\mbox{copies}} & & & & \\
%
      &     (\wt W^s)_{s-1,s-1} & (\wt W^s)_{s-1,s} & 0 & &  \\
      &   (\wt W^s)_{s, s-1} & (\wt W^s)_{s, s} & (\wt W^s)_{s, s+1} & &  \\
      &   0 & (\wt W^s)_{s+1, s} & (\wt W^s)_{s+1, s+1} &  & &  \\
     & & &  &  0 & \cdots & 0 &  \\
     &  & & & \vdots  &  \cdots & \vdots & \\
     & &  &  & 0 & \cdots & 0
    \end{bmatrix}.
     \]
 To simplify the notation, we let $\wh W^s$ denote the possibly nonzero block in each $\wt W^s$, namely,
%%\begin{linenomath}
\[
  \wh W^1 := \begin{bmatrix} (\wt W^1)_{1,1} & (\wt W^1)_{1,2} \\ (\wt W^1)_{2, 1} & (\wt W^1)_{2, 2} \end{bmatrix} \in \mathbb R^{2p \times 2p}, \qquad \wh W^n :=  \begin{bmatrix} (\wt W^n)_{n-1,n-1} & (\wt W^n)_{n-1,n} \\ (\wt W^n)_{n, n-1} & (\wt W^n)_{n, n} \end{bmatrix} \in \mathbb R^{2p \times 2p},
\]
%%\end{linenomath}
and for each $s=2, \ldots, n-1$,
%%\begin{linenomath}
\[
  \wh W^s := \begin{bmatrix} (\wt W^s)_{s-1,s-1} & (\wt W^s)_{s-1,s} & 0 \\  (\wt W^s)_{s, s-1} & (\wt W^s)_{s, s} & (\wt W^s)_{s, s+1} \\  0 & (\wt W^s)_{s+1, s} & (\wt W^s)_{s+1, s+1} \end{bmatrix} \in \mathbb R^{3 p\times 3p}.
\]
When $W$ is PD, it is shown in \cite[Lemma 4.1]{ShenEswarDu_Arx20} that there exist $\wt W^s$'s such that each $\wh W^s$ in the above decomposition is PD.

Since $\ol Q_w$ is diagonal and PD, it follows from the similar argument in \cite[Lemma 4.1]{ShenEswarDu_Arx20} that the PD matrix $\Psi \in \mathbb R^{np \times np}$ can be decomposed in the similarly way. Specifically, there exist matrices $\wt \Psi^s$ such that $\Psi = \sum^n_{s=1} \wt \Psi^s$, where $\wt \Psi^s$'s satisfy the abovementioned conditions with $\wt W^s$ (resp. $\wh W^s$) replaced by $\wt \Psi^s$ (resp. $\wh \Psi^s$). By setting $\gamma \equiv 0$ in (\ref{eq:MPC_opt_model}) without losing generality, the objective function $J(\ubld)$ in (\ref{eq:MPC_opt_model}) can be decomposed as
\[
  J(\mathbf u) \, = \,  J_1(\ubld_1, \ubld_2) + \sum^{n-1}_{i=2} J_i(\ubld_{i-1}, \ubld_i, \ubld_{i+1}) + J_n(\ubld_{n-1}, \ubld_n),
\]
%%\end{linenomath}
where the functions $J_i$'s on the right hand side are given by
%%\begin{linenomath}
\begin{eqnarray}
   J_1(\ubld_1, \ubld_2) & := & \frac{1}{2} \begin{bmatrix} \abld^T_1(\ubld_1) & \abld^T_2(\ubld_2) \end{bmatrix} \Big(\wh W^1 - \tau^2 \wh \Psi^1 \Big)  \begin{bmatrix} \abld_1(\ubld_1) \\ \abld_2(\ubld_2) \end{bmatrix} + c^T_{\Ical_1} \abld_1(\ubld_1) \notag \\
   & &  \quad + \ \frac{\tau^2}{2}
   \begin{bmatrix} \ubld^T_{1} & \ubld^T_2 \end{bmatrix} \wh \Psi^1 \begin{bmatrix} \ubld_{1} \\ \ubld_2 \end{bmatrix}, \notag \\
   J_i (\ubld_{i-1}, \ubld_{i}, \ubld_{i+1}) & := & \frac{1}{2} \begin{bmatrix} \abld^T_{i-1}(\ubld_{i-1}) & \abld^T_i(\ubld_i) & \abld^T_{i+1}(\ubld_{i+1}) \end{bmatrix}
    \Big( \wh W^i -\tau^2 \wh \Psi^i \Big) \begin{bmatrix} \abld_{i-1}(\ubld_{i-1}) \\ \abld_i(\ubld_i) \\ \abld_{i+1}(\ubld_{i+1}) \end{bmatrix} + c^T_{\Ical_i} \abld_i(\ubld_i) \notag \\
   & & \quad \
  + \frac{\tau^2}{2} \begin{bmatrix} \ubld^T_{i-1} & \ubld^T_i & \ubld^T_{i+1} \end{bmatrix} \wh \Psi^i \begin{bmatrix} \ubld_{i-1} \\ \ubld_i \\ \ubld_{i+1} \end{bmatrix}, \qquad \forall \ i=2, \ldots, n-1, \label{eqn:J_i_decomposition} \\
   J_n(\ubld_{n-1}, \ubld_n) & := &  \frac{1}{2} \begin{bmatrix} \abld^T_{n-1}(\ubld_{n-1}) & \abld^T_n(\ubld_n) \end{bmatrix} \Big(\wh W^n - \tau^2 \wh \Psi^n \Big)  \begin{bmatrix} \abld_{n-1}(\ubld_{n-1}) \\ \abld_n(\ubld_n) \end{bmatrix} + c^T_{\Ical_n} \abld_n(\ubld_n) \notag \\
   & & \quad + \
   \frac{\tau^2}{2} \begin{bmatrix} \ubld^T_{n-1} & \ubld^T_n \end{bmatrix} \wh \Psi^n \begin{bmatrix} \ubld_{n-1} \\ \ubld_n \end{bmatrix}. \notag
\end{eqnarray}
In view of the assumption $\bf A.1$, the above decomposition of $J$ satisfies the communication network topology constraint.  Note that $\wh W^i - \tau^2 \wh\Psi^i$ may not be PSD or PD although $W-\tau^2 \Psi$ is PSD.

%%%\gap

\begin{remark} \rm
 Another decomposition of $J$ is as follows. Note that $V:=W-\tau^2 \Psi$ is PSD, and it can be written as $V= E^T \mathbf S^{-T}  \Phi \mathbf S^{-1} E$ for a symmetric PSD matrix $\Phi$ whose blocks are all diagonal. Therefore, the similar decomposition can be made to $V$ whose corresponding $\wh V^i$ is PSD. In view of the objective function $J$ in (\ref{eq:MPC_opt_model}), we can decompose $J$ in a similar way by replacing $\wh W^i - \tau^2 \wh\Psi^i$ in the above decomposition by $\wh V^i$.
\end{remark}

In what follows, we use the above decomposition to formulate a locally coupled optimization problem by introducing copies of local variables.
We consider the cyclic like network topology through this subsection, although the proposed formulation and schemes can be easily extended to other network topologies satisfying the assumption $\bf A.1$. In this case, $\mathcal N_1 = \{ 2\}$, $\mathcal N_{n} = \{ n-1 \}$, and $\mathcal N_{i} = \{ i-1, i+1 \}$ for $i=2, \ldots, n-1$. Hence, each $J_i$ in the decomposition of $J$ can be written as $J_i (\mathbf u_i, (\mathbf u_j)_{j \in \mathcal N_i} )$.

Recall that for each $i=1, \ldots, n$, $\mathcal X_i:=\{ \mathbf u_i \in \mathbb R^p \, | \, a_{i, \min} \mathbf 1 \le \mathbf u_i \le a_{i, \max} \mathbf 1 \}$. Further, define
\begin{eqnarray}
 \mathcal Y_i & := & \left \{ \ubld_i \in \mathbb R^p \, \big | \, v_{\min} \le v_i(k) + \tau \big( S_p \, \abld_i(\ubld_i) \big)_s \le v_{\max}, \quad \forall \, s=1, \ldots, p  \, \right\},  \label{eqn:Y_i} \\
\mathcal Z_i & := & \left\{  \, (\ubld_{i-1}, \ubld_i) \in \mathbb R^p \times \mathbb R^p \, \big | \, (H_i( \ubld_{i-1}, \ubld_i ))_s \le 0, \qquad \forall \, s=1, \ldots, p \, \right\}.  \label{eqn:Z_i}
\end{eqnarray}
As indicated before, $\mathcal Z_1$ depends  only on $\mathbf u_1$ although it is written in the above form for notational convenience.
Let  $\deltabf_S$ denote the indicator function of a closed set $S$. Define, for each $i=1, \ldots, n$,
\begin{align*}
  \wh J_{i}( \mathbf u_i, (\mathbf u_j)_{j \in \mathcal N_i}) & \, := \,   J_i (\mathbf u_i, (\mathbf u_j)_{j \in \mathcal N_i} ) + \deltabf_{\mathcal X_i}(\ubld_i) + \deltabf_{\mathcal Y_i}(\ubld_i) + \deltabf_{\mathcal Z_i}(\mathbf u_{i-1}, \mathbf u_i).
\end{align*}
For each $i=1, \ldots, n$, define $\wh{\mathbf u}_i:= \big(\mathbf u_i, (\mathbf u_{i, j} )_{j \in \mathcal N_i} \big)$, where the new variables $\mathbf u_{i, j}$ represent the predicted values of $\mathbf u_j$ of vehicle $j$ in the neighbor $\mathcal N_i$ of vehicle $i$, and let $\wh {\mathbf u} := ( \wh{\mathbf u}_1, \ldots, \wh{\mathbf u}_n) \in \mathbb R^{N}$.
 Define the consensus subspace
%%\begin{linenomath}
\[
   \mathcal A \, := \, \Big\{ \, \wh{\mathbf u} \in \mathbb R^{N} \, \big | \,  \mathbf u_{i, j} = \mathbf u_j, \ \forall \, (i, j) \in  \Ecal \, \Big\}.
\]
%%\end{linenomath}
Then the underlying optimization problem  (\ref{eq:MPC_opt_model}) can be equivalently written as the following locally coupled optimization problem:
\begin{equation} \label{eqn:MPC_model_locally_coupled}
\min_{\wh{\mathbf u} } \ \ \sum^n_{i=1} \wh J_{i}(\wh{\mathbf u}_i), \quad \mbox{ subject to } \quad \wh{\mathbf u} \in \mathcal A.
\end{equation}
In the above formulation, the functions $\wh J_i$'s are decoupled, and the consensus constraint $\mathcal A$ gives rise to the only coupling in this formulation.

%--------------------------------------------------------------------
%
\subsection{Sequential Convex Programming and Operator Splitting Method based Fully Distributed Algorithms for the MPC Optimization Problem} \label{subsect:SCP_scheme}

When the MPC horizon $p=1$, the underlying MPC optimization problem (\ref{eq:MPC_opt_model}) or (\ref{eqn:MPC_model_locally_coupled}) is a convex  quadratically constrained quadratic program (QCQP), for which the fully distributed schemes developed in \cite{ShenEswarDu_Arx20} can be applied. We consider $p>1$ from now on. In this case,
the underlying MPC optimization problem (\ref{eq:MPC_opt_model}) or (\ref{eqn:MPC_model_locally_coupled})
yields a non-convex minimization problem whose objective function and constraints are non-convex, whereas the coefficients $c_{2, i}>0$ and $c_{3, i}>0$ defining the nonlinearities are small. Therefore, it is expected that an optimal solution under the nonlinear vehicle dynamics is ``close'' to that under the linear vehicle dynamics.
The latter solution, which can be obtained using fully distributed schemes \cite{ShenEswarDu_Arx20},
may be used as an initial guess for a distributed  scheme for the nonlinear vehicle dynamics.
 We formally discuss this observation as follows.
%

%----------------------------------------------------------------------------------
%
%\subsubsection{Perturbation Analysis of Some Optimization Problems (October, 2019)}

The general theory of perturbed optimization can be found in the monograph \cite{BonnasShapiro_book00}; %%%\cite{BonnasShapiro_book00}.
we consider a special case here.
Let $f:\mathbb R^n \times \mathbb R^q \rightarrow \mathbb R$ and $g_i:\mathbb R^n \times \mathbb R^q \rightarrow \mathbb R$ with $i=1, \ldots, m$ be all continuous functions. Let $\Omega \subset \mathbb R^n$ be a compact set, and  $\Theta \subseteq \mathbb R^q$ be a set of parameter vectors that contain the zero vector. Fix a parameter vector $\theta \in \Theta$, and define the parameter dependent constraint set
\[
   \mathcal W_\theta \, := \, \big \{ x \in \mathbb R^n \, \big| \, g_i(x, \theta) \le 0, \ \forall \, i=1, \ldots, m \big \}.
\]
We assume that for each parameter vector $\theta \in \Theta$, the set $\Omega \cap \mathcal W_\theta$ is nonempty. Since $g_i(\cdot, \theta)$ is continuous for a given $\theta$, $\Omega \cap \mathcal W_\theta$ is a nonempty compact set such that for a fixed $\theta \in \Theta$, the minimization problem
\[
P_\theta: \quad \min_{x \in \Omega \cap \mathcal W_\theta} f(x, \theta)
\]
 has a nonempty closed solution set denoted by $\mathcal S_\theta$.

For each $x \in \Omega\cap \W_0$, define the index set $\Ical(x):= \{ i \ | \ g_i(x, 0)=0 \}\subseteq \{1, \ldots, m\}$, which corresponds to the  index set of active inequality constraints. We introduce the following assumption on $\Omega \cap \W_0$.
\begin{itemize}
  \item [$\bf A.4$] For any $x^\diamond \in \Omega\cap \Wcal_0$ whose corresponding $\Ical(x^\diamond)$ is nonempty, there exists a sequence $(w^\ell)$ in $\Omega\cap \W_0$ such that: (i) for each $\ell$, $g_i(w^\ell, 0) < 0$ for all $i=1, \ldots, m$; and (ii) $(w^\ell)$ converges to $x^\diamond$.
\end{itemize}

%
%{\bf A.4}: For any $x^\diamond \in \Omega\cap \Wcal_0$ whose corresponding $\Ical(x^\diamond)$ is %nonempty, there exists a sequence $(w^\ell)$ in $\Omega\cap \W_0$ such that: (i) for each $\ell$, %$g_i(w^\ell, 0) < 0$ for all $i=1, \ldots, m$; and (ii) $(w^\ell)$ converges to $x^\diamond$.
%

The following lemma presents a sufficient condition related to the Slater's condition for $\bf A.4$ to hold.

\begin{lemma} \label{lem:condition_for_A4}
   Suppose each $g_i(\cdot, 0)$ is a convex function, $\Omega \cap \Wcal_0$ is a convex compact set, and there exists $z \in \Omega \cap \Wcal_0$ such that $g_i(z, 0) <0$ for all $i=1, \ldots, m$. Then $\bf A.4$ holds.
\end{lemma}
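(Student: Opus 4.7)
The plan is to use a classical Slater-type construction: take a convex combination of the given interior point $z$ with the boundary point $x^\diamond$, and let the weight on $z$ shrink to zero. This should directly produce the desired approximating sequence.

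More specifically, fix $x^\diamond \in \Omega \cap \mathcal{W}_0$ with $\mathcal{I}(x^\diamond)$ nonempty, and define $w^\ell := (1 - \tfrac{1}{\ell}) x^\diamond + \tfrac{1}{\ell} z$ for $\ell \in \mathbb{N}$. Since $\Omega \cap \mathcal{W}_0$ is convex and contains both $x^\diamond$ and $z$, each $w^\ell$ lies in $\Omega \cap \mathcal{W}_0$, and trivially $w^\ell \to x^\diamond$, which handles condition (ii).

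For condition (i), I would split on whether the index $i$ is active at $x^\diamond$ or not. By convexity of $g_i(\cdot, 0)$,
\[
   g_i(w^\ell, 0) \, \le \, \big(1 - \tfrac{1}{\ell}\big) g_i(x^\diamond, 0) + \tfrac{1}{\ell}\, g_i(z, 0).
\]
For $i \in \mathcal{I}(x^\diamond)$, the first term vanishes and the Slater hypothesis gives $g_i(z,0)<0$, so $g_i(w^\ell,0) \le \tfrac{1}{\ell} g_i(z,0) < 0$. For $i \notin \mathcal{I}(x^\diamond)$, both $g_i(x^\diamond,0)<0$ and $g_i(z,0)<0$, and the right-hand side is a convex combination of two strictly negative numbers, hence itself strictly negative. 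Either way $g_i(w^\ell,0)<0$ for every $i$, which establishes $\bf A.4$.

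There is no serious obstacle here; the only subtlety is remembering to handle the inactive indices separately (for them one uses strict feasibility of $x^\diamond$ itself rather than of $z$), but this is immediate since $x^\diamond \in \mathcal{W}_0$ forces $g_i(x^\diamond,0) \le 0$ and non-activeness upgrades this to strict negativity.
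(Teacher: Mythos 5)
Your proof is correct and is essentially the paper's own argument: both move along the segment from $x^\diamond$ toward the Slater point $z$ inside the convex set $\Omega \cap \Wcal_0$ and invoke convexity of $g_i(\cdot,0)$. The only difference is cosmetic: your active/inactive case split is unnecessary, since feasibility gives $g_i(x^\diamond,0)\le 0$ for \emph{every} $i$, so the single estimate $g_i(w^\ell,0)\le \big(1-\tfrac{1}{\ell}\big)g_i(x^\diamond,0)+\tfrac{1}{\ell}g_i(z,0)\le \tfrac{1}{\ell}g_i(z,0)<0$ handles all indices uniformly, which is exactly how the paper concludes.
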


\begin{proof}
 Let $z\in \Omega \cap \Wcal_0$ be such that $g_i(z, 0) < 0$ for each $i=1, \ldots, m$,
  and consider $x \in \Omega \cap \Wcal_0$ whose index set $\Ical(x)$ is nonempty. Therefore, for any $\lambda \in (0, 1]$, $g_i(x + \lambda (z - x), 0) = g_i(\lambda z + (1-\lambda) x, 0 ) \le \lambda g_i(z, 0) + (1-\lambda) g_i(x, 0) \le  \lambda g_i(z, 0)<0$ for each $i=1, \ldots, m$. Therefore, $\bf A.4$ holds.
\end{proof}

\begin{proposition} \label{prop:solution_proximal}
  Suppose $P_0$ has the unique minimizer $x_*$, i.e., $\mathcal S_0=\{ x_*\}$. Then under the abovementioned assumptions (including $\bf A.4$), for any $\varepsilon>0$, there exists $\eta>0$ such that for all $\theta \in \Theta$ with $\| \theta \| \le \eta$, $\sup_{z \in \mathcal S_\theta} \| z - x_* \| < \varepsilon$.
  %% for any $x^* \in S_0$.
\end{proposition}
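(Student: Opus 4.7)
The plan is to argue by contradiction, leveraging the compactness of $\Omega$ and the continuity of $f$ and the $g_i$, together with assumption $\bf A.4$ to carry feasibility information from $P_0$ to nearby problems $P_\theta$. Specifically, suppose the conclusion fails: then there exist $\varepsilon_0>0$, a sequence $\theta_k \in \Theta$ with $\theta_k \to 0$, and $z_k \in \mathcal S_{\theta_k}$ such that $\|z_k - x_*\| \ge \varepsilon_0$ for all $k$. Since $z_k \in \Omega$ and $\Omega$ is compact, I extract a convergent subsequence (still indexed by $k$) with $z_k \to \bar z \in \Omega$, and necessarily $\|\bar z - x_*\| \ge \varepsilon_0$.

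The first milestone is to show $\bar z \in \Omega \cap \Wcal_0$, i.e., that $\bar z$ is feasible for the unperturbed problem. This is immediate from continuity of each $g_i$: $g_i(z_k,\theta_k) \le 0$ together with $(z_k,\theta_k) \to (\bar z, 0)$ yields $g_i(\bar z, 0) \le 0$ for every $i$. The second and main milestone is to show that $\bar z$ achieves the optimal value of $P_0$, i.e., $f(\bar z, 0) \le f(x, 0)$ for every $x \in \Omega \cap \Wcal_0$; uniqueness of the minimizer of $P_0$ will then force $\bar z = x_*$, contradicting $\|\bar z - x_*\| \ge \varepsilon_0$.

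To obtain the comparison $f(\bar z,0) \le f(x,0)$, I split on whether $x$ is strictly feasible for $P_0$. If $\Ical(x) = \emptyset$, then $g_i(x,0) < 0$ for every $i$, so by continuity of the $g_i$ in their second argument, $g_i(x,\theta_k) < 0$ for all sufficiently large $k$, so $x \in \Omega \cap \Wcal_{\theta_k}$. Optimality of $z_k$ for $P_{\theta_k}$ then gives $f(z_k,\theta_k) \le f(x,\theta_k)$, and letting $k\to\infty$ using continuity of $f$ yields $f(\bar z,0) \le f(x,0)$. If instead $\Ical(x) \neq \emptyset$, assumption $\bf A.4$ provides a sequence $w^\ell \in \Omega \cap \Wcal_0$ with strictly feasible $w^\ell$ and $w^\ell \to x$; the previous case delivers $f(\bar z,0) \le f(w^\ell,0)$, and continuity of $f(\cdot, 0)$ at $x$ yields $f(\bar z,0) \le f(x,0)$ upon sending $\ell \to \infty$.

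The main obstacle I anticipate is precisely the second case above: boundary points of $\Omega \cap \Wcal_0$ need not be feasible for the perturbed problems $P_{\theta_k}$, so one cannot directly use them as test points against $z_k$. Assumption $\bf A.4$ is exactly the interior-approximation hypothesis designed to circumvent this, and its invocation is the only subtle step; the rest of the argument is a standard compactness-plus-continuity upper semicontinuity argument for the solution map of a parametric program.
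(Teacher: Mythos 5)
Your proposal is correct and follows essentially the same argument as the paper: contradiction via a sequence $z^k \in \mathcal S_{\theta^k}$ with $\theta^k \to 0$, compactness of $\Omega$ to extract a limit $\bar z$, continuity to show $\bar z \in \Omega \cap \Wcal_0$, and a case split on $\Ical(x)$ with assumption $\bf A.4$ supplying strictly feasible approximants in the active-constraint case, forcing $\bar z = x_*$ by uniqueness. The only (cosmetic) difference is in the second case: the paper builds a diagonal subsequence $(\theta^{s_\ell})$ along which each $w^\ell$ is feasible and passes to the limit once, whereas you apply the strictly-feasible case to each fixed $w^\ell$ (sending $k \to \infty$ first) and then let $\ell \to \infty$ — an equivalent iterated-limit bookkeeping.
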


\begin{proof}
 Suppose not. Then there exist $\varepsilon_0>0$ and a sequence $(\theta^k)$ in $\Theta$ with $\| \theta^k \| \rightarrow 0$ such that for each $k$, there exist $z^k \in \mathcal S_{\theta^k}$ with $\| z^k - x_{*} \| \ge \varepsilon_0$.
%
% Note that $S_0$ is a compact set such that $(x^{*, k})$ has a convergent subsequence whose limit $x^* \in %S_0$.
%
 Since $z^k$ belongs to the compact set $\Omega$, $(z^k)$ has a convergent subsequence whose limit $z^* \in \Omega$ satisfies $\| z^* - x_* \| \ge \varepsilon$. Without loss of generality, we may assume that this subsequences is $(z^k)$ itself.
%
% Therefore,  we have $\| z^k - x^{*} \| \ge \varepsilon_0$ for all large $k$.
%
  Since $g_i(z^k, \theta^k) \le 0$ for all $i=1, \ldots, m$, it follows from the continuity of each $g_i (\cdot, \cdot)$ that $g_i(z^*, 0) \le 0$. Hence, $z^* \in \Omega \cap \Wcal_0$. Consider a fixed but arbitrary $x \in \Omega \cap \Wcal_0$. Hence, either $\Ical(x)$ is empty or $\Ical(x)$ is nonempty. For the former case, we deduce via the continuity of $g_i(x, \cdot)$ that $g_i(x, \theta^k) < 0, i=1, \ldots, m$ for all large $k$.  Hence, $x \in \Omega \cap \Wcal_{\theta^k}$ for all large $k$. This shows that $f(x, \theta^k) \ge f(z^k, \theta^k)$ for all large $k$, and therefore $f(x, 0) \ge f(z^*, 0)$.
 For the latter case, it follows from the assumption $\bf A.4$ on $\Omega \cap \Wcal_0$ that there exists a sequence $(w^\ell)$  in $\Omega\cap \W_0$ which converge to $x$ such that $g_i(w^\ell, 0) < 0$ for all $\ell$ and all $i=1, \ldots, m$ . By the continuity of $g_i$'s and the fact that $(\theta^k) \rightarrow 0$, we see that for $\ell=1$, there exists an index $s_1$ such that $g_i(w^1, \theta^{s_1}) < 0$ for all $i=1, \ldots, m$. Then for $\ell=2$, there exists an index $s_2$ with $s_2>s_1$ such that $g_i(w^2, \theta^{s_2}) < 0$ for all $i=1, \ldots, m$.
 Continuing this process, we obtain a strictly increasing index sequence $(s_\ell)$ such that
 for each $\ell$, $g_i(w^\ell, \theta^{s_\ell}) < 0$ for all $i=1, \ldots, m$.
%
% for each $\ell$, there exists $\ell_k$ with $\ell_1 < \ell_2 < \cdots < \ell_k <  \ell_{k+1} < %\cdots$ such that
% $g_i(w^\ell, \theta^{\ell_k}) < 0$ for all $i=1, \ldots, m$.
%
 Hence, each $w^\ell \in \Omega \cap \Wcal_{\theta^{s_\ell}}$ such that $f(w^\ell, \theta^{s_\ell})  \ge f(z^{s_\ell}, \theta^{s_\ell})$. Since $(\theta^{s_\ell})$ is a subsequence of $(\theta^k)$ and $(z^{s_\ell})$ is a subsequence of $(z^k)$, we have that $\theta^{s_\ell} \rightarrow 0$ and $z^{s_\ell} \rightarrow z^*$. This leads to $f(x, 0) \ge f(z^*, 0)$. Consequently, $f(x, 0) \ge f(z^*, 0)$ for all $x \in \Omega \cap \Wcal_0$. This implies that $z^*$ is a minimizer of $P_0$. Since $x_*$ is the unique minimizer of $P_0$, we must have $z^*=x_*$, yielding a contradiction to $\| z^* - x_* \| \ge \varepsilon_0$.
\end{proof}

We apply this proposition to the optimization problem (\ref{eq:MPC_opt_model}).
Recall that the parameter vector $\varphibf=(\varphibf_d, \varphibf_f)=(c_{2,i}, c_{3,i})^n_{i=1} \in \mathbb R^{2n}_+$. To emphasize the dependence of the objective function $J$ on $\varphibf$, we write it as $J(\ubld, \varphibf)$ by abusing the notation. Further, the constraints in (\ref{eq:MPC_opt_model}) can be written as $\mathcal X \cap \mathcal Y \cap \mathcal Z$, where $\mathcal X = \mathcal X_1 \times \cdots \times \mathcal X_n$ is a convex and compact set, and $\mathcal Y \cap \mathcal Z = \{ \ubld \, | \, g_i(\ubld, \varphibf) \le 0, \, i=1, \ldots, m \}$ for some real-valued functions $g_i$, which also depend on $\varphibf$. It is shown in \cite{ShenEswarDu_Arx20} that when $\varphibf =0$, $J(\ubld, 0)$ is a strongly convex quadratic function, and each $g_i(\ubld, 0)$ is an affine or a convex quadratic function. Hence, when $\varphibf =0$,  (\ref{eq:MPC_opt_model}) becomes a convex optimization problem which attains a unique optimal solution $\ubld_{*,0}$. Further, when $r_i \ge \tau$ for all $i$ and $v_0(k)> v_{\min}$, this convex optimization problem has non-empty interior \cite[Corollary 3.1]{ShenEswarDu_Arx20} such that $\bf A.4$ holds by Lemma~\ref{lem:condition_for_A4}. Therefore, letting $\mathcal S_{\varphibf}$ denote the solution set of (\ref{eq:MPC_opt_model}) corresponding to the parameter vector $\varphibf$, we obtain the following corollary from Proposition~\ref{prop:solution_proximal}.

\begin{corollary} \label{coro:optimal_solution_closeness}
 Consider the optimization problem (\ref{eq:MPC_opt_model}) with the parameter vector $\varphibf \in \mathbb R^{2n}_+$ at time $k$. Suppose $r_i \ge \tau$ for all $i$ and $v_0(k)> v_{\min}$. Then
 for any $\varepsilon>0$, there exists $\eta>0$ such that for all $\varphibf \in \mathbb R^{2n}_+$ with $\| \varphibf \| \le \eta$, $\sup_{\ubld \in \mathcal S_{\varphibf} } \| \ubld - \ubld_{*,0} \| < \varepsilon$.
\end{corollary}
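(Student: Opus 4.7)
The plan is to view the corollary as a direct instantiation of Proposition~\ref{prop:solution_proximal} applied to the parametrized family of optimization problems (\ref{eq:MPC_opt_model}), with parameter vector $\varphibf \in \mathbb R^{2n}_+$ (so $\theta \equiv \varphibf$ and $q = 2n$) and the ambient compact set taken to be $\Omega := \mathcal X = \mathcal X_1 \times \cdots \times \mathcal X_n$, which is a product of compact boxes and hence compact. The parameter-dependent constraint set $\mathcal W_\varphibf$ is taken to be $\mathcal Y \cap \mathcal Z$, expressed through the finite family of inequalities $g_i(\ubld, \varphibf) \le 0$ coming from the speed constraints in (\ref{eqn:Y_i}) and the safety distance constraints in (\ref{eqn:Z_i}). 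My first step is to record the following properties that make Proposition~\ref{prop:solution_proximal} applicable: (i) $J(\cdot, \cdot)$ and each $g_i(\cdot, \cdot)$ are jointly continuous in $(\ubld, \varphibf)$, as they are polynomial in both arguments; (ii) for every feasible $\varphibf \in \mathbb R^{2n}_+$ (in particular all sufficiently small $\varphibf$), Proposition~\ref{prop:seq_feasibility_nonlinear} together with the hypothesis $r_i \ge \tau$ guarantees that $\Omega \cap \mathcal W_\varphibf$ is nonempty; hence the solution set $\mathcal S_\varphibf$ is a nonempty compact set.

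The second step is to verify the two structural hypotheses of Proposition~\ref{prop:solution_proximal} at the base point $\varphibf = 0$. On the one hand, when $\varphibf = 0$ the function $\abld(\ubld)$ reduces to $\ubld$, so $J(\ubld, 0) = \tfrac{1}{2} \ubld^T W \ubld + c^T \ubld + \gamma$ is a strongly convex quadratic (by the PD property of $W$ recalled from \cite[Lemma 3.1]{ShenEswarDu_Arx20}); moreover each $g_i(\ubld, 0)$ is either affine (speed constraint) or convex quadratic (safety constraint, since the quadratic term $-(v_i-v_{\min})^2/(2a_{i,\min})$ has a positive leading coefficient because $a_{i,\min}<0$). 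Consequently, the limiting problem $P_0$ is a strongly convex program on the convex compact set $\Omega \cap \mathcal W_0$ and therefore has a unique minimizer $\ubld_{*,0}$, giving $\mathcal S_0 = \{\ubld_{*,0}\}$ as required.

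The third step is to verify assumption $\bf A.4$ at $\varphibf = 0$. Here I would apply Lemma~\ref{lem:condition_for_A4}: since $\Omega \cap \mathcal W_0$ is convex and compact and each $g_i(\cdot, 0)$ is convex, it suffices to exhibit a Slater point, i.e.\ some $z \in \Omega \cap \mathcal W_0$ with $g_i(z, 0) < 0$ for all $i$. This Slater point is supplied by Corollary~\ref{coro:nonempty_interior_general_p}, whose hypotheses are met under the standing assumption $\bf A.2$ together with the extra hypothesis $v_0(k) > v_{\min}$ imposed in the corollary. (Note that the $\varphibf = 0$ case reduces to the linear dynamics setting and the non-emptiness of the interior proved there carries over, as also observed in the paragraph preceding the statement.) Hence $\bf A.4$ holds.

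With these three ingredients in hand, Proposition~\ref{prop:solution_proximal} applies verbatim: for every $\varepsilon > 0$ there exists $\eta > 0$ such that $\sup_{\ubld \in \mathcal S_\varphibf} \| \ubld - \ubld_{*,0} \| < \varepsilon$ whenever $\| \varphibf \| \le \eta$, which is the claim. I do not anticipate a serious obstacle here because the content of the corollary is essentially a bookkeeping exercise; the only mildly delicate point is making precise the identification of $g_i(\cdot, 0)$ as either affine or convex quadratic (in order to invoke Lemma~\ref{lem:condition_for_A4}), and confirming that the Slater point from Corollary~\ref{coro:nonempty_interior_general_p} indeed gives strict inequality in all the $g_i(\cdot, 0) \le 0$ constraints (not only in the safety constraints but also in the two-sided speed constraints), which follows because the interior point constructed in the proof of Proposition~\ref{prop:interior_nonlin_dynamics} satisfies $v_{\min} < v_i + \tau a_i(\wh u_i) < v_{\max}$ strictly.
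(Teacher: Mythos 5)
Your proposal is correct and follows essentially the same route as the paper, which obtains the corollary by instantiating Proposition~\ref{prop:solution_proximal} with $\Omega=\mathcal X$ and $\mathcal W_{\varphibf}=\mathcal Y\cap\mathcal Z$, using the strong convexity of $J(\cdot,0)$ and the affine/convex-quadratic structure of the $g_i(\cdot,0)$ to get the unique minimizer $\ubld_{*,0}$, and verifying $\bf A.4$ via Lemma~\ref{lem:condition_for_A4} together with the nonempty interior of the constraint set (Corollary~\ref{coro:nonempty_interior_general_p}) under $r_i\ge\tau$ and $v_0(k)>v_{\min}$. Your write-up simply spells out the same ingredients (joint continuity, nonemptiness of $\mathcal S_{\varphibf}$ for small $\varphibf$, strictness of the speed and safety inequalities at the Slater point) in more detail than the paper's one-paragraph derivation.
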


\gap

%
%To develop a fully distributed scheme for the non-convex and coupled optimization problem, we %consider the sequential convex programming method
%
To solve the coupled non-convex  optimization problem (\ref{eq:MPC_opt_model}) with $\varphibf \ne 0$, we exploit the sequential convex programming (SCP) method \cite{ZSLu_Technote2013}. To be self-contained, we provide a brief description of the SCP method for an important special case as follows.
%
%Toward this end, we focus on a special case of the SCP method as follows.
%
%Toward this end, we introduce more concepts and notation.
%
Consider the nonlinear program
\begin{equation} \label{eqn:opt_model_SCP}
 (P'): \quad \min_{x \in \mathbb R^n} f(x) \quad \mbox{ subject to} \quad  x \in \mathcal P, \ \ \ g_i(x) - r_i(x) \le 0, \quad \forall \, i=1, \ldots, \ell,
\end{equation}
where $\Pcal \subseteq \mathbb R^n$ is a closed convex set, $f$ and each $g_i$ are $C^1$ (but not necessarily convex) functions,
and each $r_i$ is a  convex $C^1$-function. We assume that $\nabla f$ and $\nabla g_i$ are Lipschitz on $\mathcal P$, i.e. there exist constants $L_f>0$ and $L_{g_i}>0$ such that $\| \nabla f(x) - \nabla f(x') \|_2 \le L_f \|x - x'\|_2$ and $\| \nabla g_i(x) - \nabla g_i(x') \|_2 \le L_{g_i} \|x - x'\|_2$ for all $x, x' \in \mathcal P$ and $i=1, \ldots, \ell$.
%
%The similar holds for $L_{g_i}$'s.
%
Let $\wh x$ be a feasible point of $(P')$, i.e., $\wh x \in \mathcal P'$ and $g_i(\wh x) - r_i(\wh x)\le 0, \ i=1, \ldots, \ell$. Consider an approximation of the constraint set of $(P')$ at $\wh x$:
\begin{eqnarray*}
\lefteqn{   \mathcal C(\wh x, \{ \nabla g_i(\wh x) \}^\ell_{i=1}, \{ \nabla r_i(\wh x) \}^\ell_{i=1} ) }  \\
 & := & \Big \{ z \in \mathcal P \, | \, g_i(\wh x) + \nabla g_i(\wh x)^T (z - \wh x) + \frac{L_{g_i}}{2}\| z - \wh x\|^2_2 - [ r_i(\wh x) + \nabla r_i(\wh x)^T (z - \wh x) ] \le 0, \ i=1, \ldots, \ell \Big \}.
\end{eqnarray*}
It is shown in \cite[Lemma 3.3]{ZSLu_Technote2013} that $ \mathcal C(\wh x, \{ \nabla g_i(\wh x) \}^\ell_{i=1}, \{ \nabla r_i(\wh x) \}^\ell_{i=1} )$ is a nonempty closed convex set. The following lemma provides a simple sufficient condition for the Slater's condition to hold for the approximated constraint set; this condition is useful for convergence analysis of the SCP scheme.

\begin{lemma} \label{lem:Slater_CQ_SCP}
 Given a feasible point $\wh x$ of $(P')$,  suppose $\mathcal C(\wh x, \{ \nabla g_i(\wh x) \}^\ell_{i=1}, \{ \nabla r_i(\wh x) \}^\ell_{i=1} )$ is not singleton. Then the Slater's condition holds for $\mathcal C(\wh x, \{ \nabla g_i(\wh x) \}^\ell_{i=1}, \{ \nabla r_i(\wh x) \}^\ell_{i=1} )$, i.e., there exists $\wh z \in \mathcal P$ such that $g_i(\wh x) + \nabla g_i(\wh x)^T (\wh z - \wh x) + \frac{L_{g_i}}{2}\| \wh z - \wh x\|^2_2 - [ r_i(\wh x) + \nabla r_i(\wh x)^T (\wh z - \wh x) ] < 0, \forall \, i=1, \ldots, \ell$.
\end{lemma}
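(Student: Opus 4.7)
The plan is to exploit the strict convexity contributed by the quadratic penalty terms $\frac{L_{g_i}}{2}\|z-\wh x\|_2^2$ and to produce a Slater point as a midpoint of $\wh x$ with any second point of the approximate constraint set.

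First, I would introduce the shorthand
\begin{equation*}
\phi_i(z) \, := \, g_i(\wh x) + \nabla g_i(\wh x)^T(z-\wh x) + \frac{L_{g_i}}{2}\|z-\wh x\|_2^2 - \big[\, r_i(\wh x) + \nabla r_i(\wh x)^T(z-\wh x)\,\big]
\end{equation*}
so that $\mathcal C(\wh x,\{\nabla g_i(\wh x)\}_{i=1}^{\ell},\{\nabla r_i(\wh x)\}_{i=1}^{\ell}) = \{\, z \in \mathcal P : \phi_i(z) \le 0,\ i=1,\ldots,\ell\,\}$. Evaluating at $z=\wh x$ yields $\phi_i(\wh x) = g_i(\wh x) - r_i(\wh x) \le 0$ because $\wh x$ is feasible for $(P')$, so $\wh x$ itself already lies in $\mathcal C$. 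The key structural fact I would record next is that each $\phi_i$ is \emph{strictly} convex: the only nonlinear piece is the quadratic $\frac{L_{g_i}}{2}\|z-\wh x\|_2^2$, which is strictly convex because $L_{g_i} > 0$ by the standing Lipschitz assumption.

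Since $\mathcal C$ contains $\wh x$ and is not a singleton by hypothesis, I would pick any $y \in \mathcal C$ with $y \neq \wh x$ and set $\wh z := \tfrac{1}{2}\wh x + \tfrac{1}{2}y$. Convexity of $\mathcal P$ gives $\wh z \in \mathcal P$, and strict convexity of each $\phi_i$ applied on the nondegenerate segment $[\wh x,y]$ produces
\begin{equation*}
\phi_i(\wh z) \, < \, \tfrac{1}{2}\phi_i(\wh x) + \tfrac{1}{2}\phi_i(y) \, \le \, 0, \qquad i = 1, \ldots, \ell,
\end{equation*}
which is precisely the Slater inequality in the statement of the lemma, with $\wh z$ playing the role of the strict interior point.

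The argument is essentially a one-liner once the strict-convexity observation is made; the ``not singleton'' hypothesis is used in exactly one spot, namely to supply the direction $y - \wh x \neq 0$ along which the convex-combination inequality becomes strict. There is no serious obstacle: the quadratic regularizer $\frac{L_{g_i}}{2}\|\cdot-\wh x\|_2^2$ is precisely the device the SCP construction inserts to majorize $g_i$, and the same strict convexity that makes the majorization useful is what makes Slater's condition hold automatically.
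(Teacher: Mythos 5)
Your proof is correct, and it takes a genuinely different route from the paper's. The paper argues indirectly: it picks a second point $x' \in \mathcal C$, shows that $[\nabla g_i(\wh x) - \nabla r_i(\wh x)]^T(x'-\wh x) < 0$ for every active index $i \in \Ical(\wh x)$ (using that the quadratic term $\frac{L_{g_i}}{2}\|x'-\wh x\|_2^2$ is strictly positive), observes that $d = x'-\wh x$ lies in the tangent cone $\mathcal T_{\mathcal P}(\wh x)$, concludes that the generalized MFCQ holds, and then invokes an external result (Proposition 3.5 of the cited SCP technical note) to pass from MFCQ to Slater's condition. You instead construct the Slater point explicitly: since each $\phi_i$ is strictly convex (the only nonlinear piece being $\frac{L_{g_i}}{2}\|\cdot-\wh x\|_2^2$ with $L_{g_i}>0$, as the paper's standing assumptions guarantee), the midpoint $\wh z = \frac{1}{2}\wh x + \frac{1}{2}y$ of the nondegenerate segment joining $\wh x$ to any other point $y \in \mathcal C$ satisfies $\phi_i(\wh z) < \frac{1}{2}\phi_i(\wh x)+\frac{1}{2}\phi_i(y) \le 0$ for \emph{all} $i$, active or not, and lies in $\mathcal P$ by convexity. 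Your argument is more elementary and self-contained — it eliminates the tangent-cone/MFCQ machinery and the external citation, and it produces a concrete Slater point rather than an existence statement. The paper's route would be the one to fall back on if the regularized constraints were merely convex rather than strictly convex (e.g., if some quadratic coefficient could vanish), but under the stated hypotheses with $L_{g_i}>0$, your midpoint argument is the shorter and cleaner proof.
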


\begin{proof}
Clearly, $\wh x \in \mathcal C(\wh x, \{ \nabla g_i(\wh x) \}^\ell_{i=1}, \{ \nabla r_i(\wh x) \}^\ell_{i=1} )$. Since $\mathcal C(\wh x, \{ \nabla g_i(\wh x) \}^\ell_{i=1}, \{ \nabla r_i(\wh x) \}^\ell_{i=1} )$ is not singleton, there exists $x' \in \mathcal C(\wh x, \{ \nabla g_i(\wh x) \}^\ell_{i=1}, \{ \nabla r_i(\wh x) \}^\ell_{i=1} )$ and $x' \ne \wh x$. Let  $\Ical(\wh x):=\{ i \, | \, g_i(\wh x) - r_i(\wh x) = 0 \}$ denote the index set of  active constraints at $\wh x$ of $(P)$. Thus $x' \in \mathcal P$ and $[\nabla g_i(\wh x)- \nabla r_i(\wh x)]^T (x' - \wh x)+ \frac{L_{g_i}}{2}\| x' - \wh x\|^2_2 \le 0$ for all $i\in \Ical(\wh x)$. Since  $\frac{L_{g_i}}{2}\| x' - \wh x\|^2_2>0$ for all $i \in \Ical(\wh x)$, we have $[\nabla g_i(\wh x)- \nabla r_i(\wh x)]^T (x' - \wh x)<0$ for all $i \in \Ical(\wh x)$. Let $d:=x' - \wh x$. Since $\mathcal P$ is a closed convex set, $d \in \mathcal T_{\mathcal P}(\wh x)$ \cite[Lemma 3.13]{Ruszczynski_book06}, where $\mathcal T_{\mathcal P}(\wh x)$ denotes the tangent cone of $\mathcal P$ at $\wh x$. This shows that the (generalized) MFCQ holds. It thus follows from \cite[Proposition 3.5]{ZSLu_Technote2013} that the Slater's condition holds for $\mathcal C(\wh x, \{ \nabla g_i(\wh x) \}^\ell_{i=1}, \{ \nabla r_i(\wh x) \}^\ell_{i=1} )$.
%
% Then the MFCQ holds at $\wh x$, i.e., there exists a vector $d \in \mathcal T_{\mathcal X}(\wh %x)$ such that $[\nabla g_i(\wh x)- \nabla r_i(\wh x)]^T d <0$ for all $i \in \Ical(\wh x)$, where  %$\Ical(\wh x):=\{ i \, | \, g_i(\wh x) - r_i(\wh x) = 0 \}$ denotes the index set of the active %constraints $(P)$.
%
\end{proof}

The SCP scheme solves $(P')$ in (\ref{eqn:opt_model_SCP}) as follows \cite{ZSLu_Technote2013}: consider an approximation of the objective function $f$ for a given feasible point $\wh x$: $\wt f(z; \wh x):=f(\wh x) + [\nabla f(\wh x) ]^T(z - \wh x) + \frac{L_f}{2} \|z - \wh x \|^2_2$. Clearly, $\wt f$ is a strongly convex function in $z$. At each step, the SCP scheme solves the convex optimization problem at $x^k$ using the convex approximation $\wt f (\cdot; x^k)$ over the  approximating convex constraint set $\mathcal C(x^k, \{ \nabla g_i(x^k) \}^\ell_{i=1}, \{ \nabla r_i(x^k) \}^\ell_{i=1} )$ to generate a unique optimal solution $x^{k+1}$. It then updates the gradients $\nabla f$, $\nabla g_i$, and $\nabla r_i$ using $x^{k+1}$, and formulates another convex optimization problem and solves it again. It is shown in \cite[Theorem 3.4]{ZSLu_Technote2013} that any accumulation point of the sequence $(x^k)$ generated by the SCP scheme is a KKT point of $(P')$, provided that the accumulation point $x^*$ satisfies the Slater's condition for $\mathcal C(x^*, \{ \nabla g_i(x^*) \}^\ell_{i=1}, \{ \nabla r_i(x^*) \}^\ell_{i=1} )$.

We now apply the SCP scheme to develop a fully distributed scheme for the non-convex  MPC optimization problem (\ref{eq:MPC_opt_model}). Consider the  locally coupled formulation (\ref{eqn:MPC_model_locally_coupled}) of the MPC optimization problem (\ref{eq:MPC_opt_model}).
Recall that $\wh{\mathbf u}_i:= \big(\mathbf u_i, (\mathbf u_{i, j} )_{j \in \mathcal N_i} \big)$, and $\wh {\mathbf u} := ( \wh{\mathbf u}_1, \ldots, \wh \ubld_n)$. For each $i=1,\ldots, n$, it follows from the velocity constraint $\mathcal Y_i$ in (\ref{eqn:Y_i}) and the safety distance constraint $\mathcal Z_i$ in  (\ref{eqn:Z_i}) that there are real-vauled smooth functions $g_{i, s}$ and convex quadratic functions $r_{i, s}$ for $s=1, \ldots, 3p$ such that $\wh \ubld_i \in \mathcal Y_i \cap \mathcal Z_i$ if and only if $g_{i, s}(\wh\ubld_i) - r_{i, s}(\wh \ubld_i) \le 0$ for $s=1, \ldots, 3p$;
specific choices of $g_{i, s}$ and $r_{i, s}$ are given in Sections~\ref{subsect:approximation_functions} and \ref{subsect:test_performance}.
  In view of the real-valued objective function $J (\wh \ubld) = \sum^n_{i=1} J_i( \wh \ubld_i)$, the problem (\ref{eqn:MPC_model_locally_coupled}) becomes
\[
   \min \sum^n_{i=1} J_i( \wh \ubld_i), \quad \mbox{ subject to } \wh \ubld \in \mathcal A, \ \
   \wh\ubld_i \in \mathcal X_i, \ \ g_{i, s}(\wh\ubld_i) - r_{i, s}(\wh \ubld_i) \le 0, \ \ \forall \,   i=1,\ldots, n, \ \ s=1, \ldots, 3p.
\]
Recall that $\mathcal X = \mathcal X_1 \times \cdots \times \mathcal X_n$ is a convex compact set.
Since $\mathcal X$ is compact and $\mathcal A$ is the consensus subspace, it is easy to show that there are positive Lipschitz constants $L_{J_i}$ and $L_{g_{i, s}}$ for the gradients of $J_i$ and $g_{i, s}$ on $\mathcal A \cap \mathcal X$, i.e., for all $\wh \ubld, \wh \ubld' \in \mathcal A \cap \mathcal X$,
\begin{eqnarray*}
    \| \nabla J_i( \wh \ubld_i) -  \nabla J_i( \wh \ubld'_i) \|_2 & \le & L_{J_i} \cdot \| \wh \ubld_i - \wh \ubld'_i \|_2, \qquad \forall \ i=1, \ldots, n, \\
     \| \nabla g_{i, s}(\wh \ubld_i ) -  \nabla g_{i, s}( \wh \ubld'_i) \|_2 & \le & L_{g_{i, s}} \cdot \| \wh \ubld_i - \wh \ubld'_i \|_2, \qquad \forall \  i=1, \ldots, n, \ \ s=1, \ldots, 3p.
\end{eqnarray*}

To develop a SCP based fully distributed scheme,
we introduce more notation. Given any $\wh \ubld =(\wh\ubld_i)^n_{i=1} \in \mathcal X$ and any vectors $d_{J_i}$, $d_{g_{i, s}}$, and $d_{r_{i, s}}$ for $i=1, \ldots, n$ and $s=1, \ldots, 3p$, consider the following function as a convex approximation of the original nonconvex objective function $J$, where $y=(y_1, \ldots, y_n) \in \mathbb R^N$ with each $y_i$ being a suitable subvector of $y$:
\[
   f(y; \, \wh\ubld, \{ d_{J_i} \}^n_{i=1} ) \, := \, \sum^n_{i=1} \Big( J_i(\wh\ubld_i) + d^T_{J_i}(\wh\ubld_i) ( y_i - \wh\ubld_i) + \frac{L_{J_i}}{2} \|   y_i - \wh\ubld_i \|^2_2 \Big),
\]
and the following sets as convex approximations of the original nonconvex constraint sets $\mathcal Y \cap \mathcal Z$:
\begin{eqnarray*}
  \lefteqn{ \mathcal C\big(\wh \ubld, \{ d_{g_{i, s}}, d_{r_{i, s}}, \, i=1, \ldots, n, \, s=1, \ldots, 3p  \} \, \big) } \\
   & := & \Big\{ \, y \in \mathcal X \, | \  g_{i, s}(\wh \ubld_i) + d^T_{g_{i, s}} (y_i - \wh\ubld_i) + \frac{L_{g_{i, s}}}{2} \| y_i - \wh \ubld_i \|^2_2  \\
    & & \qquad \qquad \qquad - \big[ r_{i, s}(\wh \ubld_i) + d^T_{r_{i, s}} ( y_i - \wh \ubld_i) \big] \le 0, \ \ \ i=1, \ldots, n, \ \ s=1, \ldots, 3p \ \Big\},
\end{eqnarray*}
Clearly, $f$ is a strongly convex quadratic function in $y$ and decoupled in $y_i$'s, and the convex set  $\mathcal C\big(\wh \ubld, \{ d_{g_{i, s}}, d_{r_{i, s}}, \, i=1, \ldots, n, \, s=1, \ldots, p  \} \big)$ is the Cartesian product of $\mathcal C_i$'s for $i=1, \ldots, n$, where each
\begin{eqnarray*}
 \mathcal C_{i}\big(\wh \ubld_i, \{ d_{g_{i, s}} \}^{3 p}_{s=1}, \{ d_{r_{i, s}} \}^{3p}_{s=1} \big)  & := & \Big\{ y_i \in \mathcal X_i \ | \ g_{i, s}(\wh \ubld_i) + d^T_{g_{i, s}} (y_i - \wh\ubld_i) + \frac{L_{g_{i, s}}}{2} \| y_i - \wh \ubld_i \|^2_2  \\
    & & \qquad \qquad \qquad - \big[ r_{i, s}(\wh \ubld_i) + d^T_{r_{i, s}} ( y_i - \wh \ubld_i) \big] \le 0,  \ \, s=1, \ldots, 3p \Big\}.
\end{eqnarray*}
%
%Then  it is observed that for each $q=2, 3$,
%$\mathcal C_q\big(\wh \ubld, \{ d_{(g^{\NL}_{q, i})_s}\}^{(n, p)}_{(i, s)=(1, 1)}, \{ d_{(r_{q, %i})_s} \}^{(n, p)}_{(i, s)=(1, 1)} \big)$ is the Cartesian product of $ \mathcal C_{q, i}\big(\wh %\ubld_i, \{ d_{(g^{\NL}_{q, i})_s}\}^{p}_{s=1}, \{ d_{(r_{q, i})_s} \}^p_{s=1} \big), i=1, %\ldots, n$.
%

Using the above notation, the iterative scheme of the SCP method is: for a  feasible initial guess $\wh \ubld^0$,
\begin{eqnarray}
   \wh \ubld^{k+1} & = & \argmin_y \Big\{ f(y; \wh\ubld^k, \{ \nabla J_i(\wh\ubld^k_i) \}^n_{i=1} ) \, \big | \, y \in \mathcal A, \ \ \mbox{and} \notag \\
    & & \qquad \qquad y \in \mathcal C\big(\wh \ubld^k, \{ \nabla g_{i, s}(\wh \ubld^k_i), \nabla r_{i, s}(\wh \ubld^k_i), \, i=1, \ldots,n, \, s=1, \ldots, 3 p \} \big) \Big\}. \label{eqn:SCP_k}
\end{eqnarray}
By virtue of Corollary~\ref{coro:optimal_solution_closeness}, the initial $\wh \ubld^0$ can be chosen as a solution to the problem (\ref{eq:MPC_opt_model}) or (\ref{eqn:MPC_model_locally_coupled}) whose objective function is $J$ with $\varphibf=0$ and the approximate constraints are polyhedral or quadratically constrained convex sets; see Section~\ref{subsect:approximation_functions} for details.
An efficient fully distributed scheme has been developed in \cite{ShenEswarDu_Arx20} to compute such $\wh \ubld^0$.
%
%from the linear dynamics in view of Proposition~\ref{prop:solution_proximal}.
%
%({\bf is $\wh \ubld^0$ feasible?})
%
It is shown in \cite[Theorem 4.3]{ZSLu_Technote2013} that if $\wh\ubld^0$ is feasible, then $\wh \ubld^k$ is feasible for all $k$ and the constraint set in each step $k$ is a nonempty closed convex set \cite[Lemma 3.3]{ZSLu_Technote2013}.

The convex minimization problem (\ref{eqn:SCP_k}) at each step $k$ can be solved via  operator splitting method based fully distributed schemes. Fix $\wh \ubld^k=(\wh \ubld^k_i)^n_{i=1}$ and the related gradients evaluated at $\wh \ubld^k$. We write the objective function $f(y; \, \wh\ubld, \{ d_{J_i} \}^n_{i=1} )$ as $f(y)$ and
 the constraint sets $\mathcal C_i\big(\wh \ubld^k_i, \{ \nabla g_{i, s}(\wh \ubld^k_i), \nabla r_{i, s}(\wh \ubld^k_i), \, s=1, \ldots, 3 p \} \big)$ as $\mathcal C_{i}$'s for notational simplicity. Clearly, $\wh \ubld^k_i \in \mathcal C_i$ for each $i$.
If $\mathcal C_{i}$ is singleton for some $i$, i.e., $\mathcal C_{i} = \{ \wh\ubld^k_i\}$, then we have $\wh\ubld^{k+1}_i = \wh \ubld^k_i$ such that the optimization problem can be reduced to a simpler problem. When $\mathcal C_{i}$ is non-singleton, it follows from Lemma~\ref{lem:Slater_CQ_SCP} that the Slater's condition holds for that $\mathcal C_{i}$. Let $F(y) := f(y; \wh\ubld^k, \{ \nabla J_i(\wh\ubld^k_i) \} ^n_{i=1} )+ \deltabf \mathcal C(y) + \deltabf \mathcal A(y)$.
By  \cite[Corollary 23.8.1]{Rockafellar_book70}, $\partial F(y) = \{ \nabla f(y) \} +  \mathcal N_{\mathcal C} (y) + \mathcal N_{\mathcal A}(y)$.
As a result, several operator splitting method based fully distributed algorithms \cite{DavisYin_SVA17, HuXiaoLiu_CDC18} can be applied to solve the convex optimization problem (\ref{eqn:SCP_k}).

Motivated by \cite{ShenEswarDu_Arx20}, we consider the (generalized) Douglas-Rachford splitting method based distributed scheme. Specifically, define for each $i=1, \ldots, n$, $f_i(y_i):= J_i(\wh\ubld^k_i) + d^T_{J_i}(\wh\ubld^k_i) ( y_i - \wh\ubld_i) + \frac{L_{J_i}}{2} \|   y_i - \wh\ubld^k_i \|^2_2$, and $\wh f_i(y) := f_i(y_i) + \deltabf \mathcal C_i(y_i)$. Hence, the objective function $f(y)= \sum^n_{i=1} f_i(y_i)$.
For any constant $0<\alpha<1$ and $\rho>0$, the Douglas-Rachford splitting method based scheme is given by
\begin{align*}
   w^{t+1} \, = \, \Pi_{\mathcal A} (z^t), \quad
   z^{t+1} \, = \, z^t + 2 \alpha \cdot \Big[ \mbox{Prox}_{\rho \wh f_1+ \cdots + \rho \wh f_n}\big(2 w^{t+1} - z^t \big) - w^{t+1} \Big],  \ \ \forall \, t\in \mathbb Z_+,
\end{align*}
where $\mbox{Prox}_h$ denotes the proximal operator of a proper lower semicontinuous convex function $h$, and $\Pi_{\mathcal A}$ denotes the Euclidean projection onto $\mathcal A$.
Since $\mathcal A$ is the consensus subspace, it is shown that \cite[Section IV]{HuXiaoLiu_CDC18} that for any $\wh {\mathbf u} := ( \wh{\mathbf u}_1, \ldots, \wh{\mathbf u}_n)$ where $\wh{\mathbf u}_i:= \big(\mathbf u_i, (\mathbf u_{ij} )_{j \in \mathcal N_i} \big)$, $\ol  {\mathbf u} := \Pi_{\mathcal A}(\wh {\mathbf u})$ is given by:
%%\begin{linenomath}
\begin{equation} \label{eqn:consensus_projection}
   \ol {\mathbf u}_j = \ol {\mathbf u}_{ij} =  \frac{1}{1+|\mathcal N_j| } \Big(  \wh{\mathbf u}_j  + \sum_{k \in \mathcal N_j} \wh{\mathbf u}_{kj} \Big), \qquad \forall \, (i, j) \in \mathcal E.
\end{equation}
%%\end{linenomath}
Furthermore, since $\wh f_i$'s are decoupled, a distributed version of the above algorithm is given by:
\begin{subequations} \label{eqn:DR_scheme}
\begin{align}
   w^{t+1}_i & \, = \, \ol z^t_i, \qquad i=1, \ldots, n; \\
   z^{t+1}_i & \, = \, z^t_i + 2 \alpha \cdot \Big[ \mbox{Prox}_{\rho \wh f_i} \big( 2 w^{t+1}_i - z^t_i\big ) - w^{t+1}_i \Big], \quad i=1, \ldots, n.
\end{align}
\end{subequations}
%
%By virtue of the discussions in Section~\ref{subsect:ist_scheme_linear}, the above scheme can be %computed distributively: for $ t\in \mathbb Z_+$,
%\[
%   w^{t+1}_i = \ol z^t_i, \quad
%   z^{t+1} \, = \, z^t_i + \lambda \cdot \Big[ \Pi_{ {\mathcal C}_{q,i}} \big( 2 w^{t+1}_i - %z^t_i - \gamma [ L_{J^{\NL}_i} (w^{t+1}_i - \wh \ubld^k_i) + \nabla J^{\NL}_i(\wh \ubld^k_i) ] %\big ) - w^{t+1}_i \Big], \ \ i=1, \ldots, n,
%\]
%where the parameters $\gamma$ and $\lambda$ satisfy $0 < \gamma \le \frac{2}{\ol L}$ and $0 < %\lambda < 2 - \frac{\gamma \ol L}{2}$ with $\ol L:=\max_{i=1} L_{J^{\NL}_i}>0$.
%
Note that the proximal operator in the second equation of (\ref{eqn:DR_scheme}) is given by
$\mbox{Prox}_{\rho \wh f_i}( 2 w^{t+1}_i - z^t_i) = \argmin_{y_i \in \mathcal C_i} f_i(y_i)+ \frac{1}{2\rho} \| y_i - (2 w^{t+1}_i - z^t_i) \|^2_2$, where $\mathcal C_i$ is the intersection of the polyhedral set $\mathcal X_i$ and a quadratically constrained convex set. Since $f_i$ is a convex quadratic  function, $\mbox{Prox}_{\rho \wh f_i}( 2 w^{t+1}_i - z^t_i)$ can be formulated as a second-order cone program or QCQP and solved by SeDuMi \cite{Sturm_OMS99}. See Algorithm~\ref{algo:distributed_SCP} for its pseudo-code.

%
%the non-convex and coupled optimization problem under the nonlinear vehicle dynamics given in %Section~\ref{subsect:optim_nonlinear_dynamics}.
%

\begin{algorithm}%[t]
%=====================================================================================
\caption{Sequential Convex Programming and Douglas-Rachford Splitting Method based Fully Distributed Algorithm for $p \ge 2$}
\begin{algorithmic}[1]
\label{algo:distributed_SCP}
%=====================================================================================
%
%\STATE Let $d_k$ be a descent direction, $s=1$, $\beta=0.5$, and $c_1=0.1$;
%
\STATE Choose constants $0<\alpha <1$ and $\rho>0$

\STATE Solve the problem (\ref{eqn:MPC_model_locally_coupled}) with $\varphibf =0$ via a fully distributed scheme and obtain a solution $\wh \ubld^{\text{lin}}$

\STATE Initialize $k=0$, and set an initial point $\wh\ubld^0=\wh \ubld^{\text{lin}}$

\WHILE{the stopping criteria is not met}

  \STATE Compute $\nabla J_i(\wh\ubld^k_i)$, $\nabla g_{i, s}(\wh \ubld^k_i)$, $\nabla r_{i, s}(\wh \ubld^k_i)$, and set  $z^0=\wh\ubld^k$ and $t=0$.

   \REPEAT

  \FOR {$i=1, \ldots, n$}

   \STATE Compute $\ol z^t_i$ using equation (\ref{eqn:consensus_projection}), and let $w^{t+1}_i \leftarrow \ol z^t_i$

  %%% \STATE {$w^{k+1}_i \leftarrow \ol z^k_i$}

  \ENDFOR

  \FOR {$i=1, \ldots, n$}

   \STATE {$z^{t+1}_i \leftarrow z^t_i + 2 \alpha \cdot \Big[ \mbox{Prox}_{\rho \wh f_i} \big( 2 w^{t+1}_i - z^t_i\big ) - w^{t+1}_i \Big]$}

  \ENDFOR

  \STATE  $t \leftarrow t+1$

   \UNTIL an accumulation point is achieved

   \STATE Set $\wh \ubld^{k+1} = w^t$ and $k\leftarrow k+1$

\ENDWHILE

%%%\STATE Iterate until the stopping criteria is met

\RETURN $\wh{\mathbf u}^* = \wh \ubld^k$

%%%\STATE Output: $x^* = x^k$

%=====================================================================================
\end{algorithmic}
%=====================================================================================
\end{algorithm}

Since $\mathcal X$ is a compact set, the numerical sequence $(\wh\ubld^k)$ generated by Algorithm~\ref{algo:distributed_SCP} always has an accumulation point denoted by $\wh\ubld^*$. It follows from \cite[Theorem 3.4]{ZSLu_Technote2013} that under very mild conditions, $\wh\ubld^*$ is feasible and is a KKT point of the nonconvex program (\ref{eq:MPC_opt_model}).
%
%(\ref{eqn:nonlin_opt_type2}) or (\ref{eqn:nonlin_opt_type3}).
%
Our numerical experiences show that $(\wh\ubld^k)$ converges to $\wh\ubld^*$ which is a local minimizer of (\ref{eq:MPC_opt_model}). This coincides with the observation made in Corollary~\ref{coro:optimal_solution_closeness} when $c_{2, i}$ and $c_{3, i}$ are small.

%\gap

%----------------------------------------------------------------
%
\subsection{Approximation of the Objective Function and Constraint Functions} \label{subsect:approximation_functions}

When $p>1$, the underlying MPC optimization problem (\ref{eq:MPC_opt_model}) and its locally coupled formulation (\ref{eqn:MPC_model_locally_coupled})  give rise to non-convex optimization problems with complicated objective functions and constraints, especially the velocity and safety distance constraints for a relatively large $p$, due to highly sophisticated closed-form expressions for $\abld_i(\ubld_i)$'s.
To facilitate computation, particularly for real-time computation, we derive a simplified model to approximate the objective function and constraint functions below. We start with the constraints for $\ubld=(\ubld_1, \ldots, \ubld_n)$ first, where we omit $k$ since it is fixed.

The exact closed-form expressions for $\abld_i(\ubld_i)$'s are given in the recursive manner at the beginning of Section~\ref{subsect:optim_nonlinear_dynamics}. These expressions
are highly sophisticated especially for large $j$'s because of the nonlinear relation in aerodynamic drag. Since the  coefficients $c_{2, i}$'s and $c_{3,i}$'s are small, we only consider the terms in $\abld_i(\ubld_i)$ that are linearly in $c_{2, i}$'s and $c_{3,i}$'s while ignoring the terms involving higher orders of $c_{2, i}$'s and $c_{3,i}$'s. Such an approximation is accurate enough for transportation applications and facilitates numerical computation. Toward this goal, recall that
 the matrix $S_p \in \mathbb R^{p \times p}$ is defined in the same was as in (\ref{eqn:matrix_S_n}) with $n$ replaced by $p$, and $(S_p \mathbf u_i)_0 :=0$.
We then have, for each $i=1, \ldots, n$ and $s=1, \ldots, p$,
\[
   \big( \abld_i(\ubld_i) \big)_s \, \approx \, \big( \mathbf u_i \big)_s  - c_{2, i} \Big[ v_i(k) + \tau \big(S_p \mathbf u_i \big)_{s-1} \Big]^2 -  c_{3, i} g,
\]
where $\big( \abld_i(\ubld_i) \big)_s$ denotes the $s$-entry of $\abld_i(\ubld_i)$ that corresponds to $a_i(k+s-1, u_i(k), \ldots, u_i(k+s-1))$, and  $\big( \mathbf u_i \big)_s$ denotes the $s$-entry of $\ubld_i$ that corresponds to $u_i(k+s-1)$. Therefore,
\begin{equation} \label{eqn:a_i_approx}
   \abld_i(\ubld_i) \, \approx \, \ubld_i - c_{2, i} \Big[ v^2_i(k) \mathbf 1  + 2\tau v_i(k) \wt S_p \ubld_i  + \tau^2 \big( \wt S_p \ubld_i \big) \circ \big( \wt S_p \ubld_i \big) \Big] - c_{3,i} g \mathbf 1,
\end{equation}
where $\wt S_p := \begin{bmatrix} 0 & 0 \\ I_{p-1} & 0 \end{bmatrix} S_p \in \mathbb R^{p\times p}$, and $\circ$ denotes the Hadamard product of two vectors in $\mathbb R^p$. Slightly abusing notation, we let $\abld_i(\ubld_i)$ represent its approximation given on the right-hand side of (\ref{eqn:a_i_approx}).
It is easy to derive the Jacobian of $\abld_i(\ubld_i)$ as
\begin{equation} \label{eqn:Jacobian_a_i}
  \mathbf J \abld_i(\ubld_i) = I_p - 2 c_{2, i} \tau v_i(k) \wt S_p - 2  c_{2, i} \tau^2 \mbox{diag}(\wt S_p \ubld_i ) \wt S_p,
\end{equation}
where for a vector $v=(v_1, \ldots, v_p) \in \mathbb R^p$, $\mbox{diag}(v)$ denotes the $p\times p$ diagonal matrix whose diagonal entries are given by $v_1, \ldots, v_p$. Here we use the fact that the Jacobian of $(A x) \circ (A x)$ is given by $\mathbf J (A x) \circ (A x)= 2 \mbox{diag}(Ax) A$ for a matrix $A$.

\gap

\noindent {\bf Approximate speed constraint}. Using the approximated $\abld_i(\cdot)$, we have, for each $i=1, \ldots, n$ and $j=1, \ldots, p$,
\begin{equation} \label{eqn:v_i_approx}
  v_i(k+j) = v_i(k) + \tau\sum^{j}_{s=1} \big( \abld_i(\ubld_i) \big)_{s} \approx v_i(k) + \tau \left( \big( S_p \mathbf u_i \big)_j - j\cdot c_{3, i} g - c_{2, i} \sum^{j-1}_{s=0} \Big[ v_i(k) + \tau \big(S_p \mathbf u_i \big)_s \Big]^2 \right).
\end{equation}
%where the matrix $S_p \in \mathbb R^{p \times p}$ is defined in the same was as in %(\ref{eqn:matrix_S_n}) with $n$ replaced by $p$, and $(S_p \mathbf u_i)_0 :=0$.
%
It should be noted that the resulting approximation of $v_i(k+j)$ remains a nonlinear and nonconvex function in $u_i(k), \ldots, u_i(k+j-1)$ or $\mathbf u_i$. The approximated speed constraint for $\ubld_i$ is given by
\[
 \mathcal Y_i:= \left \{ \ubld_i \in \mathbb R^p \, \Big | \, v_{\min} \le v_i(k) + \tau \Big( \big( S_p \mathbf u_i \big)_j - j\cdot c_{3, i} g - c_{2, i} \sum^{j-1}_{s=0} \big[ v_i(k) + \tau (S_p \mathbf u_i \big)_s \big]^2 \Big) \le v_{\max}, \, j=1, \ldots, p  \, \right\}.
\]
For $j=1, \ldots, p$, define the function
\begin{equation} \label{eqn:_q_i_j}
q_{i, j}(\ubld_i) \, := \, v_i(k) + \tau \Big( \big( S_p \mathbf u_i \big)_j - j\cdot c_{3, i} g - c_{2, i} \sum^{j-1}_{s=0} \big[ v_i(k) + \tau (S_p \mathbf u_i \big)_s \big]^2 \Big).
\end{equation}
A straightforward calculation shows that the gradient of $q_{i, j}$ is given by
\[
\nabla q_{i, j}(\ubld_i)= \tau \left(  \big (S_p)_{j, \bullet} -  2\tau c_{2,i} \sum^{j-1}_{s=0} \big[ v_i(k) + \tau (S_p \mathbf u_i \big)_s \big] \big(S_p)_{s\bullet}  \right)^T.
\]

%\gap

\noindent {\bf Approximate safety distance constraint}. For each $i=1, \ldots, n$ and $j=1, \ldots, p$, the safety distance constraint is given by
\[
  \big( H_i(\ubld_{i-1}, \ubld_i) \big)_j := L_i  + r_i \cdot v_i(k+j) - \frac{ (v_i(k+j) - v_{\min} )^2}{ 2 a_{i, \min} } - [ x_{i-1}(k+j) - x_i(k+j)] \, \le \, 0.
\]
%
%As indicated earlier, the exact expression of $\big( H_i(\ubld_{i-1}, \ubld_i) \big)_j$ is highly %sophisticated for large $j$'s. Therefore, we
%
To derive an approximation of $\big( H_i(\ubld_{i-1}, \ubld_i) \big)_j$, recall that the expression for $z_i(k+j)= x_{i-1}(k+j) - x_i(k+j)$ is given by (\ref{eqn:z_k_j}), where
\mycut{
 First, we have
\begin{align*}
 z_i(k+j) & = x_{i-1}(k+j) - x_i(k+j)  = z_i(k) + j \tau z'(k) + \tau^2 \sum^{j-1}_{s=0} \frac{2(j-s)-1}{2} b_i(k+s, \ubld_{i-1}, \ubld_i ), \\
 z'_i(k+j) & = v_{i-1}(k+j) - v_{i}(k+j) = z'_i(k) + \sum^{j-1}_{s=0} b_i(k+s, \ubld_{i-1}, \ubld_i ),
\end{align*}
where
}
for each $s=0, 1, \ldots, j-1$, it follows from  (\ref{eqn:v_i_approx}) that
\begin{align*}
\lefteqn{ b_i(k+s, \ubld_{i-1}, \ubld_i )  = \big( \abld_{i-1}(\ubld_{i-1})- \abld_i(\ubld_{i}) \big)_{s+1} }\\
   & = \big[ u_{i-1}(k+s) - c_{2, i-1} v^2_{i-1}(k+s) - c_{3, i-1} g \big]  - \big[ u_{i}(k+s) - c_{2, i} v^2_i(k+s) - c_{3, i} g \big] \\
   & \approx \big[ u_{i-1}(k+s) - u_{i}(k+s) \big] - \left( c_{2,i-1} \Big[ v_{i-1}(k) + \tau \big(S_p \ubld_{i-1}\big)_s \Big]^2 -  c_{2,i} \Big[ v_{i}(k) + \tau \big(S_p \ubld_{i}\big)_s \Big]^2 \right) \\
   & \qquad - \big(  c_{3, i-1} -c_{3, i} \big) g.
\end{align*}
Further, by using the approximation of $v_i(k+j)$ given in (\ref{eqn:v_i_approx}), we obtain
\begin{align}
  \big( H_i(\ubld_{i-1}, \ubld_i) \big)_j & \approx \ \ L_i + r_i \cdot \Big[
  v_i(k) + \tau \Big( \big( S_p \mathbf u_i \big)_j - j\cdot c_{3, i} g - c_{2, i} \sum^{j-1}_{s=0} \big[ v_i(k) + \tau  \big(S_p \mathbf u_i \big)_s \big]^2 \Big) \Big] \notag  \\
  & - \frac{1}{2 a_{i, \min}} \Bigg[
  v_i(k) - v_{\min} + \tau \Big( \big( S_p \mathbf u_i \big)_j - j\cdot c_{3, i} g - c_{2, i} \sum^{j-1}_{s=0} \big[ v_i(k) + \tau  \big(S_p \mathbf u_i \big)_s \big]^2 \Big) \Bigg]^2 \label{eqn:H_i_j_approx} \\
  & - \left\{ \, z_i(k) +\Delta + j \tau z'_i(k) + \tau^2 \sum^{j-1}_{s=0} \frac{2(j-s)-1}{2}
   \Big[ \, u_{i-1}(k+s) - u_{i}(k+s)   \right. \notag \\
   & \qquad  -  \Big( c_{2,i-1} \big[ v_{i-1}(k) + \tau \big(S_p \ubld_{i-1}\big)_s \big]^2   -  c_{2,i} \big[ v_{i}(k) + \tau  \big(S_p \ubld_{i}\big)_s \big]^2 \Big)  - \big(  c_{3, i-1} -c_{3, i} \big) g  \Big] \, \Bigg\}. \notag
\end{align}
By slightly abusing the notation, we let $\big( H_i(\ubld_{i-1}, \ubld_i) \big)_j$ denote its approximation given above. Clearly, this approximating function is smooth but nonconvex.
 Further, when $\varphibf=(\varphibf_d, \varphibf_f)=(c_{2,i}, c_{3,i})^n_{i=1}=0$, each $\big( H_i(\ubld_{i-1}, \ubld_i) \big)_j$ reduces to a convex quadratic function in $(\ubld_{i-1}, \ubld_i)$.
%
%Besides, the constraint function $\big( H_i(\ubld_{i-1}, \ubld_i) \big)_j$ not only depends on %$\ubld_i$ but also on $\ubld_i$ of the $(i-1)$-th vehicle, and thus is locally coupled with its %neighboring vehicles. This coupling structure imposes challenges in developing fully distributed %schemes.
%
To compute the gradient of  $\big( H_i(\ubld_{i-1}, \ubld_i) \big)_j$, let
 the vector $\mathbf p$ and the matrix $R_p$ be given by
\[
  \mathbf p \, := \, \begin{bmatrix} \ 1 \ \\ \ 2 \ \\ \ \vdots \ \\ \ p \ \end{bmatrix} \in \mathbb R^p, \qquad
   R_p \, := \,  \begin{bmatrix}
1&0&0&  \hdots &0\\
3 &1& 0&  \hdots &0\\
\vdots &\vdots & \ddots  & \ddots & \vdots\\
2 p-3 & 2p-5 &\hdots& 1  & 0\\
2 p -1 & 2p-3 &  \hdots & 3 &1 \\
\end{bmatrix} \in \mathbb R^{p\times p}.
\]
It is noted that for each $i=1,\ldots, n$ and $j=1, \ldots, p$,
\[
 x_{i-1}(k+j) - x_i(k+j) = z_i(k+j) = z_i(k) + \Delta + \frac{\tau^2}{2} \Big[R_p \Big(\abld_{i-1}(\ubld_{i-1})- \abld_i(\ubld_{i}) \Big) \Big]_j
\]
Hence, using the gradient of $q_{i, j}$ and the Jacobian of $\abld_i$ given  by (\ref{eqn:Jacobian_a_i}), the gradients of  $\big( H_i(\ubld_{i-1}, \ubld_i) \big)_j$ with respect to  $\ubld_i$ and $\ubld_{i-1}$ (for $i\ge 2$) are respectively given by
\begin{align*}
\lefteqn{ \nabla_{\ubld_i} \big( H_i(\ubld_{i-1}, \ubld_i) \big)_j  = r_i \cdot \nabla q_{i, j}(\ubld_i) } \\
&  -  \frac{1}{a_{i, \min}} \Bigg[
  v_i(k) - v_{\min} + \tau \Big( \big( S_p \mathbf u_i \big)_j - j\cdot c_{3, i} g - c_{2, i} \sum^{j-1}_{s=0} \big[ v_i(k) + \tau  \big(S_p \mathbf u_i \big)_s \big]^2 \Big) \Bigg] \cdot \nabla q_{i, j}(\ubld_i) \\
  & + \frac{\tau^2}{2} \Big[ (R_p)_{j\bullet} \, \mathbf J \abld_i(\ubld_i) \Big]^T,
\end{align*}
and for $i\ge 2$,
\[
\nabla_{\ubld_{i-1}} \big( H_i(\ubld_{i-1}, \ubld_i) \big)_j =  - \frac{\tau^2}{2} \Big[ (R_p)_{j\bullet} \, \mathbf J \abld_{i-1}(\ubld_{i-1}) \Big]^T.
\]

\gap

\noindent {\bf Approximate objective function}. Consider the decomposition of the (central) objective function given by local objective functions $J_i$'s in (\ref{eqn:J_i_decomposition}). The approximate objective function $J_i$ can be easily obtained by substituting (\ref{eqn:a_i_approx}) into (\ref{eqn:J_i_decomposition}). In what follows, we compute the gradient of $J_i$. It follows from
$
J_1(\ubld_1, \ubld_2) := \frac{1}{2} \begin{bmatrix} \abld^T_1(\ubld_1) & \abld^T_2(\ubld_2) \end{bmatrix} \big(\wh W^1 - \tau^2 \wh \Psi^1 \big)  \begin{bmatrix} \abld_1(\ubld_1) \\ \abld_2(\ubld_2) \end{bmatrix} + c^T_{\Ical_1} \abld_1(\ubld_1)
+ \ \frac{\tau^2}{2} \begin{bmatrix} \ubld^T_{1} & \ubld^T_2 \end{bmatrix} \wh \Psi^1 \begin{bmatrix} \ubld_{1} \\ \ubld_2 \end{bmatrix}$ that
\begin{align*}
 \nabla_{\ubld_1} J_1(\ubld_1, \ubld_2) & = \begin{bmatrix} \big( \mathbf J \abld_1(\ubld_1) \big)^T & 0 \end{bmatrix} \big(\wh W^1 - \tau^2 \wh \Psi^1 \big)  \begin{bmatrix} \abld_1(\ubld_1) \\ \abld_2(\ubld_2) \end{bmatrix} + \big( \mathbf J \abld_1(\ubld_1) \big)^T c_{\Ical_1} + \tau^2\begin{bmatrix} I_p & 0 \end{bmatrix} \wh \Psi^1 \begin{bmatrix} \ubld_{1} \\ \ubld_2 \end{bmatrix}, \\
  \nabla_{\ubld_2} J_1(\ubld_1, \ubld_2) & = \begin{bmatrix}0 &  \big( \mathbf J \abld_2(\ubld_2) \big)^T  \end{bmatrix} \big(\wh W^1 - \tau^2 \wh \Psi^1 \big)  \begin{bmatrix} \abld_1(\ubld_1) \\ \abld_2(\ubld_2) \end{bmatrix} + \tau^2 \begin{bmatrix} 0 & I_p  \end{bmatrix} \wh \Psi^1 \begin{bmatrix} \ubld_{1} \\ \ubld_2 \end{bmatrix}.
\end{align*}
The similar result holds for $\nabla J_{n} (\ubld_{n-1}, \ubld_n)$ and $\nabla J_i (\ubld_{i-1}, \ubld_{i}, \ubld_{i+1})$ for $i=2, \ldots, n-1$.

\gap

%%%%%%%%%%%%%%%%%%%%%%%%%%%%%%%%%%%%%%%%%%%%%%%%%%%%%%%%%%
%

\noindent {\bf Computation of a feasible $\wh \ubld^0$}: We discuss how to generate and compute a feasible initial guess $\wh \ubld^0$ of the SCP scheme using approximate constraint functions. In view of the approximate speed constraint function in (\ref{eqn:v_i_approx}) and $0\le v_{\min}<v_{\max}$, we see that any $\ubld_i \in \mathcal Y_i$ satisfies:
\begin{align*}
 0\le v_{\min} + c_{3, i} g & \le \ v_i(k) + \tau (S_p \ubld_i)_1 \ \le v_{\max}+c_{2, i} v^2_i(k) + c_{3, i} g := \breve v_{i, 1}, \\
0\le v_{\min} + 2 c_{3, i} g & \le \ v_i(k) + \tau (S_p \ubld_i)_2 \ \le v_{\max}+c_{2, i} [ v^2_i(k) + \breve v^2_{i, 1}] + 2 c_{3, i} g := \breve v_{i, 2},  \\
\vdots &  \qquad \qquad \vdots \qquad \qquad \qquad \qquad \qquad \vdots
\end{align*}
In general,  $0\le v_{\min} + j c_{3, i} g \le v_i(k) + \tau (S_p \ubld_i)_j \le v_{\max}+j c_{3, i}  g + c_{2, i} \sum^{j-1}_{s=0} \breve v^2_{i, s}:=\breve v_{i, j}$ for each $j=1, \ldots, p$, where $\breve v_{i, 0} :=v_i(k)$.
Using the above results, we consider the constraint set
\begin{align*}
 \wt {\mathcal Y}_i & := \left \{ \ubld_i \in \mathbb R^p \, \Big | \,  v_{\min}   \le v_i(k) + \tau \big( (\ubld_i)_1 - c_{3, i} g - \tau c_{2, i} v^2_{i}(k) \big)\le v_{\max}, \ \mbox{ and } \ \forall \ j=2, \ldots, p, \right. \\
& \quad \  v_{\min}  + \tau c_{2, i} \sum^{j-1}_{s=0} \breve v^2_{i, s} \le v_i(k) + \tau \Big( \big( S_p \mathbf u_i \big)_j -  j\cdot c_{3, i} g \Big)
 \le v_{\max}  +\tau c_{2, i} \big [ v^2_i(k) + \sum^{j-1}_{s=1} (v_{\min} + s \cdot c_{3, i} g )^2 \big]  \, \Bigg\},
\end{align*}
which is clearly polyhedral and thus convex. Further, it is easy to see that $\wt {\mathcal Y}_i \subseteq \mathcal Y_i$ for each $i=1, \ldots, n$.

Additionally, define $\grave v_{i, 0} := v_i(k)$ and $\grave v_{i, j}:= v_{\min} + j \cdot c_{3, i } g$ for $j=1, \ldots, p-1$.
In light of the approximating function for $\big(H_i(\ubld_{i-1}, \ubld_i) \big)_j$ given by (\ref{eqn:H_i_j_approx}), we define the following function for each $i=1, \ldots, n$ and $j=1, \ldots, p$,
\begin{align*}
  \big( \wt H_i(\ubld_{i-1}, \ubld_i) \big)_j & := \ L_i + r_i \cdot \Big[
  v_i(k) + \tau \Big( \big( S_p \mathbf u_i \big)_j - j\cdot c_{3, i} g - c_{2, i} \sum^{j-1}_{s=0} \grave v^2_{i, s} \Big) \Big] \notag  \\
  & - \frac{1}{2 a_{i, \min}} \Bigg[
  v_i(k) - v_{\min} + \tau \Big( \big( S_p \mathbf u_i \big)_j - j\cdot c_{3, i} g - c_{2, i} \sum^{j-1}_{s=0} \grave v^2_{i, s} \Big) \Bigg]^2  \\ %%%\label{eqn:H_i_j_approx} \\
  & - \left\{ z_i(k) + j \tau z'_i(k) + \tau^2 \sum^{j-1}_{s=0} \frac{2(j-s)-1}{2}
   \Big[ \, u_{i-1}(k+s) - u_{i}(k+s)   \right. \notag \\
   & \qquad \left. -  \Big( c_{2,i-1} \big[ v_{i-1}(k) + \tau \big(S_p \ubld_{i-1}\big)_s \big]^2   -  c_{2,i} \grave v^2_{i, s} \Big)  - \big(  c_{3, i-1} -c_{3, i} \big) g  \Big] \right\}. \notag
\end{align*}
Clearly,  $\big( \wt H_i(\ubld_{i-1}, \ubld_i) \big)_j$ is a convex quadratic function in $(\ubld_{i-1}, \ubld_i)$. Besides, for any $\ubld_i \in \wt {\mathcal Y}_i$, it is easy to verify that $\big[ v_i(k) + \tau  \big(S_p \mathbf u_i \big)_s \big]^2 \ge \grave v^2_{i, s}$ for all $s=0, 1, \ldots, p-1$, and for each $j=1, \ldots, p$,
\begin{align*}
0 & \le  v_i(k) - v_{\min} + \tau \Big( \big( S_p \mathbf u_i \big)_j - j\cdot c_{3, i} g - c_{2, i} \sum^{j-1}_{s=0} \big[ v_i(k) + \tau  \big(S_p \mathbf u_i \big)_s \big]^2 \Big) \\
& \le v_i(k) - v_{\min} + \tau \Big( \big( S_p \mathbf u_i \big)_j - j\cdot c_{3, i} g - c_{2, i} \sum^{j-1}_{s=0} \grave v^2_{i, s} \Big).
\end{align*}
 Thus for any $\ubld_{i-1}$ and any $\ubld_i \in \wt {\mathcal Y}_i$ , $\big( \wt H_i(\ubld_{i-1}, \ubld_i) \big)_j \le 0$ implies $\big( H_i(\ubld_{i-1}, \ubld_i) \big)_j \le 0$. Define the convex set
\[
\wt {\mathcal Z}_i :=  \left\{  \, (\ubld_{i-1}, \ubld_i) \in \mathbb R^p \times \mathbb R^p \, \big | \, (\wt H_i( \ubld_{i-1}, \ubld_i ))_s \le 0, \quad \forall \, s=1, \ldots, p \, \right\}.
\]
Thus $\wt {\mathcal Y_i} \cap \wt {\mathcal Z}_i \subseteq {\mathcal Y_i} \cap {\mathcal Z}_i$ for all $i=1, \ldots, n$.  Let $J^0_i$ denote the function $J_i$ when $\varphibf =0$.
Therefore, $\min \sum^n_{i=1} J^0_i(\wh \ubld_i)$ subject to $\wh\ubld_i \in \mathcal X_i \cap \wt {\mathcal Y_i} \cap \wt {\mathcal Z}_i, i=1, \ldots, n$ and $\wh \ubld \in \mathcal A$ is a convex QCQP and leads to a unique solution $\wh \ubld^0_{*}$ that is feasible to the nonconvex program (\ref{eqn:MPC_model_locally_coupled}) with approximate constraints, namely, $\wh \ubld^0_{*} \in \prod^n_{i=1} \mathcal X_i \cap {\mathcal Y_i} \cap  {\mathcal Z}_i$.
 Under the assumption $\bf A.2$, the convex QCQP is feasible for all small $\|\varphibf\|>0$. This implies that the nonconvex program (\ref{eqn:MPC_model_locally_coupled}) with approximate constraints is feasible for all small $\|\varphibf\|$.

%-------------------------------------------------------------------------------
%
\section{Control Design and Stability Analysis of Closed Loop Dynamics} \label{sect:control_analysis}

In this section, we discuss the choice of the weight matrices $Q_{z, s}$,  $Q_{z', s}$ and $Q_{w, s}$ to achieve the desired closed loop performance, including stability and traffic transient dynamics. For the similar reasons given in \cite[Section 5]{GShenDu_TRB16}, we focus on the constraint free case.
Recall that $\varphibf:=(\varphibf_d, \varphibf_f)\in \mathbb R^{2n}_+$, where $\varphibf_d:=(c_{2, 1}, \ldots, c_{2, n}) \in \mathbb R^n_+$ and $\varphibf_f:=(c_{3, 1}, \ldots, c_{3, n}) \in \mathbb R^{n}_+$. Further,  $c_{2, 0}=c_{3, 0}=0$ as indicated before.
% 
% since $u_0(k)$ is the actual acceleration of the leading vehicle instead of its control.

%-----------------------------------------------------------------------
%
%\subsection{Control Design and Stability Analysis for the Nonlinear Vehicle Dynamics} %\label{subsect:stability_nonlinear}
%

%
%The underlying optimization problem of the MPC under the nonlinear dynamics becomes ..., where we %assume that all $c_{2,i}$ and $c_{3, i}$ are sufficiently small.
%
%For notational simplicity, define the parameter vectors  $\varphibf_d:=(c_{2, 1}, \ldots, c_{2, %n}) \in \mathbb R^n_+$ and $\varphibf_f:=(c_{3, 1}, \ldots, c_{3, n}) \in \mathbb R^{n}_+$, where %the subscripts $d$ and $f$ denote the drag and friction respectively. Further,
%
%$\varphibf:=(c_{2, 1}, \ldots, c_{2, n}, c_{3, 1}, \ldots, c_{3, n}) \in \mathbb R^{2n}_+$.
%
%

%--------------------------------------------------------------------
%
\subsection{Review of the Closed-loop Stability Analysis under Linear Vehicle Dynamics} \label{susbsect:review_closed_loop_linear_dynamics}

When $\varphibf=0$, the nonlinear vehicle dynamics reduces to the linear vehicle dynamics given by (\ref{eqn:model_longit_double_integrator}), for which the closed loop stability of the MPC based platooning control with a general horizon $p$ has been analyzed in \cite[Section 5]{ShenEswarDu_Arx20}. We present a short review of these stability results as they pave a way for studying closed loop stability under nonlinear vehicle dynamics when $\| \varphibf\|$ is small.

Let $\mathbf w(k):= (w(k), \ldots, w(k+p-1))$. As before, we omit $k$ when $k$ is fixed. It is shown that under the linear vehicle dynamics, the objective function is  equivalently written as \cite[Section 5]{ShenEswarDu_Arx20}
\[
  J(\mathbf w) = \frac{1}{2} \mathbf w^T \mathbf H \mathbf w + \mathbf w^T \left( \mathbf G \begin{bmatrix} z(k) \\ z'(k) \end{bmatrix} - u_0(k) \mathbf g \right) + \wt \gamma,
\]
where $\wt \gamma$ is a constant. Here $\mathbf H \in \mathbb R^{pn\times pn}$ is a symmetric PD matrix given by
\begin{equation} \label{eqn:Hbld_matrix}
    \mathbf H \, = \,  \begin{bmatrix} \breve\Hbld_{1,1} +\tau^2 Q_{w, 1}  & \breve\Hbld_{1, 2} & \breve\Hbld_{1,3} & \cdots & \cdots & \breve\Hbld_{1, p} \\ \breve\Hbld_{2,1} & \breve\Hbld_{2,2} +\tau^2 Q_{w, 2}  & \breve\Hbld_{2, 3} & \cdots & \cdots & \breve\Hbld_{2, p} \\ \cdots &  & \cdots &  & \cdots & \\ \cdots  &  & \cdots &  & \cdots & \\ \breve\Hbld_{p,1} & \breve\Hbld_{p, 2} & \breve\Hbld_{p, 3} & \cdots & \cdots & \breve\Hbld_{p, p} +\tau^2 Q_{w, p}  \end{bmatrix} \in \mathbb R^{np \times np},
\end{equation}
where $\breve\Hbld_{i, j}$'s are diagonal PD matrices given by
\[
 \breve\Hbld_{i, j}  \, := \, \sum^p_{s=\max(i, j)} \Big( \frac{\tau^4}{4}[2(s-i)+1]\cdot [2 (s-j)+1] Q_{z, s} + \tau^2 Q_{z', s} \Big)  \in \mathbb R^{n\times n},
\]
and the matrix $\mathbf G$ and  constant vector $\mathbf g$ are
\begin{equation} \label{eqn:G_bld}
  \mathbf G \, := \, \begin{bmatrix} \mathbf G_{1, 1} &  \mathbf G_{1, 2} \\ \vdots & \vdots \\ \mathbf G_{p, 1} & \mathbf G_{p, 2} \end{bmatrix} \in \mathbb R^{pn \times 2 n}, \qquad
  \mathbf  g \, := \, \tau^2 \begin{bmatrix} Q_{w, 1} \mathbf e_1 \\ \vdots \\ Q_{w, p} \mathbf e_1 \end{bmatrix} \in \mathbb R^{pn}.
\end{equation}
where $\mathbf G_{i, 1}, \mathbf G_{i, 2} \in \mathbb R^{n\times n}$ are given by: for each $i=1, \ldots, p$,
\begin{eqnarray*}
 \mathbf G_{i, 1} & = & \tau^2 \sum^p_{s=i} \frac{ 2(s-i) + 1 }{2} Q_{z, s}, \qquad \quad
 \mathbf G_{i,2} \, = \, \tau^3 \sum^p_{s=i}s  \frac{ 2(s-i) + 1 }{2}  Q_{z, s} + \tau \sum^p_{s=i} Q_{z', s}.
\end{eqnarray*}
%
%And the constant vector %%%$\mathbf g \in \mathbb R^{pn}$ is given by
%$
% \mathbf  g \, := \, \tau^2 \begin{bmatrix} Q_{w, 1} \mathbf e_1 \\ \vdots \\ Q_{w, p} \mathbf %e_1 \end{bmatrix} \in \mathbb R^{pn}.
%$
%
Hence, the optimal solution is  $\mathbf w_*=(w_*(k), w_*(k+1), \ldots, w_*(k+p-1))=-\mathbf H^{-1} \Big(\mathbf G \begin{bmatrix} z(k) \\ z'(k) \end{bmatrix} -  u_0(k) \mathbf g \Big)$, and
$w_*(k)= - \begin{bmatrix} I_n & 0 & \cdots & 0 \end{bmatrix} \mathbf H^{-1} \Big(\mathbf G \begin{bmatrix} z(k) \\ z'(k) \end{bmatrix} -  u_0(k) \mathbf g \Big)$.
Define the matrix $\mathbf K$ and the vector $\mathbf d$ as
\begin{equation} \label{eqn:K_mat_d_vec}
 \mathbf K := - \begin{bmatrix} I_n & 0 & \cdots & 0 \end{bmatrix} \mathbf H^{-1} \mathbf G \in \mathbb R^{n\times 2n}, \qquad \mathbf d:= \begin{bmatrix} I_n & 0 & \cdots & 0 \end{bmatrix} \mathbf H^{-1} \mathbf g \in \mathbb R^{n}.
\end{equation}
The closed loop dynamics under the MPC platooning control becomes
\begin{equation} \label{eqn:A_c_matrix}
   \begin{bmatrix} z(k+1) \\ z'(k+1) \end{bmatrix} = \underbrace{\left\{ \begin{bmatrix} I_n & \tau I_n \\ 0 & I_n \end{bmatrix} + \begin{bmatrix} \frac{\tau^2}{2} I_n \\ \tau I_n \end{bmatrix} \mathbf  K \right \} }_{A_{\mbox{c}}}  \begin{bmatrix} z(k) \\ z'(k) \end{bmatrix} +  \begin{bmatrix} \frac{\tau^2}{2} I_n \\ \tau I_n \end{bmatrix}  u_0(k) \cdot \mathbf d,
\end{equation}
where $A_{\mbox{c}}$ represents the closed loop dynamics matrix for the linear vehicle dynamics.

By the assumption $\bf A.3$, the diagonal matrices $Q_{z, s}, Q_{z', s}$ and $Q_{w, s}$ can be written as $Q_{z, s} = \mbox{diag}(\alphabf^s)$, $Q_{z', s} = \mbox{diag}(\betabf^s)$, and $Q_{w, s} = \mbox{diag}(\zetabf^s)$, where $\alphabf^s, \betabf^s \in \mathbb R^n_{+}$ and $\zetabf^s \in \mathbb R^n_{++}$ for all $s=1, \ldots, p$ with  $\alphabf^s =(\alpha^s_i)^n_{i=1}$, $\betabf^s=(\beta^s_i)^n_{i=1}$, and $\zetabf^s=(\zeta^s_i)^n_{i=1}$.
It is shown in \cite[Section 5]{ShenEswarDu_Arx20} that for each $p=1, 2, 3$, $A_{\mbox{c}}$ is Schur stable for any $\tau>0$, $(\alpha^1_i, \beta^1_i, \zeta^1_i) >0$,  $0\ne (\alpha^j_i, \beta^j_i, \zeta^j_i) \ge 0$ for each $j=2, \ldots, p$ and $i=1, \ldots, n$.
For $p>4$, numerical tests show that the same result is expected to hold. Throughout the rest of this section, we assume that for a given $p$, the weight matrices $Q_{z, s}, Q_{z', s}$ and $Q_{w, s}$ satisfying the assumption $\bf A.3$ are chosen such that $A_{\mbox{c}}$ is Schur stable.

%
%Let $\boldsymbol\alpha:=(\alphabf^1, \ldots, \alphabf^p)$, $\boldsymbol\beta:=(\betabf^1, \ldots, %\betabf^p)$, and $\boldsymbol\zeta:=(\zetabf^1, \ldots, \zetabf^p)$. We may write the matrix %$A_{\mbox{c}}$ as $A_{\mbox{c}}(\boldsymbol\alpha, \boldsymbol\beta, \boldsymbol\zeta, \tau)$ to %emphasize its dependence on these parameters.
%
%It is shown in \cite[Section 5]{ShenEswarDu_TS20} that for each $p=1, 2, 3$, %$A_{\mbox{c}}(\boldsymbol\alpha, \boldsymbol\beta, \boldsymbol\zeta, \tau)$ is Schur stable for %any $\tau>0$, $(\alpha^1_i, \beta^1_i, \zeta^1_i) >0$,  $0\ne (\alpha^j_i, \beta^j_i, \zeta^j_i) %\ge 0$ for each $j=2, \ldots, p$ and $i=1, \ldots, n$.
%
%For $p>4$, the same result is expected to hold evidenced by numerical tests. Throughout the rest %of this section, we assume that for a given $p$, the weight matrices $Q_{z, s}, Q_{z', s}$ and $Q_{w, s}$ satisfying the assumption $\bf A.3$ are chosen such that $A_{\mbox{c}}$ is Schur %stable.
%

%---------------------------------------------------------------
%
\subsection{Reformulation of the Closed Loop Dynamics as a Tracking System}

Consider the nonlinear vehicle dynamics (\ref{eqn:d_model_longit}).  It follows from the definitions of $z(k), z'(k)$ and $w(k)$ that for $i=1, \ldots, n$,
%\begin{eqnarray*} %%%{align}
%
\begin{subequations} \label{eqn:z_z'_closed_loop}
\begin{eqnarray}
  z_i(k+1) & = & z_i(k) + \tau z'_i(k)+ \frac{\tau^2}{2} \Big( w_i(k) - [c_{2, i-1} v^2_{i-1}(k) - c_{2, i} v^2_i(k) ] - [c_{3, i-1} -c_{3, i}] g\Big), \label{eqn:z_z'_closed_loop_line1}\\ %%\qquad
  z'_i(k+1)  & = & z'_i(k) + \tau\Big( w_i(k) - [c_{2, i-1} v^2_{i-1}(k) - c_{2, i} v^2_i(k) ] - [c_{3, i-1} -c_{3, i}] g \Big). \label{eqn:z_z'_closed_loop_line2}
\end{eqnarray}
\end{subequations}

For given $(v_0(k), u_0(k)), k \in \mathbb Z_+$,  the equilibrium of the above discrete-time system is  $(z_e, z'_e)=(0, 0)$ such that $v_{e, i}(k)=v_0(k)$ for all $i=1, \ldots, n$.  Hence, the corresponding
\[
w_{e, i}(k)= [c_{2, i-1}-c_{2, i}] v^2_0(k)+[c_{3, i-1} -c_{3, i}] g, \qquad \forall \ i=1, \ldots, n.
\]
Let $w_e(k):=(w_{e, 1}(k), \ldots, w_{e, n}(k))^T$.
By shifting $w(k)$ from the time-varying $w_e(k)$, we define $\wh w(k) := w(k) - w_e(k)$.  Hence, this yields the following equations: for $i=1, \ldots, n$,
 \begin{eqnarray*} %%%{align}
  z_i(k+1) \, = \, z_i(k) + \tau z'_i(k)+ \frac{\tau^2}{2} \Big( \wh w_i(k) + r_i(k) \Big), \quad %%\qquad
  z'_i(k+1)  \, = \, z'_i(k) + \tau\Big( \wh w_i(k) + r_i(k) \Big),
\end{eqnarray*}
where   for each $i=1, \ldots, n$,
\[
  r_i(k) := w_{e, i}(k) -  [c_{2, i-1} v^2_{i-1}(k) - c_{2, i} v^2_i(k) ] - [c_{3, i-1} -c_{3, i}] g = c_{2, i-1} \big(  v^2_0(k) - v^2_{i-1}(k)\big) - c_{2, i} \big( v^2_0(k) - v^2_{i}(k) \big).
\]
In light of $v(k) = - S_n z'(k) + v_0(k) \cdot \mathbf 1$,  we have $v^2_0(k) - v^2_i(k)= (S_n z'(k))_i \cdot [ 2 v_0(k) - (S_n z'(k))_i ]$ for each $i$. We thus define the vector-valued smooth function $h:\mathbb R^n \times \mathbb R \rightarrow \mathbb R^n$ as $h(z', v_0) :=(h_1(z', v_0), \ldots, h_n(z', v_0))^T$, where $h_1(z', v_0):= c_{2, 1}(S_n z')_1 \big[ (S_n z')_1 - 2 v_0 \big]$, and for $i=2, \ldots, n$,
\[
 h_i(z', v_0):=  c_{2, i-1} (S_n z')_{i-1} \big[  2 v_0 - (S_n z')_{i-1} \big] - c_{2, i} (S_n z')_i\big[ 2 v_0 - (S_n z')_i  \big].
\]
Clearly, $h(0, v_0)=0$ for any $v_0$. Further, we decompose $h$ as the sum of the following two functions:
\begin{equation} \label{eqn:matrix_D}
 h(z', v_0) \, = \, v_0 \cdot \Bigg( \underbrace{2 \begin{bmatrix} -c_{2, 1} & & & \\ &  c_{2,1}-c_{2, 2}& &  \\ & & \ddots & \\ & & &  c_{2,n-1}-c_{2, n} \end{bmatrix} S_n}_{:=D(\varphibf_d)} \Bigg) \cdot  z' \, + \, \wt h(z'),
\end{equation}
where the vector-valued function $\wt h:=(\wt h_1, \ldots, \wt h_n)^T:\mathbb R^n \rightarrow \mathbb R^n$ is given by:
\[
    \wt h_i(z') \, := \, c_{2, i} [(S_n z' )_i]^2 - c_{2, i-1}[(S_n z' )_{i-1}]^2, \qquad \forall \ i=1, \ldots, n.
\]
or equivalently
\begin{equation} \label{eqn:wt_h_func}
    \wt h(z') \, := \, \begin{bmatrix} c_{2,1} & & &  \\ -c_{2,1} & c_{2, 2} & &  \\ & \ddots & \ddots &  \\ & &  -c_{2, n-1} & c_{2, n} \end{bmatrix} \Big[ \big(S_n z' \big) \circ  \big(S_n z' \big) \Big] \, = \, \underbrace{S^{-1}_n \mbox{diag}(\varphibf_d)}_{:=\wt D(\varphibf_d)} \Big[ \big(S_n z' \big) \circ  \big(S_n z' \big) \Big].
\end{equation}
%where $\circ$ denotes the Hadamard product of two vectors in $\mathbb R^n$.

%To emphasize the dependence of $D, \wt D$ on $\varphibf_d$, we often write them as %$D(\varphibf_d)$ and $\wt D(\varphibf_d)$ if needed.
%
Note that the elements of $D$ and $\wt D$ are linear in $\varphibf_d$ such that $D(\varphibf_d) =\wt D(\varphibf_d)=0$  when $\varphibf_d= 0$.
%
%({\bf Here $\| \varphibf_d\|_\infty \ll 1$ is much less than 1.})
%
%
Using this notation, the nonlinear vehicle dynamics (\ref{eqn:d_model_longit})  is described by the following discrete-time system:
\begin{eqnarray*} %\label{eqn:nonlin_closed_loop_02}
\begin{bmatrix} z(k+1) \\ z'(k+1) \end{bmatrix} & = & \begin{bmatrix} I_n & \tau I_n \\ 0 & I_n \end{bmatrix} \begin{bmatrix} z(k) \\ z'(k) \end{bmatrix} +  \begin{bmatrix} \frac{\tau^2}{2} I_n \\ \tau I_n \end{bmatrix} \Big( \wh w(k) +  h(z'(k), v_0(k) ) \Big) \\
& = &  \left\{ \begin{bmatrix} I_n & \tau I_n \\ 0 &  I_n \end{bmatrix} +  v_0(k) \cdot \begin{bmatrix} \frac{\tau^2}{2} I_n \\ \tau I_n \end{bmatrix}  \begin{bmatrix} 0 & D(\varphibf_d) \end{bmatrix}   \right\} \begin{bmatrix} z(k) \\ z'(k) \end{bmatrix} +  \begin{bmatrix} \frac{\tau^2}{2} I_n \\ \tau I_n \end{bmatrix} \Big( \wh w(k) +  \wt h(z'(k)) \Big). \notag
\end{eqnarray*}
By slightly abusing notation, we also write the function $\wt h$ as $\wt h_{\varphibf_d}(z')$ to emphasize its dependence on $\varphibf_d$. Noting that $\wt h$ is linear in $\varphibf_d$ for any fixed $z'$, we see that $\wt h_0(z')\equiv 0$ for any given $z'\in \mathbb R^n$.

%
%\left\{ \begin{bmatrix} I_n & \tau I_n \\ 0 &  I_n \end{bmatrix} +  v_0(k) \cdot \begin{bmatrix} %\frac{\tau^2}{2} I_n \\ \tau I_n \end{bmatrix}  \begin{bmatrix} 0 & D(c_{2, i}) \end{bmatrix}   %\right\} \begin{bmatrix} z(k) \\ z'(k) \end{bmatrix} +  \begin{bmatrix} \frac{\tau^2}{2} I_n \\ %\tau I_n \end{bmatrix} \Big( \wh w(k) +  \wt h(z'(k)) \Big),
%\]
%
Define the following matrices:
\begin{equation} \label{eqn:closed_loop_matrices}
  A := \begin{bmatrix} I_n & \tau I_n \\ 0 &  I_n \end{bmatrix},
   \quad B:=\begin{bmatrix} \frac{\tau^2}{2} I_n \\ \tau I_n \end{bmatrix},
  \quad
   \Delta A(\varphibf_d) := B  \begin{bmatrix} 0 & D(\varphibf_d) \end{bmatrix}, \quad \wh A(k) := A + v_0(k) \cdot \Delta A(\varphibf_d).
\end{equation}
As before, we often write $\wh A(k)$ as $\wh A(v_0(k), \varphibf_d)$ to stress its dependence on $v_0(k)$ and $\varphibf_d$. Let $\zbld :=(z, z') \in \mathbb R^n \times \mathbb R^n$.
%
%The the system (\ref{eqn:nonlin_closed_loop_02}) becomes
%
We obtain the following non-autonomous nonlinear dynamical system:
\begin{equation} \label{eqn:nonlin_closed_loop_02}
\zbld(k+1) \, = \, \wh A(k) \zbld(k) + B  \Big( \wh w_*(k) +  \wt h_{\varphibf_d}(z'(k)) \Big), \qquad \forall \ k \in \mathbb Z_+,
\end{equation}
where $\wh w_*(k)$ is an optimal solution to the unconstrained MPC optimization problem (\ref{eq:MPC_opt_model}) which implicitly depends on $\zbld(k), v_0(k)$ and $u_0(k)$.
For any fixed $\varphibf_d$, the closed loop system given by (\ref{eqn:nonlin_closed_loop_02}) yields a non-autonomous nonlinear dynamical system, since $\wt h$ is nonlinear in $z'$ and $v_0(k)$ is time varying. In what follows, we further discuss the non-autonomous system (\ref{eqn:nonlin_closed_loop_02}) for different MPC horizon $p$.

 %%%({\bf More on $\wt h(z'(k+s)$})

\gap

\noindent{\bf Case (i): $p=1$}.
In this case,  the closed-form expression of $\wh w_*(k)$ is derived below.
Letting $\wt w(k):= w(k) - u_0(k) \cdot \mathbf e_1 = \wh w(k) + d(k)$, where $d(k):=w_e(k) - u_0(k) \cdot \mathbf e_1$, the unconstrained MPC becomes
\[
   \min J(\wh w(k)) := \frac{1}{2} \Big\{ z^T(k+1) Q_z z(k+1) + (z'(k+1))^T Q_{z'} z'(k+1) + \tau^2 [\wh w(k) + d(k)]^T Q_{w}  [\wh w(k) + d(k)]^T \Big\}
\]
subject to $z(k+1) = z(k) + \tau z'(k)+ \frac{\tau^2}{2} \big[ \wh w(k) + h(z'(k), v_0(k))\big]$, and $z'(k+1)=z'(k) + \tau
\big[ \wh w(k) + h(z'(k), v_0(k))\big]$, where $Q_{z}:=Q_{z, 1}$, $Q_{z'}:=Q_{z', 1}$, and $Q_w :=Q_{w, 1}$. Hence,
\begin{eqnarray*}
 \nabla J(\wh w(k)) &  = &  \Big( \frac{\tau^4}{4} Q_z + \tau^2 Q_{z'} + \tau^2 Q_w \Big) \wh w(k) + \frac{\tau^2}{2} Q_z z(k) +  \Big( \frac{\tau^3}{2} Q_z + \tau Q_{z'}  \Big) z'(k) \\
   & & + \Big( \frac{\tau^4}{4} Q_z + \tau^2 Q_{z'}  \Big) h(z'(k), v_0(k)) + \tau^2 Q_w d(k).
\end{eqnarray*}
Define the matrix
\begin{equation} \label{eqn:W_matrix01}
  \wh W \, := \,  \left[ \frac{ \tau^2 Q_z }{4} +  Q_{z'}  + Q_w \right]^{-1}.
\end{equation}
Using this matrix, we obtain the closed form expression for the optimal solution $\wh w_*(k)$ as
\[
 \wh w_*(k) = - \wh W \cdot \Big[ \frac{Q_z}{2}  z(k) +  \Big( \frac{\tau Q_z}{2}  + \frac{Q_{z'}}{\tau}   \Big) z'(k) + \Big( \frac{\tau^2}{4} Q_z +  Q_{z'}  \Big) h(z'(k), v_0(k)) +  Q_w d(k) \Big].
\]
Substituting $\wh w_*(k)$ into (\ref{eqn:nonlin_closed_loop_02}), using the following matrix $A_{\mbox{c}}$ derived in \cite[Section 5]{GShenDu_TRB16} (which agrees with  the closed loop dynamics matrix in (\ref{eqn:A_c_matrix}) when $p=1$)
\begin{eqnarray} \label{eqn:linear_close_loop_p=1}
 A_{\mbox{c}} \, := \,  \begin{bmatrix} I_n - \frac{\tau^2}{4} \wh W Q_z  & \tau  I_n -  \wh W \Big( \frac{\tau^3}{4} Q_z + \frac{\tau}{2} Q_{z'} \Big)  \\ -\frac{\tau}{2} \wh W Q_z & I_n - \wh W \Big( \frac{\tau^2}{2} Q_z +  Q_{z'} \Big) \end{bmatrix}
\end{eqnarray}
and in view of $h(z', v_0) = v_0 D(\varphibf_d) z' + \wt h_{\varphibf_d}(z')$, the closed loop dynamics is characterized by:
\begin{eqnarray*}
%\begin{bmatrix} z(k+1) \\ z'(k+1) \end{bmatrix}
%
\zbld(k+1) & = & A_{\mbox{c}} \, \zbld(k) %%\begin{bmatrix} z(k) \\ z'(k) \end{bmatrix}
+  B   \Big\{ - \wh W \Big( \frac{\tau^2}{4} Q_z +  Q_{z'}  \Big) h(z'(k), v_0(k)) -  \wh W Q_w d(k) + h(z'(k), v_0(k)) \Big\} \\
 %%%%& & \qquad \qquad \qquad \quad + \ h(z'(k), v_0(k)) \Big\} \\
 & = & \Big(A_{\mbox{c}} +  v_0(k) \cdot \Delta {\bar A}(\varphibf_d) \Big) \zbld(k) -  B \wh W Q_w d(k) + \breve B \wt h_{\varphibf_d}(z'(k)),
\end{eqnarray*}
where the matrices $\Delta \bar A$ and $\breve B$ are given by
\begin{equation}
 %\bar B \, := \, -  B \wh W Q_w,  \qquad
 %
 \breve B \, := \,  B \Big[ I_n- \wh W \big( \frac{\tau^2}{4} Q_z +  Q_{z'}  \big) \Big], \qquad
 \Delta \bar A(\varphibf_d) \, := \, \breve B \begin{bmatrix} 0 &  D(\varphibf_d) \end{bmatrix}.  \label{eqn:matrix_Delta_A_B}
%
%  \Delta \bar A(\varphibf_d) & :=  &  B \Big[ I_n- \wh W \Big( \frac{\tau^2}{4} Q_z +  Q_{z'}  %\Big) \Big]\begin{bmatrix} 0 &  D(\varphibf_d) \end{bmatrix}, \label{eqn:matrix_Delta_A_B}  \\
%  \notag
\end{equation}
By using $d(k) = w_e(k) - u_0(k) \cdot \mathbf e_1$, the closed loop dynamics for $p=1$ becomes:
\begin{equation} \label{eqn:closed_loop_p=1}
  \zbld(k+1) \, = \, \Big(A_{\mbox{c}} +  v_0(k) \cdot \Delta {\bar A}(\varphibf_d) \Big) \zbld(k) + B \Big( u_0(k)  \mathbf d - \wh W Q_w w_e(k) \Big) + \breve B \wt h_{\varphibf_d} (z'(k)),
\end{equation}
where $\mathbf d = \wh W Q_w \mathbf e_1$ that agrees with what is given in (\ref{eqn:K_mat_d_vec}) for $p=1$, and $w_e(k)$ depends on $v_0(k)$ and $\varphibf$. Further, there exists a positive constant $\wt \varkappa$ such that $\| w_e(k) \| \le \wt \varkappa \cdot \| \varphibf\|$ for any $v_0(k) \in [v_{\min}, v_{\max}]$.

%
%and $\breve B:=  B \big[ I_n- \wh W^{-1} \big( \frac{\tau^2}{4} Q_z +  Q_{z'}  \big) \big]$.
%
%\[
%  g(z', \varphibf_d):=  B \Big[ I_n- \wh W^{-1} \Big( \frac{\tau^2}{4} Q_z +  Q_{z'}  \Big) %\Big]
% \wt h(z', \varphibf_d).
%\]
%

%
%Let $\mathcal V_0:=[0, v_{\max}]$ be a bounded set of $v_0$ and $\wt c_2 :=\max_i(c_{2, i})>0$, %where $\wt c_2 \ll 1$ is much less than 1. Since $D(c_{2, i}) \rightarrow 0$  as $\wt c_2 %\downarrow 0$
%and
%$\| \wt h(z')\|/\| \wt c_2 z' \| \rightarrow 0$ as $\| \wt c_2 z' \| \rightarrow 0$,
%%
%it is easy to show that for any given  $v_0\in \mathcal V_0$, $\| g(z')\|_2/\| \wt c_2 z'\|_2 %\rightarrow 0$ as $\| \wt c_2 z'\| \rightarrow 0$.
%

\gap

\noindent{\bf Case (ii): $p>1$}.
In this case, recall that for any fixed $k\in \mathbb Z_+$, $u_0(k+s)=u_0(k)$ for all $s=1, \ldots, p-1$ in the MPC model. Hence, $v_0(k+s)=v_0(k) + \tau s u_0(k)$ for all $s=1, \ldots, p-1$.
%
%, where we assume that $v_{\min} \le v_0(k+s) \le v_{\max}$ without of generality; otherwise, $v_0(k+s)$ is replaced by $\max(v_{\min}, \min(v_0(k+s), v_{\max}))$.
%
%{\bf Note that the assumption $v_0(k)>v_{\min}$ in $\bf A.2$ is not needed since we consider %unconstrained MPC.} In what follows, we let $\vbld_0(k):=\big(v_0(k), v_0(k+1), \ldots, %v_0(k+p-1) \big) \in \mathbb R^p$.
%
%
Define $\wh A(k+s) := A+ v_0(k+s) \cdot \Delta A(\varphibf_d)$ for all $s=0, 1, \ldots, p-1$.
Given $\wh A(k+s)$ with $s=0, \ldots, p-1$, define the state transition matrix as the following matrix product for any $s, s' \in \{0, \ldots, p\}$ with $s \le s'$,
\[
  \Phi_{\wh A}(k+s, k+s) \, := \, I; \qquad  \Phi_{\wh A}(k+s', k+s) \, := \, \wh A(k+s'-1)\times \cdots \times \wh A(k+s), \ \ \ \forall \, s'>s.
\]
Based upon the above notation, we obtain, for any fixed $k\in \mathbb Z_+$ and $s=1, \ldots, p$,
\begin{eqnarray}
 \lefteqn{ \zbld(k+s) \, = \, \Phi_{\wh A} (k+s, k) \zbld(k) + \sum^{s-1}_{i=0} \Phi_{\wh A}(k+s, k+i+1)  B \Big[ \wh w(k+i) + \wt h_{\varphibf_d}(z'(k+i)) \Big] } \label{eqn:zbld_z_k+s} \\
  & = & \Phi_{\wh A} (k+s, k)  \zbld(k) + \sum^{s-1}_{i=0} \Phi_{\wh A}(k+s, k+i+1) B \wh w(k+i)
   + \sum^{s-1}_{i=0} \Phi_{\wh A}(k+s, k+i+1) B \wt h_{\varphibf_d}(z'(k+i)). \notag
\end{eqnarray}

In light of (\ref{eqn:wt_h_func}) and (\ref{eqn:zbld_z_k+s}), the following lemma can be established via an induction argument on $s$ and straightforward calculations; its proof is omitted.

\begin{lemma} \label{lem:wt_h}
Fix an arbitrary $k \in \mathbb Z_+$. For each  $s=1, \ldots, p$, $\wt h_{\varphibf_d}(z'(k+s))$ is a vector-valued function
%
%in $(z'(k), v_0(k), \wh w(k), \varphibf_d, \wh w(k), \ldots, \wh w(k+s-1))$
%
whose each entry is a multivariate polynomial in $(z'(k), u_0(k), v_0(k), \wh w(k), \ldots, \wh w(k+s-1) )$ and $\varphibf_d$.
\end{lemma}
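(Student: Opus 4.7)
The plan is a direct induction on $s$ that leverages two observations and completely bypasses the general formula (\ref{eqn:zbld_z_k+s}), so no nontrivial computation with the state transition matrix $\Phi_{\wh A}$ is needed.

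The first and key observation is the block upper triangular structure of $\wh A$. From (\ref{eqn:closed_loop_matrices}) and $\Delta A(\varphibf_d) = B [0 \ D(\varphibf_d)]$,
\[
  \wh A(k+i) \, = \, \begin{bmatrix} I_n & \tau I_n + \frac{\tau^2}{2} v_0(k+i) D(\varphibf_d) \\ 0 & I_n + \tau v_0(k+i) D(\varphibf_d) \end{bmatrix},
\]
so the $z'$-component of (\ref{eqn:nonlin_closed_loop_02}) decouples from $z$ and, for each $s = 0, 1, \ldots, p-1$, satisfies
\[
  z'(k+s+1) \, = \, \left[ I_n + \tau v_0(k+s) D(\varphibf_d) \right] z'(k+s) + \tau \wh w(k+s) + \tau \wt h_{\varphibf_d}(z'(k+s)).
\]
This decoupling is the reason the lemma lists $z'(k)$ but not $z(k)$ among the polynomial variables; identifying this structural point is the main (and essentially only) obstacle in the proof.

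The second observation is that, by the MPC convention $u_0(k+j) = u_0(k)$ for $j = 1, \ldots, p-1$ and the linear dynamics of the leading vehicle ($c_{2,0} = c_{3,0} = 0$), one has $v_0(k+i) = v_0(k) + \tau i u_0(k)$. Hence $v_0(k+i)$, and with it the matrix $I_n + \tau v_0(k+i) D(\varphibf_d)$, is a polynomial in $(v_0(k), u_0(k), \varphibf_d)$.

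The induction is then immediate. At $s = 0$, $z'(k)$ is trivially a polynomial in the stated variables. Assuming the claim at stage $s$, formula (\ref{eqn:wt_h_func}) gives $\wt h_{\varphibf_d}(y) = \wt D(\varphibf_d)[(S_n y) \circ (S_n y)]$, a polynomial of total degree two in $y$ and degree one in $\varphibf_d$; hence $\wt h_{\varphibf_d}(z'(k+s))$ inherits polynomial dependence on $(z'(k), u_0(k), v_0(k), \wh w(k), \ldots, \wh w(k+s-1), \varphibf_d)$. Substituting into the recursion above, $z'(k+s+1)$ becomes polynomial in $(z'(k), u_0(k), v_0(k), \wh w(k), \ldots, \wh w(k+s), \varphibf_d)$, and one final application of $\wt h_{\varphibf_d}(\cdot)$ preserves polynomial structure, which yields the lemma.
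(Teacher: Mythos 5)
Your proof is correct, and it is a genuine (if modest) streamlining of the route the paper indicates. The paper omits its own argument, saying only that the lemma ``can be established via an induction argument on $s$ and straightforward calculations'' from (\ref{eqn:wt_h_func}) and the full $2n$-dimensional solution formula (\ref{eqn:zbld_z_k+s}), which involves products of the state transition matrices $\Phi_{\wh A}$. You run the same induction but discard (\ref{eqn:zbld_z_k+s}) in favor of the one-step recursion for the $z'$-block alone, justified by the block upper triangular form of $\wh A(k+i)$ (lower-right block $I_n + \tau v_0(k+i) D(\varphibf_d)$, lower-left block zero) together with the fact that $\wt h_{\varphibf_d}$ depends only on $z'$. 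This buys two things: you never have to track the $z$-component or expand transition-matrix products, and it makes structurally transparent why the lemma's list of polynomial variables contains $z'(k)$ but not $z(k)$ --- a point that, working from (\ref{eqn:zbld_z_k+s}) directly, one would have to extract by observing that $z$ never feeds back into $z'$ or into $\wt h_{\varphibf_d}$. The remaining ingredients match the paper exactly: $v_0(k+i) = v_0(k) + \tau i\, u_0(k)$ follows from the MPC convention $u_0(k+j) = u_0(k)$ together with $c_{2,0} = c_{3,0} = 0$, and the form $\wt h_{\varphibf_d}(y) = \wt D(\varphibf_d)\big[(S_n y) \circ (S_n y)\big]$ from (\ref{eqn:wt_h_func}), quadratic in $y$ and linear in $\varphibf_d$, closes the induction. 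One small mislabeling: the recursion you use is the within-horizon prediction dynamics with generic inputs $\wh w(k+s)$ (the recursion underlying (\ref{eqn:zbld_z_k+s})), not literally ``the $z'$-component of (\ref{eqn:nonlin_closed_loop_02}),'' which is the closed loop driven by the optimal $\wh w_*(k)$; the algebra is identical, so this is harmless, but the citation should point to the prediction dynamics.
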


%
%For each $s=1, \ldots, p-1$, $\wt h(z'(k+s))$ depends on $\wh w(k), \ldots, \wh w(k+s-1)$ and %$\varphibf_d$.({\bf !!!})
%

Further,  in view of $u_0(k+s) =u_0(k)$ for any $s \ge 0$ and a fixed $k \in \mathbb Z_+$, we have for each $s=0, \ldots, p-1$,
\[
  \wt w(k+s) = w(k+s) - u_0(k+s)\mathbf e_1 = \wh w(k+s) + w_e(k+s) - u_0(k) \mathbf e_1 = \wh w(k+s) + d(k+s),
\]
where $d(k+s):= w_e(k+s) - u_0(k) \mathbf e_1$. Here we recall that for each $s=0, \ldots, p-1$,
\[
w_{e, i}(k+s)= [c_{2, i-1}-c_{2, i}] v^2_0(k+s)+[c_{3, i-1} -c_{3, i}] g, \qquad \forall \ i=1, \ldots, n,
\]
where $v_0(k+s) = v_0(k) + s\tau u_0(k)$.  Note that $w_e(k)$ depends on $\varphibf$ linearly.

Consider the unconstrained MPC model. Define the following augmented matrices and vector:
\[
   \ol Q_{\zbld, s} :=\begin{bmatrix} Q_{z, s} & \\ & Q_{z', s} \end{bmatrix}, \ \ s=1,\ldots, p; \quad
   \ol Q_w := \begin{bmatrix} Q_{w, 1} & & & \\ &  \ddots & \\ & & Q_{w, p} \end{bmatrix}; \quad \wt \dbld(k) := \begin{bmatrix} d(k)\\ \vdots \\ d(k+p-1) \end{bmatrix}.
\]
For any fixed $k \in \mathbb Z_+$, the objective function in the MPC model is written as
\begin{eqnarray*}
  J( \underbrace{\wh w(k), \ldots, \wh w(k+p-1)}_{:=\wh \wbld(k)} ) \, = \, \frac{1}{2} \Big(
    \sum^p_{s=1}  \zbld(k+s)^T \ol Q_{\zbld, s} \zbld(k+s) \Big) + \frac{\tau^2}{2} [\wh\wbld(k) + \wt\dbld(k)]^T \ol Q_{w}    [\wh \wbld(k) + \wt\dbld(k)].
\end{eqnarray*}
%
%\gap
%

Substituting the expression for $\zbld(k+s)$ given by (\ref{eqn:zbld_z_k+s}) into the objective function  $J$, we obtain the objective function written as $J(\wh \wbld)$ for a fixed $k$. It follows from the previous development and Lemma~\ref{lem:wt_h} that $J$ is a polynomial function in $(\wh\wbld, \zbld(k), v_0(k), u_0(k), \varphibf)$.
Moreover, the Hessian of the objective function $J$ with respect to $\wh \wbld$ is given by
\[
   H J(\wh \wbld) \, =  \,\Bigg[ \, \frac{\partial J^2(\wh \wbld)}{\partial \wh \wbld_i \partial \wh \wbld_j} \, \Bigg]_{i, j} \, := \, \wh \Hbld(\wh \wbld, \zbld(k), v_0(k), u_0(k), \varphibf).
\]
%
%(or $\wh \Hbld(\wh \wbld, \zbld(k), v_0(k), \ldots, v_0(k+p-1), \varphibf)$ ??)
%
%where $\varphibf:=(\varphibf_d, \varphibf_f)\in \mathbb R^{2n}_+$.
%
When $k$ is fixed, we write this Hessian as $\wh \Hbld(\wh\wbld, \zbld, v_0, u_0, \varphibf)$ to emphasize its dependence on these variables. Clearly, $\wh \Hbld$ is an analytic, thus smooth, function, and for any $(\wh\wbld, \zbld, v_0, u_0)$, $\wh \Hbld(\wh\wbld, \zbld, v_0, u_0, \varphibf)|_{\varphibf=0} =\Hbld$, where $\Hbld$ is the constant PD matrix given by (\ref{eqn:Hbld_matrix}).
Moreover, when $\varphibf=0$, the objective function $J$ reduces to the one for the linear vehicle dynamics whose corresponding optimal solution is given in Section \ref{susbsect:review_closed_loop_linear_dynamics} as
\[
\wh \wbld_*(\zbld, v_0, u_0, \varphibf)|_{\varphibf=0} \, = \, -\mathbf H^{-1} \big(\mathbf G \cdot \zbld -  u_0 \cdot \mathbf g \big).
\]
However, unlike Case (i) where $p=1$, the closed form expression of a critical point or a local minimizer $\wh \wbld_*$ is unavailable  for $p>1$, and some implicit function theorems are needed. Instead of applying the classical local implicit function theorem, we consider results on non-local (or global) implicit functions to express $\wh \wbld_*$ in term of $\zbld, v_0, u_0$ and $\varphibf$, since the variables $\zbld, v_0, u_0$ can be non-local.

\begin{proposition} \label{prop:perturbed_H_uniform}
 Let $\mathcal U_{\zbld}$ be a bounded set in $\mathbb R^{2n}$, $\mathcal U_{0}$ be a bounded set containing $[a_{0,\min}, a_{0,\max}]$, and $\mathcal V_0$ be a bounded set containing $[v_{\min}, v_{\max}]$.
 Let $\mathcal U_{\wh \wbld}$ be a bounded set in $\mathbb R^{n p}$ containing all $\wh \wbld_*(\zbld, v_0, u_0, 0)$ for all $\zbld \in \mathcal U_{\zbld}$, $v_0\in\Vcal_0$, and $u_0 \in \mathcal U_{0}$.
%
%  let $\mathcal U_{\wh \wbld}$ be a bounded set containing all $\wh \wbld_*(\zbld, v_0, u_0, 0)$ %for all $\zbld \in \mathcal U_{\zbld}$, $v_0\in[v_{\min}, v_{\max}]$, and $u_0 \in \mathcal %U_{u_0}$.
%
  Then for any constant $\wt \lambda$ with $0 < \wt \lambda < \lambda_{\min}(\Hbld)$,
  there exists a positive constant $\mu_1>0$ such that    $\wh \Hbld(\wh \wbld, \zbld, v_0, u_0, \varphibf)$ is PD and $\lambda_{\min}(\wh \Hbld(\wh \wbld, \zbld, v_0, u_0, \varphibf)) \ge \wt \lambda$  for all $(\wh \wbld, \zbld, v_0, u_0, \varphibf) \in \Ucal_{\wh \wbld} \times \Ucal_{\zbld} \times \mathcal V_0 \times \mathcal U_{0} \times \mathcal B_{\infty}(0, \mu_1)$, where $\mathcal B_\infty(0, \mu_1):=\{ \varphibf \, | \, \| \varphibf \|_\infty < \mu_1 \}$.
\end{proposition}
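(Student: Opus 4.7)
The plan is to reduce the assertion to a uniform continuity argument on a suitable compact set, exploiting the key identity $\wh{\mathbf H}(\wh\wbld, \zbld, v_0, u_0, \varphibf)\big|_{\varphibf=0} \equiv \mathbf H$ noted in the setup. The main ingredients are (i) smoothness of $\wh{\mathbf H}$ in all arguments (which follows from Lemma~\ref{lem:wt_h} and the polynomial form of $J$ in $(\wh\wbld, \zbld, v_0, u_0, \varphibf)$), (ii) continuity of the map $A \mapsto \lambda_{\min}(A)$ on the space of symmetric matrices, and (iii) compactness.

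First, since $\mathcal U_{\wh\wbld}$, $\mathcal U_{\zbld}$, $\mathcal V_0$, $\mathcal U_0$ are all bounded, their closures are compact; let $K$ denote the product of these closures, a compact subset of $\mathbb R^{np}\times \mathbb R^{2n} \times \mathbb R \times \mathbb R$. Define
\[
  g(\wh\wbld, \zbld, v_0, u_0, \varphibf) \, := \, \lambda_{\min}\!\big(\wh{\mathbf H}(\wh\wbld, \zbld, v_0, u_0, \varphibf)\big).
\]
The Hessian $\wh{\mathbf H}$ is symmetric (Schwarz's theorem) and smooth (in fact polynomial) in all variables; composing with the continuous map $\lambda_{\min}$ shows $g$ is continuous on $K \times \ol{\mathcal B}_\infty(0,1)$, which is compact. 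Hence $g$ is uniformly continuous there.

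Next, I would use the identity $\wh{\mathbf H}(\cdot, \cdot, \cdot, \cdot, 0) = \mathbf H$ to conclude $g(\cdot, \cdot, \cdot, \cdot, 0) \equiv \lambda_{\min}(\mathbf H)$ on $K$. Given $\wt\lambda$ with $0 < \wt \lambda < \lambda_{\min}(\mathbf H)$, set $\varepsilon := \lambda_{\min}(\mathbf H) - \wt\lambda > 0$. Uniform continuity of $g$ then yields a $\delta > 0$ such that whenever $\|\varphibf\|_\infty < \delta$ and $(\wh\wbld, \zbld, v_0, u_0) \in K$,
\[
  \big|\, g(\wh\wbld, \zbld, v_0, u_0, \varphibf) - \lambda_{\min}(\mathbf H)\,\big| \, < \, \varepsilon,
\]
so $g(\wh\wbld, \zbld, v_0, u_0, \varphibf) > \wt\lambda > 0$. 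Setting $\mu_1 := \min(\delta, 1)$ gives that $\wh{\mathbf H}$ is symmetric with smallest eigenvalue at least $\wt\lambda$ on $K\times \mathcal B_\infty(0,\mu_1)$, and in particular on $\mathcal U_{\wh\wbld}\times\mathcal U_{\zbld}\times \mathcal V_0\times \mathcal U_0 \times \mathcal B_\infty(0,\mu_1)$; positive definiteness follows since $\wt\lambda > 0$.

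The only nontrivial point is obtaining \emph{uniformity} of $\mu_1$ in the first four variables; without it, continuity only produces pointwise neighborhoods. The passage to closures plus uniform continuity on a compact domain handles this cleanly, and I do not foresee any essential obstacle beyond verifying that $\wh{\mathbf H}$ indeed depends continuously on all arguments, which was already established in the construction of $J$ preceding the proposition.
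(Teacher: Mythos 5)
Your proof is correct, but it takes a somewhat different route from the paper's. The paper argues quantitatively at the level of matrix entries: it bounds $\sup \| \nabla_{\varphibf} \wh\Hbld_{i,j} \|$ over the bounded parameter region by a constant $M$, applies the mean value theorem in $\varphibf$ to get an entrywise Lipschitz estimate, and concludes $\| \wh\Hbld(\wh\wbld, \zbld, v_0, u_0, \varphibf) - \Hbld \| \le M' \|\varphibf\|$ uniformly, after which the eigenvalue bound follows from standard perturbation of the constant PD matrix $\Hbld$ (in effect $\lambda_{\min}(\wh\Hbld) \ge \lambda_{\min}(\Hbld) - \|\wh\Hbld - \Hbld\|$). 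You instead compose with $\lambda_{\min}$ directly, pass to closures to get a compact domain, and invoke uniform continuity of $g = \lambda_{\min}\circ\wh\Hbld$ there, comparing each point $(\cdot,\varphibf)$ with $(\cdot,0)$ where $g \equiv \lambda_{\min}(\Hbld)$ by the key identity $\wh\Hbld|_{\varphibf=0} = \Hbld$ (which, being a polynomial identity, indeed extends to the closures). Both arguments hinge on the same two facts — polynomial smoothness of the Hessian entries and boundedness of the domain — but yours is softer and shorter: it avoids the gradient bound and the mean value theorem, needing only continuity of $\lambda_{\min}$ on symmetric matrices. What the paper's version buys in exchange is an explicit linear modulus $\|\wh\Hbld - \Hbld\| \le M'\|\varphibf\|$, which matches the style of the quantitative estimates used nearby (e.g., the bound $\|\Delta\wh{\mathbf h}\| \le \varkappa\|\varphibf\|_\infty$ after Proposition~\ref{prop:implicit_function_wh_wbld}), whereas your compactness argument yields no rate. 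Your handling of the uniformity issue — the one genuinely delicate point — is sound: uniform continuity on the compact product domain with $\mu_1 = \min(\delta,1)$ gives $\mu_1$ independent of $(\wh\wbld, \zbld, v_0, u_0)$, and you even obtain the slightly stronger strict inequality $\lambda_{\min} > \wt\lambda$.
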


\begin{proof}
As indicated before, each entry $\wh \Hbld_{i, j}(\cdot) $ is a real-valued polynomial function in $(\wh\wbld, \zbld, v_0, u_0, \varphibf)$ and is thus globally smooth. Let $\mu>0$ be any given positive constant. Define the following constant
\[
   M \, := \,  \max_{i, j\in\{1, \ldots, np\}} \Big( \sup_{ (\wh \wbld, \zbld, v_0, u_0, \varphibf) \in \Ucal_{\wh \wbld} \times \Ucal_{\zbld} \times \mathcal V_0 \times \mathcal U_{0} \times  B_{\infty}(0, \mu) } \big\| \nabla_{\varphibf}  \wh \Hbld_{i, j}( \wh \wbld, \zbld, v_0, u_0, \varphibf ) \big\| \Big).
\]
Since $\Ucal_{\wh \wbld} \times \Ucal_{\zbld} \times \mathcal V_0 \times \mathcal U_{0} \times  B_{\infty}(0, \mu)$ is bounded, we have $0 < M <\infty$.
Consider arbitrary indices $i, j \in \{1, \ldots, np\}$.
%
% the function $\wh \Hbld_{i, j}(\cdot)$ is a real-valued (globally)  smooth function in %$(\wh\wbld, \zbld, v_0, u_0, \varphibf)$.
%
%For notation simplicity, let ${\mathbf p}(\cdot)$ denote $\wh \Hbld_{i, j}(\cdot)$. Hence, %$\mathbf p$ .
%
For any fixed $(\wh \wbld, \zbld, v_0, u_0) \in \Ucal_{\wh \wbld} \times \Ucal_{\zbld} \times \mathcal V_0\times \mathcal U_0$, we deduce via the Mean-value Theorem that for any $\varphibf \in \mathcal B_{\infty}(0, \mu)$, there exists $\varphibf'$ in the line segment joining $0$ and $\varphibf$ such that
\[
   \big \| \wh \Hbld_{i, j}( \wh \wbld, \zbld, v_0, u_0, \varphibf) - \wh \Hbld_{i, j}( \wh \wbld, \zbld, v_0, u_0, 0) \big \| \le \big\| \nabla_{\varphibf}  \wh \Hbld_{i, j}( \wh \wbld, \zbld, v_0, u_0, \varphibf' ) \big\| \cdot \|\varphibf\| \le M \cdot  \|\varphibf\|,
\]
and $\varphibf' \in B_{\infty}(0, \mu)$.
Hence, there exists a positive constant $M'$ such that
\[
\big\| \wh\Hbld(\wh\wbld, \zbld, v_0, u_0, \varphibf) - \Hbld \big\| = \big \| \wh\Hbld(\wh\wbld, \zbld, v_0, u_0, \varphibf) - \wh \Hbld(\wh\wbld, \zbld, v_0, u_0, 0) \big\| \le M' \cdot \| \varphibf\|
\]
for all  $(\wh \wbld, \zbld, v_0, u_0, \varphibf) \in \Ucal_{\wh\wbld} \times \Ucal_{\zbld} \times \mathcal V_0 \times \mathcal U_{0} \times \mathcal B_{\infty}(0, \mu)$. Since $\Hbld$ is a constant PD matrix,  it is easy to see that the desired result holds by choosing a small positive constant $\mu_1$ with $\mu_1\le \mu$.
\end{proof}

To obtain an implicit form of $\wh\wbld_*(k)$ in terms of $\zbld(k), v_0(k), u_0(k)$ from the MPC optimization problem for small $\|\varphibf\|$, we shall exploit the following global implicit function theorems \cite{Ichiraku_TCS85, Sandberg_TCS81}.

\begin{theorem} \cite[Theorem 2]{Ichiraku_TCS85} \label{thm:global_implicit_function_thm03}
Consider the sets $\Ucal \subseteq \mathbb R^n $ and $\Vcal \subseteq \mathbb R^m$, where $\Ucal$ is connected ($\Ucal$ and $\Vcal$ are not necessarily open).
Let $f:\Ucal' \times \Vcal' \rightarrow \mathbb R^m$ be a $C^r$-function with $r \ge 1$,
  where $\Ucal' \subseteq \mathbb R^n $ and $\Vcal' \subseteq \mathbb R^m$ are open sets containing $\Ucal$ and $\Vcal$ respectively. Further, suppose the following hold:
 \begin{itemize}
   \item [(i)] For some $x_*\in \Ucal$, there exists exactly one $y_* \in \Vcal$ such that $f(x_*, y_*)=0$;
   \item [(ii)] For any $(x, y)\in \mathcal G'_f:=\{ (x, y) \in \Ucal' \times \Vcal': f(x, y)=0 \}$, $D_y f(x, y)$ is invertible;
  % \item [(iii)] For any compact set $\mathcal S \subset \Ucal$, there exists a compact %$\mathcal T \subset \Vcal$ such that $x \in \mathcal S$, $y \in \Vcal$, and $f(x, y)=0$ imply %$y \in \mathcal T$.
  %
   \item [(iii)] For any sequence $\big( (x_k, y_k) \big) \in \mathcal G_f:=\{ (x, y) \in \Ucal \times \Vcal: f(x, y)=0 \}$ with $(x_k) \rightarrow x_*$, there exists a subsequence $(y_{k'})$ of $(y_k)$ such that $(y_{k'})$ converges to a point in $\Vcal$.
  %
  % There exists a positive constant $\rho$ such that $\| (D_y f(x, y))^{-1} \| \cdot \| D_x %f(x, y) \| \le \rho$ for all $(x, y) \in \mathcal G_f$.
 \end{itemize}
 Then there exists a unique $C^r$-function $g:\Ucal \rightarrow \Vcal$ such that $f(x, g(x))=0, \forall \, x \in \Ucal$.
\end{theorem}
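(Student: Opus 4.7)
The plan is to prove this by a connectedness argument, using the classical local implicit function theorem on a neighborhood of each already-solved point and condition (iii) as the ``non-escape'' mechanism that prevents solutions from leaking out of $\Vcal$. Specifically, I would define
\[
S \, := \, \bigl\{ x \in \Ucal \, : \, \exists!\, y \in \Vcal \text{ with } f(x,y) = 0 \bigr\}
\]
and aim to show $S = \Ucal$. Since $\Ucal$ is connected, it suffices to show that $S$ is non-empty, open, and (relatively) closed in $\Ucal$. Non-emptiness is immediate from hypothesis (i), which puts $x_* \in S$ with unique partner $y_* \in \Vcal$. Having done this, the map $g:\Ucal\to\Vcal$ is defined by $g(x):=$ the unique $y\in\Vcal$ with $f(x,g(x))=0$, and its $C^r$-regularity at each $x_0\in \Ucal$ will follow from the local implicit function theorem, since the locally produced $C^r$ branch must agree with $g$ by uniqueness.

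For openness, fix $x_0 \in S$ with $y_0 := g(x_0)$. Condition (ii) makes $D_y f(x_0,y_0)$ invertible, so the classical local implicit function theorem yields an open neighborhood $N\subseteq \Ucal'$ of $x_0$ and an open neighborhood $W\subseteq \Vcal'$ of $y_0$ on which a unique $C^r$ map $\tilde g:N\to W$ satisfies $f(x,\tilde g(x))=0$. To promote this \emph{local} uniqueness in $W$ to \emph{global} uniqueness in $\Vcal$ for $x$ near $x_0$, I argue by contradiction: if there were a sequence $x_n\to x_0$ in $\Ucal$ admitting a second zero $y_n'\in \Vcal\setminus W$, condition (iii) (applied at $x_0$) would extract a subsequence $y_{n_k}'\to y'\in \Vcal$ with $f(x_0,y')=0$, whence uniqueness at $x_0$ forces $y'=y_0$, placing $y_{n_k}'\in W$ for large $k$, a contradiction. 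Closedness is analogous: for $x_k\in S$ with $x_k\to \bar x\in \Ucal$, condition (iii) produces a subsequential $\Vcal$-limit $\bar y$ of $(g(x_k))$ with $f(\bar x,\bar y)=0$, establishing existence at $\bar x$; the same extraction argument upgrades this to uniqueness at $\bar x$.

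The main obstacle is the leap from the \emph{local} uniqueness delivered by the standard implicit function theorem (valid only inside the small neighborhood $W$ of $y_0$) to \emph{global} uniqueness within $\Vcal$; without the latter, the set $S$ could fail to be open, and different ``branches'' of solutions could pass through the same $x$. This is precisely where condition (iii) carries the argument: it provides just enough sequential compactness in $\Vcal$ to force any competing branch to accumulate at an already-identified solution, which then contradicts local uniqueness. A secondary subtlety is that condition (iii) is stated in terms of a point $x_*$, and the argument requires applying it at each accumulation point under consideration; this is the natural reading in which $x_*$ plays the role of a generic (rather than fixed) point of $\Ucal$, and it is the only way the global-implicit-function conclusion can be reached from purely local data plus connectedness of $\Ucal$.
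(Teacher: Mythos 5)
The first thing to note is that the paper does not prove this statement at all: it is imported verbatim from Ichiraku [Theorem 2], and the proof there is a covering-map argument (the paper alludes to it when proving Theorem~\ref{thm:global_implicit_function_thm}): condition (ii) plus the classical implicit function theorem makes the projection $\pi(x,y)=x$ of the zero set a local homeomorphism, condition (iii) supplies the properness that turns $\pi$ into a covering of the connected set $\mathcal{U}$, and condition (i) pins the fiber over $x_*$ to a single point, so the covering is one-sheeted and $g$ is its (automatically $C^r$) inverse. Your open--closed monodromy argument is the same mechanism in spirit, and your remark that (iii) must be applied at arbitrary limit points rather than only at the $x_*$ of (i) is not merely ``the natural reading''---it is forced: under the literal fixed-$x_*$ reading the statement is false. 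Take $f(x,y)=xy-1$ with $\mathcal{U}=\mathcal{V}=\mathbb{R}$ and $x_*=1$: conditions (i)--(iii) hold literally (any solution sequence with $x_k\to 1$ has $y_k=1/x_k\to 1$), yet $f(0,\cdot)$ has no zero, so no global $g$ exists.

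The genuine gap is that you run the open--closed argument on the wrong set. The conclusion asserts uniqueness of a global $C^r$ \emph{section}, not pointwise uniqueness of solutions, and your set $S=\{x\in\mathcal{U} : \exists!\, y\in\mathcal{V},\ f(x,y)=0\}$ need not be closed even when all hypotheses hold and the theorem is true. Concretely, take $\mathcal{U}=\mathbb{R}$, $\mathcal{V}=\{0,1\}\subset\mathbb{R}$, $\mathcal{V}'=\mathbb{R}$, and $f(x,y)=y\,(y-1-x^2)$. Then $D_yf=2y-(1+x^2)$ is nonzero on the entire zero set, (i) holds at any $x_*\neq 0$, and (iii) holds at every point of $\mathcal{U}$ (solution sequences have $y_k\in\{0,1\}$, and $y_k=1$ forces $x_k=0$), and the conclusion holds with $g\equiv 0$ as the unique $C^r$ section. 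Yet $S=\mathbb{R}\setminus\{0\}$, since at $x=0$ both $y=0$ and $y=1$ are solutions in $\mathcal{V}$: $S$ is not closed, so connectedness cannot give $S=\mathcal{U}$, and your closedness step (``the same extraction argument upgrades this to uniqueness at $\bar x$'') is simply false---extraction produces \emph{one} limit solution but cannot exclude a second zero in $\mathcal{V}$ at $\bar x$ that is not a limit of the $g(x_k)$. A second, independent hole sits in your openness step: the local branch $\wt g(x)$ is only known to lie in $W\subseteq\mathcal{V}'$, and since $\mathcal{V}$ is not assumed open nothing places $\wt g(x)$ in $\mathcal{V}$, so even the existence half of ``$x$ near $x_0$ lies in $S$'' is unsupported (indeed, as transcribed the theorem silently needs some ``branches stay in $\mathcal{V}$'' ingredient: with $f(x,y)=(y_1-x,\,y_2-x)$ and $\mathcal{V}$ the $y_1$-axis in $\mathbb{R}^2$, all listed hypotheses hold, vacuously away from $x=0$, yet no $\mathcal{V}$-valued section exists). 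The repair is to abandon pointwise uniqueness and instead track the connected component of the zero set through $(x_*,y_*)$, proving that $\pi$ restricted to it is a covering with a one-point fiber over $x_*$, hence a homeomorphism; your connectedness idea then does legitimate work only at the last step, applied to the agreement set $\{x: g_1(x)=g_2(x)\}$ of two candidate sections.
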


An easily verified condition in replace of condition (iii) in the above theorem is given by the following theorem; its proof resembles that of \cite[Theorem 5]{Ichiraku_TCS85}, which exploits the covering map argument.

\begin{theorem} \label{thm:global_implicit_function_thm}
 Let $\Ucal \subseteq \mathbb R^n$ be a connected set, and $\Vcal \subseteq \mathbb R^m$ be a closed set. Let $f:\Ucal' \times \Vcal' \rightarrow \mathbb R^m$ be a $C^r$-function with $r \ge 1$,
  where $\Ucal' \subseteq \mathbb R^n $ and $\Vcal' \subseteq \mathbb R^m$ are open sets containing $\Ucal$ and $\Vcal$ respectively.
 Suppose  the following hold:
 \begin{itemize}
   \item [(i)] For some $x_*\in \Ucal$, there exists exactly one $y_* \in \Vcal$ such that $f(x_*, y_*)=0$;
   \item [(ii)] For any $(x, y)\in \mathcal G'_f:=\{ (x, y) \in \Ucal' \times \Vcal': f(x, y)=0 \}$, $D_y f(x, y)$ is invertible;
   \item [(iii)] There is a positive constant $\rho$ such that $\| (D_y f(x, y))^{-1} \| \cdot \| D_x f(x, y) \| \le \rho$ for all $(x, y) \in \mathcal G'_f$.
 \end{itemize}
 Then there exists a unique $C^r$ function $g:\Ucal \rightarrow \Vcal$ such that $f(x, g(x))=0, \forall \, x \in \Ucal$.
\end{theorem}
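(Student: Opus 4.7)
The plan is to combine the classical local implicit function theorem, available at every point of $\mathcal G'_f$ by (ii), with a path-lifting construction driven by the a priori bound in (iii), and then glue the local pieces together using connectedness of $\Ucal$. First, the classical IFT at $(x_*, y_*)$ yields a neighborhood $U_0 \subseteq \Ucal'$ of $x_*$ and a unique $C^r$ function $g_0 : U_0 \to \Vcal'$ with $g_0(x_*) = y_*$ and $f(x, g_0(x)) \equiv 0$; applied at an arbitrary $(x, y) \in \mathcal G'_f$, the same reasoning shows that the projection $\pi : \mathcal G'_f \to \Ucal'$, $(x, y) \mapsto x$, is a local $C^r$-diffeomorphism. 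The goal is to splice these local branches into a single $g : \Ucal \to \Vcal$.

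Fix $x_1 \in \Ucal$ and, working within the path component of $x_*$ in $\Ucal$ (which coincides with $\Ucal$ in all intended applications), pick a piecewise $C^1$ path $\gamma : [0, 1] \to \Ucal$ with $\gamma(0) = x_*$, $\gamma(1) = x_1$. Lift $\gamma$ by solving the initial value problem
\begin{equation*}
 \dot y(t) \, = \, -\bigl[ D_y f(\gamma(t), y(t)) \bigr]^{-1} D_x f(\gamma(t), y(t)) \, \dot\gamma(t), \qquad y(0) \, = \, y_*.
\end{equation*}
Along any solution, $\tfrac{d}{dt} f(\gamma(t), y(t)) \equiv 0$, so $(\gamma(t), y(t)) \in \mathcal G'_f$; condition (iii) yields $\| \dot y(t)\| \le \rho \, \|\dot\gamma(t)\|$, so $y(t)$ is forced to remain in the closed ball $\overline{B}(y_*, \rho\cdot \ell(\gamma))$. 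On the maximal interval $[0, t^*)$ of existence in $\Vcal'$, the Lipschitz bound delivers the Cauchy limit $y(t^*) := \lim_{t \uparrow t^*} y(t)$; closedness of $\Vcal$ places this limit in $\Vcal \subseteq \Vcal'$, so the local IFT at $(\gamma(t^*), y(t^*))$ extends the lift strictly past $t^*$, contradicting maximality unless $t^* = 1$. Set $g(x_1) := y(1) \in \Vcal$.

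Smoothness of $g$ is immediate, since near each $x \in \Ucal$ the global $g$ coincides with the $C^r$ local implicit function supplied by (ii). For path independence, if two paths $\gamma_1, \gamma_2$ from $x_*$ to $x_1$ produce lifts $y^{(1)}, y^{(2)}$, the set $\{ t \in [0,1] : y^{(1)}(t) = y^{(2)}(t) \}$ is closed by continuity and open by the local uniqueness in the classical IFT at $(\gamma_i(t), y^{(i)}(t))$; combined with the basepoint uniqueness from (i), this yields $y^{(1)}(1) = y^{(2)}(1)$, so $g$ is well defined. Uniqueness of $g$ itself follows by the same clopen argument applied to any competitor $\tilde g$, which agrees with $g$ on $U_0$ by (i). The principal obstacle is this global single-valuedness step: the local single-valuedness from (ii) must be promoted to a globally well-defined implicit function by carefully combining (i) with the Lipschitz control in (iii) to rule out \emph{sheet-swapping} during lifting, and a secondary technical subtlety is that the ODE lift a priori takes values only in the open $\Vcal'$, with closedness of $\Vcal$ being precisely what captures boundary limits inside $\Vcal$.
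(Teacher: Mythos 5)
Your route is genuinely different from the paper's: the paper disposes of this theorem in a few lines by reducing it to Theorem~\ref{thm:global_implicit_function_thm03}, checking only that the sequential condition (iii) of that theorem follows from the present hypotheses --- for a sequence $\big((x_k,y_k)\big)$ in $\mathcal G_f$ with $(x_k)\to x_*$, the set $\mathcal S=\{x_k\}\cup\{x_*\}$ is compact, hypothesis (iii) together with the covering-map argument of \cite[Theorem 5]{Ichiraku_TCS85} bounds the fiber set $\{y\in\Vcal'\,:\, f(x,y)=0,\ x\in\mathcal S\}$, and since each $y_k\in\Vcal$ and $\Vcal$ is closed, a subsequential limit lies in $\Vcal$. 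Your direct path-lifting construction is in the spirit of what happens inside that cited covering argument, but as written it has two genuine gaps. The first is the boundary step: your lift $y(t)$ is only known to lie in the \emph{open} set $\Vcal'$, so the Cauchy limit $y(t^*)$ lies merely in the closure of the ball you exhibit; closedness of $\Vcal$ is inapplicable because the trajectory was never shown to lie in $\Vcal$, and indeed $y(t^*)$ can sit on $\partial\Vcal'$, outside the domain of $f$, at which point the local implicit function theorem cannot be invoked to extend past $t^*$. (Contrast the paper's use of closedness, where it is applied to a sequence lying in $\Vcal$ by the very definition of $\mathcal G_f$.) Keeping the continuation inside $\Vcal$ is part of what must be proved, and it is exactly where the properness/covering structure enters.

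The second gap is well-definedness. The set $\{t \,:\, y^{(1)}(t)=y^{(2)}(t)\}$ is not open: for $t\in(0,1)$ the two lifts sit over different base points $\gamma_1(t)\neq\gamma_2(t)$, and local IFT uniqueness compares zeros over the \emph{same} base point only; generically the two lifts separate immediately after $t=0$ even though both start at $y_*$, so your clopen argument collapses and nontrivial monodromy --- the very sheet-swapping you flag --- is not excluded by anything you wrote. Ruling it out requires either homotopy lifting together with simple connectedness of $\Ucal$ (not assumed: $\Ucal$ is merely connected, and your restriction to piecewise-$C^1$ paths in the path component of $x_*$ already proves less than the statement) or the sheet-counting argument implicit in the paper's reduction: properness over compact subsets of the base, derived from (iii), makes the projection of the relevant zero set a covering with locally constant sheet number over the connected $\Ucal$, and hypothesis (i) then forces exactly one sheet meeting $\Vcal$. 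Without one of these ingredients your $g$ is not shown to be well defined, so the proposal as written does not prove the theorem; the ODE estimate $\|\dot y(t)\|\le\rho\,\|\dot\gamma(t)\|$ is a correct and useful rendering of (iii), but it must be combined with the global covering-type argument rather than a pointwise clopen comparison of lifts.
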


\begin{proof}
It suffices to show that condition (iii) of Theorem~\ref{thm:global_implicit_function_thm03} holds under conditions (ii) and (iii).  To show this result, let
 $\big( (x_k, y_k) \big)$ be an arbitrary sequence in $\mathcal G_f:=\{ (x, y) \in \Ucal \times \Vcal: f(x, y)=0 \}$ such that $(x_k) \rightarrow x_*$. Define $\Scal:=\{ x_k \, : \, k \in \mathbb N \} \cup \{ x_* \}$. Hence, $\Scal$ is a compact set in $\Ucal$ and thus is bounded.  It follows from the assumption (iii) and a similar argument for \cite[Theorem 5]{Ichiraku_TCS85} that the set  $\Wcal':=\{ y \in \Vcal' \, : \, f(x, y)=0, x \in \mathcal S \}$ is bounded. Note that $\Wcal:=\{ y \in \Vcal \, : \, f(x, y)=0, x \in \mathcal S \} \subseteq \Wcal'$ is also bounded. Hence, $(y_k)$ attains a convergent subsequence $(y_{k'})$ whose limit is denoted by $y_*$. Since $\Vcal$ is closed and $y_{k'} \in \Vcal$, we have $y_* \in \Vcal$. This yields the desired result.
\end{proof}

%
%\begin{theorem} \cite[Theorems 3 and 5]{Ichiraku_TCS85} \label{thm:global_implicit_function_thm}
% Let $\Ucal$ and $\Vcal$ be open sets of $\mathbb R^n$ and $\mathbb R^m$ respectively, and let %$f:\Ucal \times \Vcal \rightarrow \mathbb R^m$ be a $C^r$-function with $r \ge 1$. Suppose %$\Ucal$ is connected and  the following hold:
% \begin{itemize}
%   \item [(i)] For some $x_*\in \Ucal$, there exists exactly one $y_* \in \Vcal$ such that %$f(x_*, y_*)=0$;
%   \item [(ii)] For any $(x, y)\in \mathcal G_f:=\{ (x, y) \in \Ucal \times \Vcal: f(x, y)=0 \}$, %$D_y f(x, y)$ is invertible;
%   \item [(iii)] There exists a positive constant $\rho$ such that $\| (D_y f(x, y))^{-1} \| %\cdot \| D_x f(x, y) \| \le \rho$ for all $(x, y) \in \mathcal G_f$.
% \end{itemize}
% Then there exists a unique continuous (in fact $C^r$) function $g:\Ucal \rightarrow \Vcal$ such %that $f(x, g(x))=0, \forall \, x \in \Ucal$.
%\end{theorem}
%The $C^r$-property of the function $g$ in the above theorem is due to the $C^r$-property of a %local implicit function by the classical implicit function theorem and the uniqueness of $g$; %see, for example, \cite[Corollary 1]{Sandberg_TCS81}. Further, condition (iii) can be replaced by %a weaker (i.e., less restrictive) condition \cite[Theorems 3]{Ichiraku_TCS85}.
%

Using the above theorem, we establish a result on global implication function for $\wh \wbld_*$ as follows.

\begin{proposition} \label{prop:implicit_function_wh_wbld}
  Let $\mathcal U_{\zbld}$ be a bounded open convex  set in $\mathbb R^{2n}$, let $\mathcal U_{0}$ be a bounded open convex set containing $[a_{0,\min}, a_{0,\max}]$, and let $\mathcal V_0$ be a bounded open convex set containing $[v_{\min}, v_{\max}]$.
 %
 % $\{ v_0 \, | \, v_{\min} \mathbf 1 \le v_0 \le v_{\max} \mathbf 1 \}$.
  %
  Let $\mathcal U_{\wh \wbld}$ be a compact set in $\mathbb R^{n p}$ containing all $\wh \wbld_*(\zbld, v_0, u_0, 0)$ for all $\zbld \in \mathcal U_{\zbld}$, $v_0\in\Vcal_0$, and $u_0 \in \mathcal U_{0}$.  Then there exist a positive constant $\mu_2>0$ and a unique smooth function $\mathbf h: \Ucal_{\zbld} \times \Vcal_0 \times \Ucal_{0} \times \mathcal B_\infty(0, \mu_2) \rightarrow \mathcal U_{\wh \wbld}$ such that $\wh \wbld_* = \mathbf h(\zbld, v_0, u_0, \varphibf)$ for all $(\zbld, v_0, u_0, \varphibf) \in \Ucal_{\zbld} \times \Vcal_0 \times \Ucal_{0} \times \mathcal B_\infty(0, \mu_2)$.
  %
  %, where $\mathcal B_\infty(0, \mu_2):=\{ \varphibf \, | \, \| \varphibf \|_\infty < \mu_2 \}$.
\end{proposition}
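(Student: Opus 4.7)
The plan is to apply the global implicit function theorem (Theorem~\ref{thm:global_implicit_function_thm}) to the first-order optimality condition
\[
F(\zbld, v_0, u_0, \varphibf, \wh\wbld) \, := \, \nabla_{\wh\wbld} J(\wh\wbld; \zbld, v_0, u_0, \varphibf),
\]
viewing $x := (\zbld, v_0, u_0, \varphibf)$ as the parameter and $y := \wh\wbld$ as the unknown. Because $J$ is polynomial in all its arguments (via Lemma~\ref{lem:wt_h} and the expression for $\zbld(k+s)$ in (\ref{eqn:zbld_z_k+s})), $F$ is polynomial and thus globally $C^\infty$. Since $D_y F = \wh\Hbld$ will be positive definite by Proposition~\ref{prop:perturbed_H_uniform}, the critical point produced by the theorem is automatically the unique local minimizer in the relevant neighborhood and must therefore coincide with $\wh\wbld_*$.

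To set up the hypotheses of Theorem~\ref{thm:global_implicit_function_thm}, take $\Ucal := \Ucal_\zbld \times \Vcal_0 \times \Ucal_0 \times \mathcal B_\infty(0, \mu_2)$ (bounded, open, convex, hence connected) and $\Vcal := \Ucal_{\wh\wbld}$ (compact, hence closed). Enlarge them to bounded open supersets $\Ucal' \supseteq \Ucal$ and $\Vcal' \supseteq \Ucal_{\wh\wbld}$, chosen (after possibly shrinking $\mu_2$) so that Proposition~\ref{prop:perturbed_H_uniform}, applied to slightly enlarged bounded sets in place of the originals, guarantees $\wh\Hbld(y, x)$ to be PD with $\lambda_{\min}(\wh\Hbld) \ge \wt\lambda > 0$ for all $(x, y) \in \Ucal' \times \Vcal'$. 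This is possible because $\wh\Hbld$ is continuous in all its arguments and reduces to the constant PD matrix $\Hbld$ at $\varphibf = 0$. For condition (i), pick any $x_* = (\zbld_0, v_0^0, u_0^0, 0) \in \Ucal$: at $\varphibf = 0$, the equation $F = 0$ becomes the linear equation $\Hbld\, y + \mathbf G \zbld_0 - u_0^0\, \mathbf g = 0$, whose unique solution $y_* = -\Hbld^{-1}(\mathbf G \zbld_0 - u_0^0 \mathbf g)$ lies in $\Ucal_{\wh\wbld}$ by hypothesis. Condition (ii) is precisely invertibility of $D_y F = \wh\Hbld$ on $\Gcal'_f \subseteq \Ucal' \times \Vcal'$, arranged above.

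For condition (iii), the polynomial $D_x F$ is continuous and hence bounded on the bounded set $\Ucal' \times \Vcal'$, while $\|(D_y F)^{-1}\|_2 \le 1/\wt\lambda$ by the uniform PD bound, yielding the required uniform bound $\|(D_y F)^{-1}\|\cdot\|D_x F\| \le \rho$ on $\Gcal'_f$. Theorem~\ref{thm:global_implicit_function_thm} then produces a unique smooth map $\mathbf h : \Ucal \to \Ucal_{\wh\wbld}$ with $F(x, \mathbf h(x)) = 0$. Because $J(\cdot; x)$ is strongly convex on the open neighborhood $\Vcal'$ of $\Ucal_{\wh\wbld}$ (by PD of $\wh\Hbld$), the critical point $\mathbf h(x)$ is the unique local minimizer of $J(\cdot; x)$ in $\Vcal'$, and therefore $\mathbf h(x) = \wh\wbld_*$, yielding the claim with $\mu_2 := \mu$. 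The main technical obstacle is the joint choice of the enlargements $\Ucal', \Vcal'$ and $\mu_2$: we need $\wh\Hbld$ to stay uniformly PD on the enlarged domain, $\Vcal'$ to contain every $\varphibf = 0$ minimizer (for condition (i)), and condition (iii) to prevent the critical-point branch from escaping $\Vcal'$ as $\varphibf$ moves away from zero. All three requirements are reconciled by the compactness of $\Ucal_{\wh\wbld}$ together with the uniform estimates supplied by Proposition~\ref{prop:perturbed_H_uniform}.
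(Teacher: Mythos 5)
Your proposal is correct and follows essentially the same route as the paper's proof: both apply Theorem~\ref{thm:global_implicit_function_thm} to $\mathbf f = \nabla_{\wh\wbld} J$, verify condition (i) via the closed-form linear solution $-\mathbf H^{-1}(\mathbf G\zbld - u_0\mathbf g)$ at $\varphibf = 0$, obtain (ii) and the bound $\|(D_{\wh\wbld}\mathbf f)^{-1}\|_2 \le 1/\wt\lambda$ from the uniform positive definiteness of $\wh\Hbld$ on an enlarged bounded open set $\Ucal'_{\wh\wbld}$ via Proposition~\ref{prop:perturbed_H_uniform}, and get (iii) by boundedness of the continuous (polynomial) derivative $D_x\mathbf f$ on a bounded domain. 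Your closing argument that the resulting critical point is the unique local minimizer $\wh\wbld_*$ (by local strong convexity) is a welcome explicit justification of a step the paper leaves implicit, but it does not change the approach.
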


\begin{proof}
 For any given $(\zbld, v_0, u_0, \varphibf)$, a local minimizer $\wh \wbld_*$ satisfies $\nabla J_{\wh \wbld}( \wh \wbld_*, \zbld, v_0, u_0, \varphibf)=0$.
%
% (Note that $\nabla J_{\wh \wbld}$ not only depends on $\varphibf_d$ but also on $\varphibf_f$ %and thus $\varphibf$ since it depends on $\wt {\mathbf d}(k)$, which depends on $\varphibf$.)
%
 Define the function $\mathbf f( \wh \wbld, \zbld, v_0, u_0, \varphibf) := \nabla J_{\wh \wbld}( \wh \wbld, \zbld, v_0, u_0, \varphibf)$. Hence, $\mathbf f(\cdot)$ is a globally smooth function. In what follows, we verify the three conditions in Theorem~\ref{thm:global_implicit_function_thm} by recognizing $x=(\zbld, v_0, u_0, \varphibf)$,  $y=\wh \wbld$, $\Ucal = \Ucal_{\wh\wbld}$ and $\Vcal=\Ucal_{\zbld} \times \Vcal_0 \times \Ucal_{0} \times \mathcal B_\infty(0, \mu_2)$ for some suitable $\mu_2>0$.
 First, note that  for any given $(\zbld, v_0, u_0, \varphibf)$ with $\varphibf=0$, $\wh \wbld_*= -\mathbf H^{-1} \big(\mathbf G \cdot \zbld -  u_0 \mathbf g \big)$ is the unique solution to $\mathbf f( \wh \wbld_*, \zbld, v_0, u_0, 0)=0$. Secondly, letting $\Ucal'_{\wh \wbld} $ be a bounded open set containing $\Ucal_{\wh \wbld}$,
 it follows from
 Proposition~\ref{prop:perturbed_H_uniform} that there exist positive constants $\mu_2$ and $\wt \lambda$ such that $D_{\wh\wbld} \mathbf f(\wh \wbld, \zbld, v_0, u_0, \varphibf ) = \wh \Hbld(\wh \wbld, \zbld, v_0, u_0, \varphibf)$ is PD with $\lambda_{\min}\big( \Hbld(\wh \wbld, \zbld, v_0, u_0, \varphibf) \big) \ge \wt \lambda$
 for all $(\wh \wbld, \zbld, v_0, u_0, \varphibf) \in \Ucal'_{\wh \wbld} \times \Ucal_{\zbld} \times \mathcal V_0 \times \mathcal U_{0} \times \mathcal B_{\infty}(0, \mu_2)$. This
  implies that $D_{\wh\wbld} \mathbf f(\wh \wbld, \zbld, v_0, u_0, \varphibf )$ is invertible and
  $\| D^{-1}_{\wh\wbld} \mathbf f ( \wh \wbld, \zbld, v_0, u_0, \varphibf) \|_2 \le 1/\wt \lambda$ for all $(\wh\wbld, \zbld, v_0, u_0, \varphibf) \in \Ucal'_{\wh\wbld} \times \Ucal_{\zbld} \times \Vcal_0 \times \Ucal_{0} \times \mathcal B_\infty(0, \mu_2)$.
 Obviously, the same result holds on
 $\mathcal G'_{\mathbf f} := \{  (\wh \wbld, \zbld, v_0, u_0, \varphibf)\in  \Ucal'_{\wh\wbld} \times \Ucal_{\zbld} \times \Vcal_0 \times \Ucal_{0} \times \mathcal B_\infty(0, \mu_2) \, | \,
 \mathbf f( \wh \wbld, \zbld, v_0, u_0, \varphibf) = 0 \}$.
 %
% for  $(\wh\wbld, \zbld, v_0, u_0, \varphibf) \in \Ucal'_{\wh\wbld} \times \Ucal_{\zbld} \times %\Vcal_0 \times \Ucal_{0} \times \mathcal B_\infty(0, \mu_2)$ satisfying $\mathbf f( \wh \wbld_*, %\zbld, v_0, u_0, \varphibf)=0$.
 %
  Lastly,
 since $\| D_{(\zbld, v_0, u_0, \varphibf )} \mathbf f(\cdot)\|_2$ is a globally continuous function, it achieves its  maximum on any compact set containing $\Ucal'_{\wh\wbld} \times\Ucal_{\zbld} \times \Vcal_0 \times \Ucal_{0} \times \mathcal B_\infty(0, \mu_2)$. Hence, there exists a positive constant $\rho$ such that $\| D^{-1}_{\wh\wbld} \mathbf f(\cdot)\|_2\cdot \| D_{(\zbld, v_0, u_0, \varphibf )} \mathbf f(\cdot)\|_2 \le \rho$ on  $\Ucal'_{\wh\wbld} \times \Ucal_{\zbld} \times \Vcal_0 \times \Ucal_{0} \times \mathcal B_\infty(0, \mu_2)$. This result also holds on $\mathcal G'_{\mathbf f}$.
 %
 %:= \{  (\wh \wbld, \zbld, v_0, u_0, \varphibf)\in  \Ucal'_{\wh\wbld} \times \Ucal_{\zbld} \times %\Vcal_0 \times \Ucal_{0} \times \mathcal B_\infty(0, \mu_2) \, | \,
 %\mathbf f( \wh \wbld, \zbld, v_0, u_0, \varphibf) = 0 \}$.
 %
 Clearly, $\Ucal_{\zbld} \times \Vcal_0 \times \Ucal_{0} \times \mathcal B_\infty(0, \mu_2)$ is convex and thus connected, and $\Ucal_{\wh\wbld}$ is compact and thus closed.
 Consequently, in view of the smoothness of $\mathbf f$, we conclude that there exists a unique smooth function $\mathbf h$ such that $\wh \wbld_* = \mathbf h(\zbld, v_0, u_0, \varphibf)$ for all $(\zbld, v_0, u_0, \varphibf) \in \Ucal_{\zbld} \times \Vcal_0 \times \Ucal_{0} \times \mathcal B_\infty(0, \mu_2)$.
\end{proof}

The above proposition implies that the unconstrained nonconvex optimization problem $\min J(\wh \wbld)$ has a unique local minimizer $\wh \wbld_*$ on $\Ucal_{\wh \wbld}$ for any fixed $(\zbld, v_0, u_0, \varphibf) \in \Ucal_{\zbld} \times \Vcal_0 \times \Ucal_{0} \times \mathcal B_\infty(0, \mu_2)$. Hence, for any $(\zbld(k), v_0(k), u_0(k), \varphibf) \in \Ucal_{\zbld} \times \Vcal_0 \times \Ucal_{0}$ at each $k$,
\[
   \wh \wbld_*(k)  = \mathbf h (\zbld(k), v_0(k), u_0(k), \varphibf), \qquad
   \wh w_*(k) =  \begin{bmatrix} I_n & 0 & \cdots & 0 \end{bmatrix}  \mathbf h (\zbld(k), v_0(k), u_0(k), \varphibf).
\]
Moreover,  note that $\mathbf h(\zbld, v_0, u_0, 0) = -\mathbf H^{-1} \big(\mathbf G \cdot \zbld -  u_0 \mathbf g \big)$ for any fixed $(\zbld, v_0, u_0) \in \Ucal_{\zbld} \times \Vcal \times \Ucal_{0}$. Define
\[
  \Delta \wh{\mathbf h} (\zbld, v_0, u_0, \varphibf) \, := \, \begin{bmatrix} I_n & 0 & \cdots & 0 \end{bmatrix} \Big( \mathbf h (\zbld, v_0, u_0, \varphibf) - \mathbf h(\zbld, v_0, u_0, 0) \Big).
\]
Since $\Ucal_{\zbld} \times \Vcal_0 \times \Ucal_{0} \times \mathcal B_\infty(0, \mu_2)$ is an open convex  set, it follows from the Mean-value Theorem that for
any fixed $(\zbld, v_0, u_0, \varphibf) \in \Ucal_{\zbld} \times \Vcal_0 \times \Ucal_{0}
\times \mathcal B_\infty(0, \mu_2)$,
\[
   \Delta \wh {\mathbf h} (\zbld, v_0, u_0, \varphibf) = \int^1_{0} D_{\varphibf} \wh {\mathbf h}(\zbld, v_0, u_0, t \varphibf) dt \cdot \varphibf.
\]
Therefore, there exists a positive constant $\varkappa$ such that $\| \Delta \wh{\mathbf h} (\zbld, v_0, u_0, \varphibf) \| \le \varkappa \| \varphibf \|_{\infty}$ for all $(\zbld, v_0, u_0, \varphibf) \in \Ucal_{\zbld} \times \Vcal_0 \times \Ucal_{0} \times \mathcal B_\infty(0, \mu_2)$.

Substituting the above results to the closed loop dynamics (\ref{eqn:nonlin_closed_loop_02}), we obtain
\begin{eqnarray*}
  \zbld(k+1) & = & \wh A(k) \zbld(k) + B  \Big( \wh w_*(k) +  \wt h_{\varphibf_d}(z'(k)) \Big) \\
  & = &
    \Big(A + v_0(k) \Delta A(\varphibf_d) \Big) \zbld(k) + B \Big[ -\begin{bmatrix} I_n & 0 & \cdots & 0 \end{bmatrix} \mathbf H^{-1} \big(\mathbf G \cdot \zbld(k) -  u_0(k) \mathbf g\big) \Big] \\
    & & \qquad + B \Big( \Delta \wh {\mathbf h} (\zbld, v_0, u_0, \varphibf) + \wt h_{\varphibf_d}(z'(k)) \Big) \\
   & = & \Big[ \underbrace{ \big(A +B \mathbf K )}_{A_{\mbox{c}}}  + v_0(k) \cdot \Delta A(\varphibf_d) \Big] \zbld(k) + B \Big( u_0(k) \cdot  \mathbf d + \Delta \wh {\mathbf h} (\zbld, v_0, u_0, \varphibf)  + \wt h_{\varphibf_d}(z'(k)) \Big),
\end{eqnarray*}
where the constant matrix $\mathbf K $ and  the  constant vector $\mathbf d$ are given by (\ref{eqn:K_mat_d_vec}), and $A_{\mbox{c}}$ is the closed loop dynamics matrix for the linear vehicle dynamics given by (\ref{eqn:A_c_matrix}). This leads to the closed loop dynamics for $p>1$:
\begin{equation} \label{eqn:closed_loop_p>1}
  \zbld(k+1) \, = \, \Big(A_{\mbox{c}} +  v_0(k) \cdot \Delta A(\varphibf_d) \Big) \zbld(k) + B \Big[ u_0(k)  \mathbf d +   \Delta \wh {\mathbf h} (\zbld(k), v_0(k), u_0(k), \varphibf) \Big]  + B \wt h_{\varphibf_d}(z'(k))
\end{equation}
for all $(\zbld, v_0, u_0, \varphibf) \in \Ucal_{\zbld} \times \Vcal_0 \times \Ucal_{0} \times \mathcal B_\infty(0, \mu_2)$, where $\mathcal U_{\zbld}$ is a bounded open  convex set in $\mathbb R^{2n}$,  $\mathcal U_{0}$ is a bounded open convex set containing $ [a_{0, \min}, a_{0, \max}]$, and $\mathcal V_0$ is a bounded open convex set containing $[v_{\min}, v_{\max}]$.

%%%%\gap

%-------------------------------------------------------------------
%
\subsection{Input-to-state Stability of Discrete-time Dynamical Systems: An Overview}

We give a brief overview of (local) input-to-state stability. Consider the discrete-time system on $\mathbb R^n$:
\begin{equation} \label{eqn:discrete_time_sys}
  x(k+1) \, = \, f(x(k), u(k), k), \qquad \forall \ k \in \mathbb Z_+,
\end{equation}
where $f:\mathbb R^n \times \mathbb R^m \times \mathbb Z_+ \rightarrow \mathbb R^n$, and $f(\cdot, \cdot, k)$  is continuous for any fixed $k\in \mathbb Z_+$. Let $\ubld
:=(u(0), u(1), \ldots )$ be a sequence of vectors in $\mathbb R^m$ that represents an input function on $\mathbb Z_+$.
%
%:\mathbb Z_+ \rightarrow \mathbb R^m$ is the input function given by $\ubld :=(u(0), u(1), \ldots %)$.
%
We assume that $f(0, 0, k) =0$ for all $k \in \mathbb Z_+$ such that $x_e=0$ is an equilibrium of the system (\ref{eqn:discrete_time_sys}) under the $0$-input, i.e., $\ubld=0$. We let $\| \ubld\|_\infty:=\sup\{ \| u(k) \| \, : \, k \in \mathbb Z_+\}$. Hence, for any $\ubld \in \ell^m_{\infty}$, $\|\ubld\|_\infty<\infty$.
For a given initial condition $\xi \in \mathbb R^n$ and an input function $\ubld$, let $x(k, \xi, \ubld)$ denote the trajectory of the system (\ref{eqn:discrete_time_sys}).

We introduce the notions of $\mathcal K$ class of functions \cite[pp. 135]{Khalil_book96}. A  continuous function $\alpha:\mathbb R_+ \rightarrow \mathbb R_+$ is a $\mathcal K$-function if it is strictly increasing on $[0, \infty)$ and $\alpha(0)=0$. It is a $\mathcal K_\infty$-function if it is a $\mathcal K$-function and $\alpha(t) \rightarrow \infty$ as $t \rightarrow \infty$. It is a {\em positive definite} function if $\alpha(t)>0$ for all $t>0$ and $\alpha(0)=0$. A function $\beta:\mathbb R_+ \times \mathbb R_+ \rightarrow \mathbb R_+$ is a $\mathcal {KL}$-function if (i) for any fixed $t \ge 0$, the function $\beta(\cdot, t)$ is a $\mathcal K$-function; and (ii) for any fixed $s \ge 0$, the function $\beta(s, \cdot)$ is decreasing and $\beta(s, t) \rightarrow 0$ as $t \rightarrow \infty$.

\begin{definition} \rm \label{def:local_ISS}
 The time-varying discrete-time system (\ref{eqn:discrete_time_sys}) is {\em locally input-to-state stable} (ISS) if there exist a $\mathcal {KL}$-function $\beta:\mathbb R_+\times \mathbb R_+ \rightarrow \mathbb R_+$, a $\mathcal K$-function $\gamma:\mathbb R_+ \rightarrow \mathbb R_+$, and two positive constants $\theta_x, \theta_u$ such that for all $\xi$ with $\| \xi \| \le \theta_x$ and $\ubld\in \ell^m_\infty$ with $\|\ubld\|_\infty\le \theta_u$, the following holds:
 \[
    \| x(k, \xi, \ubld) \| \, \le \, \beta(\| \xi\|, k) + \gamma (\|\ubld \|_\infty), \qquad \forall \ k \in \mathbb Z_+.
 \]
\end{definition}
The above definition follows from \cite[Definition 5.2]{Khalil_book96} for continuous-time systems and \cite[Definition 3.1]{JiangWang_Auto01} for global ISS of discrete-time systems. Also see \cite{ShenHu_SICON12, HuShenP_SICON16, Sontag_bookchapter08} for details. In what follows, we extend the Lyapunov approach for global ISS in \cite[Lemma 3.5]{JiangWang_Auto01} and \cite{JiangWang_SCL02} to local input-to-state stability (ISS) for the time-varying system (\ref{eqn:discrete_time_sys}); see \cite[Lemma 2.3]{JiangLiWang_Auto04} for a similar local version of the ISS.

\begin{theorem} \label{thm:local_ISS_general}
 Consider the time-varying discrete-time system (\ref{eqn:discrete_time_sys}) defined by $f:\mathbb R^n \times \mathbb R^m \times \mathbb Z_+ \rightarrow \mathbb R^n$. Suppose there exists a local ISS-Lyapunov function $V:\mathbb R^n \times \mathbb Z_+ \rightarrow \mathbb R_+$ for the system (\ref{eqn:discrete_time_sys}), namely, there exist two sets $\mathcal D_x :=\{ x \in \mathbb R^n \, | \, \| x \| \le r \}$ and $\mathcal D_u :=\{ u \in \mathbb R^m \, | \, \| u\| \le r_u \}$ for some positive constants $r$ and $r_u$,
 where $r_u$ can be $+\infty$,  such that  the following hold:
 \begin{itemize}
   \item [(i)]  There exist two $\mathcal K_\infty$-functions $\alpha_1$ and $\alpha_2$ such that $\alpha_1(t) \le \alpha_2(t), \forall \, t \ge 0$ and $\alpha_1(\|x\|) \le V(x, k) \le \alpha_2(\|x\|)$ for all $x \in \mathcal D_x$ and all $k \in \mathbb Z_+$;
 %
 %   where $\in \mathcal B_x$ is an open set containing $\mathcal D_x$ (i.e., $\mathcal D_x \subset \mathcal B_x$);
   \item [(ii)] There exist a $\mathcal K_\infty$-function $\alpha_3$ and a $\mathcal K$-function $\sigma$ such that $V(f(x, u, k), k+1) - V(x, k) \le -\alpha_3(\|x\|) + \sigma(\|u\|)$ for all $x \in \mathcal D_x$ and $u\in \mathcal D_u$ and all $k \in \mathbb Z_+$.
 \end{itemize}
 Then there exist positive constants $\theta_x>0$ and $\theta_u>0$ such that the following hold:
 \begin{itemize}
   \item [(i)] For any $\xi$ with $\| \xi \|\le \theta_x$ and
     $\mathbf u=(u(k))_{k\in \mathbb Z_+} \in \ell^m_\infty$ with $\| \mathbf u\|_\infty \le \theta_u$, $x(k, \xi, \ubld) \in \mathcal D_x$ for all $k \in \mathbb Z_+$;

   \item [(ii)] The system (\ref{eqn:discrete_time_sys}) is locally input-to-state stable in terms of the positive constants $\theta_x$ and $\theta_u$ given in Definition~\ref{def:local_ISS}.
 \end{itemize}
%
% Then the system (\ref{eqn:discrete_time_sys}) is locally input-to-state stable.
\end{theorem}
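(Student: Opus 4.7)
The plan is to adapt the standard Lyapunov-based ISS argument of \cite[Lemma 3.5]{JiangWang_Auto01} and \cite{JiangWang_SCL02} to the local, non-autonomous setting by working with suitable sublevel sets of the ISS-Lyapunov function $V$. The calibration of $\theta_x$ and $\theta_u$ is chosen so that (a) trajectories never exit $\mathcal{D}_x$, where the dissipation inequality (ii) is in force, and (b) once this invariance is in hand, the classical two-regime dichotomy yields the $\mathcal{KL}+\mathcal{K}$ estimate in Definition~\ref{def:local_ISS}.

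For invariance, I would fix $c_0 \in (0, \alpha_1(r))$, so that the time-varying sublevel set $\Omega_{c_0}(k) := \{x : V(x,k) \le c_0\}$ is contained in $\{x : \|x\| \le \alpha_1^{-1}(c_0)\} \subseteq \mathcal{D}_x$ uniformly in $k$. Using continuity of $\alpha_2,\alpha_3^{-1},\sigma$ at $0$, I then choose $\theta_u \in (0, r_u]$ so small that $\alpha_2\bigl(\alpha_3^{-1}(2\sigma(\theta_u))\bigr) + \sigma(\theta_u) \le c_0$, and pick $\theta_x \in (0, r]$ with $\alpha_2(\theta_x) \le c_0$. Any $\xi$ with $\|\xi\| \le \theta_x$ then satisfies $V(\xi,0) \le c_0$. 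A one-step induction establishes invariance: if $V(x(k),k)\le c_0$ and $\|u(k)\|\le \theta_u$, then either $\|x(k)\| \le \alpha_3^{-1}(2\sigma(\theta_u))$, in which case (ii) gives $V(x(k+1),k+1) \le V(x(k),k) + \sigma(\theta_u) \le \alpha_2(\alpha_3^{-1}(2\sigma(\theta_u))) + \sigma(\theta_u) \le c_0$; or $\|x(k)\| > \alpha_3^{-1}(2\sigma(\theta_u))$, in which case (ii) delivers $V(x(k+1),k+1) \le V(x(k),k) - \tfrac{1}{2}\alpha_3(\|x(k)\|) \le c_0$. Hence $x(k,\xi,\mathbf{u}) \in \Omega_{c_0}(k) \subseteq \mathcal{D}_x$ for all $k$, proving claim (i).

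With trajectories trapped in $\mathcal{D}_x$, the ISS estimate (ii) follows by the standard dichotomy. Introduce the $\mathcal{K}$-function $\chi(s) := \alpha_3^{-1}(2\sigma(s))$. Whenever $\|x(k)\| \ge \chi(\|\mathbf{u}\|_\infty)$, assumption (ii) yields the strict decrease $V(x(k+1),k+1) - V(x(k),k) \le -\tfrac{1}{2}\alpha_3(\|x(k)\|) \le -\tfrac{1}{2}\alpha_3\bigl(\alpha_2^{-1}(V(x(k),k))\bigr)$; combined with the trivial bound $V(x(k+1),k+1) \le V(x(k),k) + \sigma(\|\mathbf{u}\|_\infty)$ in the complementary regime, the discrete scalar comparison lemma (as in \cite[Lemma~3.5]{JiangWang_Auto01}) supplies a $\mathcal{KL}$-function $\widetilde\beta$ with $V(x(k),k) \le \max\bigl\{\widetilde\beta(V(\xi,0),k),\ \alpha_2(\chi(\|\mathbf{u}\|_\infty))\bigr\}$ on $\mathbb{Z}_+$. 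Sandwiching with (i) and setting $\gamma := \alpha_1^{-1}\circ \alpha_2 \circ \chi$ produces a $\mathcal{KL}$-function $\beta$ such that $\|x(k,\xi,\mathbf{u})\| \le \beta(\|\xi\|, k) + \gamma(\|\mathbf{u}\|_\infty)$, which is exactly the local ISS inequality of Definition~\ref{def:local_ISS}.

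The main obstacle is the coupling between the \emph{local} validity of (i)--(ii) and the \emph{non-autonomous} character of $V$: escape from $\mathcal{D}_x$ has to be ruled out using only the bounds available on $\mathcal{D}_x\times\mathcal{D}_u$, which forces $\theta_x$, $\theta_u$, and $c_0$ to be calibrated simultaneously with $\sigma(\theta_u)$ rather than sequentially. A secondary technical point is extracting a genuine $\mathcal{KL}$ decay from a nonlinear dissipation in which $\alpha_3$ need not be linear; this requires the comparison-function machinery of \cite{JiangWang_Auto01,JiangWang_SCL02} rather than a direct exponential estimate.
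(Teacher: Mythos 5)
Your proposal follows the route the paper itself intends: the paper states Theorem~\ref{thm:local_ISS_general} without proof, presenting it as an extension of \cite[Lemma 3.5]{JiangWang_Auto01} and \cite{JiangWang_SCL02} (with \cite[Lemma 2.3]{JiangLiWang_Auto04} as a similar local version), and your argument is exactly that standard machinery --- calibrated sublevel-set invariance, the two-regime dichotomy via $\chi:=\alpha_3^{-1}\circ 2\sigma$, the discrete comparison lemma, and the max-to-sum conversion through $\alpha_1^{-1}$. The dichotomy and comparison part of your sketch is correct, including the simultaneous (rather than sequential) calibration of $c_0$, $\theta_x$, $\theta_u$ with $\sigma(\theta_u)$.

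There is, however, a genuine gap in your invariance step. You assert that $\Omega_{c_0}(k)=\{x : V(x,k)\le c_0\}$ is contained in $\{x:\|x\|\le \alpha_1^{-1}(c_0)\}\subseteq \mathcal D_x$, but the lower bound $\alpha_1(\|x\|)\le V(x,k)$ is hypothesized \emph{only for $x\in\mathcal D_x$}; outside $\mathcal D_x$ the theorem imposes no lower bound on $V$, so the full sublevel set need not lie in $\mathcal D_x$. In your one-step induction this matters exactly where you pass from $V(x(k+1),k+1)\le c_0$ to $x(k+1)\in\mathcal D_x$: that inference is circular, since $\alpha_1(\|x(k+1)\|)\le V(x(k+1),k+1)$ is available only after you know $x(k+1)\in\mathcal D_x$. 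In discrete time the state can jump, so there is no automatic boundary-crossing as in the continuous-time Khalil-style proof. The gap is repairable with the continuity hypothesis on $f(\cdot,\cdot,k)$: if $\|x(k)\|\le\alpha_1^{-1}(c_0)$, $\|u(k)\|\le\theta_u$, and $y:=f(x(k),u(k),k)\notin\mathcal D_x$, then the path $t\mapsto f(tx(k),tu(k),k)$, $t\in[0,1]$, runs continuously from $f(0,0,k)=0$ to $y$ with $\|y\|>r$, so by the intermediate value theorem applied to its norm some $t^*$ gives $\|f(t^*x(k),t^*u(k),k)\|=r$; since $(t^*x(k),t^*u(k))\in\mathcal D_x\times\mathcal D_u$, condition (ii) together with the sandwich bounds yields $\alpha_1(r)\le \alpha_2\bigl(\alpha_1^{-1}(c_0)\bigr)+\sigma(\theta_u)$, which is contradicted once $c_0$ and $\theta_u$ are additionally calibrated so that $\alpha_2\bigl(\alpha_1^{-1}(c_0)\bigr)+\sigma(\theta_u)<\alpha_1(r)$. (In the paper's application the issue is moot, since $V(\zbld,k)=\zbld^T P(k)\zbld$ with $\theta_1 I_{2n}\preccurlyeq P(k)\preccurlyeq \theta_2 I_{2n}$ makes the sandwich bounds global; but the theorem as stated does not grant you that, so your proof needs the extra step.)
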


%-----------------------------------------------------------------------
%
\subsection{Local Input-to-state Stability of the Closed Loop System}

%%{\bf Lyapunov Function Approach}

Since the closed loop dynamics for $p=1$ given by (\ref{eqn:closed_loop_p=1}) and that for $p>1$ given by (\ref{eqn:closed_loop_p>1}) share the similar structure except that the latter holds on a restricted set, we provide a unified proof for local input-to-state stability under the assumption that the closed loop dynamic matrix $A_{\mbox{c}}$ under the linear vehicle dynamics given in (\ref{eqn:A_c_matrix}) is Schur stable.

%
%Let $\mathcal U_{\zbld}$ be a bounded open convex set in $\mathbb R^{2n}$ containing the zero %vector,  $\mathcal U_{0}$ be a bounded open convex set containing $ [a_{0, \min}, a_{0, \max}]$, %and $\mathcal V_0$ be a bounded open convex set containing $[v_{\min}, v_{\max}]$. These sets are %used in Proposition~\ref{prop:implicit_function_wh_wbld} to derive the closed loop dynamics for %$p>1$.
%

\begin{theorem}
Fix $p\in \mathbb N$. Suppose the weight matrices $Q_{z, s}, Q_{z', s}$ and $Q_{w, s}$  satisfying $\bf A.1$ are such that $A_{\mbox{c}}$ given in (\ref{eqn:A_c_matrix}) is Schur stable. Then there exist positive constants $\mu$ and $\nu$ such that for all $\varphibf$ with $\|\varphibf\|_\infty \le \mu$, any $v_0(k) \in[v_{\min}, v_{\max}]$ and any $u_0(k)$ with $|u_0(k)|\le \nu$ for all $k\in\mathbb Z_+$, the closed loop dynamics given by (\ref{eqn:closed_loop_p=1}) or (\ref{eqn:closed_loop_p>1}) is locally input-to-state stable.
\end{theorem}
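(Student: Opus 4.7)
The plan is to construct a time-invariant quadratic Lyapunov function from the Schur stability of $A_{\mbox{c}}$, show that it remains a Lyapunov function for the perturbed time-varying linear part $\wh A(k)=A_{\mbox{c}}+v_0(k)\,\Delta A(\varphibf_d)$ when $\|\varphibf_d\|_\infty$ is small, and then treat the quadratic nonlinearity $\wt h_{\varphibf_d}(z'(k))$ and the affine input-dependent terms as small perturbations, so that the hypotheses of Theorem~\ref{thm:local_ISS_general} are satisfied on a suitably small ball around the origin. Since (\ref{eqn:closed_loop_p=1}) and (\ref{eqn:closed_loop_p>1}) have the same structural form (with $\Delta\wh{\mathbf h}\equiv 0$ and the residual $-B\wh W Q_w w_e(k)$ absorbed into the input term for $p=1$), the argument will be carried out in a unified way; in the case $p>1$ we work inside the open convex neighborhood supplied by Proposition~\ref{prop:implicit_function_wh_wbld} so that $\Delta\wh{\mathbf h}$ is a well-defined smooth function.

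First, since $A_{\mbox{c}}$ is Schur stable, the discrete Lyapunov equation $A_{\mbox{c}}^T P A_{\mbox{c}} - P = -I$ admits a unique symmetric positive definite solution $P$, giving $c_1\|\zbld\|^2\le \zbld^T P\zbld \le c_2\|\zbld\|^2$ for suitable $c_1,c_2>0$. Because $\|\Delta A(\varphibf_d)\|\le C_1\|\varphibf_d\|_\infty$ and $v_0(k)\in[v_{\min},v_{\max}]$ is bounded uniformly in $k$, a standard perturbation estimate on $\wh A(k)^T P\wh A(k)-P$ yields the existence of constants $\mu_1,c_3>0$ such that whenever $\|\varphibf_d\|_\infty\le \mu_1$,
\[
  \wh A(k)^T P \wh A(k) - P \,\preceq\, -c_3 I,\qquad \forall\,k\in\mathbb Z_+.
\]
Thus $V(\zbld):=\zbld^T P\zbld$ is a common quadratic Lyapunov function for the family of LTV matrices $\wh A(k)$, uniformly in $k$ and in all sufficiently small $\varphibf_d$. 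Next, I will bound the contributions of the three remaining terms on the right-hand side of (\ref{eqn:closed_loop_p=1})/(\ref{eqn:closed_loop_p>1}) when computing $V(\zbld(k+1))-V(\zbld(k))$: (a) the external input term $B(u_0(k)\mathbf d + \text{residual}(\varphibf,v_0(k)))$, whose norm is at most $C_2(|u_0(k)|+\|\varphibf\|_\infty)$; (b) the term $B\,\Delta\wh{\mathbf h}(\zbld(k),v_0(k),u_0(k),\varphibf)$, which by the Mean-Value bound $\|\Delta\wh{\mathbf h}\|\le \varkappa\|\varphibf\|_\infty$ following Proposition~\ref{prop:implicit_function_wh_wbld} is $O(\|\varphibf\|_\infty)$; and (c) the quadratic nonlinearity $B\,\wt h_{\varphibf_d}(z'(k))$ which, in view of the explicit form (\ref{eqn:wt_h_func}), satisfies $\|\wt h_{\varphibf_d}(z'(k))\|\le C_3\|\varphibf_d\|_\infty\|\zbld(k)\|^2$.

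Expanding $V(\zbld(k+1))-V(\zbld(k))$, grouping cross terms via Young's inequality $2a^T b\le \varepsilon\|a\|^2+\varepsilon^{-1}\|b\|^2$, and using the above bounds, one obtains, for all $\zbld$ in a small ball $\|\zbld\|\le r$,
\[
  V(\zbld(k+1)) - V(\zbld(k)) \,\le\, -\bigl(c_3 - \tfrac12 - C_4\|\varphibf\|_\infty - C_5\|\varphibf_d\|_\infty r\bigr)\|\zbld(k)\|^2 + \sigma\bigl(|u_0(k)|+\|\varphibf\|_\infty\bigr),
\]
where $\sigma$ is a $\mathcal K$-class function (quadratic near zero). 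Choosing $r$ and $\mu\le\mu_1$ small enough guarantees that the bracket is bounded below by a positive constant $c_4>0$, which gives an ISS-Lyapunov decrease $V(\zbld(k+1))-V(\zbld(k))\le -c_4\|\zbld(k)\|^2+\sigma(|u_0(k)|+\|\varphibf\|_\infty)$ on $\|\zbld\|\le r$. After absorbing the constant-in-$k$ perturbation $\|\varphibf\|_\infty$ into the allowed input bound $\nu$ and applying Theorem~\ref{thm:local_ISS_general} with $\mathcal D_x=\{\|\zbld\|\le r\}$ and $\mathcal D_u=\{|u_0|\le \nu\}$, local input-to-state stability follows with respect to the input $u_0(k)$. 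The main obstacle is step (c): the quadratic nonlinearity $\wt h_{\varphibf_d}(z')$ is not small as a function of $z'$ alone, so one must exploit simultaneously its linearity in the small parameter $\varphibf_d$ and a local restriction $\|\zbld\|\le r$ in order to dominate it by the linear decay $-c_3\|\zbld\|^2$; this is exactly why the stability is local and why both the state bound $\theta_x$ and the parameter bound $\mu$ must be taken sufficiently small.
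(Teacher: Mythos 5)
Your proposal is correct, and it reaches the conclusion by a genuinely different route at the one step where the two arguments diverge: the stability of the perturbed linear part. The paper treats $\zbld(k+1)=\big(A_{\mbox{c}}+v_0(k)\Delta A(\varphibf_d)\big)\zbld(k)$ as a linear time-varying system, invokes the perturbation result of \cite{ZhouZhao_TAC17} (with the explicit threshold $\wt\mu_3=-\frac{r}{\kappa_{\mbox{c}}\kappa_{\Delta A}v_{\max}}\ln(r)$) to get uniform exponential stability, and then appeals to the converse Lyapunov theorem \cite[Theorem 23.3]{Rugh_book96} to extract a \emph{time-varying} family $\{P(k)\}$ with $\theta_1 I\preccurlyeq P(k)\preccurlyeq\theta_2 I$ and $\wh A^T_{\mbox{c}}(k)P(k+1)\wh A_{\mbox{c}}(k)-P(k)\preccurlyeq-\theta_3 I$. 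You instead solve $A_{\mbox{c}}^T P A_{\mbox{c}}-P=-I$ once and verify directly that the \emph{same} time-invariant $P$ satisfies $\wh A(k)^T P\wh A(k)-P\preceq -c_3 I$ for all small $\|\varphibf_d\|_\infty$, since the perturbation enters only through the uniformly bounded scalar $v_0(k)\in[v_{\min},v_{\max}]$ multiplying the $O(\|\varphibf_d\|_\infty)$ matrix $\Delta A(\varphibf_d)$. Your route is more elementary and self-contained (no LTV exponential-stability citation, no converse Lyapunov theorem, and a common quadratic Lyapunov function that makes the subsequent Young-inequality bookkeeping cleaner); the paper's route is more general in that it would survive perturbations for which no common quadratic Lyapunov function exists, at the price of a nonconstructive $P(k)$. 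From that point on your argument tracks the paper's: the bounds $\|\wt h_{\varphibf_d}(z')\|\le C_3\|\varphibf_d\|_\infty\|\zbld\|^2$ (converted to a linear-in-$\|\zbld\|$ bound on the bounded set), $\|\Delta\wh{\mathbf h}\|\le\varkappa\|\varphibf\|_\infty$, and the application of Theorem~\ref{thm:local_ISS_general} all coincide, as does the unified treatment of (\ref{eqn:closed_loop_p=1}) and (\ref{eqn:closed_loop_p>1}). One point you should make explicit rather than gloss with ``absorbing the constant-in-$k$ perturbation'': for $p>1$ the effective input $d(k)=u_0(k)\mathbf d+\Delta\wh{\mathbf h}(\zbld(k),v_0(k),u_0(k),\varphibf)$ depends on the state, and the bound $\|\Delta\wh{\mathbf h}\|\le\varkappa\|\varphibf\|_\infty$ is only valid while $\zbld(k)\in\Ucal_{\zbld}$; the paper closes this circularity by an induction on $k$ (trajectory stays in $\Ucal_{\zbld}$ $\Rightarrow$ $\|d(k)\|\le\nu_d$ $\Rightarrow$ trajectory stays in $\Ucal_{\zbld}$ via part (i) of Theorem~\ref{thm:local_ISS_general}), and your proof needs the same short bootstrapping step, which your appeal to Proposition~\ref{prop:implicit_function_wh_wbld} sets up but does not carry out.
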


\begin{proof}
Consider $p>1$ first. For the given bounded open sets $\mathcal U_{\zbld}$ containing the zero vector,  $\mathcal U_{0}$ containing $ [a_{0, \min}, a_{0, \max}]$, and $\mathcal V_0$ containing $[v_{\min}, v_{\max}]$, the closed loop dynamics is given by (\ref{eqn:closed_loop_p>1}) as shown by Proposition~\ref{prop:implicit_function_wh_wbld}.
Since $A_{\mbox{c}}$ is Schur stable, there exist constants $\kappa_{\mbox{c}}>0$ and $r \in (0, 1)$ such that $\|(A_{\mbox{c}})^k \| \le \kappa_{\mbox{c}} \cdot r^k$ for all $k \in \mathbb Z_+$. Consider the time-varying discrete linear   system on $\mathbb R^{2n}$:
\begin{equation} \label{eqn:LTV_system}
 \zbld (k+1) \, = \, \Big ( A_{\mbox{c}} + v_0(k) \cdot \Delta A(\varphibf_d) \Big) \zbld(k), \qquad \forall \, k \in \mathbb Z_+.
\end{equation}
In view of the expressions of $\Delta A(\varphibf_d)$ given by (\ref{eqn:closed_loop_matrices}) and $D(\varphibf_d)$ by (\ref{eqn:matrix_D}) and $0\le v_{\min}\le v_0(k) \le v_{\max}$ for all $k \in \mathbb Z_+$, we deduce that there exists a positive constant $\kappa_{\Delta A}$ such that
  $\|  v_0 \cdot \Delta A(\varphibf_d) \|_2 \le \kappa_{\Delta A} \cdot v_{\max} \cdot \|\varphibf_d\|_\infty, \forall \, v_0 \in [v_{\min}, v_{\max}]$.
  Define the positive constant
  \[
   \wt \mu_3 \, :=  \, - \frac{r}{\kappa_{\mbox{c}} \cdot \kappa_{\Delta A} \cdot v_{\max}} \ln(r) > 0.
  \]
 Hence, for all $\varphibf_d$ with $\|\varphibf_d\|_\infty < \wt \mu_3$, we have $\kappa_{\Delta A} \cdot v_{\max} \cdot \|\varphibf_d\|_\infty \le - \frac{r}{\kappa_{\mbox{c}}} \ln(r)$.
  Then it follows from \cite[Theorem 3]{ZhouZhao_TAC17} that the discrete linear system (\ref{eqn:LTV_system}) is uniformly exponentially stable for any $v_0(k) \in [v_{\min}, v_{\max}], \forall \, k \in \mathbb Z_+$ and all $\varphibf_d$ with $\|\varphibf_d\|_\infty < \wt \mu_3$.
Define $\wh A_{\mbox{c}}(k) := A_{\mbox{c}} + v_0(k) \Delta A(\varphibf_d)$ for all $k \in \mathbb Z_+$.
 (Rigorously speaking, it should be written as $\wh A_{\mbox{c}}(v_0(k), \varphibf_d)$. For the sake of notational simplicity, we write it in this way.)
By \cite[Theorem  23.3]{Rugh_book96}, there exist a matrix sequence $\{ P(k) \}_{k\in \mathbb Z_+}$ with $P(k)=P^T(k) \in \mathbb R^{2n \times 2n}$ for each  $k$ and positive constants $\theta_2\ge \theta_1>0$ and $\theta_3>0$ such that
 for all $v_0(k) \in [v_{\min}, v_{\max}], \forall \, k \in \mathbb Z_+$ and all $\varphibf_d$ with $\|\varphibf_d\|_\infty < \wt \mu_3$,
 $\theta_1 I_{2n} \preccurlyeq P(k) \preccurlyeq \theta_2 I_{2n}$ and $\wh A^T_{\mbox{c}}(k) P(k+1) \wh A_{\mbox{c}}(k) - P(k) \preccurlyeq - \theta_3 I_{2n}$ for all $k \in \mathbb Z_+$, where $\preccurlyeq$ denotes the positive semi-definite order. Clearly, $\| P(k) \|_2 \le \theta_2$ for all $k\in \mathbb Z_+$.

%%%\gap

 Given  any $v_0(k) \in [v_{\min}, v_{\max}], \forall \, k \in \mathbb Z_+$ and any $\varphibf_d$ satisfying $\|\varphibf_d\|_\infty < \wt \mu_3$, define the function $f_{\varphibf_d}:\mathbb R^{2n} \times \mathbb R^n \times \mathbb Z_+ \rightarrow \mathbb R^{2n}$ as:
\begin{equation*} %%\label{eqn:f_right_hand}
   f_{\varphibf_d}(\zbld, d, k) \, := \, \wh A_{\mbox{c}}(k) \zbld + B d + B \wt h_{\varphibf_d}(z'),
\end{equation*}
where $\zbld=(z, z') \in \mathbb R^{2n}$.  Consider the time-varying Lyapunov function $V:\mathbb R^{2n} \times \mathbb Z_+ \rightarrow \mathbb R_+$ given by
\begin{equation} \label{eqn:V_ISS_Lypap_func}
    V(\zbld, k ) \, :=  \, \zbld^T P(k) \zbld, \qquad \quad k \in \mathbb Z_+.
\end{equation}
In light of $\wh A^T_{\mbox{c}}(k) P(k+1) \wh A_{\mbox{c}}(k) - P(k) \preccurlyeq - \theta_3 I_{2n}$, we have that for any $k \in \mathbb Z_+$,
\begin{eqnarray*} %%%\label{eqn:difference_Lyapunov_equation}
  V(f_{\varphibf_d}(\zbld, d, k), k+1) - V(\zbld, k) & \le & -\theta_3 \|\zbld\|^2_2 + 2 \Big[ \wh A_{\mbox{c}} (k) \zbld \Big]^T P(k+1) B \Big[ d + \wt h_{\varphibf_d}(z') \Big] \\
  & & \quad + \ \Big[d+ \wt  h_{\varphibf_d}(z') \Big]^T B^T P(k+1) B\Big[d+ \wt  h_{\varphibf_d}(z') \Big].
\end{eqnarray*}
Let
\[
\eta_1 \, := \, \sup_{\| \varphibf_d\|_\infty \le \wt \mu_3} \Big( \| A_{\mbox{c}} \|_2+ v_{\max} \cdot \| \Delta A(\varphibf_d)\|_2 \Big) \, > \, 0.
\]
 Hence, $\| \wh A_{\mbox{c}}(k) \| \le \eta_1$ for all $k\in \mathbb Z_+$. Moreover, it follows from (\ref{eqn:wt_h_func}) that there exists a positive constant $\eta_2$ such that $\| \wt
  h_{\varphibf_d}(z')\|_2 \le \eta_2 \cdot \|\varphibf_d\|_\infty \cdot \|z' \|^2_2 \le \eta_2 \cdot \|\varphibf_d\|_\infty \cdot \| \zbld \|^2_2$ for all $\varphibf_d$ and $\zbld$.
Let $ \wt \eta_2:=\eta_2 \big(\sup_{\zbld \in \Ucal_{\zbld}} \|\zbld \|\big)$.
Therefore, for all $\zbld \in \Ucal_{\zbld}$, we have
\[
\| \wt h_{\varphibf_d}(z') \|_2 \, \le \, \eta_2 \big(\sup_{\zbld \in \Ucal_{\zbld}} \|\zbld \|\big) \cdot \|\varphibf_d\|_\infty \cdot \|\zbld\| = \wt \eta_2 \cdot  \|\varphibf_d\|_\infty \cdot \|\zbld\|.
\]
Consequently,  for all $\zbld \in \Ucal_{\zbld}$, $v_0(k) \in [v_{\min}, v_{\max}], \forall \, k\in \mathbb Z_+$, $\varphibf_d \in \mathcal B(0, \wt \mu_3)$, and $d \in \mathbb R^n$, we have,
\begin{eqnarray*}
  \big[ \wh A_{\mbox{c}} (k) \zbld \big]^T P(k+1) B \big [ d + \wt h_{\varphibf_d}(z') \big] & \le & \theta_2 \| \wh A_{\mbox{c}} (k) \zbld \|_2 \cdot \|B\|_2 \big( \|d \|_2 + \| \wt
  h_{\varphibf_d}(z') \|_2 \big)
  \\
  & \le & \theta_2 \eta_1 \|B\|_2 \cdot \|\zbld\|_2 \cdot \big(  \|d \|_2 + \wt \eta_2 \|\varphibf_d \|_\infty \cdot \|\zbld \|_2 \big)
\end{eqnarray*}
and
\begin{eqnarray*}
  \big[d+ \wt h_{\varphibf_d}(z') \big ]^T B^T P(k+1) B \big[d+ \wt h_{\varphibf_d}(z') \big] & \le &  \theta_2 \cdot \|B\|^2_2 \cdot \big\|d + \wt h_{\varphibf_d}(z') \big\|^2_2 \\
   & \le &  2 \theta_2  \cdot \|B\|^2_2 \cdot \Big( \|d\|^2_2 + \big( \wt \eta_2 \|\varphibf_d \|_\infty \big)^2 \cdot   \| \zbld \|^2_2 \Big).
\end{eqnarray*}
Combining the above results, we deduce that there exists a constant $\mu_3$ with $0<\mu_3 \le \min(\wt \mu_3, \mu_2)$, where $\mu_2$ is given in Proposition~\ref{prop:implicit_function_wh_wbld}, such that for all $\|\varphibf \|_\infty \le \mu_3$, $\zbld \in \Ucal_{\zbld}$, $v_0(k)\in [v_{\min}, v_{\max}], \forall \, k \in \mathbb Z_+$, %
%(and $u_0(k) \in \Ucal_{0}$ {\bf ???})
%
and $d \in \mathbb R^n$,
\[
  V(f_{\varphibf_d}(\zbld, d, k), k+1) - V(\zbld, k) \, \le \, -\frac{2\theta_3 }{3}\|\zbld\|^2_2   + 2 \eta_3 \|d \|_2 \cdot \|\zbld \|_2 + \eta_4 \| d\|^2_2,
\]
where $\eta_3:= \theta_2 \eta_1 \|B\|_2/2$, and $\eta_4 := 2 \theta_2 \|B\|^2_2$.
%
%\gap
%
%%%%%%%%%%%%%%%%%%%%%%%%%%%%%%%%%%%%%%%%%%%%%%%%%%%%%%%%%%%%%%%%%%%%%%%%%%%%%%%%%%%%%%%%%%%%%%
%
%%%%%%%%%%%%%%%%%%%%%%%%%%%%%%%%%%%%%%%%%%%%%%%%%%%%%%%%%%%%%%%%%%%%%%%%%%%%%%%%%%%%%
%
%
Consequently, for all $\|\varphibf \|_\infty \le \mu_3$, $\zbld \in \Ucal_{\zbld}$,
$v_0(k)\in [v_{\min}, v_{\max}], \forall \, k \in \mathbb Z_+$,
%
% (and $u_0(k) \in \Ucal_{0}$ {\bf ???})
%
 and $d \in \mathbb R^n$, we have, for all $k \in \mathbb Z_+$,
\begin{eqnarray*}
 V(f_{\varphibf_d}(\zbld, d, k), k+1 ) - V(\zbld, k) & \le & -\frac{2\theta_3 }{3}\|\zbld\|^2_2   + 2 \eta_3 \cdot \|d \|_2 \cdot \|\zbld \|_2 + \eta_4 \cdot \| d\|^2_2 \\
  & = & -\frac{\theta_3}{6} \|\zbld\|^2_2 -  \frac{\theta_3}{2}  \|\zbld\|^2_2  + 2 \eta_3 \|d \|_2 \cdot \|\zbld \|_2 + \eta_4 \| d\|^2_2 \\
  & = & -\frac{\theta_3}{6} \|\zbld\|^2_2 -  \frac{\theta_3}{2}  \Big(\|\zbld\|_2 - \frac{2 \eta_3}{\theta_3} \|d\|_2\Big)^2 + \Big(\frac{2 \eta^2_3}{\theta_3} + \eta_4 \Big) \|d\|^2_2 \\
  & \le & -\frac{\theta_3}{6} \|\zbld\|^2_2 + \Big(\frac{2 \eta^2_3}{\theta_3 } + \eta_4 \Big) \|d\|^2_2.
\end{eqnarray*}
%
%where $\eta_3:=\theta_2 \| B^T \|_2\big( \eta_1 + \wh\gamma \wt c_2)>0$ and $\eta_4:=\theta_2 \| %B^T  B\|_2>0 $.
%

%
%we recall that $f(\zbld, d, k) :=  \wh A_{\mbox{c}}(k) \zbld + B d + B \wt h_{\varphibf_d}(z')$.
%
 Define the functions $\sigma_1(t):=\theta_1 t^2$, $\sigma_2(t):=\theta_2 t^2$, $\sigma_3(t):= \frac{\theta_3}{6} t^2$, and $\sigma(t):=\Big(\frac{2 \eta^2_3}{\theta_3} + \eta_4 \Big) t^2$. Clearly, these function are $\mathcal K_\infty$-functions. Let $\mathcal D_{\zbld}$ be the largest closed ball centered at the origin that is contained in $\Ucal_{\zbld}$ (such the closed ball exists since $\Ucal_{\zbld}$ is a bounded open set containing 0), and $\mathcal D_{d}=\mathbb R^n$. Hence,
 the function $V$ given in (\ref{eqn:V_ISS_Lypap_func}) is a local ISS-Lyapunov function on $\mathcal D_{\zbld}\times \mathcal D_d$ for
 the discrete time system $\zbld(k+1) = f_{\varphibf_d}(\zbld(k), d(k), k)$, for
  all $\varphibf_d \in \mathcal B_{\infty}(0, \mu_3)$ and all $v_0(k) \in [v_{\min}, v_{\max}], \forall \, k \in \mathbb Z_+$.
 It follows from Theorem~\ref{thm:local_ISS_general} that
there exist two positive constants $\nu_{\zbld}$ and $\nu_d$ such that for  any $\xi$ with $\| \xi \|\le \nu_\zbld$ and  $\ol{\mathbf d}=(d(k))_{k\in \mathbb Z_+} \in \ell^m_\infty$ with $\| \ol{\mathbf d}\|_\infty \le \nu_d$, $\zbld(k, \xi, \ol{\mathbf d}) \in \mathcal \Ucal_{\zbld}$ for all $k \in \mathbb Z_+$.
In view of the right-hand side of the  closed loop dynamics given by (\ref{eqn:closed_loop_p>1}), we see that
$d(k) = u_0(k) \cdot \mathbf d + \Delta \wh {\mathbf h} (\zbld(k), v_0(k), u_0(k), \varphibf)$ for all $k\in \mathbb Z_+$, where $\mathbf d$ is the constant vector given by (\ref{eqn:K_mat_d_vec}), and $\| \Delta \wh{\mathbf h} (\zbld, v_0, u_0, \varphibf) \| \le \varkappa \| \varphibf \|_\infty$ for all $(\zbld, v_0, u_0, \varphibf) \in \Ucal_{\zbld} \times \Vcal_0 \times \Ucal_{0} \times \mathcal B_\infty(0, \mu_2)$.
For an arbitrary but fixed $\varepsilon \in (0, 1)$, define the positive constants
\[
   \mu_4 \, := \, \min \Big( \mu_3, \frac{ \varepsilon \cdot \nu_d }{\varkappa} \Big), \qquad \nu_u \, := \, \min \Big( |a_{0, \min}|, \, a_{0,\max}, \, \frac{(1-\varepsilon) \nu_d}{\|\mathbf d\|} \Big).
\]
Hence,  $u_0 \in [-a_{0, \min}, a_{0, \max}] \subset \Ucal_0$ and $\| u_0 \mathbf d \| \le (1-\varepsilon) \nu_d$ for any $u_0$ with $| u_0 | \le \nu_u$. (The condition $u_0 \in \Ucal_0$ is needed to derive the closed loop dynamics as shown in Proposition~\ref{prop:implicit_function_wh_wbld}.)
Further, for all $\varphibf$ with $\|\varphibf \|_\infty \le \mu_4$, $u_0$ with $| u_0 | \le \nu_u$, $v_0\in[v_{\min}, v_{\max}]$ and $\zbld \in \Ucal_{\zbld}$, it is easy to show that $\| u_0 \mathbf d + \Delta \wh {\mathbf h} (\zbld, v_0, u_0, \varphibf) \| \le \nu_d$.
It can be further shown via induction on $k$ that for all $\varphibf$ with $\|\varphibf \|_\infty \le \mu_4$, $u_0(k)$ with $| u_0(k) | \le \nu_u, \forall \, k \in \mathbb Z_+$, $v_0(k) \in[v_{\min}, v_{\max}], \forall \, k \in \mathbb Z_+$, and any $\xi$ with $\| \xi \|\le \nu_\zbld$,
 $\zbld(k, \xi, \ol{\mathbf d}) \in \mathcal \Ucal_{\zbld}$ and
 $\| d(k) \|= \| u_0(k) \cdot \mathbf d + \Delta \wh {\mathbf h} (\zbld(k), v_0(k), u_0(k), \varphibf) \| \le \nu_d$ for all $k \in \mathbb Z_+$. In view of Theorem~\ref{thm:local_ISS_general} again, we deduce that the closed loop dynamics given by (\ref{eqn:closed_loop_p>1}) is locally input-to-state stable for all $\varphibf$ with $\|\varphibf \|_\infty \le \mu_4$, $v_0(k) \in[v_{\min}, v_{\max}], \forall \, k \in \mathbb Z_+$, and $u_0(k)$ with $| u_0(k) | \le \nu_u, \forall \, k \in \mathbb Z_+$.
\mycut{
Note that for the original closed loop dynamics for $p>1$ given by (\ref{eqn:closed_loop_p>1}), we can write $d(k) = u_0(k) \cdot \mathbf d + \Delta \wh {\mathbf h} (\zbld(k), v_0(k), u_0(k), \varphibf)$ for all $k\in \mathbb Z_+$, where $\| \Delta \wh{\mathbf h} (\zbld, v_0, u_0, \varphibf) \| \le \varkappa \| \varphibf \|_\infty$ for all $(\zbld, v_0, u_0, \varphibf) \in \Ucal_{\zbld} \times \Vcal \times \Ucal_{u_0} \times \mathcal B_\infty(0, \mu_2)$.
Hence, for an arbitrary but fixed $\varepsilon \in (0, 1)$, define the positive constant
\[
   \mu_4 \, := \, \min \Big( \mu_3, \frac{ \varepsilon \nu_u }{\kappa} \Big).
\]
Then for all $\varphibf$ with $\|\varphibf \|_\infty \le \mu_4$, $u_0(k)$ satisfying $\| u_0(k) \mathbf d\| \le (1-\varepsilon) \nu_u$, it is easy to show via induction on $k$ that for any $\xi$ with $\| \xi \|\le \nu_\zbld$, $\zbld(k, \xi, \ubld) \in \mathcal \Ucal_{\zbld}$ and
 $\| d(k) \|= \| u_0(k) \cdot \mathbf d + \Delta \wh {\mathbf h} (\zbld(k), v_0(k), u_0(k), \varphibf) \| \le \nu_u$ for all $k \in \mathbb Z_+$. In view of Theorem~\ref{thm:local_ISS_general} again, we deduce that the closed loop dynamics given by (\ref{eqn:closed_loop_p>1}) is locally input-to-state stable for all $\varphibf$ with $\|\varphibf \|_\infty \le \mu_4$, $v_0(k) \in \mathcal V_0$ and $u_0(k)$ satisfying $\| u_0(k) \mathbf d\| \le (1-\varepsilon) \nu_u$.
}

%\gap

We next consider $p=1$ whose closed loop dynamics is given by (\ref{eqn:closed_loop_p=1}). Its proof is similar to that for $p>1$, and we sketch its key steps as follows.
First, consider the time-varying discrete time system
\[
  \zbld(k+1) \, = \, \Big(A_{\mbox{c}} +  v_0(k) \cdot \Delta {\bar A}(\varphibf_d) \Big) \zbld(k).
\]
Since $A_{\mbox{c}}$ is Schur stable, it follows from the definition of $\Delta {\bar A}(\varphibf_d)$ given in (\ref{eqn:matrix_Delta_A_B}) that there exists a constant $\mu_1>0$ such that the above discrete-time system is uniformly exponentially stable for all $v_0(k) \in[v_{\min}, v_{\max}], \forall \, k \in \mathbb Z_+$ and all $\varphibf_d$ with $\| \varphibf_d \|_\infty \le \mu_1$. Let $\wh A_{\mbox{c}}(v_0(k), \varphibf_d):= A_{\mbox{c}} +  v_0(k) \cdot \Delta {\bar A}(\varphibf_d)$, which is written as $\wh A_{\mbox{c}}(k)$ for simplicity.
 It follows from the similar argument for the case of $p>1$ that there exist a symmetric matrix sequence $\{ P(k) \}_{k\in \mathbb Z_+}$ and positive constants $\theta_2\ge \theta_1>0$ and $\theta_3>0$ such that
 for all $v_0(k) \in [v_{\min}, v_{\max}], \forall \, k \in \mathbb Z_+$ and all $\varphibf_d$ with $\|\varphibf_d\|_\infty < \mu_1$,
 $\theta_1 I_{2n} \preccurlyeq P(k) \preccurlyeq \theta_2 I_{2n}$ and $\wh A^T_{\mbox{c}}(k) P(k+1) \wh A_{\mbox{c}}(k) - P(k) \preccurlyeq - \theta_3 I_{2n}$ for all $k \in \mathbb Z_+$.
This leads to the Lyapunov function $V(\zbld, k):=\zbld^T P(k) \zbld$ for $k \in \mathbb Z_+$. Let the function $f_{\varphibf_d}(\zbld, d, k) \, := \, \wh A_{\mbox{c}}(v_0(k), \varphibf_d) \zbld + B d + \breve B \wt h_{\varphibf_d}(z')$, and $\Ucal_{\zbld}$ be a bounded open set in $\mathbb R^{2n}$ containing the zero vector. As before, there exists a constant $\wt \eta_2>0$ such that
$\| \wt h_{\varphibf_d}(z') \|_2 \le \wt \eta_2 \cdot  \|\varphibf_d\| \cdot \|\zbld\|$ for all $\zbld \in \Ucal_{\zbld}$. By the similar argument for the case of $p>1$, there exist positive constants $\mu_2, \eta_3, \eta_4$ such that
for all $\|\varphibf \|_\infty \le \mu_2$, $\zbld \in \Ucal_{\zbld}$, $v_0(k)\in [v_{\min}, v_{\max}], \forall \, k \in \mathbb Z_+$, and $d \in \mathbb R^n$,
\begin{align*}
  V(f_{\varphibf_d}(\zbld, d, k), k+1) - V(\zbld, k) \, \le  -\frac{2\theta_3}{3}\|\zbld\|^2_2   + 2 \eta_3 \|d \|_2 \cdot \|\zbld \|_2 + \eta_4 \| d\|^2_2
   \, \le -\frac{\theta_3}{6} \|\zbld\|^2_2 + \Big(\frac{2 \eta^2_3}{\theta_3 } + \eta_4 \Big) \|d\|^2_2.
\end{align*}
By Theorem~\ref{thm:local_ISS_general},
there exist two positive constants $\nu_{\zbld}$ and $\nu_d$ such that for  any $\xi$ with $\| \xi \|\le \nu_\zbld$ and  $\ol{\mathbf d}=(d(k))_{k\in \mathbb Z_+} \in \ell^m_\infty$ with $\| \ol{\mathbf d}\|_\infty \le \nu_d$, $\zbld(k, \xi, \ol{\mathbf d}) \in \mathcal \Ucal_{\zbld}$ for all $k \in \mathbb Z_+$. Recall that $d(k)= u_0(k)  \mathbf d - \wh W Q_w w_e(k)$, where $\mathbf d = \wh W Q_w \mathbf e_1$, and $w_e(k):=(w_{e, 1}(k), \ldots, w_{e, n}(k))^T$ with $w_{e, i}(k)= [c_{2, i-1}-c_{2, i}] v^2_0(k)+[c_{3, i-1} -c_{3, i}] g$. It is easy to derive via an argument similar to that for $p>1$ that there exist positive constants $\mu_3$ and $\nu_u$ such that
for all $\varphibf$ with $\|\varphibf \|_\infty \le \mu_3$, $u_0(k)$ with $| u_0(k) | \le \nu_u, \forall \, k \in \mathbb Z_+$, $v_0(k) \in[v_{\min}, v_{\max}], \forall \, k \in \mathbb Z_+$, and any $\xi$ with $\| \xi \|\le \nu_\zbld$,
 $\zbld(k, \xi, \ol{\mathbf d}) \in \mathcal \Ucal_{\zbld}$ and
 $\| d(k) \| \le \nu_d$ for all $k \in \mathbb Z_+$. By Theorem~\ref{thm:local_ISS_general} again, the closed loop dynamics given by (\ref{eqn:closed_loop_p=1}) is locally input-to-state stable for all $\varphibf$ with $\|\varphibf \|_\infty \le \mu_3$, $v_0(k) \in[v_{\min}, v_{\max}], \forall \, k \in \mathbb Z_+$, and $u_0(k)$ with $| u_0(k) | \le \nu_u, \forall \, k \in \mathbb Z_+$.
%
%
% \[
% \zbld(k+1) \, = \, \Big(A_{\mbox{c}} +  v_0(k) \cdot \Delta {\bar A}(\varphibf_d) \Big) \zbld(k) %+ B \Big( u_0(k)  \mathbf d - \wh W Q_w w_e(k) \Big) + \breve B \wt h_{\varphibf_d} (z'(k)),
%\]
%
\end{proof}

%--------------------------------------------------------------------------------
%
\section{Numerical Results} \label{sect:numerical_results}

%----------------------------------------------------------------------------
%
\subsection{Numerical Experiment Setup and Weight Matrix Design} \label{subsect:Numerical_test_description}

Numerical tests are carried out to evaluate the performance of the proposed fully distributed schemes and the platooning control for a possibly heterogeneous CAV platoon. We consider a platoon of an uncontrolled leading vehicle labeled by the index 0 and ten CAVs, i.e., $n=10$. The sample time $\tau=1 s$, and  the speed limits  $v_{\max}=27.78 m/s$ and $v_{\min}=10 m/s$.
Since the physical parameters of CAV platoons as well as algorithm and control design depend heavily on vehicle types, we consider the following three types of CAV platoons: (i) a homogeneous small-size CAV platoon; (ii) a heterogeneous medium-size CAV platoon; and (iii) a homogeneous large-size CAV platoon. Identical minimal (resp. maximal) values of nonlinear dynamics coefficients $c_{2, i}$'s and $c_{3, i}$'s are chosen for the homogeneous small-size (resp. large-size) CAV platoon, whereas inhomogeneous values of $c_{2, i}$'s and $c_{3, i}$'s are chosen for the heterogeneous medius-size CAV platoon. Other parameters for the CAVs and their constraints, i.e., the vehicle length $L_i$, the reaction time $r_i$, the acceleration and deceleration limits $a_{i, \max}$ and $a_{i,\min}$, and the desired spacing $\Delta$, are chosen accordingly. See Tables~\ref{Table01}-\ref{Table02} for the values of these parameters \cite{GShenDu_TRB16, ZhengLiBorrelliH_TCS16}.

\begin{table}[h!]
\caption{Physical parameters for homogeneous small-size and large-size CAV platoons  }
\newcommand{\tabincell}[2]{\begin{tabular}{@{}#1@{}}#2\end{tabular}}
\begin{center}
\begin{tabular}{|c|c|c|c|c|c|c|c|}
\hline
 & $L_i (m)$  & $r_i (s)$ & $a_{i, \min} (m/s^2)$ & $a_{i, \max}(m/s^2)$ & $c_{2, i} (\times 10^{-4})$ & $c_{3, i} (\times 10^{-2})$ & $\Delta (m)$ \\
\hline

Small-size &  $5$  &  $1.0 $   &  $-8$  &  $1.4$   & $2.5$  & $0.6$ & $50$  \\
\hline
Large-size & $10$ & $1.25$ & $-6.8$ & $1.4$ & $4.5$ & $1.5$ & $65$ \\
\hline
%
%Error tolerance & \ $10^{-3}$ \ & $2\times 10^{-3}$ & $5\times 10^{-3}$ & $7\times 10^{-3}$ & %$1.25\times 10^{-2}$ \\
%\hline
%
\end{tabular}
\end{center}
\label{Table01}
\end{table}

\begin{table}
\caption{Physical parameters for a heterogeneous medium-size CAV platoon with $\Delta = 60 m$}
\newcommand{\tabincell}[2]{\begin{tabular}{@{}#1@{}}#2\end{tabular}}
\begin{center}
\begin{tabular}{|c|c|c|c|c|c|c|c|c| c| c|}
\hline
 & $i=1$  & $i=2$ & $i=3$ & $i=4$ & $i=5$ & $i=6$ & $i=7$ & $i=8$ & $i=9$ & $i=10$ \\
\hline

$L_i (m)$ &  $7 $  &  $7 $   &  $7$  &  $7$   & $7$  & $7$ & $7$ & $7$  & $7$ & $7$ \\
\hline
$r_i (s)$ & $1.21$ & $1.155$ & $0.99$ & $1.045$ & $1.21$ & $1.155$ & $0.99$ & $1.045$ & $1.155$ & $1.045$ \\
\hline
$a_{i, \min} (m/s^2)$ & $-8.14$ & $-7.77$ & $-6.66$ & $-7.03$ & $-8.14$ & $-7.77$ & $-6.66$ & $-7.03$ & $-7.77$ & $-7.03$ \\ \hline
$a_{i, \max} (m/s^2)$ & $1.4$ & $1.4$ & $1.4$ & $1.4$ & $1.4$ & $1.4$ & $1.4$ & $1.4$ & $1.4$ & $1.4$ \\ \hline
$c_{2, i} (\times 10^{-4}) $ &  $3.85 $  &  $3.675$   &  $3.15$  &  $3.325$   & $3.85$  & $3.675$ & $3.15$ & $3.325 $ & $3.675$ & $3.325$  \\ \hline
$c_{3, i} (\times 10^{-2})$ &  $1.155$  &  $1.103$   &  $0.945$  &  $0.998$   & $1.155$  & $1.103$ & $0.945$ & $0.998$ & $1.103$ & $0.998$  \\ \hline
%
%Error tolerance & \ $10^{-3}$ \ & $2\times 10^{-3}$ & $5\times 10^{-3}$ & $7\times 10^{-3}$ & %$1.25\times 10^{-2}$ \\
%\hline
%
\end{tabular}
\end{center}
\label{Table02}
\end{table}

The initial state of each CAV platoon is $z(0)=z'(0)=0$ and $v_i(0)=25 m/s$ for all $i=0, 1, \ldots, n$.
The cyclic-like graph is considered for the vehicle communication network, i.e., the  bidirectional edges of the graph are $(1, 2), (2, 3), \ldots, (n-1, n) \in \mathcal E$.
Following the discussions in \cite[Section 6]{ShenEswarDu_Arx20}, we choose the MPC horizon $p$ as $1\le p\le 5$.

We present the choices of weight matrices for each of the abovementioned three CAV platoons.
%
%When $n=10$, a particular choice of these weight matrices is given below.
%
Define
%%\begin{linenomath}
\begin{eqnarray*}
 \wt \alphabf & := & \big( 38.85, 40.2, 41.55,   42.90,   44.25,   45.60,  46.95, 48.30,   49.65, 51.00 \big) \in \mathbb R^{10}, \\
  \wt \betabf & := & \big( 130.61,   136.21,   141.82,   147.42,   153.03,   158.64,   164.24,   169.85,  175.46,  181.06 \big) \in \mathbb R^{10}, \\
  \wt \zetabf & := & \big( 62,    74,    90,    92,   106,   194,   298,   402,   454,   480 \big) \in \mathbb R^{10}.
\end{eqnarray*}
For all the three CAV platoons, $\alphabf^1 = 6 \wt \alphabf$, $\betabf^1=\wt \betabf$, and $\zetabf^1=0.5\wt \zetabf$ when $p=1$. %
\begin{itemize}
 \item Homogeneous small-size and heterogenous medium-size CAV platoons: for $p \ge 2$, $\alphabf^1 = 9(\wt\alphabf - \mathbf 1)$, $\betabf^1 = \wt \betabf - \mathbf 1$, $\zetabf^1 = 0.5(\wt\zetabf - \mathbf 1)$, and
%%%\begin{linenomath}
\[
   \alphabf^s = \frac{0.1368}{(s-1)^4} \times \wt \alphabf, \quad
   \betabf^s = \frac{0.044}{(s-1)^4} \times \wt \betabf, \quad
   \zetabf^s = \frac{0.0013}{(s-1)^4} \times \wt \zetabf, \quad s=2, \ldots, \min(p, 3).
\]
 \item Homogeneous large-size CAV platoon: for $p \ge 2$, $\alphabf^1 = 6(\wt\alphabf - \mathbf 1)$, $\betabf^1 = \wt \betabf - \mathbf 1$, $\zetabf^1 = 0.5(\wt\zetabf - \mathbf 1)$, and
%%%\begin{linenomath}
\[
   \alphabf^s = \frac{0.0684}{(s-1)^4} \times \wt \alphabf, \quad
   \betabf^s = \frac{0.044}{(s-1)^4} \times \wt \betabf, \quad
   \zetabf^s = \frac{0.0013}{(s-1)^4} \times \wt \zetabf, \quad s=2, \ldots, \min(p, 3).
\]
\end{itemize}
And for all the above CAV platoons: for $p=4, 5$,
\[
   \alphabf^s = \frac{0.0228}{(s-1)^4} \times \wt \alphabf, \quad
   \betabf^s = \frac{0.044}{(s-1)^4} \times \wt \betabf, \quad
   \zetabf^s = \frac{0.0026}{(s-1)^4} \times \wt \zetabf, \quad s=4, \ldots, p.
\]

The above vectors $\alphabf^s, \betabf^s, \zetabf^s$ define the weight matrices $Q_{z, s}, Q_{z', s}, Q_{w, s}$ for $s=1, \ldots, 5$, which further yield the closed loop dynamics matrix $A_{\mbox{c}}$; see the discussions below (\ref{eqn:K_mat_d_vec}).
It is shown that when these weights are used, $A_{\mbox{c}}$ is Schur stable for each $p=1, \ldots, 5$ and each CAV platoon.

\gap

\mycut{
\noindent{\bf Discussion on the selection of MPC horizon}. We discuss the choice of the MPC prediction horizon $p$ based on numerical tests as follows. Our numerical experience shows that for $p>1$, the weight matrices $Q_{z, 1}, Q_{z', 1}$ and $Q_{w, 1}$ play a more important role for the closed loop dynamics. For fixed $Q_{z, 1}, Q_{z', 1}$ and $Q_{w, 1}$ with the large penalties in $Q_{z, s}, Q_{z', s}$ and $Q_{w, s}$ for $s>1$, the closed loop dynamics may be mildly improved but at the expense of undesired large control. Hence, we choose smaller penalties in $Q_{z, s}, Q_{z', s}$ and $Q_{w, s}$ for $s>1$, which only lead to slightly better closed loop performance compared with the case of $p=1$. Further, when a large $p$ is used, the underlying optimization problem has a larger size, resulting in longer computation time and slow convergence of the proposed distributed scheme. Besides, the current MPC model assumes that the future $u_0(k+s)=u_0(k)$ for all $s=1, \ldots, p-1$ at each $k$. This assumption is invalid when the true $u_0(k+s)$ is substantially different from $u_0(k)$, which implies that the prediction performance is poor for a large $p$.
For these reasons, it is recommended that a smaller $p$ be used, for example, $p \le 5$.
}

The following scenarios  are used to evaluate the proposed CAV platooning control.

\noindent $\bullet$ {\bf Scenario 1}: The leading vehicle performs instantaneous deceleration/acceleration and then maintains a constant speed for a certain time of period. This scenario aims to test if a platoon can keep stable spacing and speed when the leading vehicle yields acceleration or deceleration disturbances. Specifically, the motion profile of the leading vehicle is: it decelerates from $k=51s$ to $k=54s$ with the deceleration $-2m/s^2$, and maintains a constant speed till $k=100s$. After $k=100s$, it restores to its original speed $25m/s$ with the acceleration $1m/s^2$.

\gap

\noindent $\bullet$ {\bf Scenario 2}: The leading vehicle performs periodical acceleration/deceleration. This scenario aims to test whether the proposed control scheme can reduce periodical spacing and speed fluctuation. The motion profile of the leading vehicle in this scenario is as follows: the leading vehicle periodically changes its acceleration and deceleration from $k=51s$ to $k=100s$ with the period $T=4s$ and acceleration/deceleration $\pm 1m/s^2$. Then  it maintains its original constant speed $25m/s$ after $k=100 s$.

\gap

\noindent $\bullet$ {\bf Scenario 3}:  This scenario aims to test the performance of the proposed control scheme in a real traffic environment, particularly when the leading vehicle undergoes traffic oscillations. We use real world trajectory data from an oscillating traffic flow to generate the leading vehicle's motion profile. Specifically, we consider NGSIM data on eastbound I-80 in San Francisco Bay area in California, and the data of position and speed of a real vehicle, which is treated as a leading vehicle, is used to generate control input at each sample time. Since the maximum of acceleration of this vehicle is nearly $1.8 m/s^2$, we let
$a_{i, \max}= 1.8 m/s^2$ for all $i$; the other parameters or physical limits are same as before.
%

%------------------------------------------------------
%
\subsection{Performance of the Proposed Fully Distributed Scheme} \label{subsect:test_performance}

As indicated in Section~\ref{subsect:SCP_scheme}, when $p=1$, the underlying MPC optimization problem  (\ref{eqn:MPC_model_locally_coupled}) is a convex QCQP, for which the generalized Douglas-Rachford splitting method based fully distributed algorithm developed in \cite{ShenEswarDu_Arx20} is used. In what follows, we focus on $p>1$.

When $p>1$, the underlying MPC optimization problem  (\ref{eqn:MPC_model_locally_coupled}) is nonconvex, and the sequential convex programming and Douglas-Rachford splitting method based fully distributed scheme is applied (cf. Algorithm~\ref{algo:distributed_SCP}). To apply this algorithm, we discuss the choices of the smooth functions $g_{i, s}$ and the convex function $r_{i, s}$ for the (approximated) nonconvex constraint sets $\mathcal Y_i$ and $\mathcal Z_i$, where $i=1, \ldots, n$. In view of the definition of $\mathcal Y_i$ given before (\ref{eqn:_q_i_j}), we see that $\mathcal Y_i = \{ \ubld_i \, | \,  v_{\min} - q_{i, j}(\ubld_i) \le 0, \ q_{i, j}(\ubld_i) - v_{\max} \le 0, \ j=1, \ldots, p \}$, where $q_{i, j}(\cdot)$ is given by (\ref{eqn:_q_i_j}). Define
$
 g_{i, s}(\ubld_i) :=  v_{\min} - q_{i, j}(\ubld_i)$, and $ r_{i, s}(\ubld_i) :\equiv 0$ for $s=1, \ldots, p$;  $g_{i, s}(\ubld_i) : \equiv 0$, and $r_{i, s}(\ubld_i) :=  -  q_{i, j}(\ubld_i) + v_{\max}$ for $s=p+1, \ldots, 2p$. Then $\mathcal Y_i = \{ \ubld_i \, | \, g_{i, s}(\ubld_i) - r_{i, s}(\ubld_i) \le 0, \ s=1, \ldots, 2p \}$. Similarly, let $g'_{i, s}(\ubld_{i-1}, \ubld_i)$ be the right hand side of (\ref{eqn:H_i_j_approx}), and $r'_{i, s}(\ubld_{i-1}, \ubld_i) \equiv 0$. Then $\mathcal Z_i = \{ \wh \ubld_i \, | \,  g'_{i, s}(\wh \ubld_i) - r'_{i, s}(\wh \ubld_i) \le 0, \ s=1, \ldots, p \}$. The gradient of these functions are given in Section~\ref{subsect:approximation_functions}. Furthermore, the Lipschitz constants $L_{J_i}$'s and $L_{g_{i, s}}$'s are given by $\nu_p \| H J_i (\wh \ubld_i)\|_2$ and $0.9 \| H g_{i, s} (\wh \ubld_i)\|_2$, where $\nu_p=0.8$ for $p=2, 3$ and $\nu_p=0.9$ for $p=4, 5$ respectively, and  $H f$ denotes the Hessian of a  real-valued smooth function $f$. The reasons for each Hessian scaled by these factors  are twofold: (i) the 2-norm of Hessian is conservative; and (ii) the scaled Hessian leads to faster convergence.

\gap

\noindent {\bf Initial guess warm-up}. \ To achieve real-time computation of the proposed distributed scheme (i.e., Algorithm~\ref{algo:distributed_SCP}), we exploit the initial guess warm-up technique for both the linear stage (cf. Line 2) and the inner loop of the SCP-DR stage (cf. Lines 6-14). For the former stage, see \cite[Section 6.2]{ShenEswarDu_Arx20} for its warm-up scheme. We discuss a warm-up scheme for the latter stage. Recall that the inner loop solves the following convex optimization problem: $\min_{y=(y_i) \in \mathcal A } \sum^n_{i=1} f_i(y_i) + \deltabf \mathcal C_i(y_i)$, where for each $i=1, \ldots, n$, $f_i(y_i):= J_i(\wh\ubld^k_i) + d^T_{J_i}(\wh\ubld^k_i) ( y_i - \wh\ubld_i) + \frac{L_{J_i}}{2} \|   y_i - \wh\ubld^k_i \|^2_2$, and $\mathcal C_i$ is the intersection of the box-constraint set $\mathcal X_i$ corresponding to the control constraint and a quadratically constrained convex set corresponding to the (approximated) velocity and safety distance constraints; see Section~\ref{subsect:SCP_scheme} for details. Since the (approximated) velocity and safety distance constraints are often inactive, we replace $\mathcal C_i$ by $\mathcal X_i$ in a warm-up scheme. Further,
 the generalized Douglas-Rachford scheme given by (\ref{eqn:DR_scheme}) is used to solve $\min_{y=(y_i) \in \mathcal A } \sum^n_{i=1} f_i(y_i) + \deltabf \mathcal X_i(y_i)$ in a fully distributed manner by replacing $\mathcal C_i$ by $\mathcal X_i$. Since $f_i$ and the box constraint set $\mathcal X_i$ are fully decoupled, solving the proximal operator based optimization problem in this scheme boils down to solving finitely many decoupled univariate optimization problems of the form: $\min_{t\in [c, d]} a t^2 + b t+ e$, where $t \in \mathbb R$, and $a, b, c, d, e \in \mathbb R$ are given constants with $a>0$. Such a univariate optimization problem has a simple closed-form solution, which considerably reduces computation load of the Douglas-Rachford scheme. Numerical tests show that the proposed warm-up scheme significantly improves computation time and solution quality.

\gap

%%\gap

\begin{table}
\caption{Error tolerances for outer and inner loops at different MPC horizon $p$'s}
\newcommand{\tabincell}[2]{\begin{tabular}{@{}#1@{}}#2\end{tabular}}
\begin{center}
\begin{tabular}{|c|c|c|c|c|c|}
\hline
 & \ \ $p=1$ \ \ & $p=2$ & $p=3$ & $p=4$ & $p=5$ \\
\hline
Outer loop &  $2.5\times 10^{-3}$   &  $6.5\times 10^{-3}$   &  $7.5\times 10^{-3}$  &  $1.0\times 10^{-2}$   &  $1.25\times 10^{-2}$  \\ \hline
Inner loop & NA    & $4.0\times 10^{-3}$    & $5.0\times 10^{-3}$  & $7.5\times 10^{-3}$    &   $1.0\times 10^{-2}$  \\
\hline
%Inner loop & NA & $2.5\times 10^{-3}$ & $2.5\times 10^{-3}$ & $2.5\times 10^{-3}$ & $2.5\times %10^{-3}$ \\
%\hline
%Error tolerance & \ $10^{-3}$ \ & $2\times 10^{-3}$ & $5\times 10^{-3}$ & $7\times 10^{-3}$ & %$1.25\times 10^{-2}$ \\
%\hline
%
\end{tabular}
\end{center}
\label{Table03}
\end{table}

%\gap

\noindent{\bf Performance of distributed schemes}.
We implement the proposed fully distributed algorithms via MATLAB on a computer with 4-cores processor: Intel(R) Core(TM) i7-8550U CPU @ $1.80 GHz$ and RAM: $16.0 GB$.
These distributed  algorithm are tested for the three types CAV platoons, namely homogeneous small-size and large-size CAV platoons and a heterogeneous medium-size CAV platoon, on Scenarios 1-3 for different MPC horizon $p$'s. The proposed initial guess warm-up schemes are used with the error tolerance give by $10^{-7}$  for all the cases. Moreover, we choose $\alpha =0.9$ and $\rho=0.1$ for the proximal operator based Douglas-Rachford scheme in all of these algorithms. 
Further,  the stopping criteria are characterized by the minimum of absolute and relative errors of two neighboring iterates for $p=2, 3$, whereas  for $p=4, 5$, these criteria are characterized by absolute errors of two neighboring iterate.
The list of error tolerances for the outer and  inner loop (for all the three types of CAV platoons) at $p$'s is shown in Table~\ref{Table03}. Note that there is no inner loop when $p=1$, since the underlying MPC optimization problem is a convex QCQP and solved via the fully distributed scheme given in  \cite{ShenEswarDu_Arx20}.
%
% is $10^{-3}$ for all the cases.
%
%The list of error tolerances for outer loops (for all the three types of CAV platoons) is shown in %Table~\ref{Table03}.
%
%  computation times per CAV for different CAV platoons with different $p$'s on the three scenarios is %displayed in Tables~\ref{Table04}-\ref{Table06}.
%
 A summary of mean and variance of computation time per CAV for different CAV platoons with different $p$'s on the three scenarios is displayed in Tables~\ref{Table04}-\ref{Table06}.
Moreover, to evaluate the numerical accuracy of the proposed schemes for $p=1$, we compute the relative error between the numerical solution from the distributed schemes and that from a high precision centralized scheme when the latter solution, treated as a true solution, is nonzero; see Tables~\ref{Table08}. Note that for $p \ge 2$, a true solution is hard to compute even in a centralized manner.

%\newpage

\begin{table}[h!]
\caption{Scenario 1: computation time per CAV (sec)}
\newcommand{\tabincell}[2]{\begin{tabular}{@{}#1@{}}#2\end{tabular}}
\begin{center}
\begin{tabular}{|c|c|c|c|c|c|c|}
\hline
\multirow{2}{*}{MPC horizon} &
\multicolumn{2}{c|}{Small-size} &
\multicolumn{2}{c|}{Medium-size} &
\multicolumn{2}{c|}{Large-size}\\
\cline{2-7}
   & \ \ Mean \ \ & Variance & \ \ Mean \ \ & \ Variance \  & \ \ Mean \ \ & \ Variance \ \\
\hline
$p=1$ &   0.0952   & $3.43\times 10^{-4}$ &  0.1333 & \ $1.44\times 10^{-4}$ \ & 0.1087 & $2.32\times 10^{-4}$  \\
\hline
$p=2$ & 0.1616 & $1.6\times 10^{-3}$ & 0.2795 & $4.5\times 10^{-3}$ & 0.2759 & $1.6\times 10^{-3}$ \\
\hline
$p=3$ & 0.1721 & $1.40\times 10^{-3}$ & 0.2673 & $4.11\times 10^{-3}$ & 0.2667 & $2.35 \times 10^{-3}$ \\
\hline
$p=4$ & 0.1665 & $6.33\times 10^{-4}$ & 0.2535   & $2.02\times 10^{-3}$ & 0.3038 & 0.1440 \\
\hline
$p=5$ & 0.2243 &  0.2340 & 0.3056 & $0.4440$ & 0.3296 & 0.4240 \\
\hline
\end{tabular}
\end{center}
\label{Table04}
\end{table}

\begin{table}[ht!]
\caption{Scenario 2: computation time per CAV (sec)}
\newcommand{\tabincell}[2]{\begin{tabular}{@{}#1@{}}#2\end{tabular}}
\begin{center}
\begin{tabular}{|c|c|c|c|c|c|c|}
\hline
\multirow{2}{*}{MPC horizon} &
\multicolumn{2}{c|}{Small-size} &
\multicolumn{2}{c|}{Medium-size} &
\multicolumn{2}{c|}{Large-size} \\
\cline{2-7}
   & \ \ Mean \ \ & Variance & \ \ Mean \ \ & Variance & \ \ Mean \ \ & Variance \\
\hline
$p=1$ & \  0.1098 \ & $7.33\times 10^{-4}$ & \ 0.1421 \ & \ $2.55\times 10^{-4}$ & 0.1204 & $7.54\times 10^{-4}$\\
\hline
$p=2$ & 0.1771 & $1.2\times 10^{-3}$ & 0.2857 & $6.7\times 10^{-3}$ & 0.2814 & $3.3\times 10^{-3}$ \\
\hline
$p=3$ & 0.1939 & $4.83\times 10^{-4}$ & 0.2804 & $2.78\times 10^{-3}$ & 0.2734 & $1.62\times 10^{-3}$ \\
\hline
$p=4$ & 0.2241 & $9.58\times 10^{-3}$ & 0.3165 & $5.93\times 10^{-3}$ & 0.3681 & 0.0241 \\
\hline
$p=5$ & 0.2418 & 0.0113 & 0.3051 & 0.0109 &  0.3449 & 0.0150 \\
\hline
\end{tabular}
\end{center}
\label{Table05}
\end{table}

\begin{table}[ht!]
\caption{Scenario 3: computation time per CAV (sec) }
\newcommand{\tabincell}[2]{\begin{tabular}{@{}#1@{}}#2\end{tabular}}
\begin{center}
\begin{tabular}{|c|c|c|c|c|c|c|}
\hline
\multirow{2}{*}{MPC horizon} &
\multicolumn{2}{c|}{Small-size} &
\multicolumn{2}{c|}{Medium-size} &
\multicolumn{2}{c|}{Large-size} \\
\cline{2-7}
   & \ \ Mean \ \  & Variance & \ \ Mean \ \ & Variance & \ \ Mean \ \ & Variance \\
\hline
$p=1$ &  0.1109  & $1.2\times 10^{-3}$ & \ 0.1408 \ & \ $4.09\times 10^{-4}$ \ & 0.1125 & $8.38\times 10^{-4}$ \\
\hline
$p=2$ & 0.1559 & $2.3\times 10^{-3}$ & 0.2528 & $6.3\times 10^{-3}$ & 0.2503 & $4.8\times 10^{-3}$ \\
\hline
$p=3$ & 0.2257 & 0.1320 & 0.2398 & $4.91\times 10^{-3}$ & 0.2437 & $3.98\times 10^{-3}$\\
\hline
$p=4$ & 0.2053 & $7.73\times 10^{-3}$ & 0.2883 & $9.73\times 10^{-3}$ & 0.3216 & 0.0132 \\
\hline
$p=5$ & 0.2256 & 0.0136 & 0.2882 & 0.0135 & 0.3250 & 0.0249 \\
\hline
\end{tabular}
\end{center}
\label{Table06}
\end{table}

%%\newpage

\begin{table}%[ht!]
\caption{Relative numerical error for $p=1$}
\newcommand{\tabincell}[2]{\begin{tabular}{@{}#1@{}}#2\end{tabular}}
\begin{center}
\begin{tabular}{|c|c|c|c|c|c|c|}
\hline
\multirow{2}{*}{Scenarios} &
\multicolumn{2}{c|}{Small-size} &
\multicolumn{2}{c|}{Medium-size} &
\multicolumn{2}{c|}{Large-size}\\
\cline{2-7}
   & \ \ Mean \ \ & Variance & \ \ Mean \ \ & \ Variance \  & \ \ Mean \ \ & \ Variance \ \\
\hline
Sc. 1 &   $1.07\times 10^{-3}$  & $1.44\times 10^{-7}$ &  $5.66\times 10^{-4}$ & \ $1.24\times 10^{-6}$ \ & $5.29\times 10^{-4}$ & $5.14\times 10^{-8}$  \\
\hline
Sc. 2 & $9.11\times 10^{-4}$ & $3.82\times 10^{-6}$ & $1.11\times 10^{-3}$ & $7.54 \times 10^{-6}$ & $4.38 \times 10^{-4}$ & $2.73\times 10^{-8}$ \\
\hline
Sc. 3 & $1.47\times 10^{-3}$ & $3.51\times 10^{-6}$ & $6.85\times 10^{-4}$ & $8.41\times 10^{-7}$ & $5.85\times 10^{-4}$ & $2.45 \times 10^{-7}$ \\
\hline
\end{tabular}
\end{center}
\label{Table08}
\end{table}

The numerical results show that for each $p$ and each CAV platoon type, the mean computation time is less than $0.369 s$  and thus less than the reaction time $r_i$ or sample time $\tau$ with overall fairly small variances, for all the three scenarios. Indeed, the computation time for $p=1$ is the least and becomes larger for a higher $p$ for most cases. 
Further, the computation times vary for different CAV vehicle types. In particular, the computation time for the small-size platoon is less than that of the medium-size and the large-size platoons.
%
%increases as the CAV vehicle types changes from the small-size, to the medium-size, and then to the %large-size. 
%
This is because the nonlinear effects play an increasing  role in the latter types of platoons when $c_{2, i}$ and $c_{3, i}$ become larger. Additionally, the heterogeneous dynamics in the middle-size platoon also require more computation.
%
%the increasing values of nonlinear terms dependent on $c_{2, i}$ and $c_{3, i}$ in these vehicles.
%
%Moreover, the computation time for $p=2$ is larger than that for $p>2$ since smaller error tolerances are %used for $p=2$ (cf. Table~\ref{Table03}) in order to achieve higher numerical accuracy.
%
Besides, the numerical accuracy is satisfactory for $p=1$ as shown in Table~\ref{Table08}.
 Hence, we conclude that the proposed distributed schemes are suitable for real-time computation of a heterogenous or homogeneous CAV platoon with satisfactory numerical precision.

\subsection{Performance of CAV Platooning Control}

%%%\noindent{\bf Performance of the CAV platooning control}. \

\begin{table}
\caption{Maximum steady state error of spacing (m)}
\newcommand{\tabincell}[2]{\begin{tabular}{@{}#1@{}}#2\end{tabular}}
\begin{center}
\begin{tabular}{|c|c|c|c|c|c|c|}
\hline
\multirow{2}{*}{MPC horizon} &
\multicolumn{3}{c|}{Scenarios 1-2 (among 10 vehicles)} &
\multicolumn{3}{c|}{Scenario 3 (among 9 vehicles)} \\
\cline{2-7}
   &  Small-size & Medium-size & Large-size &  Small-size & Medium-size & Large-size \\
\hline
$p=1$ & 0.0571 & 0.0941 & 0.1138 & 0 &  0.0431 & 0  \\
\hline
$p=2$ & 0.1107 & 0.1507 & 0.2738 & 0.1003 & 0.1277 & 0.2199\\
\hline
$p=3$ & 0.1122 & 0.1530 & 0.2793 & 0.1051 & 0.1369 & 0.2374 \\
\hline
$p=4$ & 0.1321 & 0.1687 & 0.2831 & 0.1142 & 0.1363 & 0.2106 \\
\hline
$p=5$ & 0.1438 & 0.1880 & 0.3052 & 0.1438 & 0.1538 & 0.2342 \\
\hline
\end{tabular}
\end{center}
\label{Table07}
\end{table}

We evaluate the closed-loop performance of the proposed CAV platooning control with different MPC horizon $p$'s for different CAV platoons on the three scenarios mentioned before. For each CAV platoon and scenario, we consider the spacing between two neighboring vehicles (i.e., $S_{i-1, i}(k):=x_{i-1}(k) - x_{i}(k)=z_{i}(k)+\Delta$), the vehicle speed $v_i(k)$, and the control input $u_i(k), \, i=1, \ldots, n$ for $p=1, 2, 3, 4, 5$.

\gap

\noindent {\bf Steady state error}. When $(c_{2, i}, c_{3, i}) \ne 0$ and $u_0(k)=0$ and $v_0(k)=v_{0,\infty}>0$ for  all large $k$,  it is observed from the numerical tests that  when the closed-loop dynamics of the CAV platoon reaches its steady state $(z_{ss}, z'_{ss}) \in \mathbb R^n \times \mathbb R^n$, i.e., $(z(k), z'(k))$ becomes the constant vector $(z_{ss}, z'_{ss})$  for all large $k$, $z_{ss}$ is nonzero. Physically, the nonzero steady state is due to nonlinear vehicle dynamics and the PD-like control structure of the MPC control scheme. To illustrate this phenomenon, consider the closed-loop dynamics in (\ref{eqn:closed_loop_p=1}) for $p=1$. (It is noted that only for $p=1$, we have a closed-form expression for the closed-loop dynamics.) Since $(z_i(k), z'_i(k))$ is constant for all large $k$ when a CAV platoon reaches its steady state, it follows from (\ref{eqn:z_z'_closed_loop}) that $
 w_i(k) - [c_{2, i-1} v^2_{i-1}(k) - c_{2, i} v^2_i(k) ] - [c_{3, i-1} -c_{3, i}] g=0$ for all large $k$. It is easy to see from (\ref{eqn:z_z'_closed_loop}) that $z'_{ss}=0$. Let $\zbld_{ss} := (z_{ss}, z'_{ss})=(z_{ss}, 0)$.  In view of $u_0(k)=0$ for all large $k$, we deduce via (\ref{eqn:closed_loop_p=1}) that $\zbld_{ss} = \big( A_{\mbox{c}} + v_{0, \infty} \Delta \bar A (\varphibf_d) \big) \zbld_{ss} - B\wh W Q_w w_{e, \infty}$, where $w_{e, \infty}$ is defined in the way of $w_e(k)$ by setting $v_0(k) \equiv v_{0, \infty}$. Since $\zbld_{ss}=(z_{ss}, 0)$, we see that $(I -  A_{\mbox{c}}) \zbld_{ss} = - B \wh W Q_w  w_{e, \infty}$. By the expression for $A_{\mbox{c}}$ given in (\ref{eqn:linear_close_loop_p=1}), we obtain $\frac{1}{2} B \wh W Q_z z_{ss} = - B \wh W Q_w w_{e, \infty}$. Since $B \wh W$ has full column rank and $Q_z, Q_w$ are diagonal PD, we further have $\frac{1}{2} Q_z z_{ss} = - Q_w w_{e,\infty}$ or equivalently $z_{ss} = -2 Q^{-1}_{z}Q_w w_{e,\infty}$. Since $w_{e,\infty} \ne 0$, we conclude that $z_{ss} \ne 0$. Note that $w_{e, \infty}$ depends on $c_{2, i-1}-c_{2, i}$ and $c_{3, i-1} -c_{3, i}$ and $v_{0, \infty}$ with  $c_{2,0}=c_{3, 0}=0$. Hence, if a CAV platoon is homogeneous, then $w_{e, \infty}$, and thus $z_{ss}$, is a multiple of $\mathbf e_1$. Moreover, in light of $Q^{-1}_{z}Q_w = \mbox{diag}(\frac{\zeta_1}{\alpha_1}, \ldots, \frac{\zeta_n}{\alpha_n})$, it is observed that $|(z_{ss})_i|$ will be smaller for a large $\alpha_i$ and  a small $\zeta_i$. This observation agrees with numerical results. In addition, when $p \ge 2$, similar observations are made although the closed form expression of $z_{ss}$ is hard to obtain. The maximum steady state error of spacing, i.e., $\|z_{ss}\|_\infty$, is displayed in Table~\ref{Table07} for different CAV platoons, different $p$'s, and the three scenarios. Note that in Scenario 3, we consider the steady state errors for $S_{i-1, i}, \, i=2, 3, \ldots, 9$ since $S_{0, 1}$ does not reach its steady state in this scenario.

%% \gap

%%%for the  three aforementioned scenarios with different MPC horizon $p$'s.

%In each scenario, we evaluate the performance of the spacing between two neighboring vehicles (i.e., %$S_{i-1, i}(k):=x_{i-1}(k) - x_{i}(k)=z_{i}(k)+\Delta$), the vehicle speed $v_i(k)$, and the control input %$u_i(k), \, i=1, \ldots, n$ for $p=1, 2, 3, 4, 5$.
%

We present the closed-loop performance only for $p=1$ and $p=5$  for each type of CAV platoons in each scenario because of the length limit; see  Figures~\ref{Fig:S1_small}-\ref{Fig:S3_large}.
%
%see Figures~\ref{Fig:S1}-\ref{Fig:S3} for (noise free) Scenarios 1-3 respectively, and %Figure~\ref{Fig:S3_noise} for Scenario 3 with noises.
%
%In fact, it is observed from these figures (and the other tests) that there is little difference in %control performance between $p=1$ and a higher $p$, e.g., $p=5$.
%
The closed-loop performance in each scenario is commented as follows:
\begin{itemize}
  \item [(i)] Scenario 1.
  %
  % the spacing, the vehicle speed, and the control input are shown in .
  %
   Figures~\ref{Fig:S1_small}, \ref{Fig:S1_medium}, and \ref{Fig:S1_large} show the MPC control performance of the homogenous small-size CAV platoon, the heterogeneous medium-size CAV platoon, and the homogenous large-size CAV platoon in Scenario 1, respectively. It can be seen that the spacing between the leading vehicle and the first CAV, i.e., $S_{0, 1}$, in all the CAV platoons  has small deviations (less than $0.5m$) from the desired spacing $\Delta$ when the leading vehicle takes instantaneous acceleration or deceleration. Further, when $p=1$, the spacings between the other CAVs in the two homogeneous CAV platoons  remain the desired constant $\Delta$, and there are small deviations from the desired spacing $\Delta$ for the other CAVs in the heterogeneous CAV platoon or the two homogeneous CAV platoons when $p=5$. In all the cases, the convergence to the steady states is fast (within 15 secs) and the steady state errors in spacing are nonzero but are small; see Table~\ref{Table07}. In fact, the maximum steady state errors increase as $p$ becomes larger;  compared with the desired spacing $\Delta=50 m$, $60 m$ or $65 m$, the largest relative error $\frac{\|z_{ss}\|_\infty }{\Delta } \le 0.47\%$ for all the three types of CAV platoons.
   %
   % the spacing between the uncontrolled leading vehicle and the first CAV, i.e., $S_{0, 1}$, has mild %deviation from the desired $\Delta$ when the leading vehicle performs instantaneous acceleration or %deceleration, while the spacings between the other CAVs remain the desired constant $\Delta$.
   %
   % Further, it takes about $35s$ for $S_{0, 1}$ to converge to the steady state with the maximum %spacing deviation $2.66m$.
    %
   Lastly, the time history of  speed and control input demonstrates satisfactory performance. In particular, it is observed that all the CAVs show the same speed change and almost identical control, implying that the CAV platoon performs a nearly coordinated motion under the proposed platooning control.
   %
   %  {\sl Maximum steady state errors are increasing as $p$ increases, partially due to more %accumulations of numerical errors for a larger $p$. Even though $\|z_{ss}\|_\infty$ is nonzero, it is %rather small, compared with the desired spacing $\Delta=50 m$. In fact, $\frac{\|z_{ss}\|_\infty %}{\Delta } \le 0.47\%$ for all the three types of CAV platoons.}
%  %

 \item [(ii)]
Scenario 2.   Figures~\ref{Fig:S2_small}, \ref{Fig:S2_medium}, and \ref{Fig:S2_large} display the MPC control performance of the homogenous small-size, the heterogeneous medium-size, and the homogenous large-size CAV platoons in Scenario 2, respectively, where the leading vehicle undertakes  periodic acceleration/deceleration. In all the cases, $S_{0, 1}$ demonstrates the largest fluctuations whose maximum magnitude of deviations is $0.25m$ when $\Delta = 50m$, $0.3 m$ when $\Delta = 60m$,
and $0.5 m$ when $\Delta = 65m$. Besides, all the CAV platoons demonstrates nearly coordinated motions. For example, when $p=1$,  the spacings $S_{i-1, i}$ for $i=2, \ldots, 10$ remain the desired constant for the two homogeneous CAV platoons, and they have small deviations from the desired spacing for the heterogeneous CAV platoon and  $p=5$ of the two homogeneous CAV platoons.
Moreover, the fluctuations of $S_{0, 1}$ and other $S_{i, i+1}$'s  quickly converge to their steady states within $15s$ when the  leading vehicle stops its periodical acceleration.  The steady state errors in spacing are as same as those in Scenario 1.
 The time history of speed and control input shows nearly identical behaviors for all the CAVs in each case.
%

%
%displays that under the periodic acceleration/deceleration of the leading vehicle, the CAV platoon also %demonstrates a ``coordinated'' motion with ``consensus''. For example, only $S_{0, 1}$ demonstrates mild %fluctuation, whereas the spacings between the other CAVs remain the desired constant, and all the CAVs %show the exactly same speed change and control. Moreover, under the proposed platooning control, the %oscillations of $S_{0, 1}$ are relatively small with the maximal magnitude less than $0.22m$. Such %oscillations quickly decay to zero within $30s$ when the leading vehicle stops its periodical %acceleration/deceleration.
%

  \item [(iii)] Scenario 3. Figures~\ref{Fig:S3_small}, \ref{Fig:S3_medium}, and \ref{Fig:S3_large} show the control performance of the homogenous small-size, the heterogeneous medium-size, and the homogenous large-size CAV platoons in Scenario 3, respectively, where the leading vehicle
      undergoes various traffic oscillations through the time window of $45s$. It is observed that $S_{0, 1}$ demonstrates the largest spacing variations with the maximum magnitude less than or equal to $0.25 m$ when $\Delta=50 m$, $0.3 m$ when $\Delta=60 m$, and $0.46 m$ when $\Delta=65 m$; the other spacings $S_{i-1, i}, \, i=2, \ldots, 10$  either are the desired constant or demonstrate nearly constant deviations with maximum magnitude less than $0.14 m$, in spite of the oscillation of $S_{0, 1}$. Further, the spacings $S_{i-1, i}, \, i=2, \ldots, 10$ almost reach steady states between $5s$ and $25 s$ and after $k=35$. The maximum steady state errors of these spacings are shown in Table~\ref{Table07}.
      It is seen that the maximum steady state error often appears at $S_{1,2}$. Compared with the desired spacing $\Delta=50 m$, $60 m$ or $65 m$, the largest relative error $\frac{\|z_{ss}\|_\infty }{\Delta } \le 0.37\%$ for all the three types of CAV platoons in Scenario 3. Finally, all the CAV platoons demonstrates  nearly coordinated motions.

\end{itemize}
%

%%%%%%%%%%%%%%%%%%%%%%%%%%%%%%%%%%%%%%%%%%%%%%%%%%%%%%%%%%%%%%%%%%%%

Consequently, the proposed platooning control effectively mitigates traffic oscillations of the spacing and vehicle speed of the CAV platoons of different types with small or almost negligible steady state errors. In fact, it achieves nearly consensus motions of the entire CAV platoons even under some perturbations.
%

%
%Compared with other platoon centered approaches, e.g., \cite{GShenDu_TRB16},  the proposed control  scheme %performs better since it uses different weight matrices that
%lead to decoupled closed loop dynamics; this choice of the weight matrices also facilitates the %development of fully distributed computation.
%

%%%%%%%%%%%%%%%%%%%%%%%%%%%%%%%%%%%%%%%%%%%%%%%%%%%%%%%%%%%%%%%%%%%%%
% Scenario 1
%
\begin{figure}[htbp]
\centering
\subfigure[Time history of spacing changes.]{ \label{S1_p1_spacing}
\includegraphics[width=0.48\columnwidth]{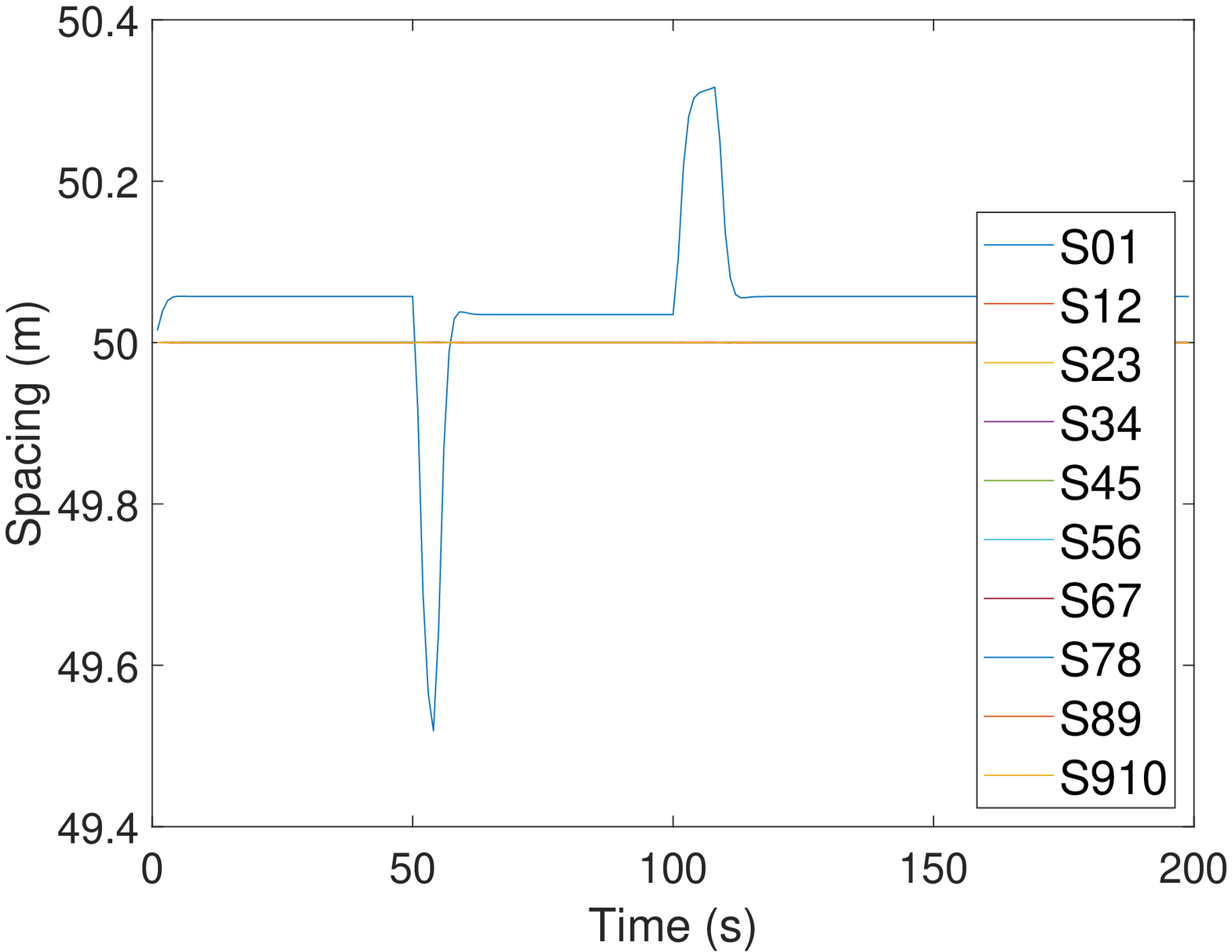}
}
\subfigure[Time history of spacing changes.] { \label{S1_p5_spacing}
\includegraphics[width=0.48\columnwidth]{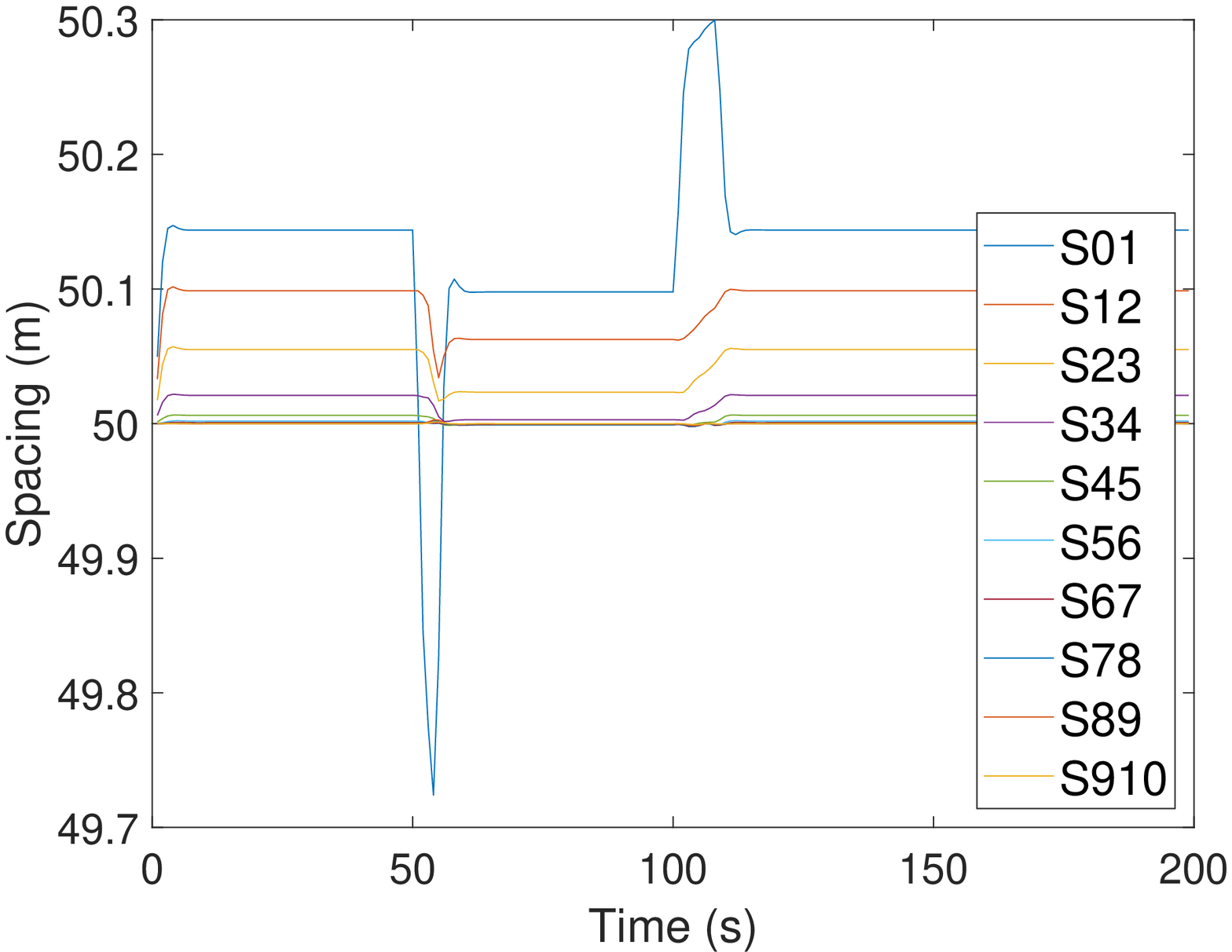}
}
\subfigure[Time history of vehicle speed. ]{ \label{S1_p1_speed}
\includegraphics[width=0.48\columnwidth]{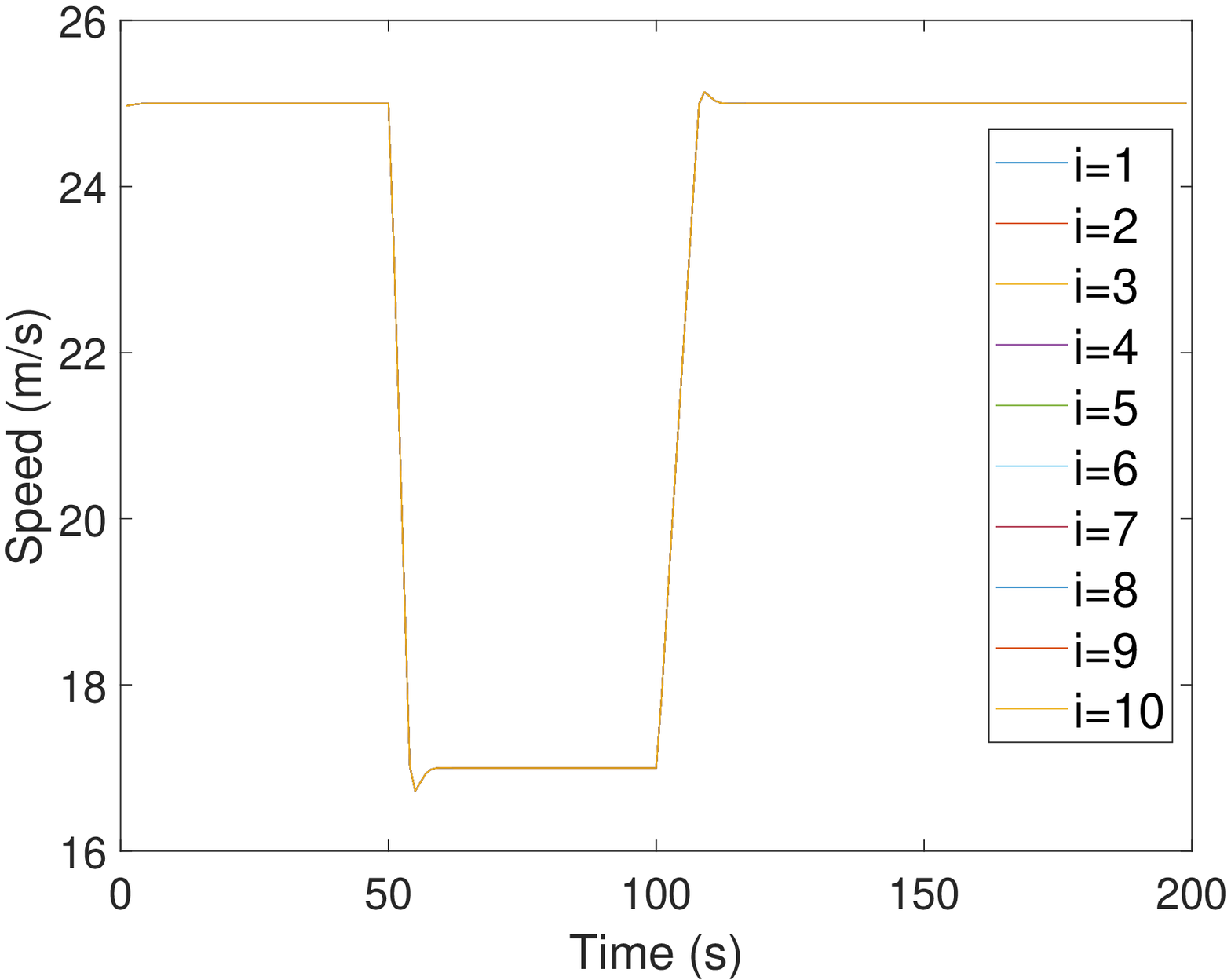}
}
\subfigure[Time history of vehicle speed.] { %%%\label{E11}
\includegraphics[width=0.48\columnwidth]{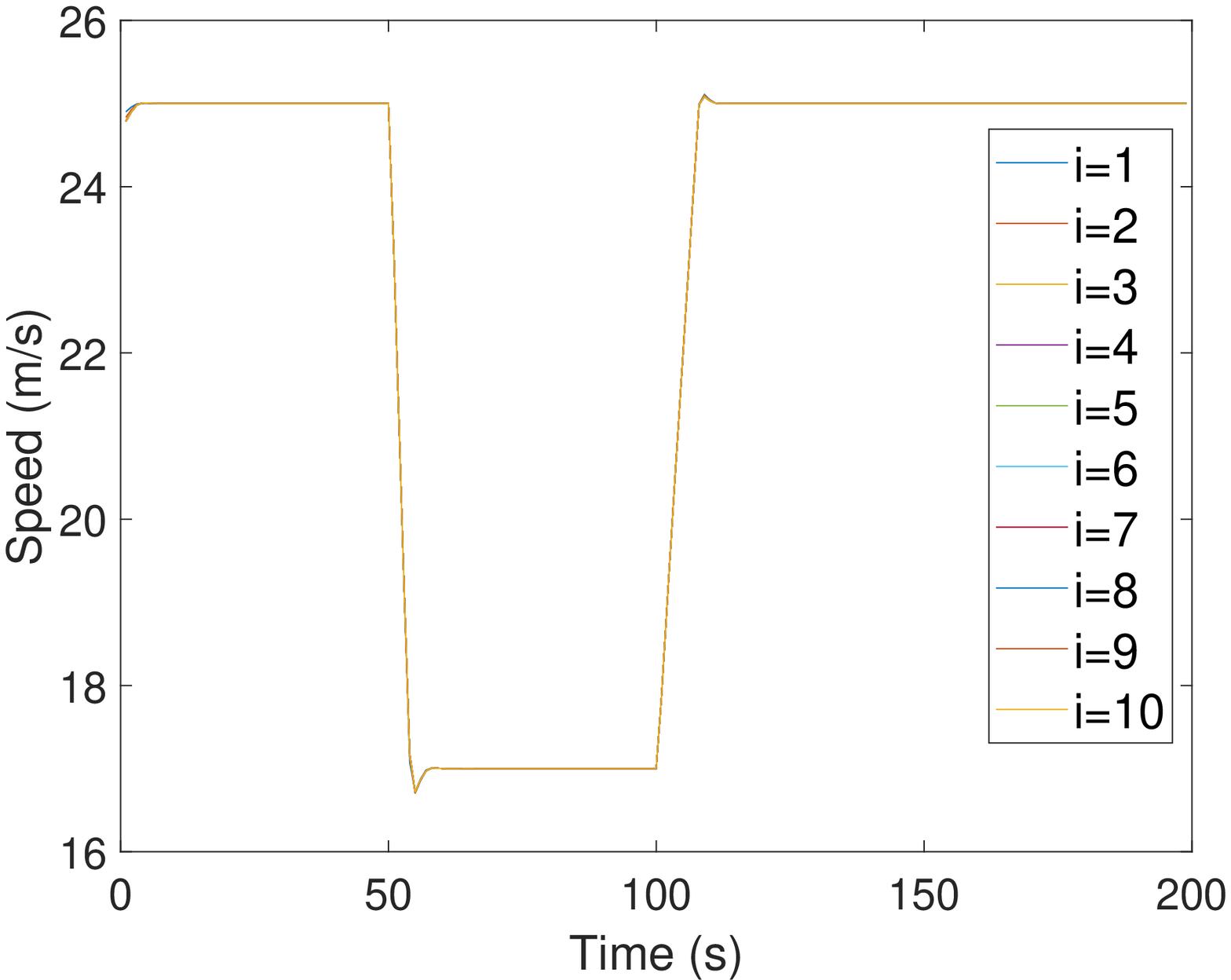}
}
\subfigure[Time history of control input.]{ %%%\label{E8AD}
\includegraphics[width=0.48\columnwidth]{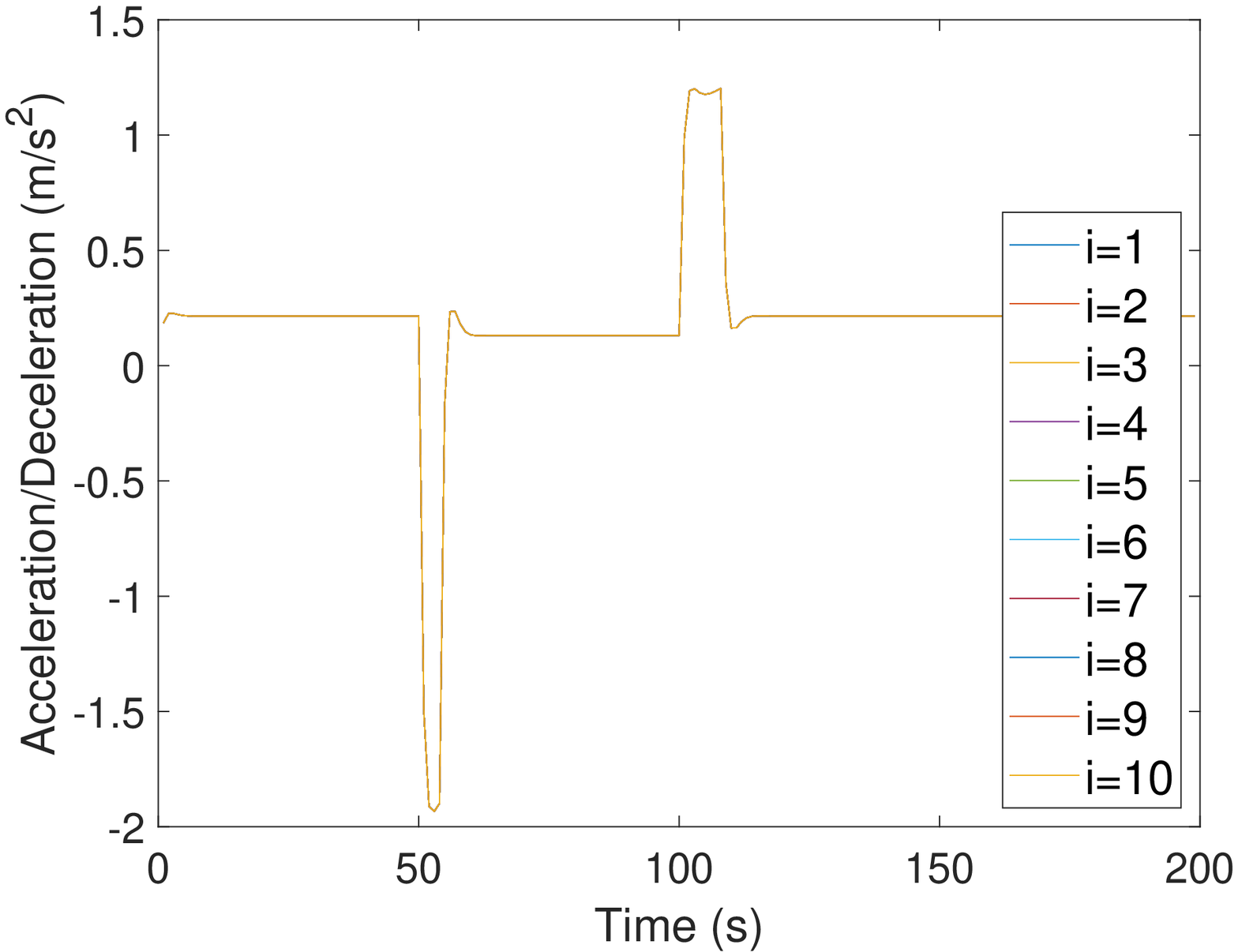}
}
\subfigure[Time history of control input] { %%%\label{E11AD}
\includegraphics[width=0.48\columnwidth]{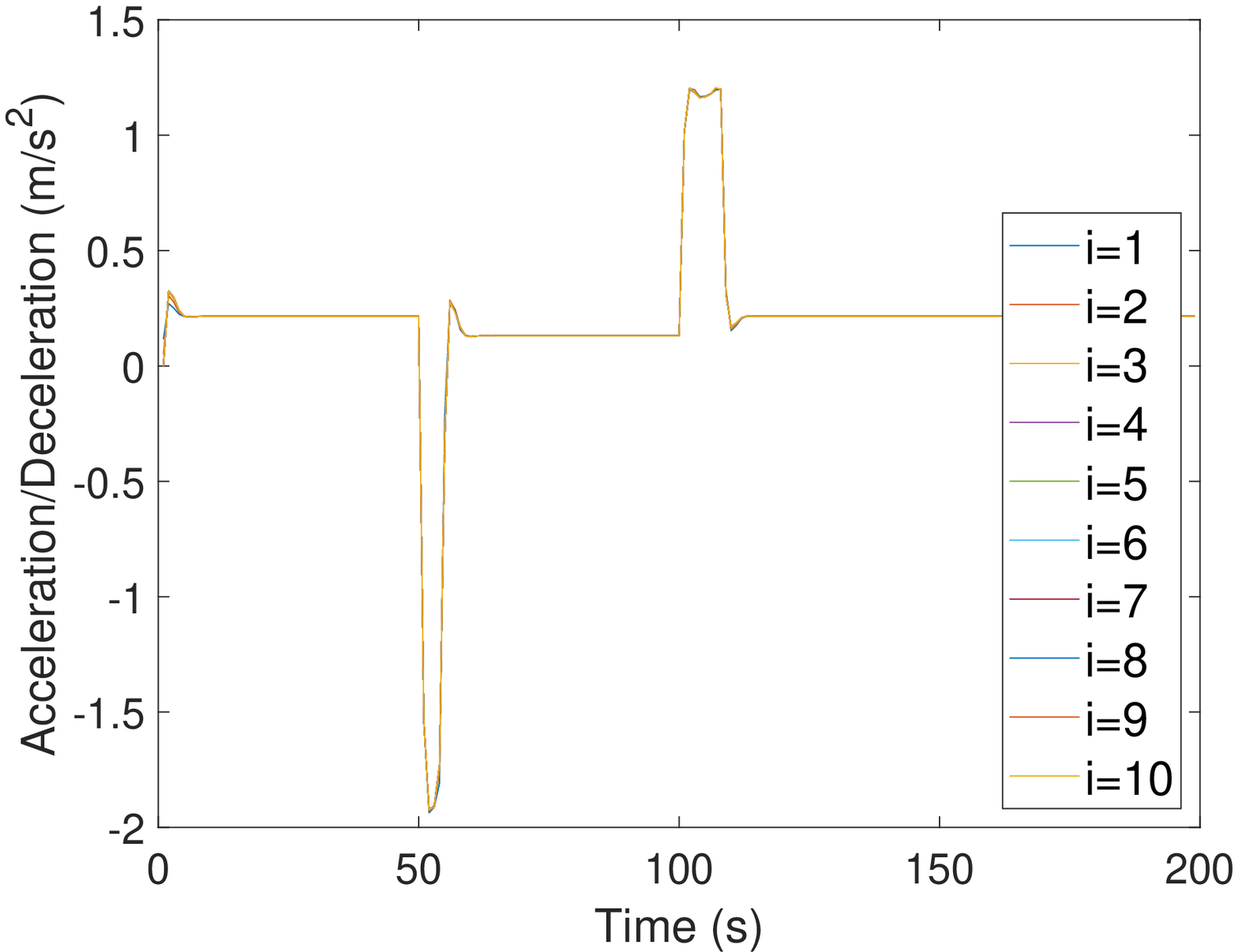}
}
\caption{Scenario 1 for the homogeneous small-size CAV platoon: platooning control with $p=1$ (left column) and $p=5$ (right column).}
\label{Fig:S1_small}
\end{figure}

%%%%%%%%%%%%%%%%%%%%%%%%%%%%%%%%%

\begin{figure}[htbp]
\centering
\subfigure[Time history of spacing changes.]{ \label{S1_p1m_spacing}
\includegraphics[width=0.48\columnwidth]{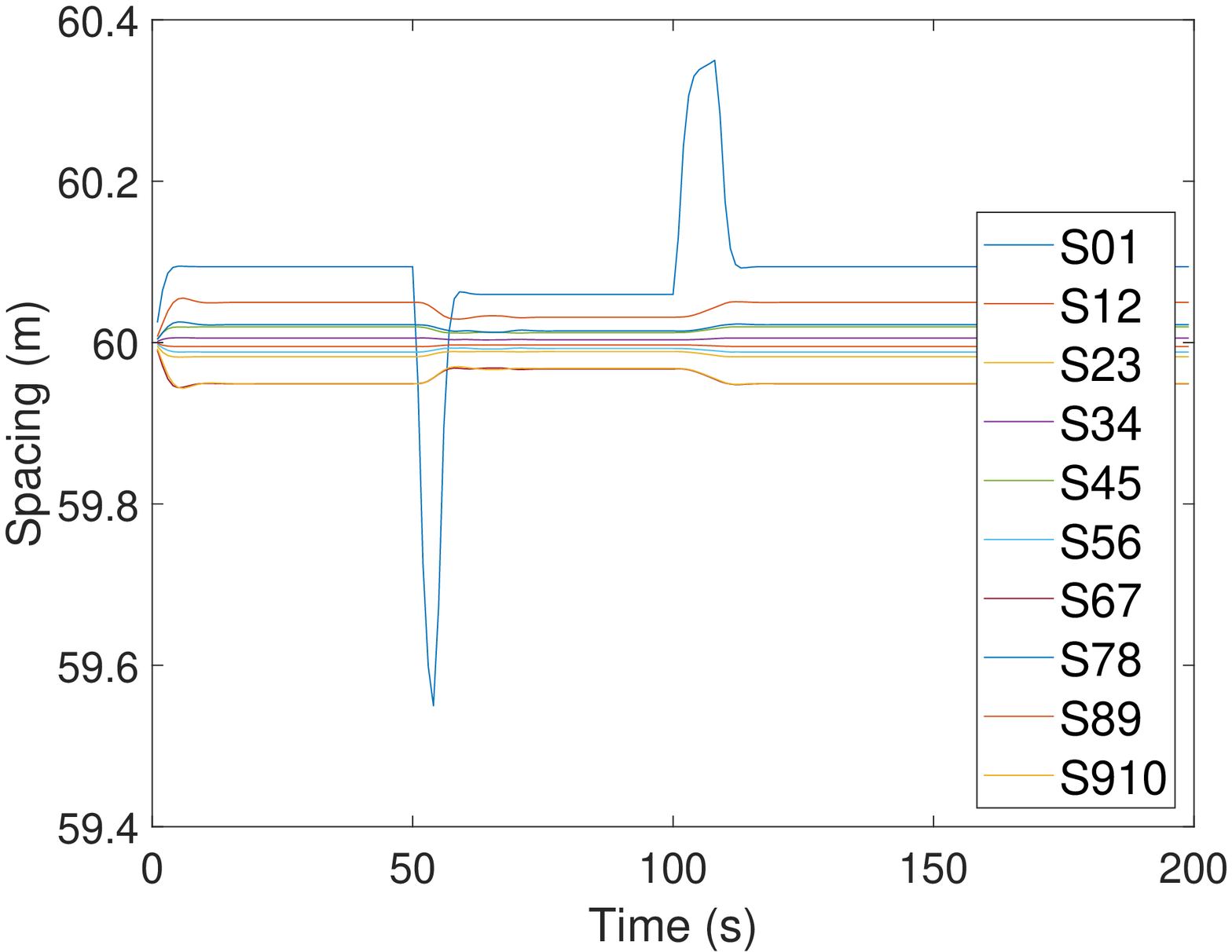}
}
\subfigure[Time history of spacing changes.] { \label{S1_p5m_spacing}
\includegraphics[width=0.48\columnwidth]{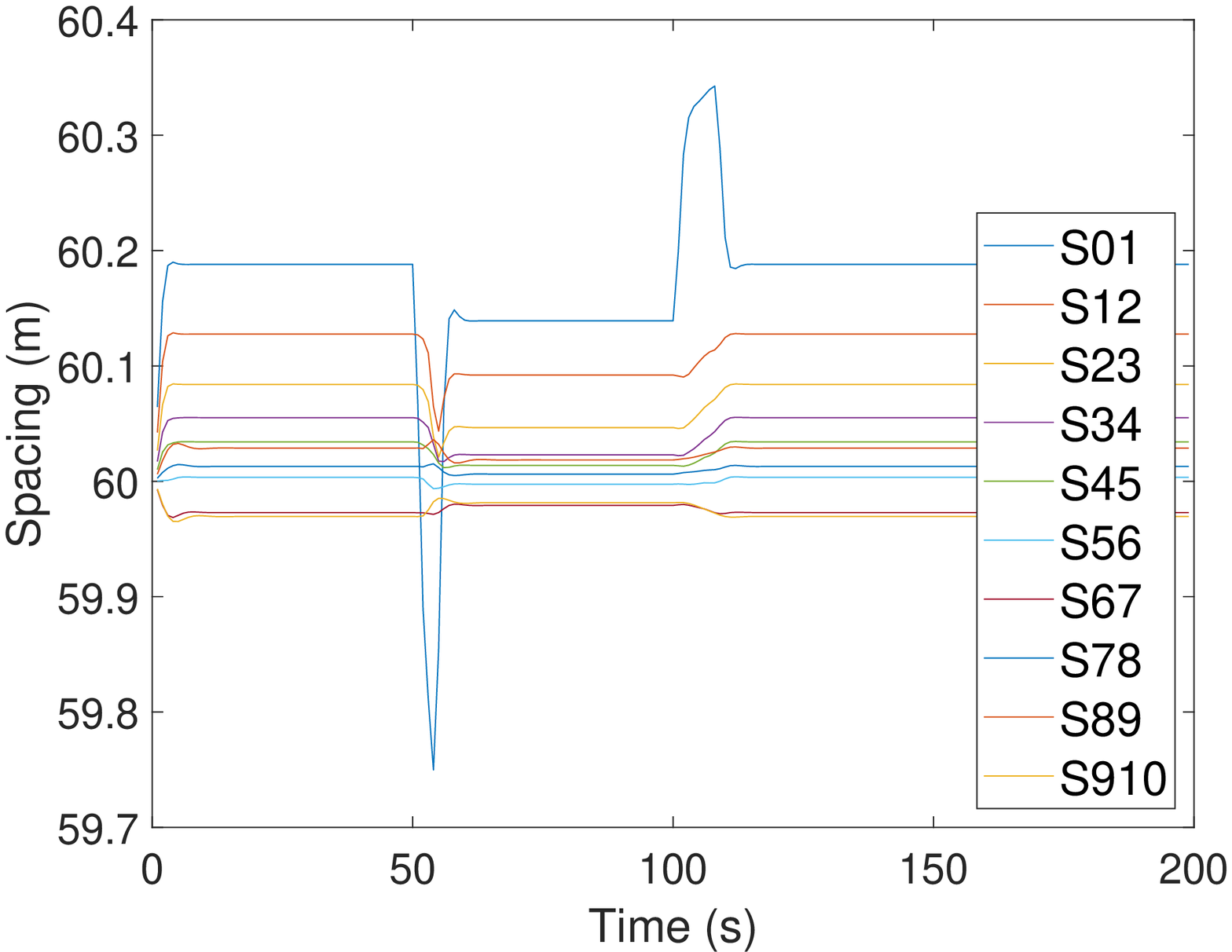}
}
\subfigure[Time history of vehicle speed. ]{ \label{S1_p1m_speed}
\includegraphics[width=0.48\columnwidth]{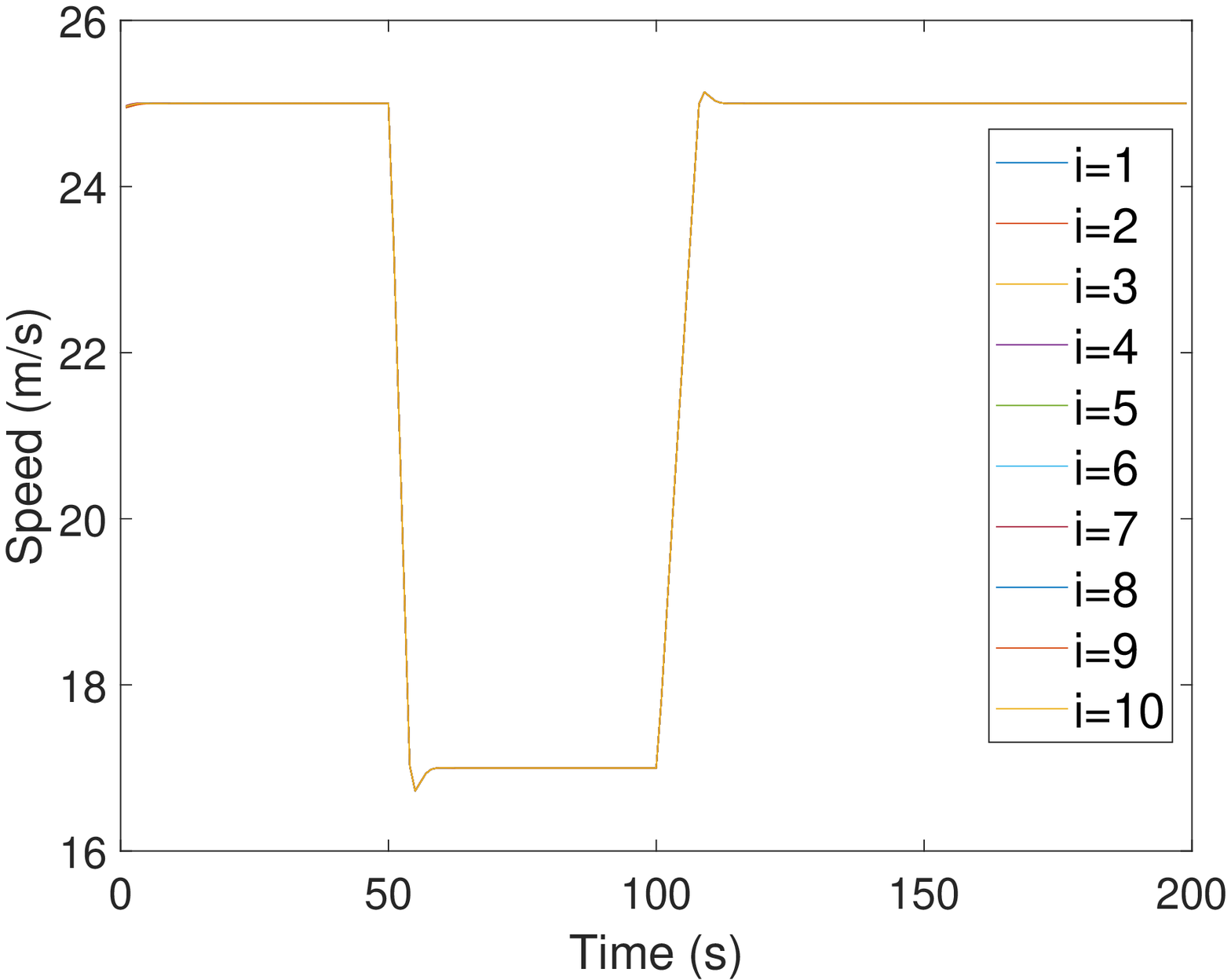}
}
\subfigure[Time history of vehicle speed.] { %%%\label{E11}
\includegraphics[width=0.48\columnwidth]{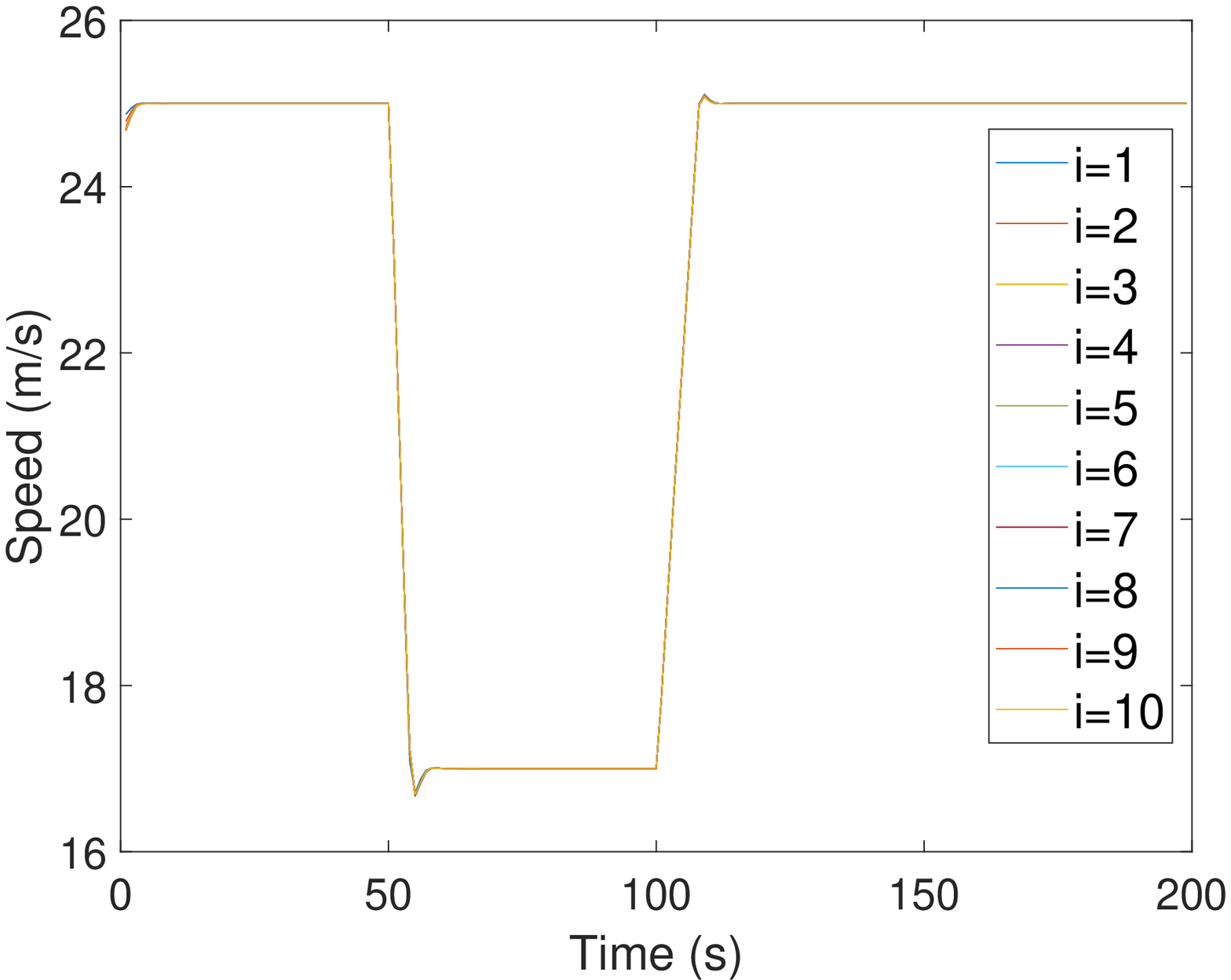}
}
\subfigure[Time history of control input.]{ %%%\label{E8AD}
\includegraphics[width=0.48\columnwidth]{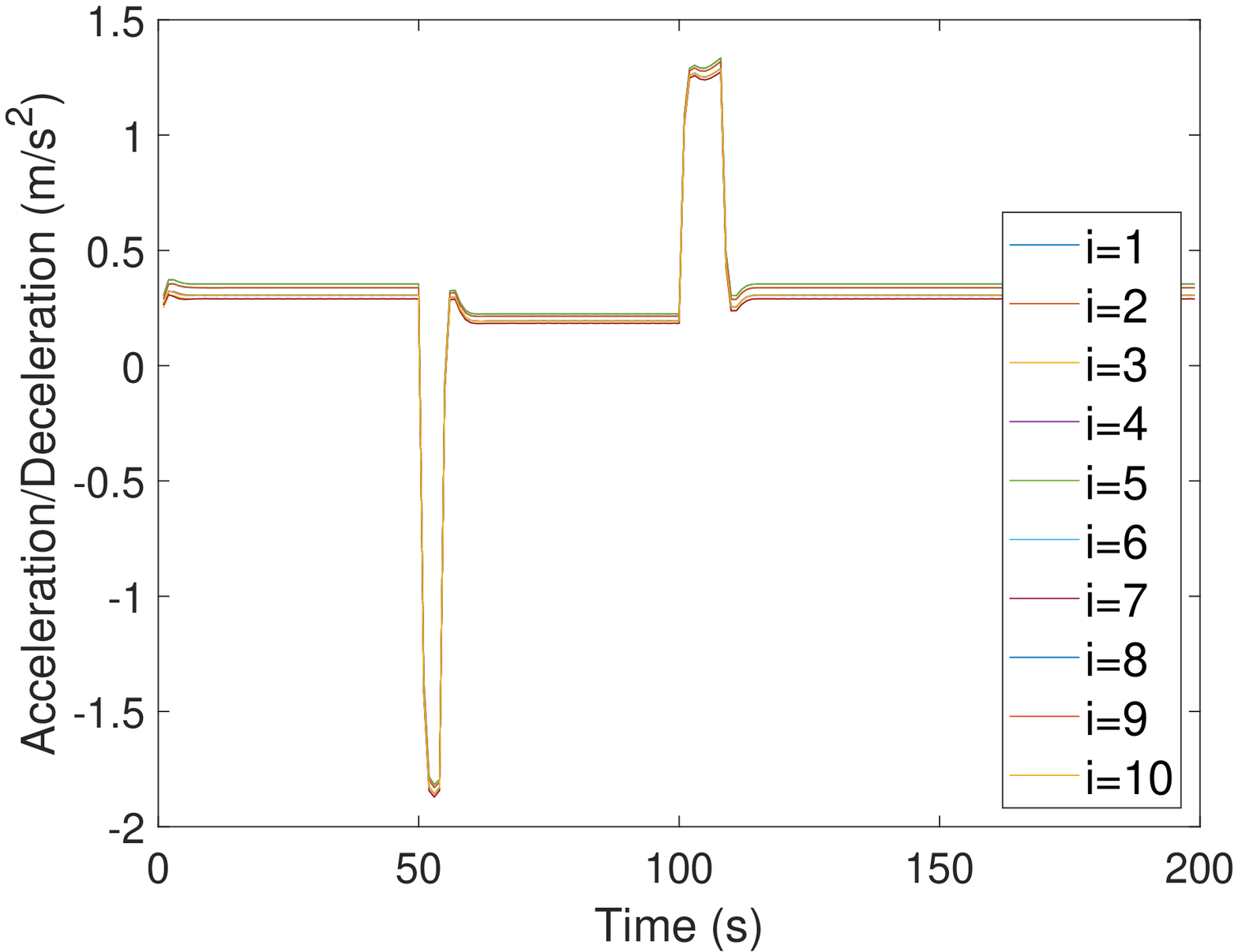}
}
\subfigure[Time history of control input] { %%%\label{E11AD}
\includegraphics[width=0.48\columnwidth]{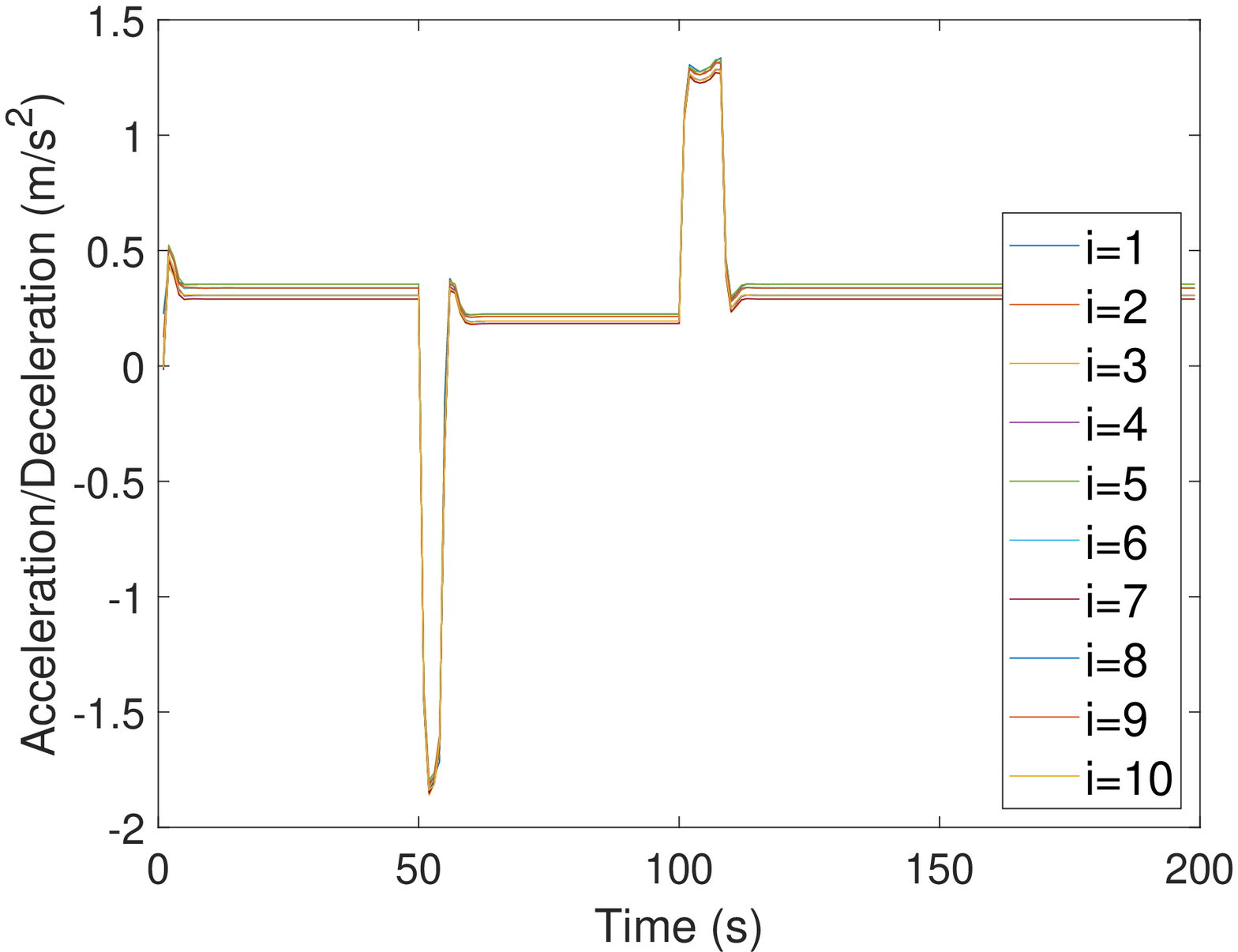}
}
\caption{Scenario 1 for the heterogeneous medium-size CAV platoon: platooning control with $p=1$ (left column) and $p=5$ (right column).}
\label{Fig:S1_medium}
\end{figure}

%%%%%%%%%%%%%%%%%%%%%%%%%%%%%%%%%

\begin{figure}[htbp]
\centering
\subfigure[Time history of spacing changes.]{ \label{S1_p1max_spacing}
\includegraphics[width=0.48\columnwidth]{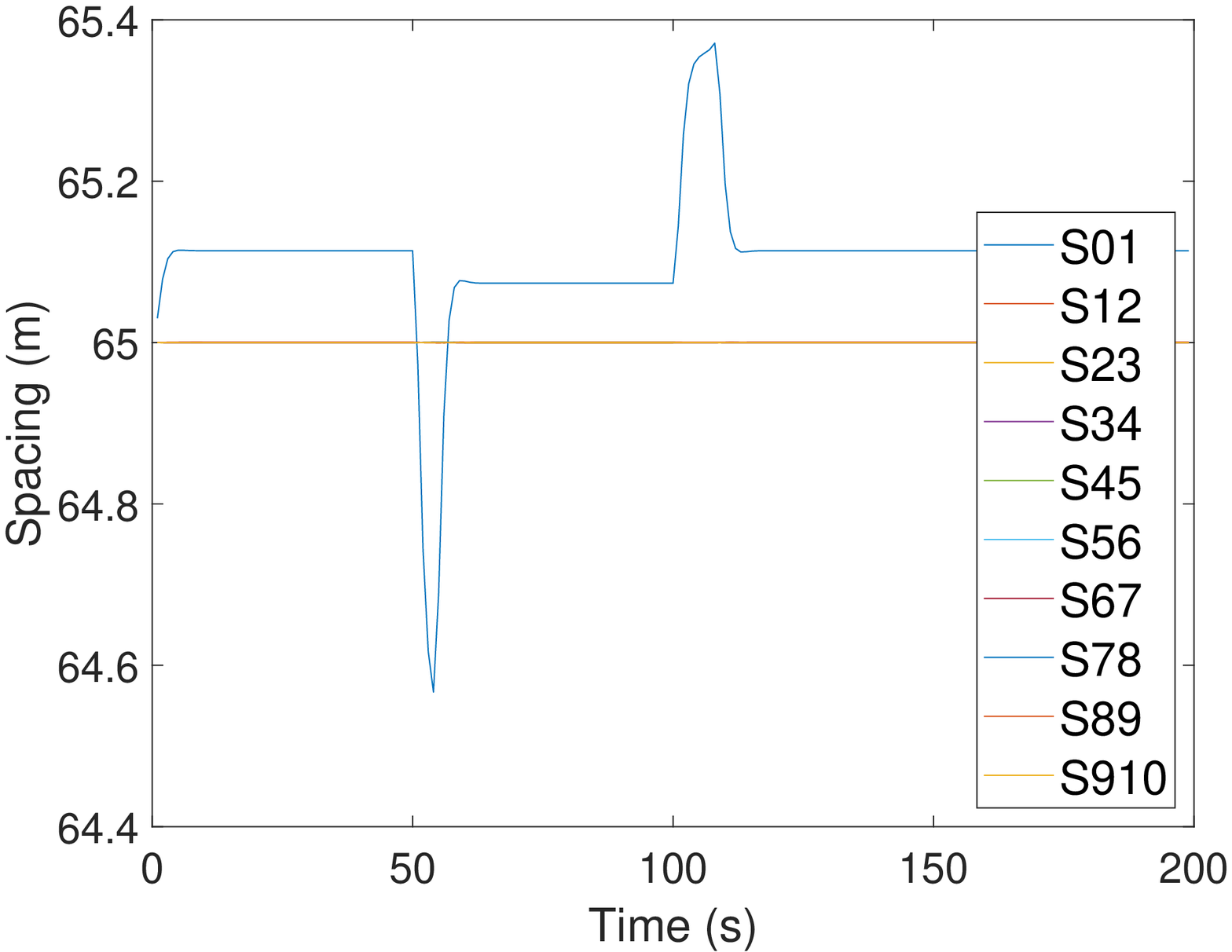}
}
\subfigure[Time history of spacing changes.] { \label{S1_p5max_spacing}
\includegraphics[width=0.48\columnwidth]{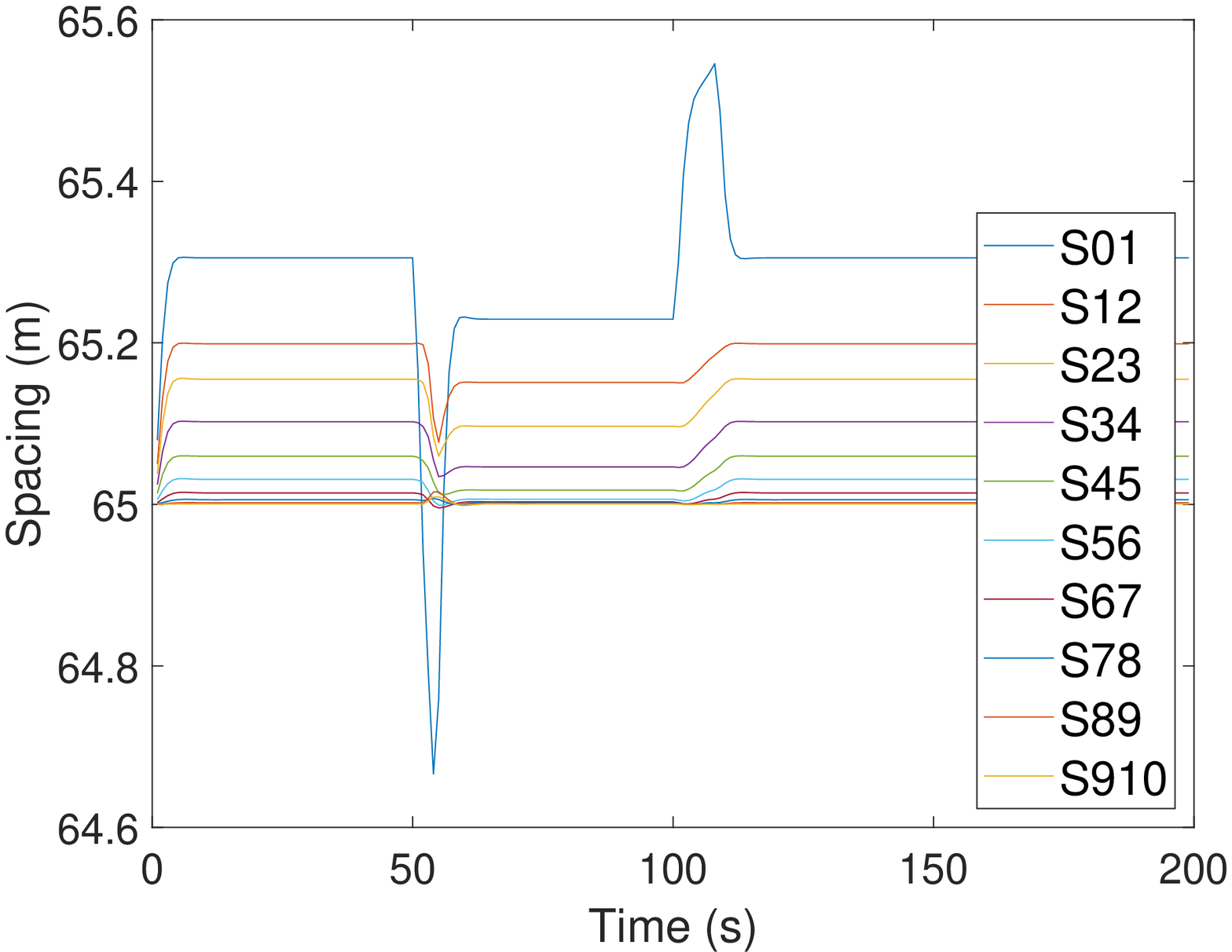}
}
\subfigure[Time history of vehicle speed. ]{ \label{S1_p1max_speed}
\includegraphics[width=0.48\columnwidth]{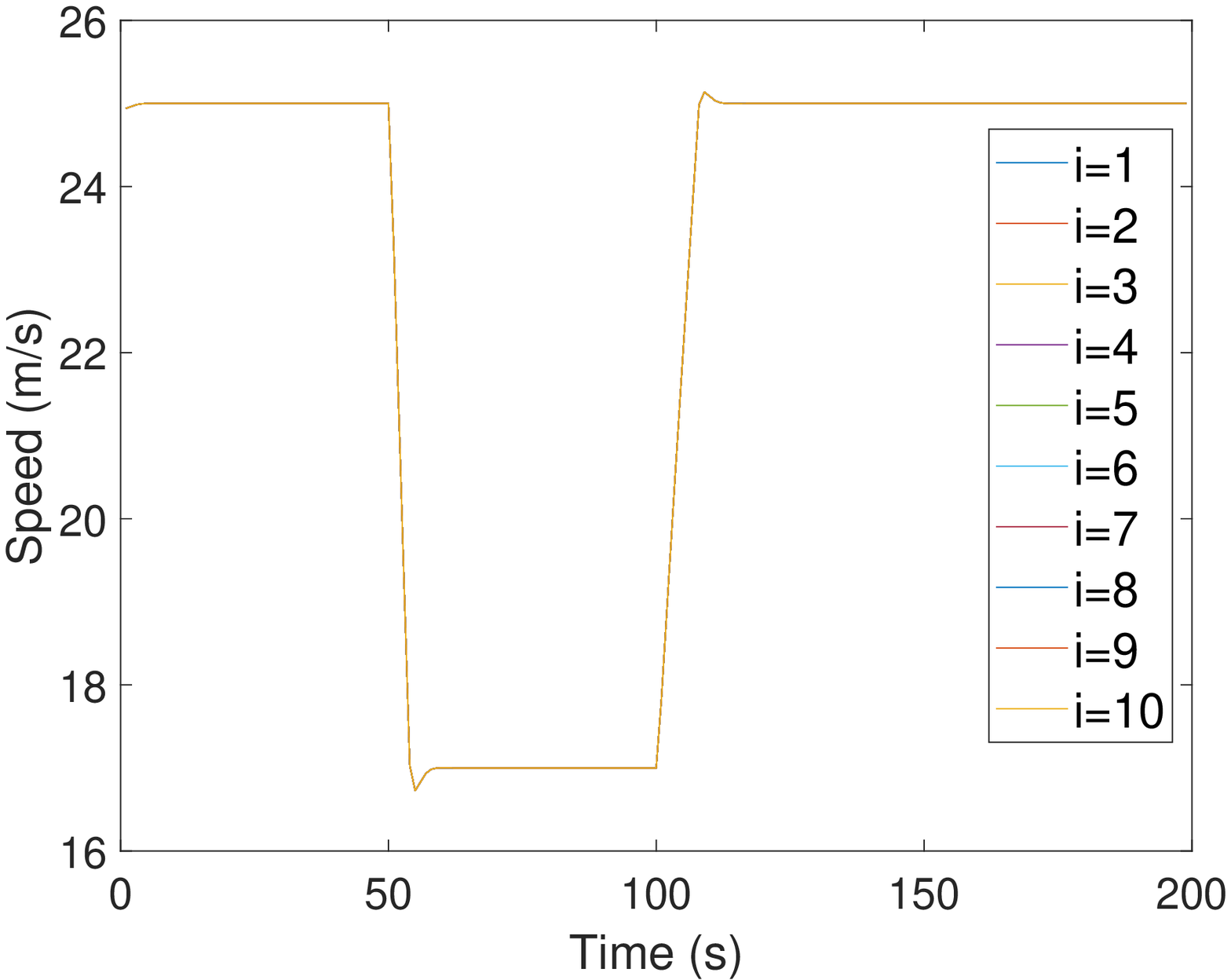}
}
\subfigure[Time history of vehicle speed.] { %%%\label{E11}
\includegraphics[width=0.48\columnwidth]{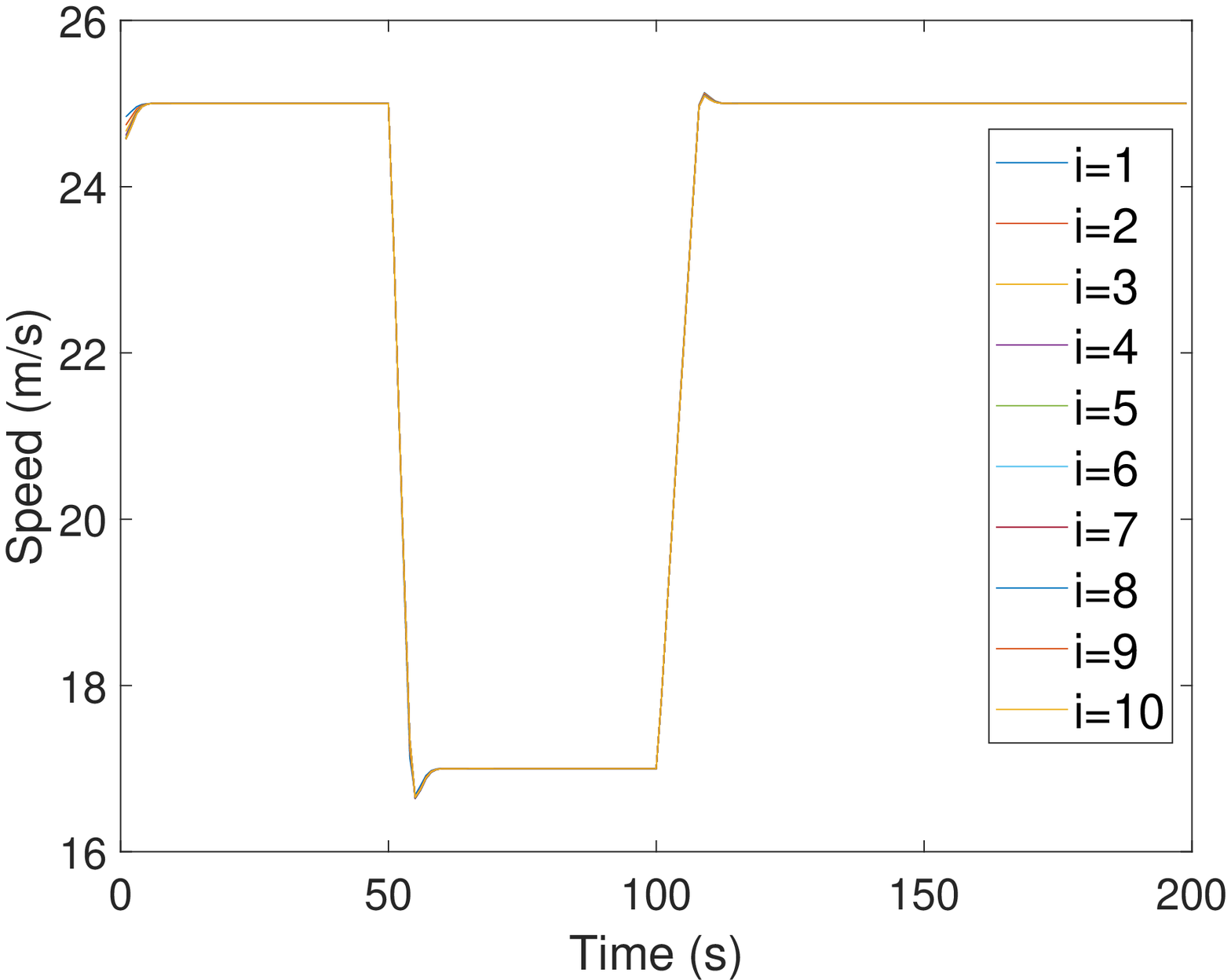}
}
\subfigure[Time history of control input.]{ %%%\label{E8AD}
\includegraphics[width=0.48\columnwidth]{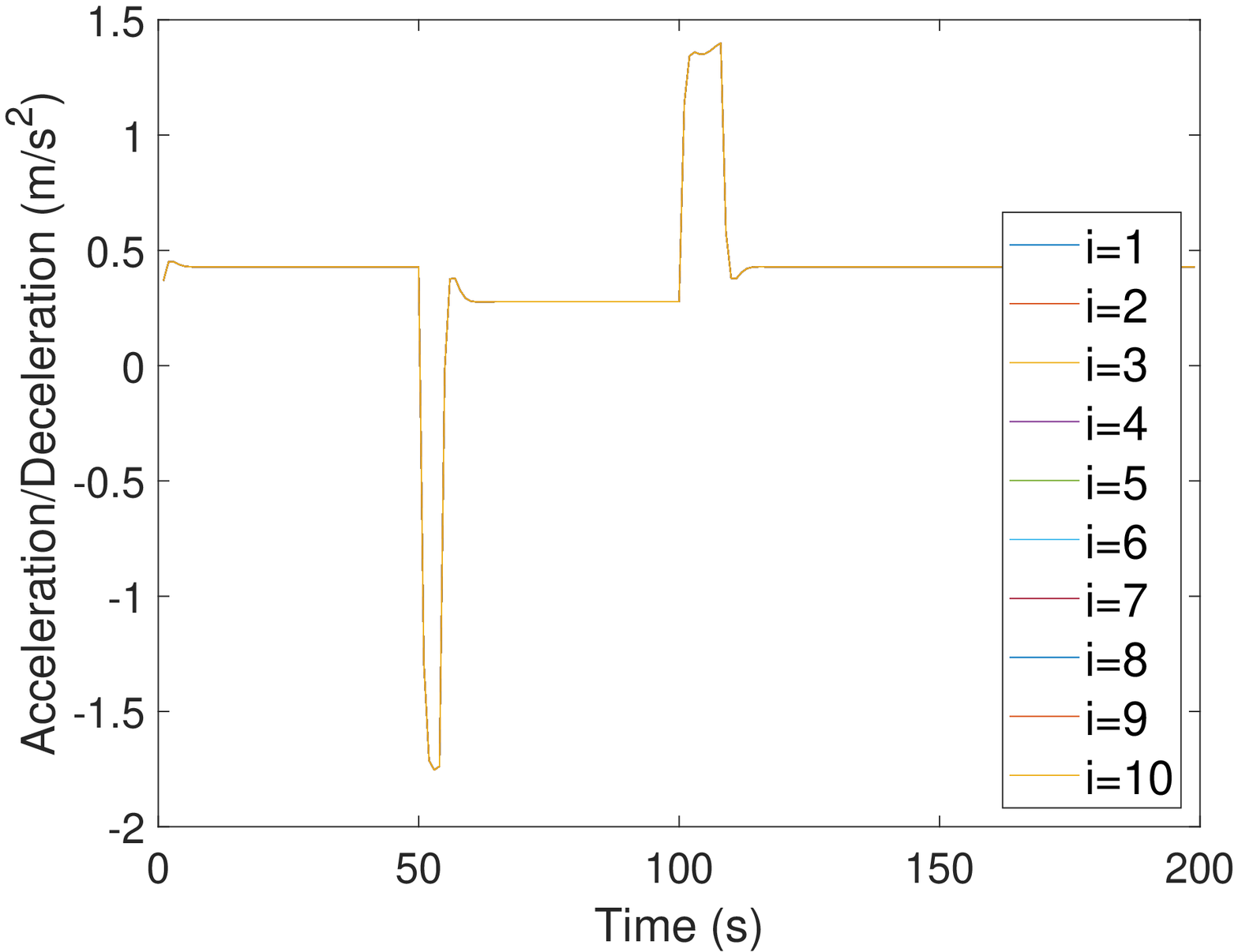}
}
\subfigure[Time history of control input] { %%%\label{E11AD}
\includegraphics[width=0.48\columnwidth]{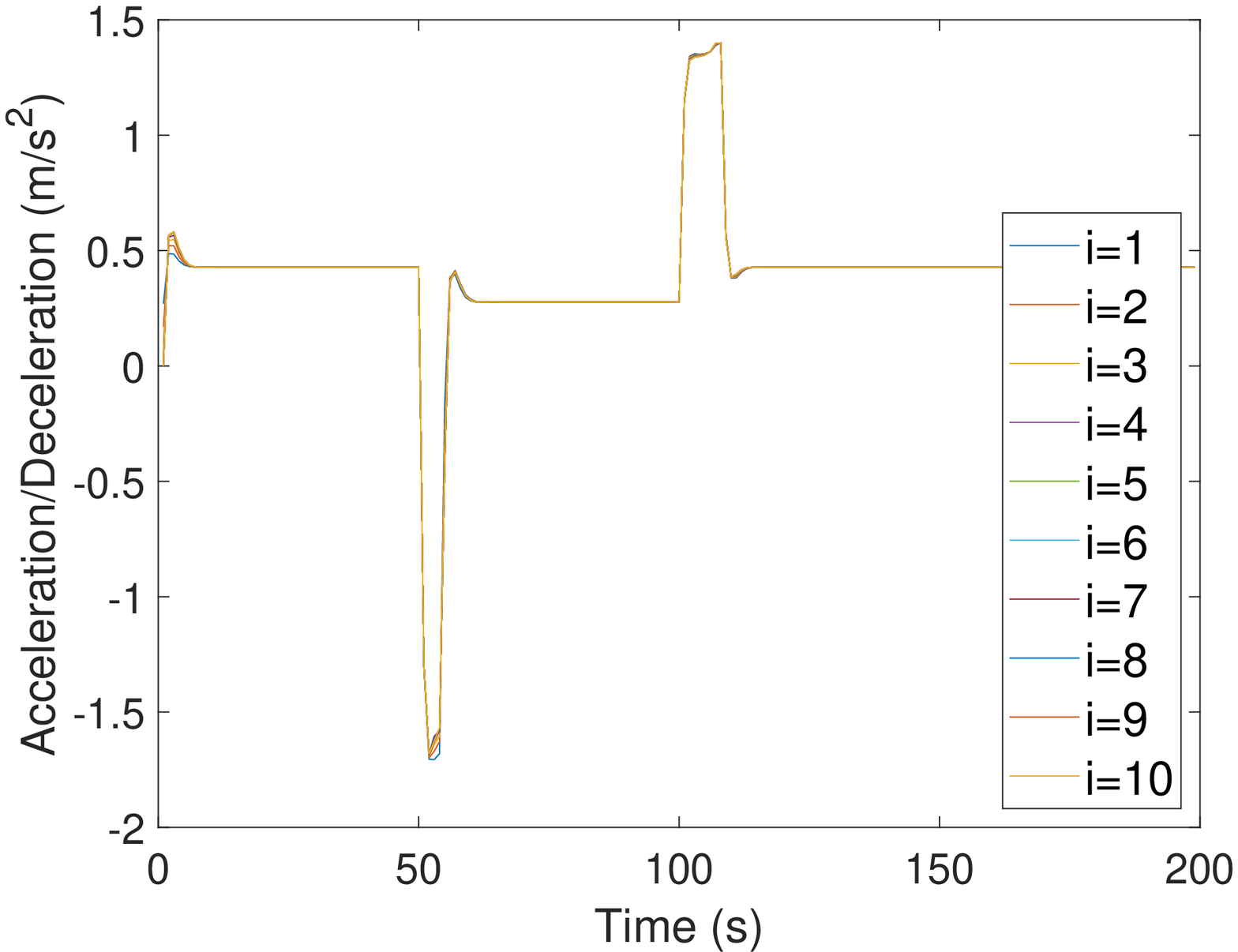}
}
\caption{Scenario 1 for the homogeneous large-size CAV platoon: platooning control with $p=1$ (left column) and $p=5$ (right column).}
\label{Fig:S1_large}
\end{figure}

%%%%%%%%%%%%%%%%%%%%%%%%%%%%%%%%%%%%%%%%%%%%%%%%%%%%%%%%%%%%%%%%%%%%%%%%%%%%%%%%%
% Scenario 2

\begin{figure}[htbp]
\centering
\subfigure[Time history of spacing changes.]{ %\label{S1_p1_spacing}
\includegraphics[width=0.48\columnwidth]{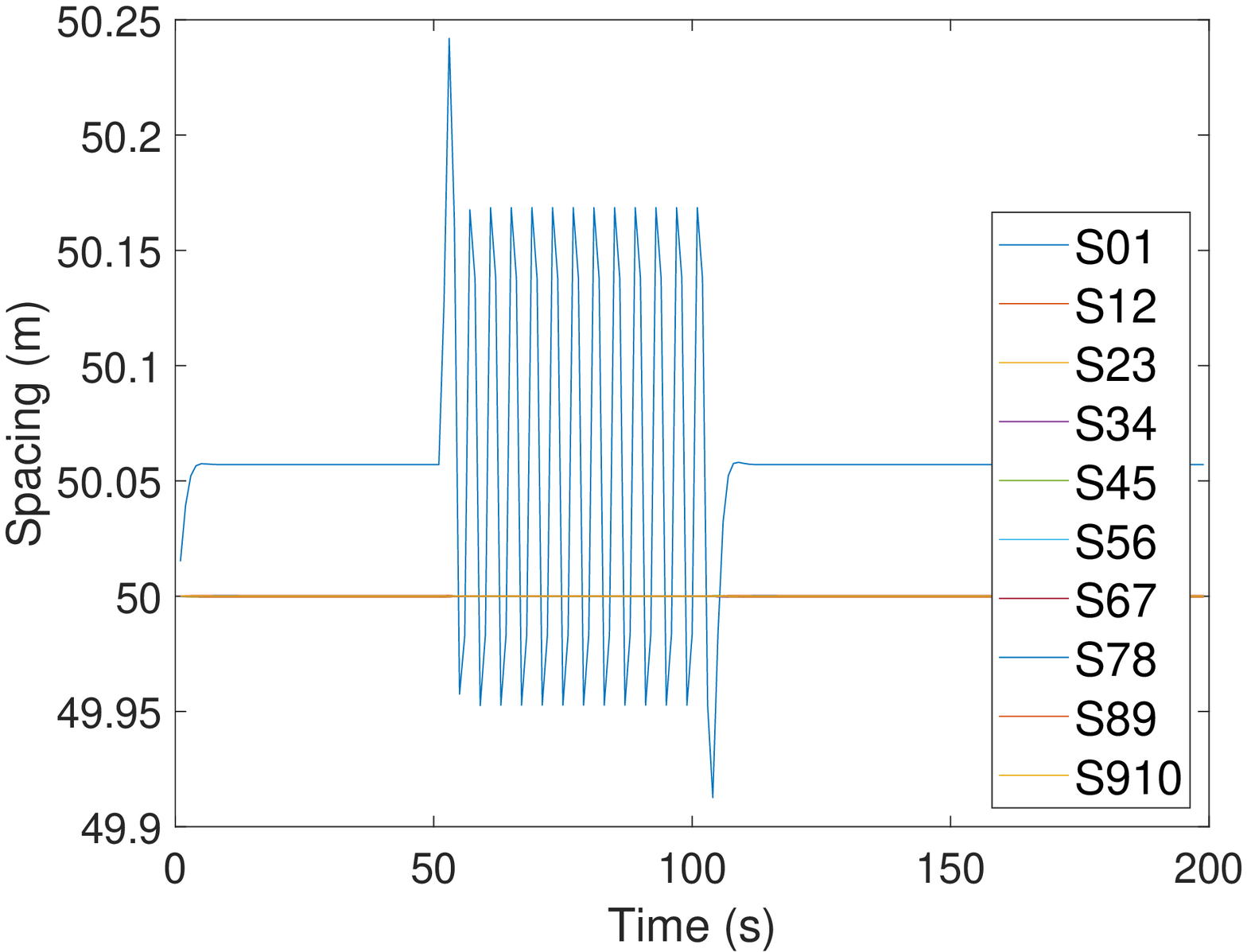}
}
\subfigure[Time history of spacing changes.] { %\label{S1_p5_spacing}
\includegraphics[width=0.48\columnwidth]{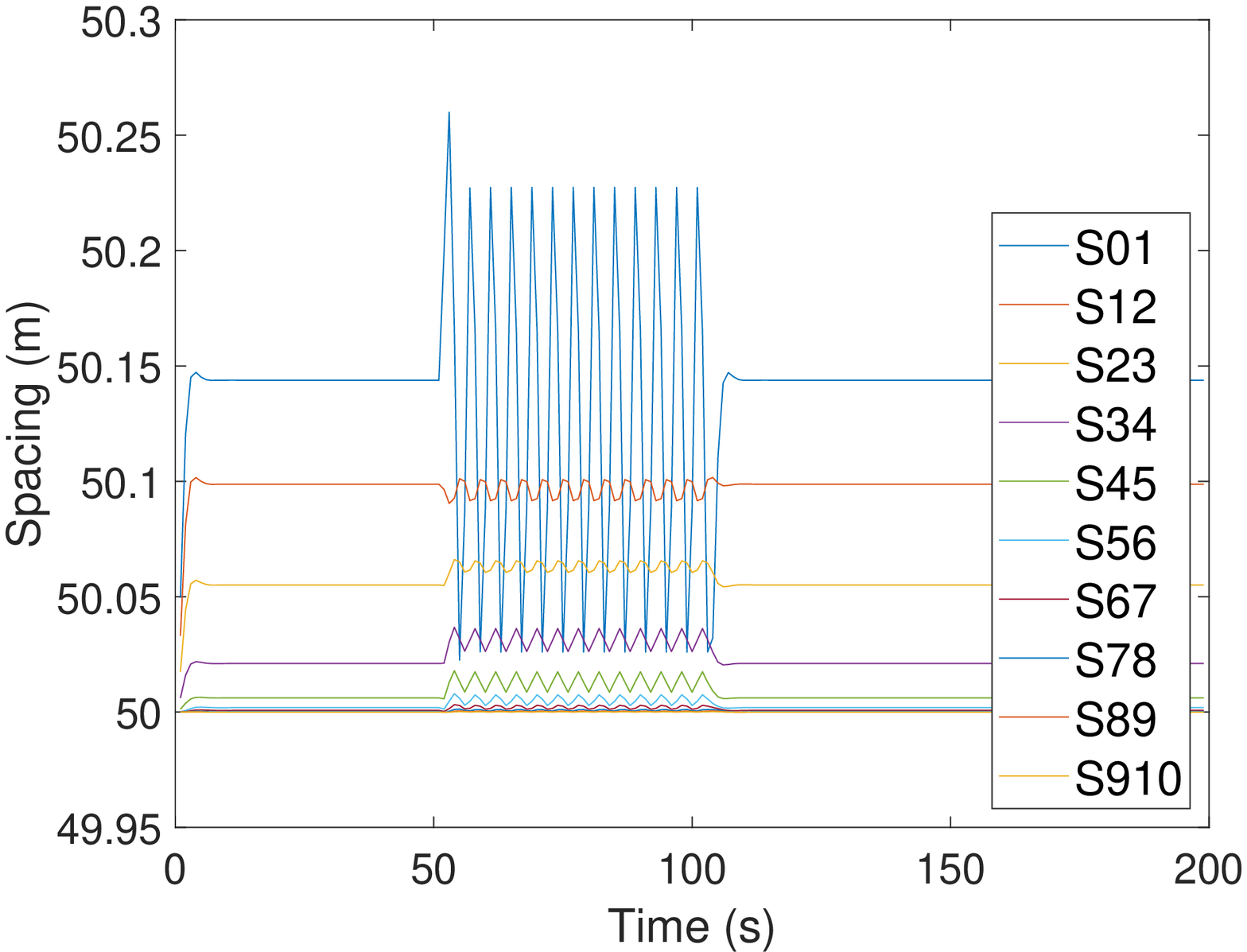}
}
\subfigure[Time history of vehicle speed. ]{ %\label{S1_p1_speed}
\includegraphics[width=0.48\columnwidth]{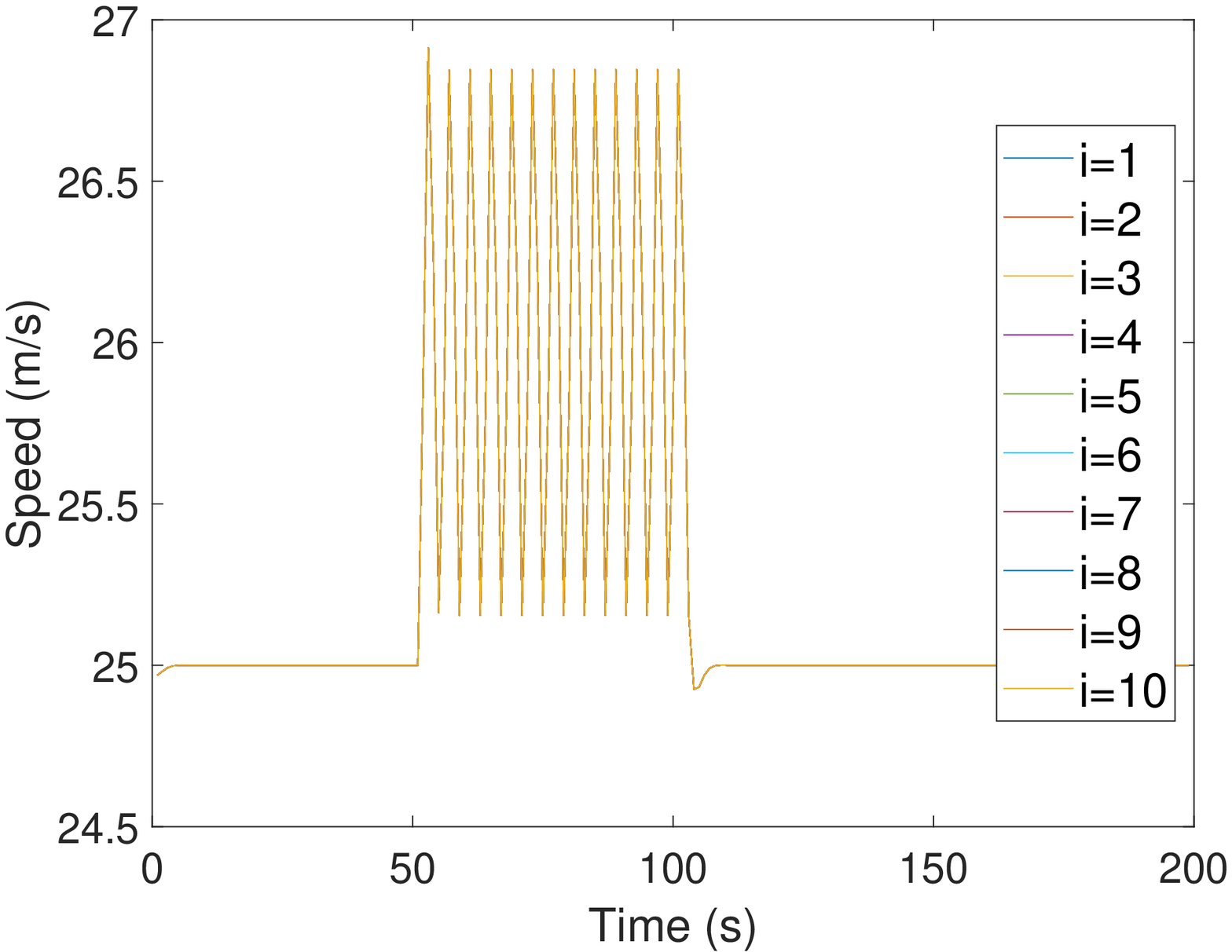}
}
\subfigure[Time history of vehicle speed.] { %%%\label{E11}
\includegraphics[width=0.48\columnwidth]{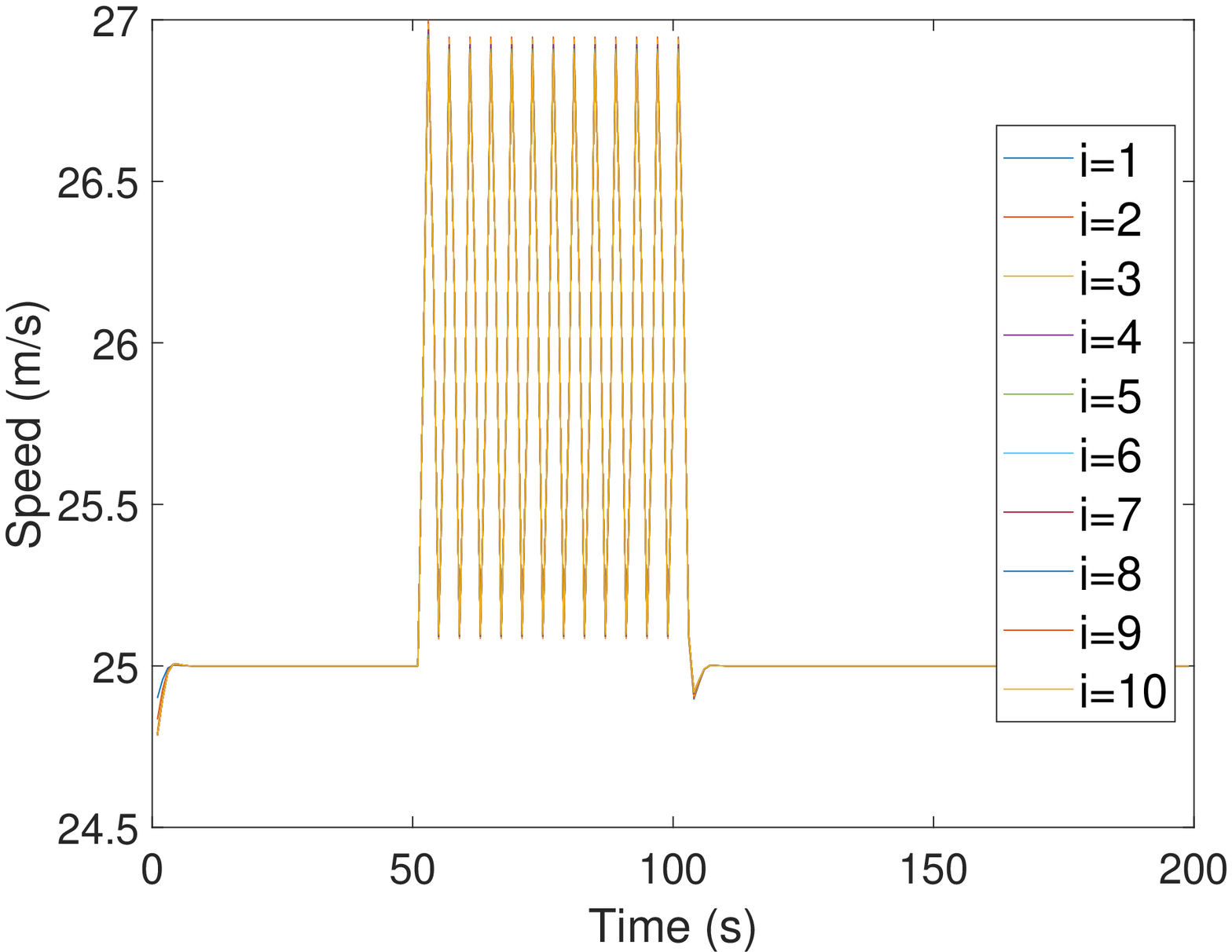}
}
\subfigure[Time history of control input.]{ %%%\label{E8AD}
\includegraphics[width=0.48\columnwidth]{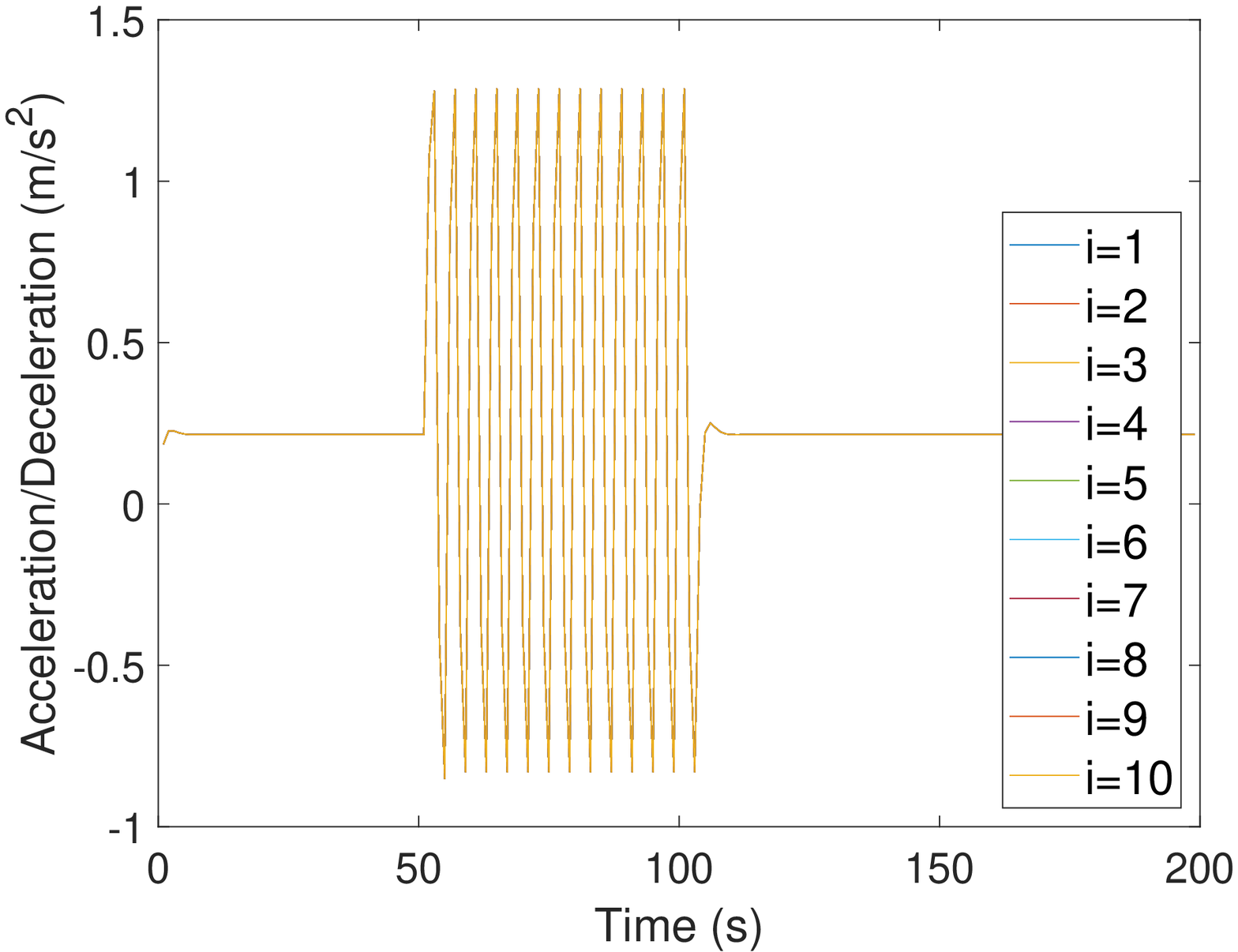}
}
\subfigure[Time history of control input] { %%%\label{E11AD}
\includegraphics[width=0.48\columnwidth]{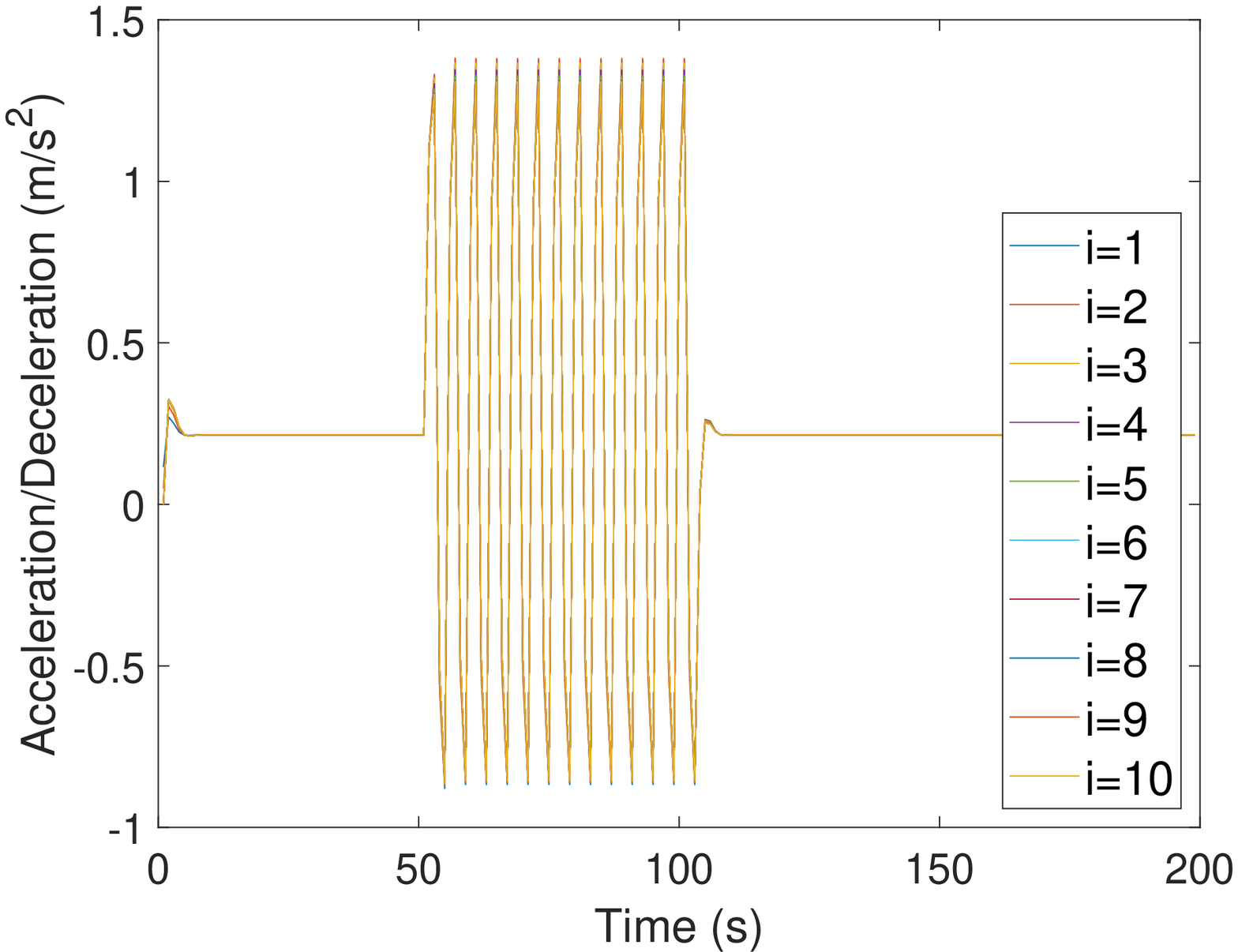}
}
\caption{Scenario 2 for the homogeneous small-size CAV platoon: platooning control with $p=1$ (left column) and $p=5$ (right column).}
\label{Fig:S2_small}
\end{figure}

%%%%%%%%%%%%%%%%%%%%%%%%%%%%%%%%%

\begin{figure}[htbp]
\centering
\subfigure[Time history of spacing changes.]{ %\label{S1_p1m_spacing}
\includegraphics[width=0.48\columnwidth]{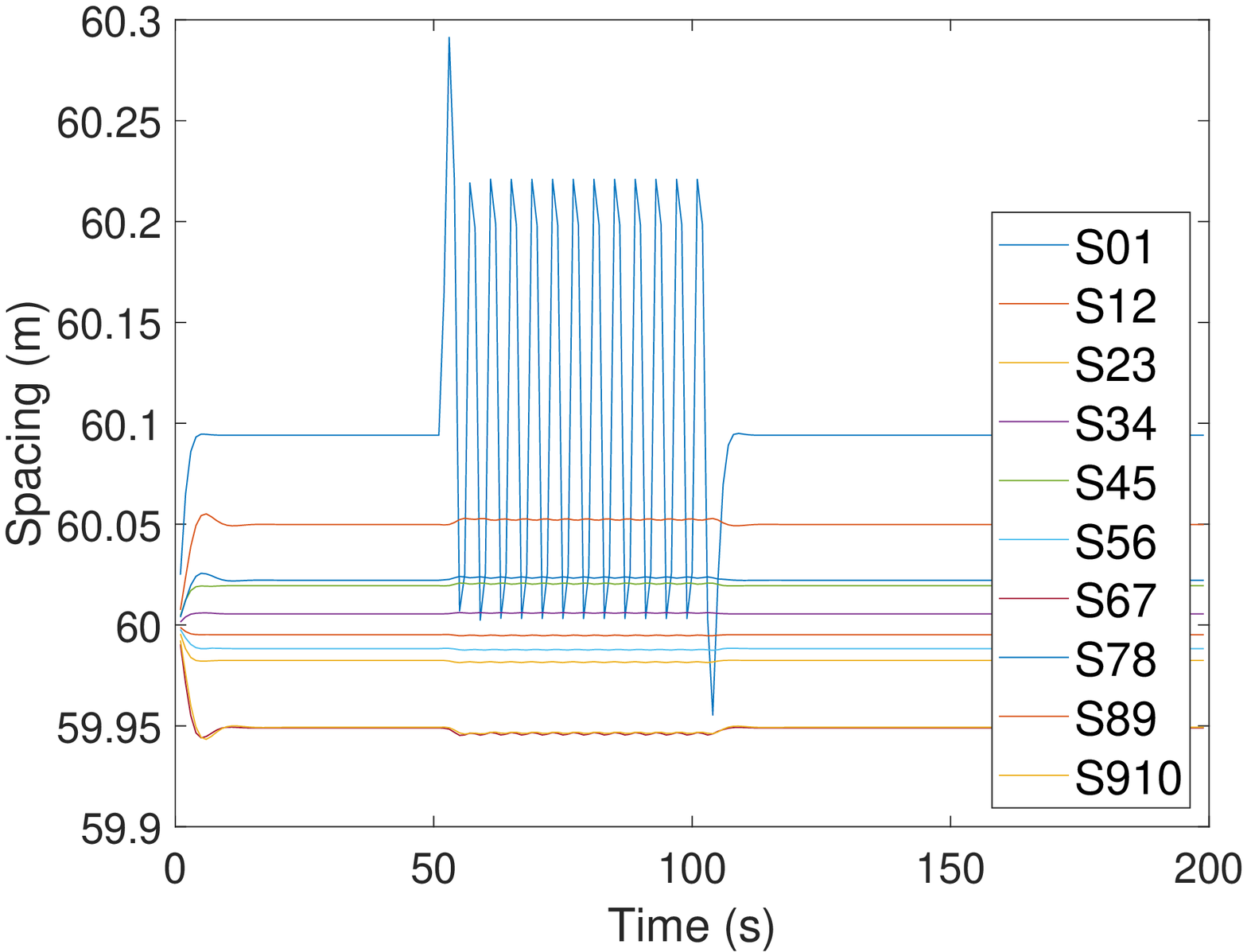}
}
\subfigure[Time history of spacing changes.] { %\label{S1_p5m_spacing}
\includegraphics[width=0.48\columnwidth]{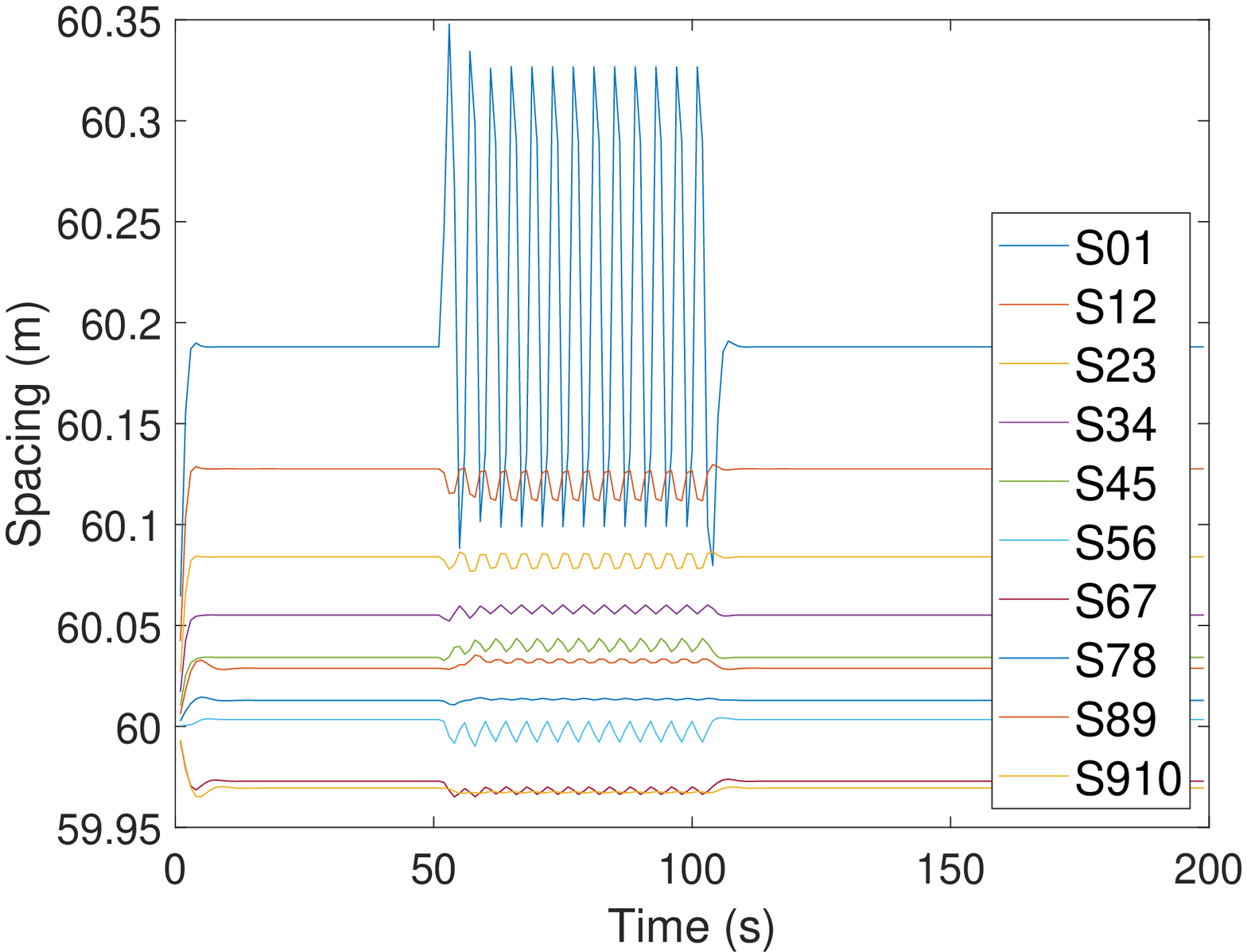}
}
\subfigure[Time history of vehicle speed. ]{ %\label{S1_p1m_speed}
\includegraphics[width=0.48\columnwidth]{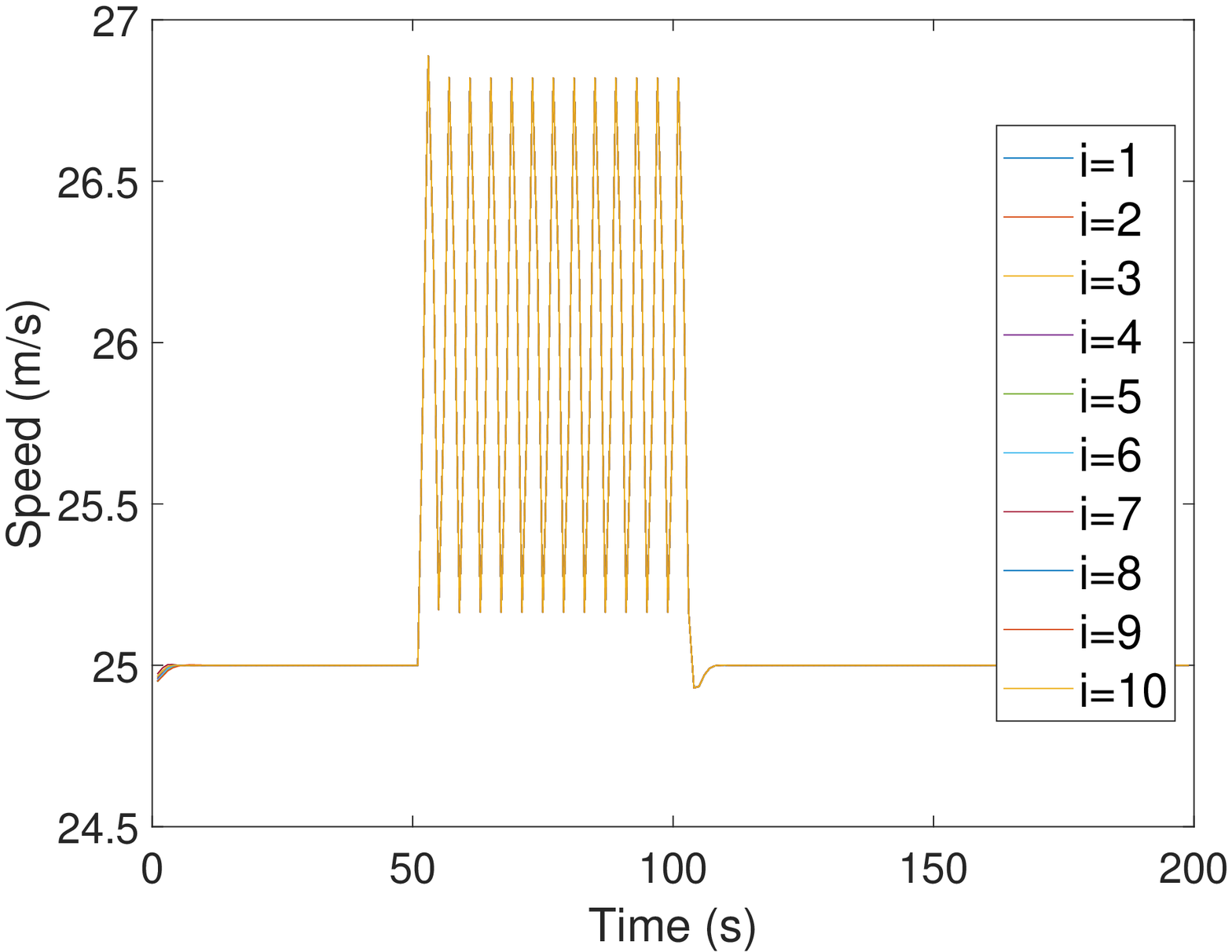}
}
\subfigure[Time history of vehicle speed.] { %%%\label{E11}
\includegraphics[width=0.48\columnwidth]{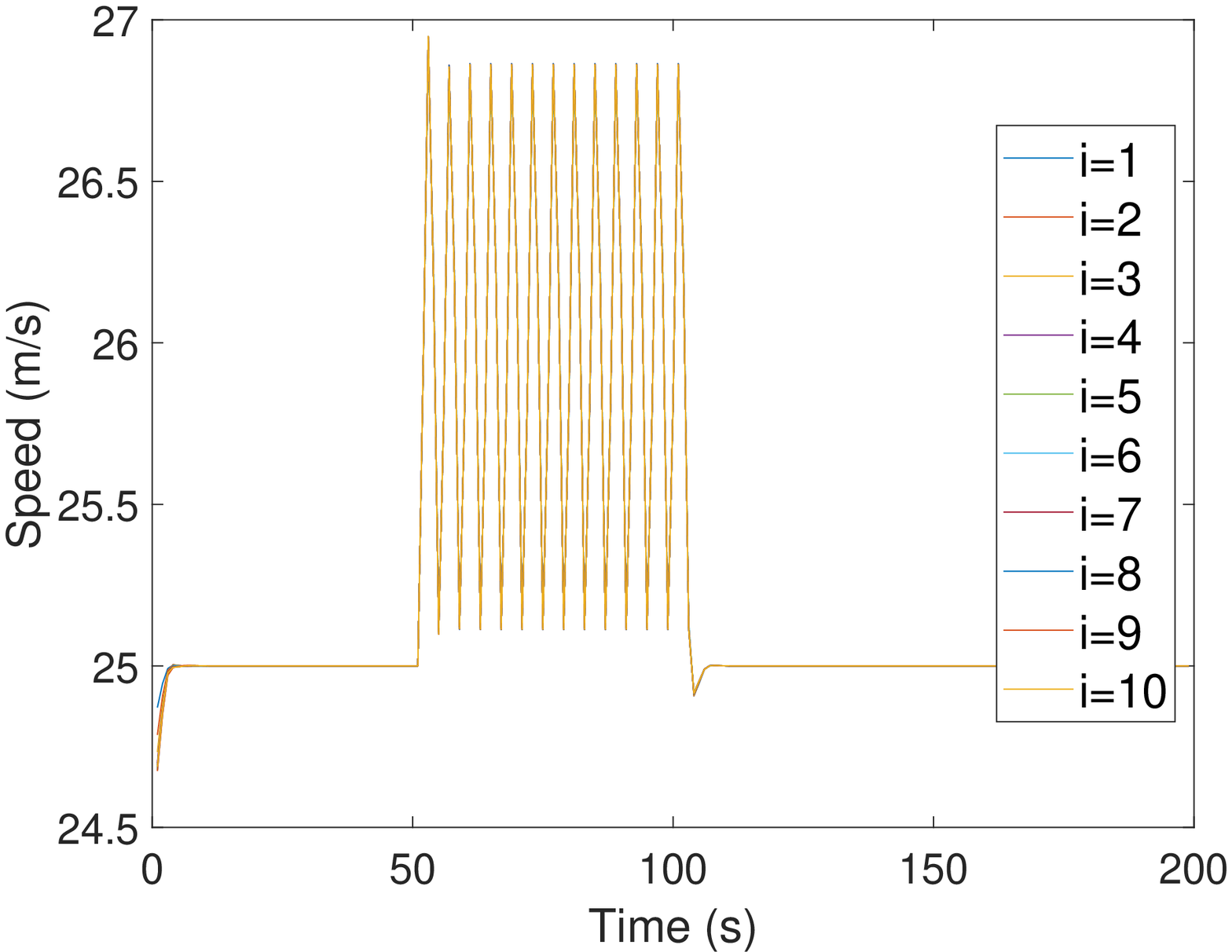}
}
\subfigure[Time history of control input.]{ %%%\label{E8AD}
\includegraphics[width=0.48\columnwidth]{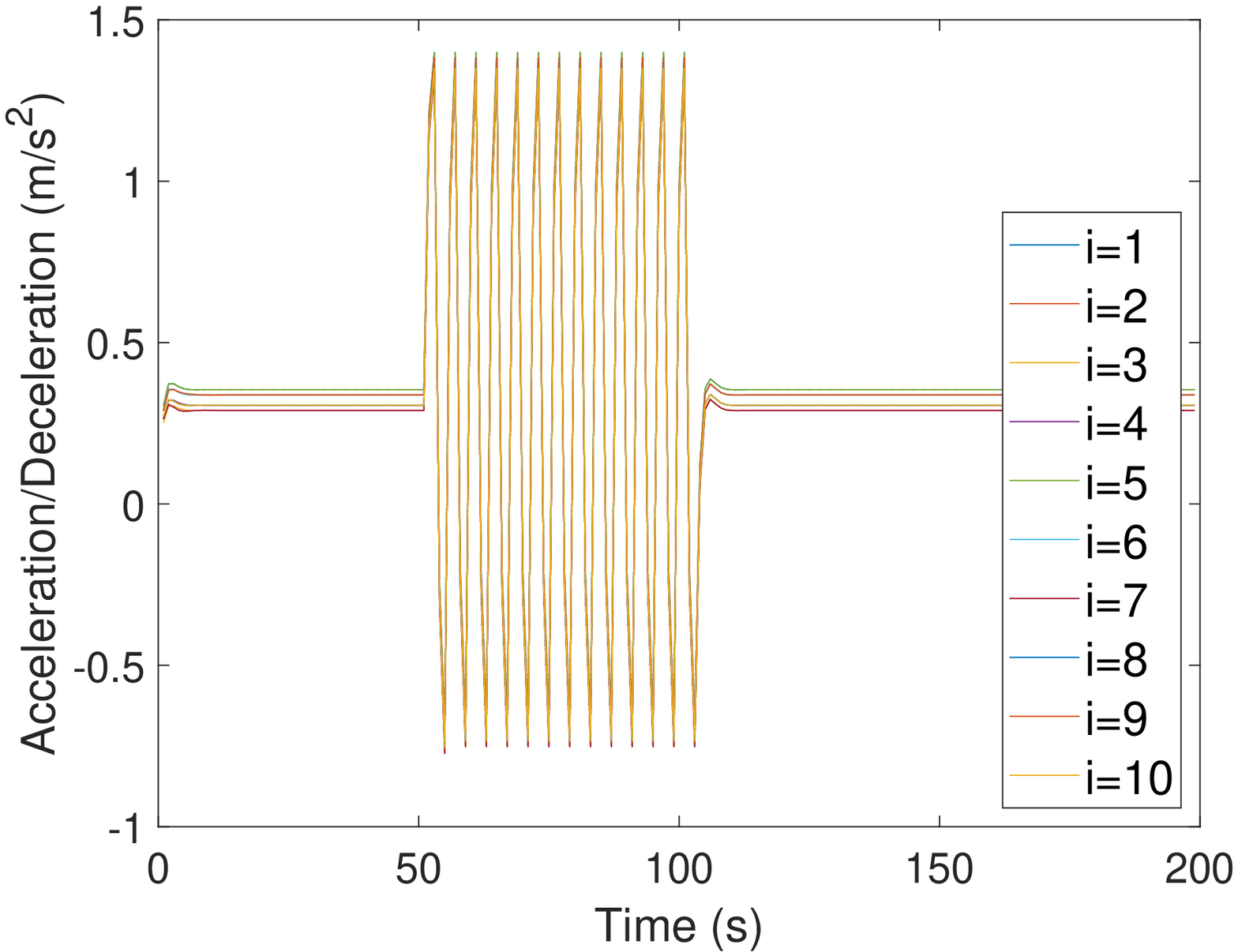}
}
\subfigure[Time history of control input] { %%%\label{E11AD}
\includegraphics[width=0.48\columnwidth]{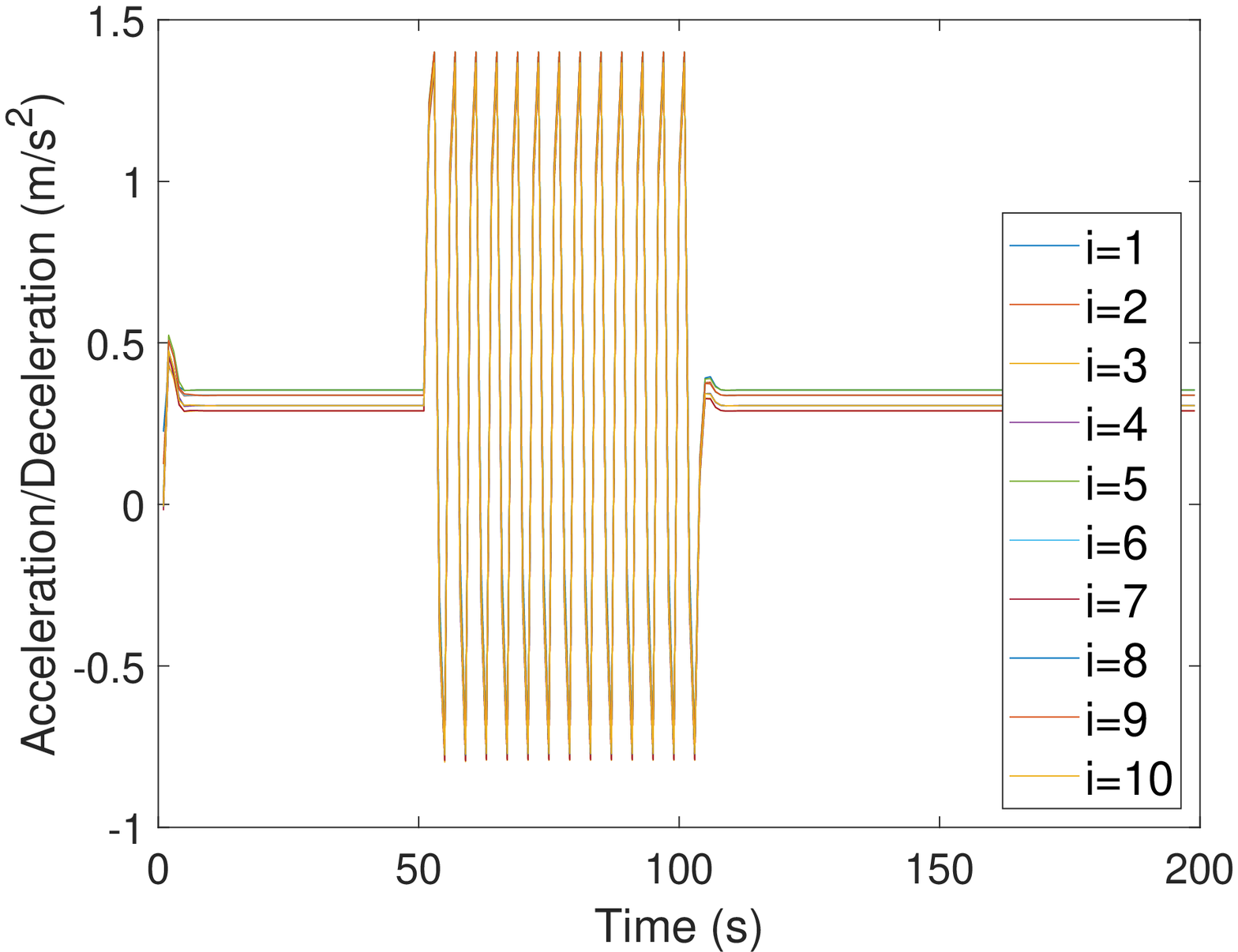}
}
\caption{Scenario 2 for the heterogeneous medium-size CAV platoon: platooning control with $p=1$ (left column) and $p=5$ (right column).}
\label{Fig:S2_medium}
\end{figure}

%%%%%%%%%%%%%%%%%%%%%%%%%%%%%%%%%

\begin{figure}[htbp]
\centering
\subfigure[Time history of spacing changes.]{ %\label{S1_p1max_spacing}
\includegraphics[width=0.48\columnwidth]{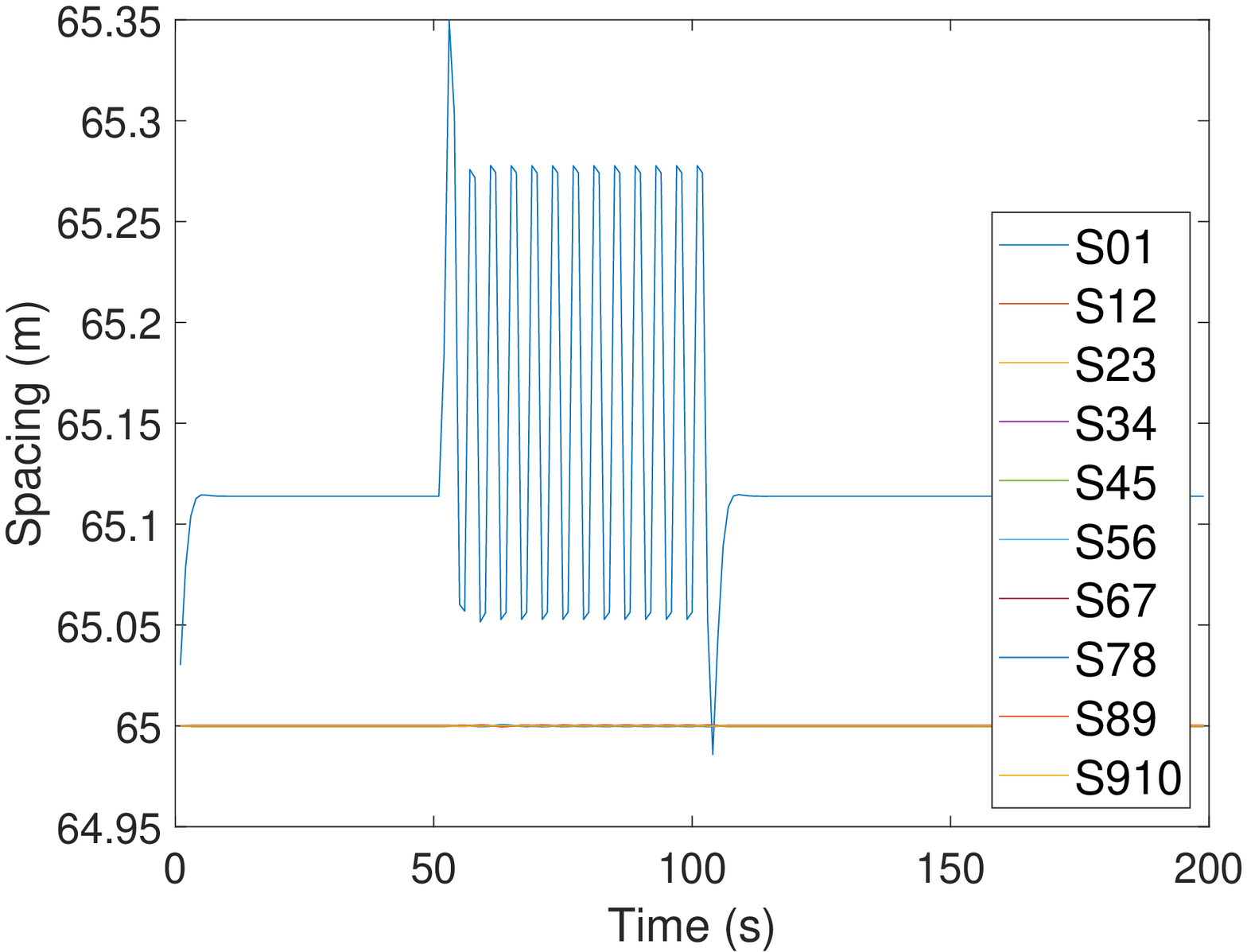}
}
\subfigure[Time history of spacing changes.] { %\label{S1_p5max_spacing}
\includegraphics[width=0.48\columnwidth]{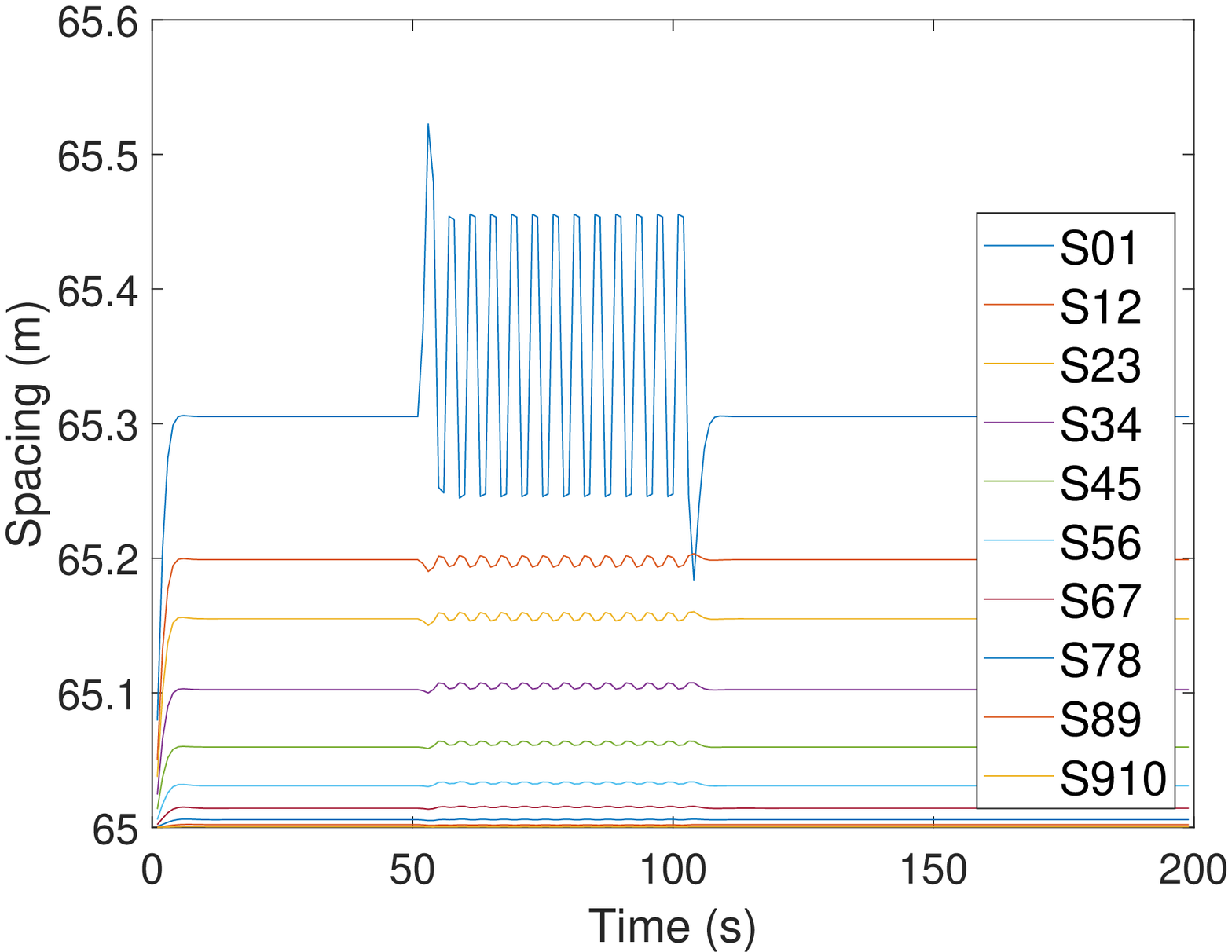}
}
\subfigure[Time history of vehicle speed. ]{ %\label{S1_p1max_speed}
\includegraphics[width=0.48\columnwidth]{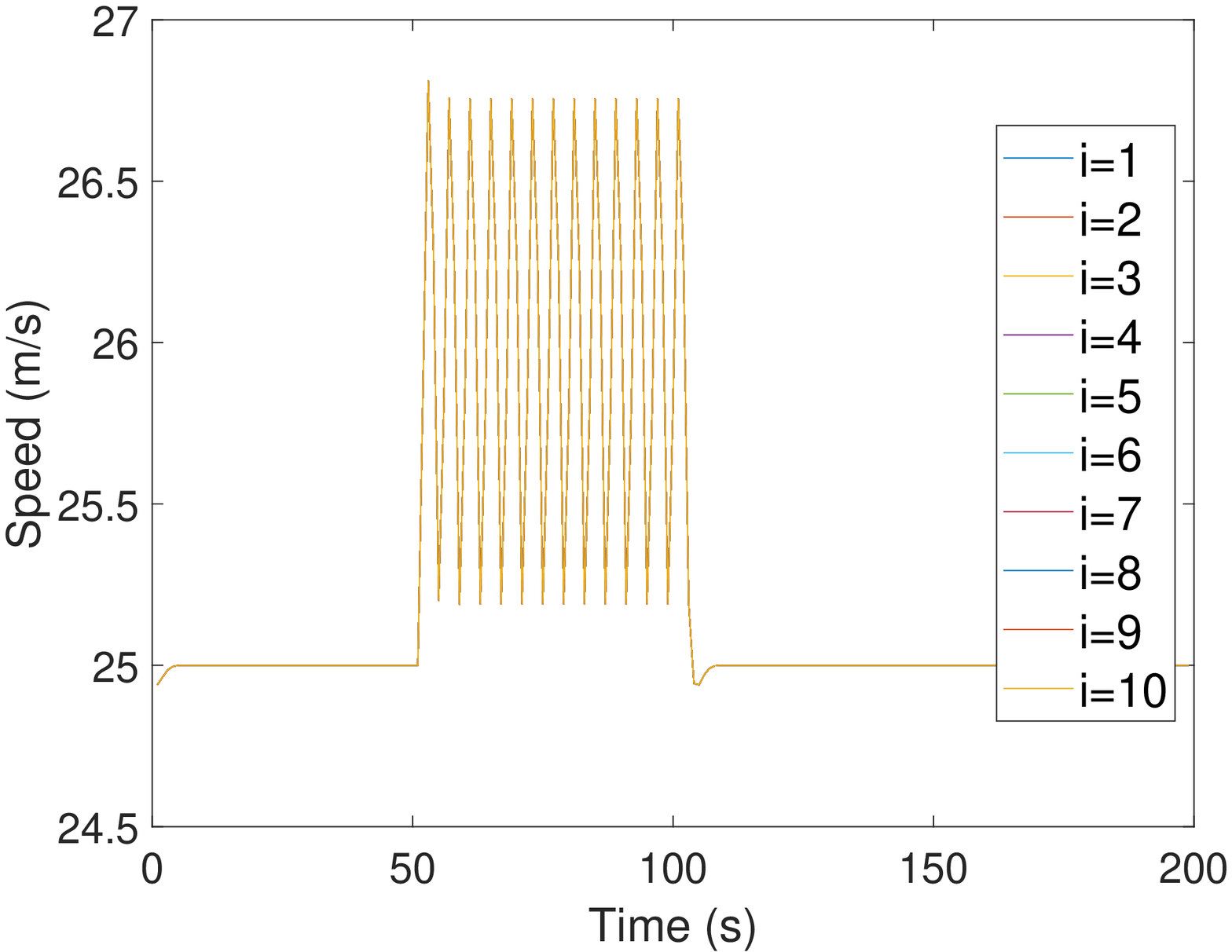}
}
\subfigure[Time history of vehicle speed.] { %%%\label{E11}
\includegraphics[width=0.48\columnwidth]{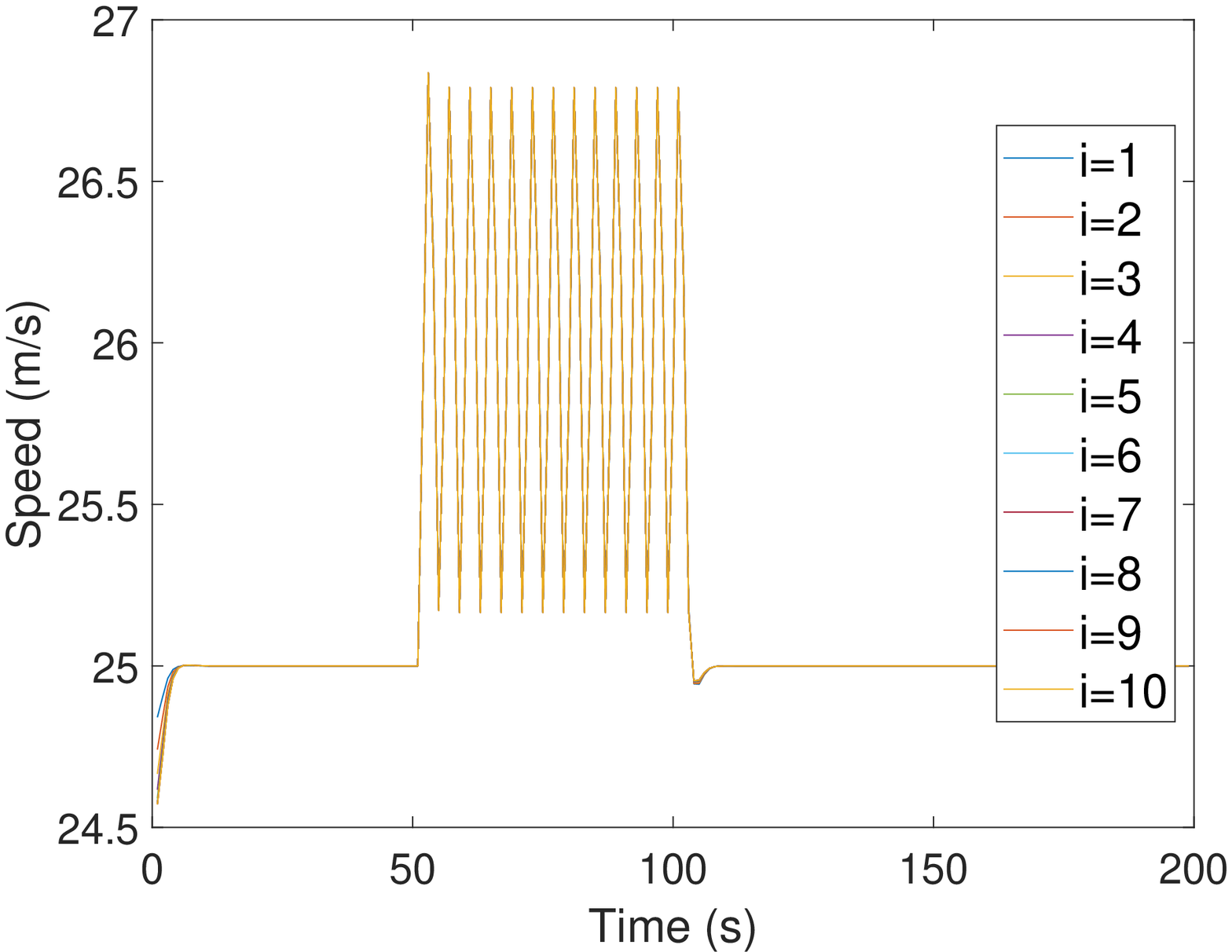}
}
\subfigure[Time history of control input.]{ %%%\label{E8AD}
\includegraphics[width=0.48\columnwidth]{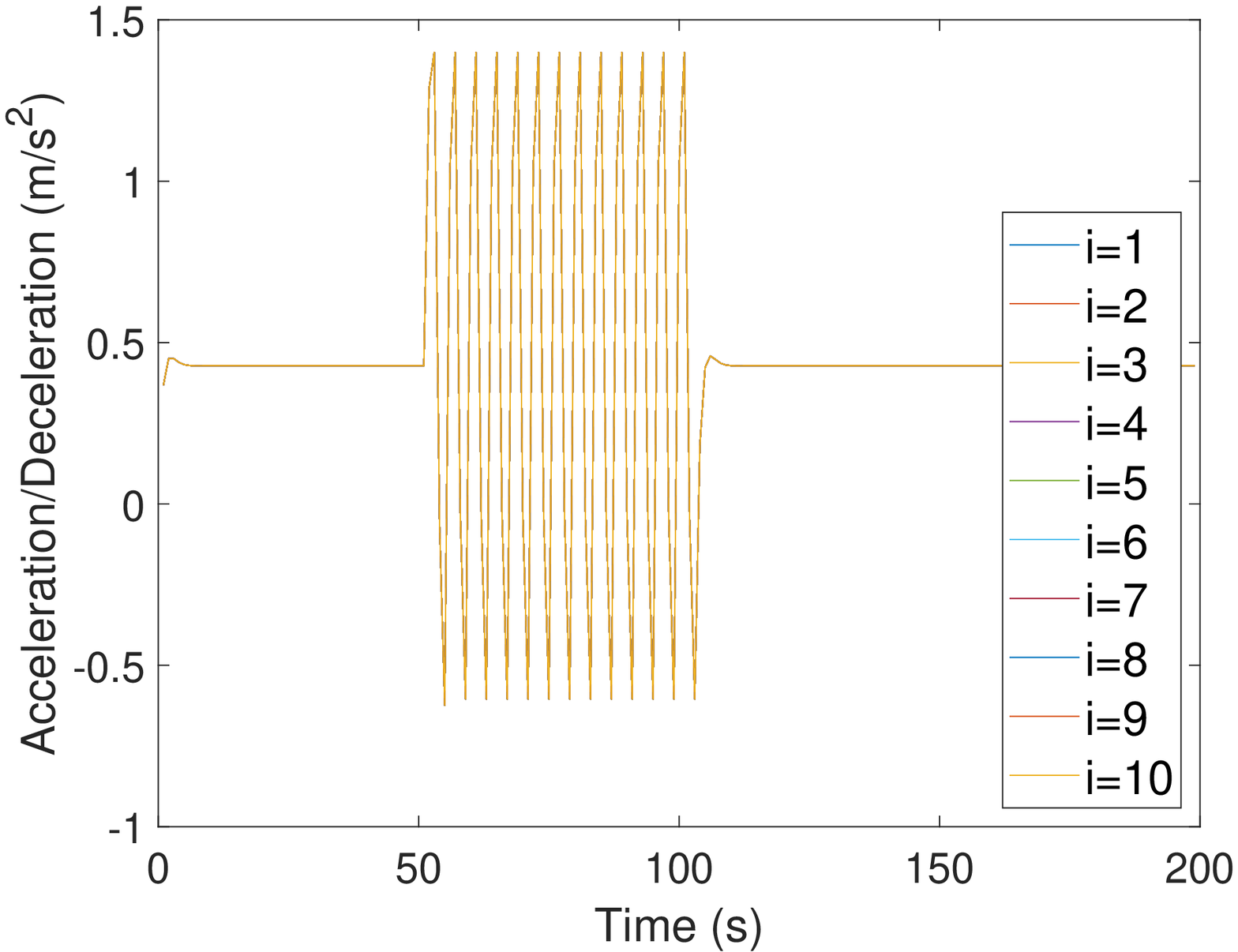}
}
\subfigure[Time history of control input] { %%%\label{E11AD}
\includegraphics[width=0.48\columnwidth]{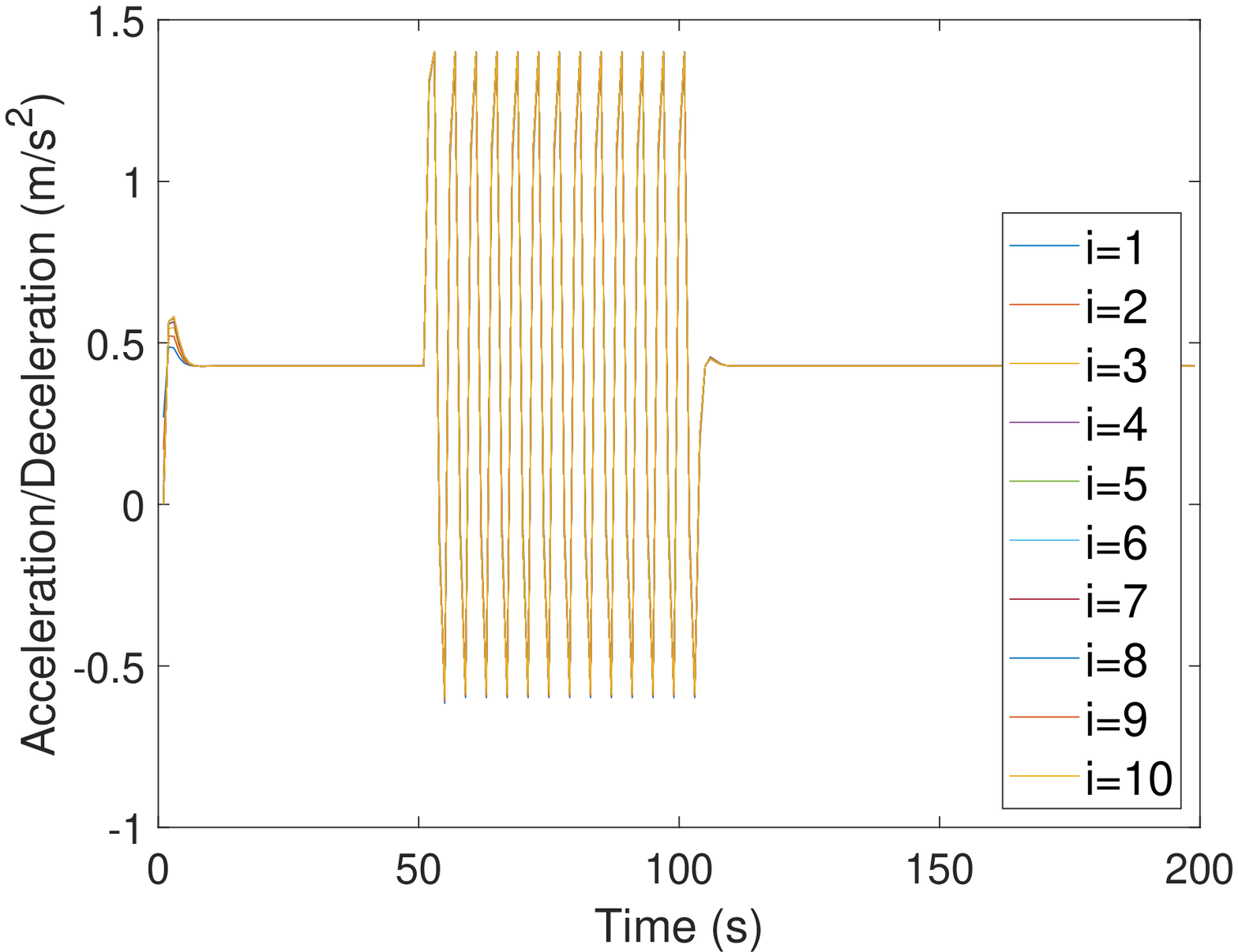}
}
\caption{Scenario 2 for the homogeneous large-size CAV platoon: platooning control with $p=1$ (left column) and $p=5$ (right column).}
\label{Fig:S2_large}
\end{figure}

%%%%%%%%%%%%%%%%%%%%%%%%%%%%%%%%%%%%%%%%%%%%%%%%%%%%%%%%%%%%%%%%%%%%%%%%%%%%%%%%%
% Scenario 3

\begin{figure}[htbp]
\centering
\subfigure[Time history of spacing changes.]{ %\label{S1_p1_spacing}
\includegraphics[width=0.48\columnwidth]{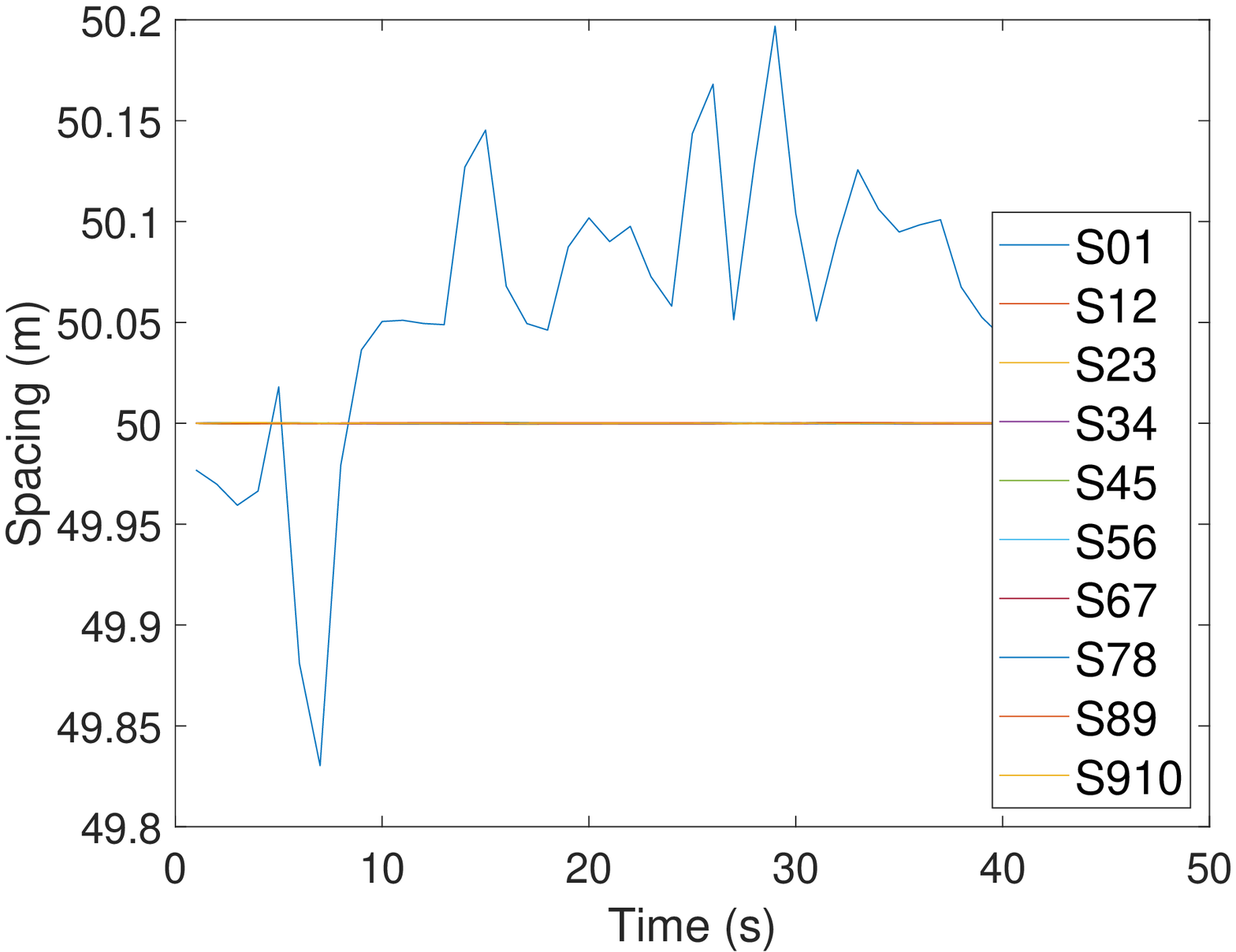}
}
\subfigure[Time history of spacing changes.] { %\label{S1_p5_spacing}
\includegraphics[width=0.48\columnwidth]{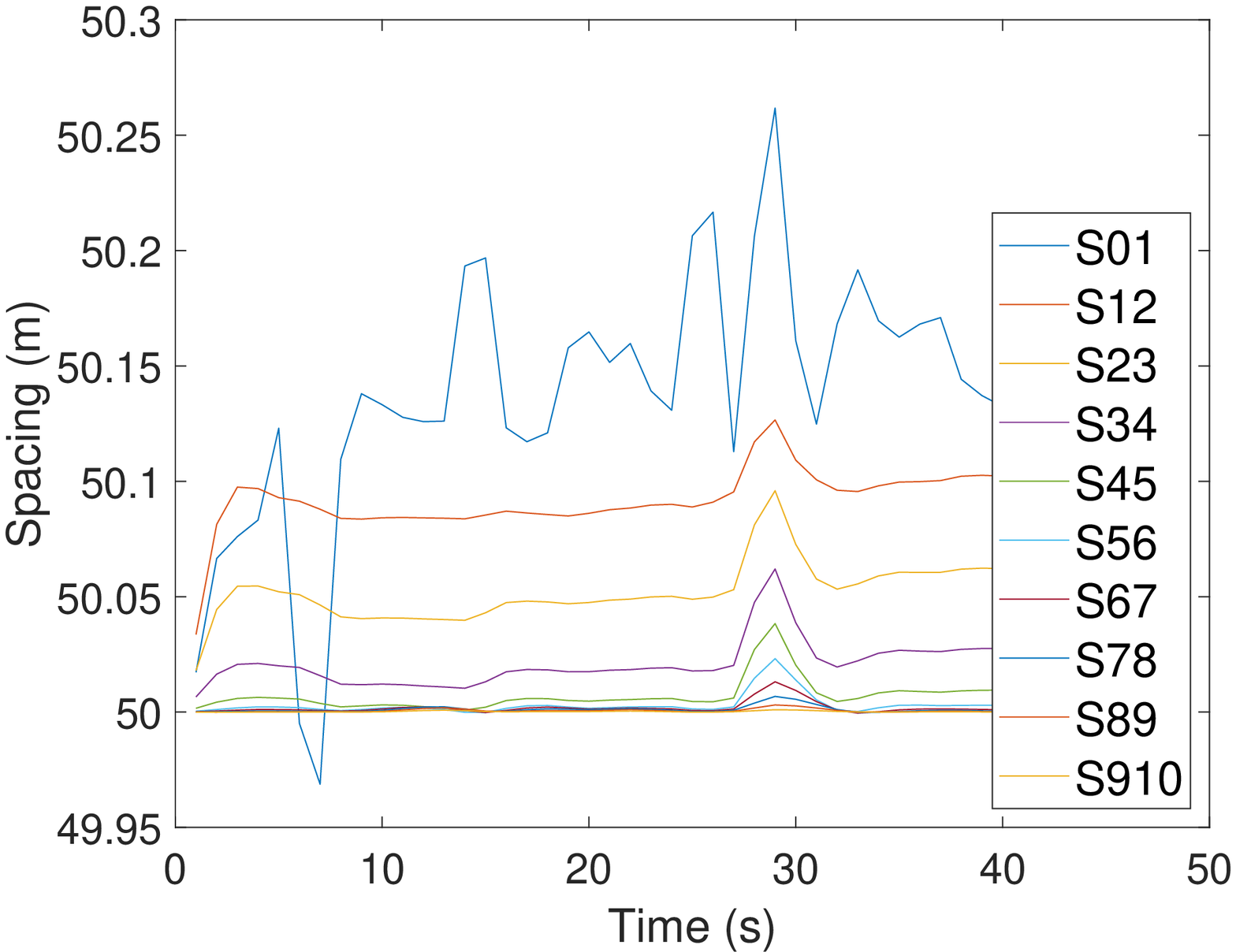}
}
\subfigure[Time history of vehicle speed. ]{ %\label{S1_p1_speed}
\includegraphics[width=0.48\columnwidth]{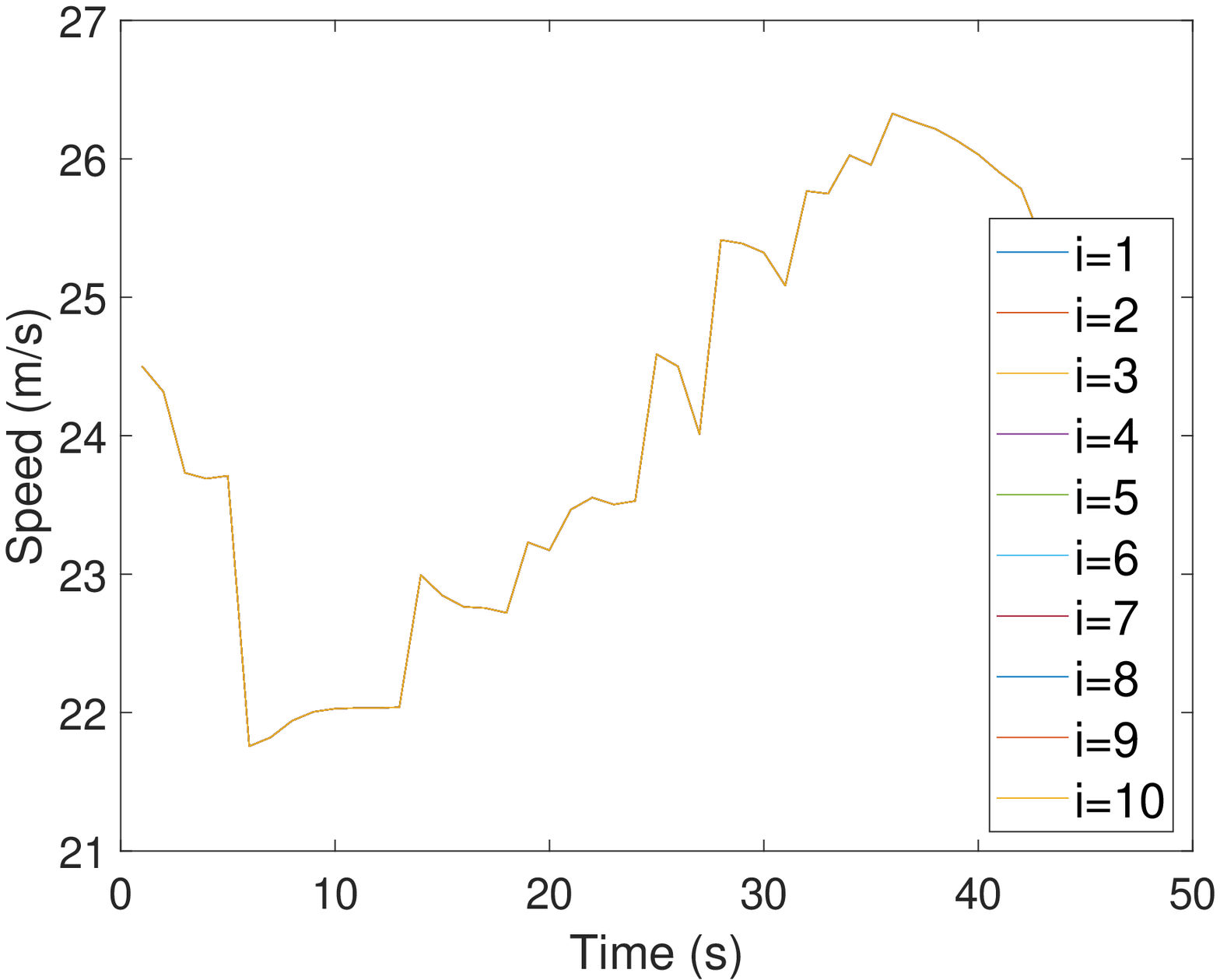}
}
\subfigure[Time history of vehicle speed.] { %%%\label{E11}
\includegraphics[width=0.48\columnwidth]{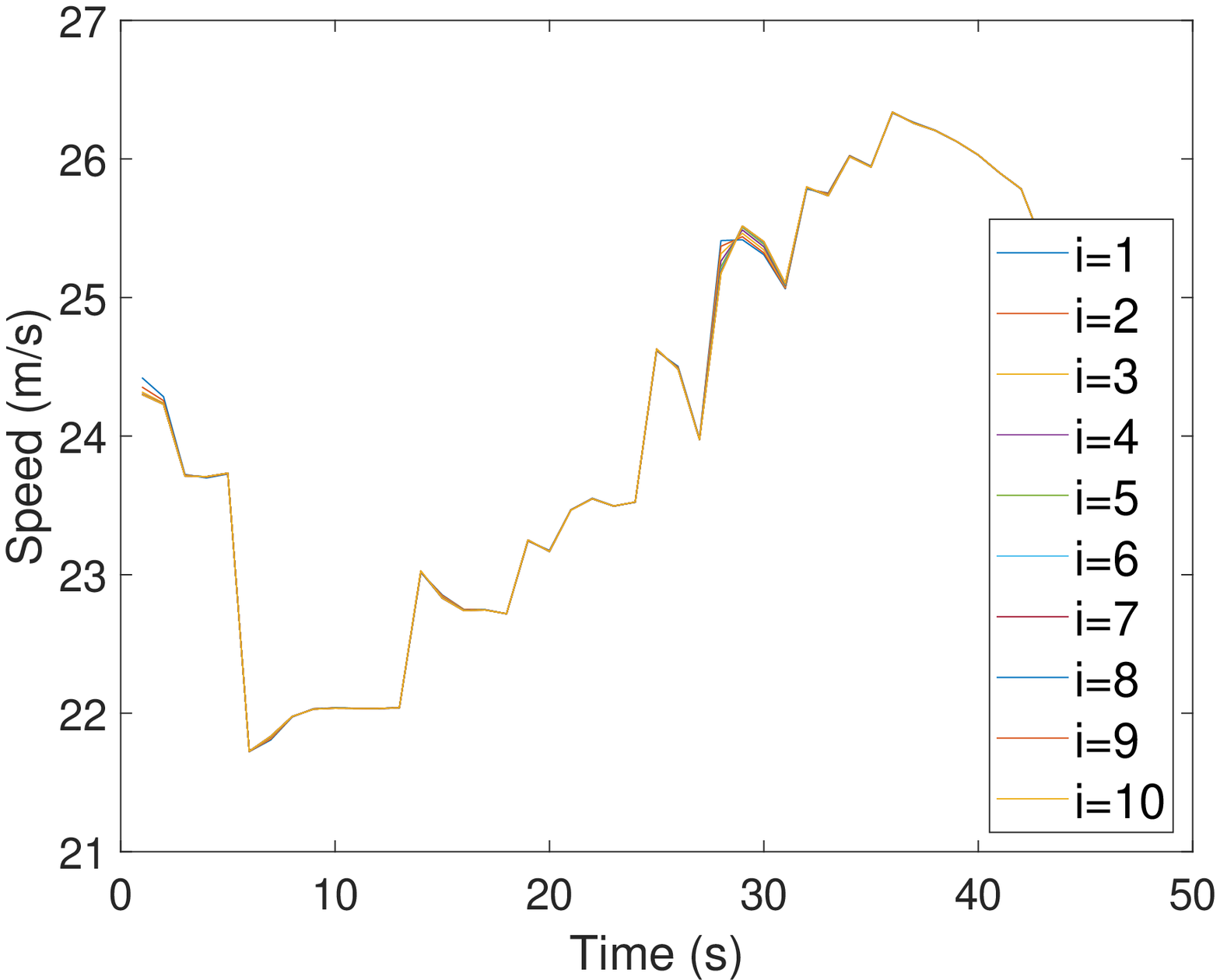}
}
\subfigure[Time history of control input.]{ %%%\label{E8AD}
\includegraphics[width=0.48\columnwidth]{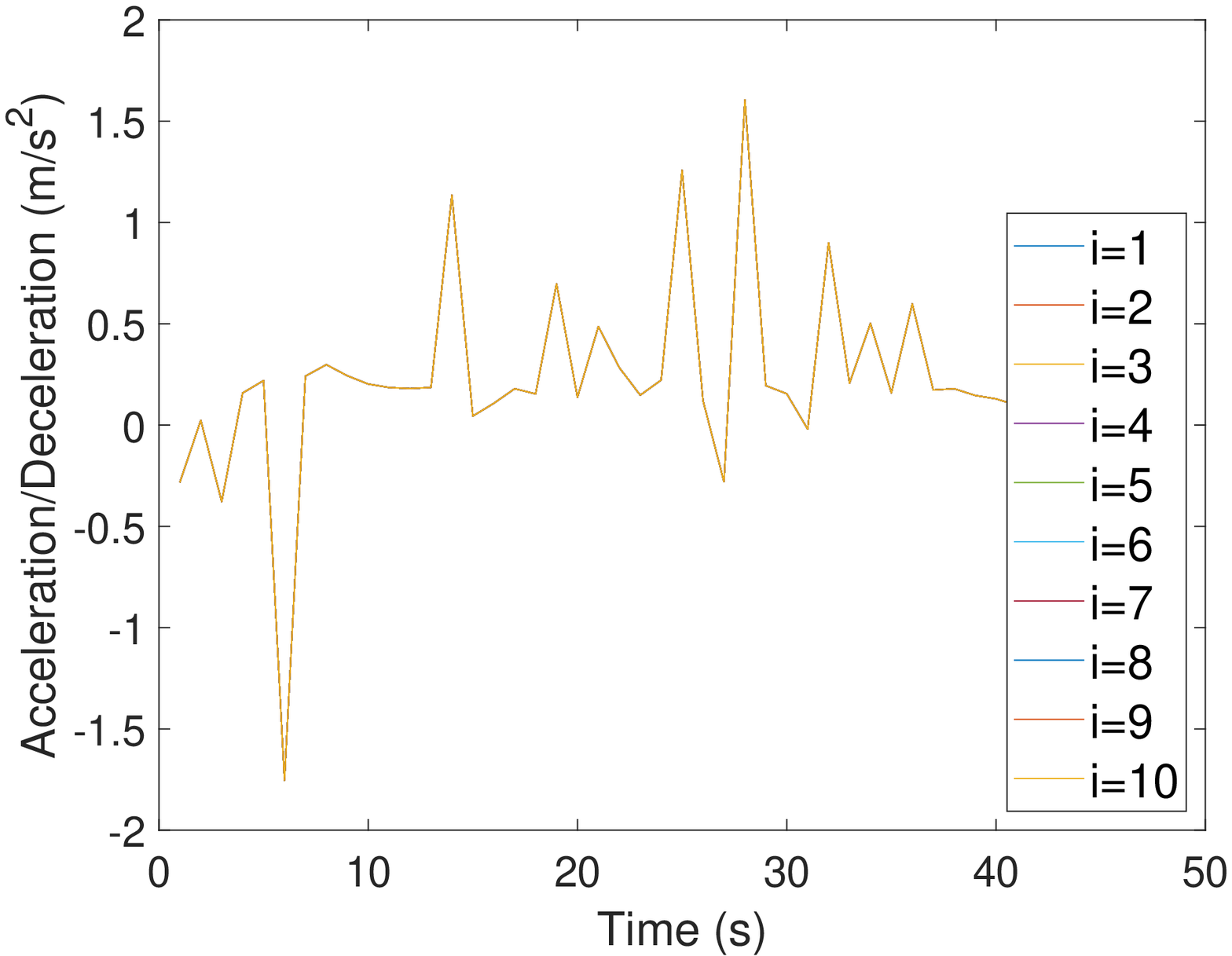}
}
\subfigure[Time history of control input] { %%%\label{E11AD}
\includegraphics[width=0.48\columnwidth]{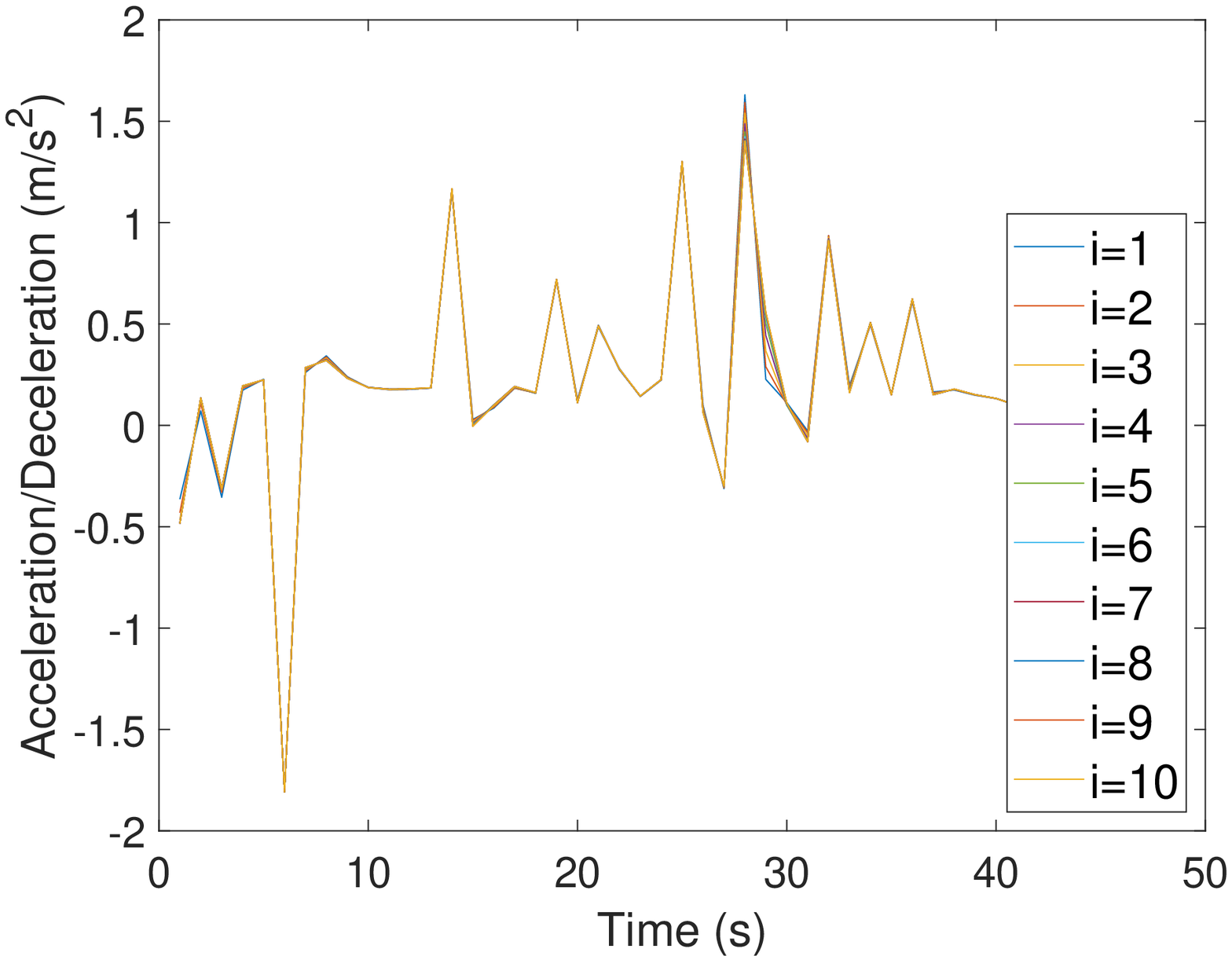}
}
\caption{Scenario 3 for the homogeneous small-size CAV platoon: platooning control with $p=1$ (left column) and $p=5$ (right column).}
\label{Fig:S3_small}
\end{figure}

%%%%%%%%%%%%%%%%%%%%%%%%%%%%%%%%%

\begin{figure}[htbp]
\centering
\subfigure[Time history of spacing changes.]{ %\label{S1_p1m_spacing}
\includegraphics[width=0.48\columnwidth]{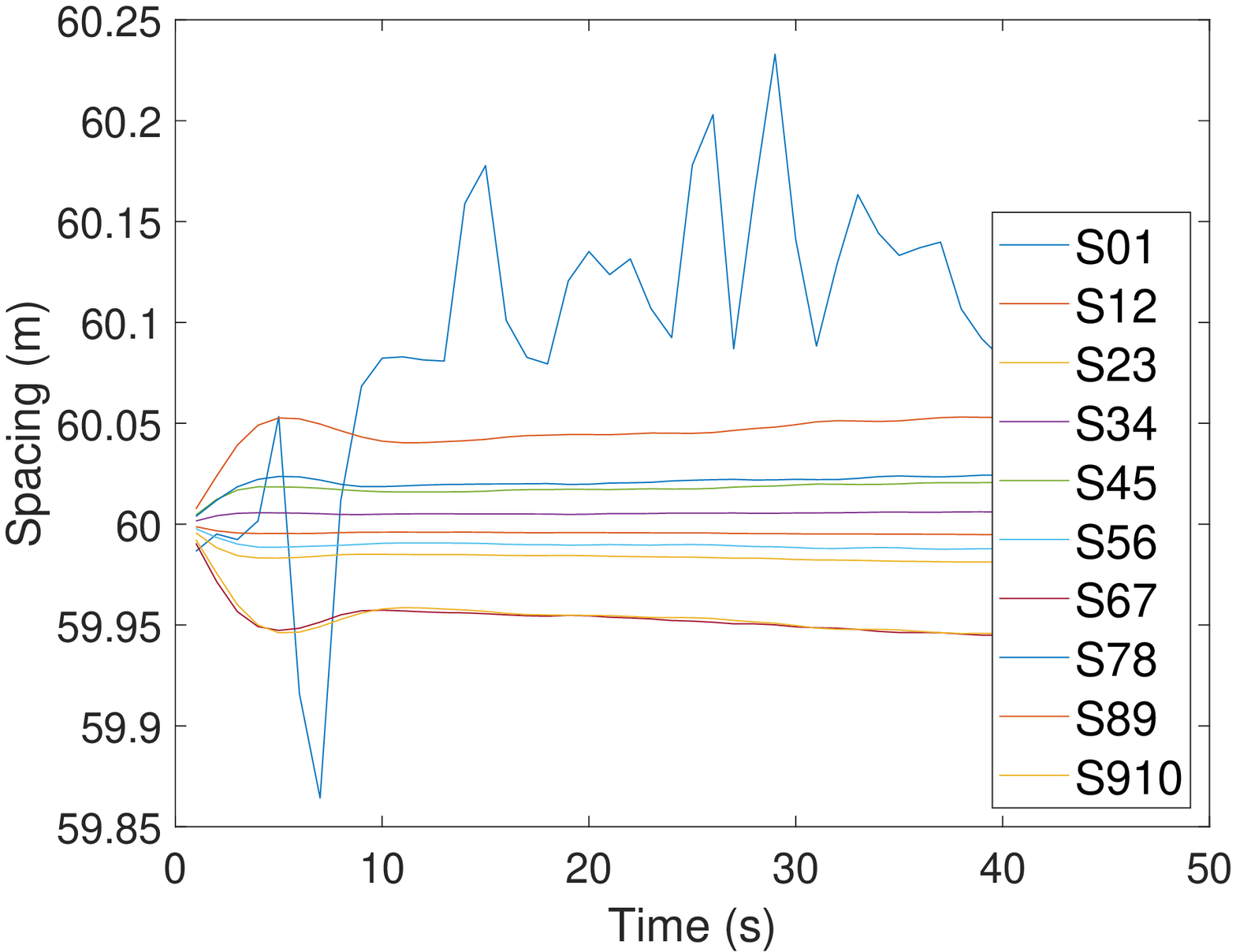}
}
\subfigure[Time history of spacing changes.] { %\label{S1_p5m_spacing}
\includegraphics[width=0.48\columnwidth]{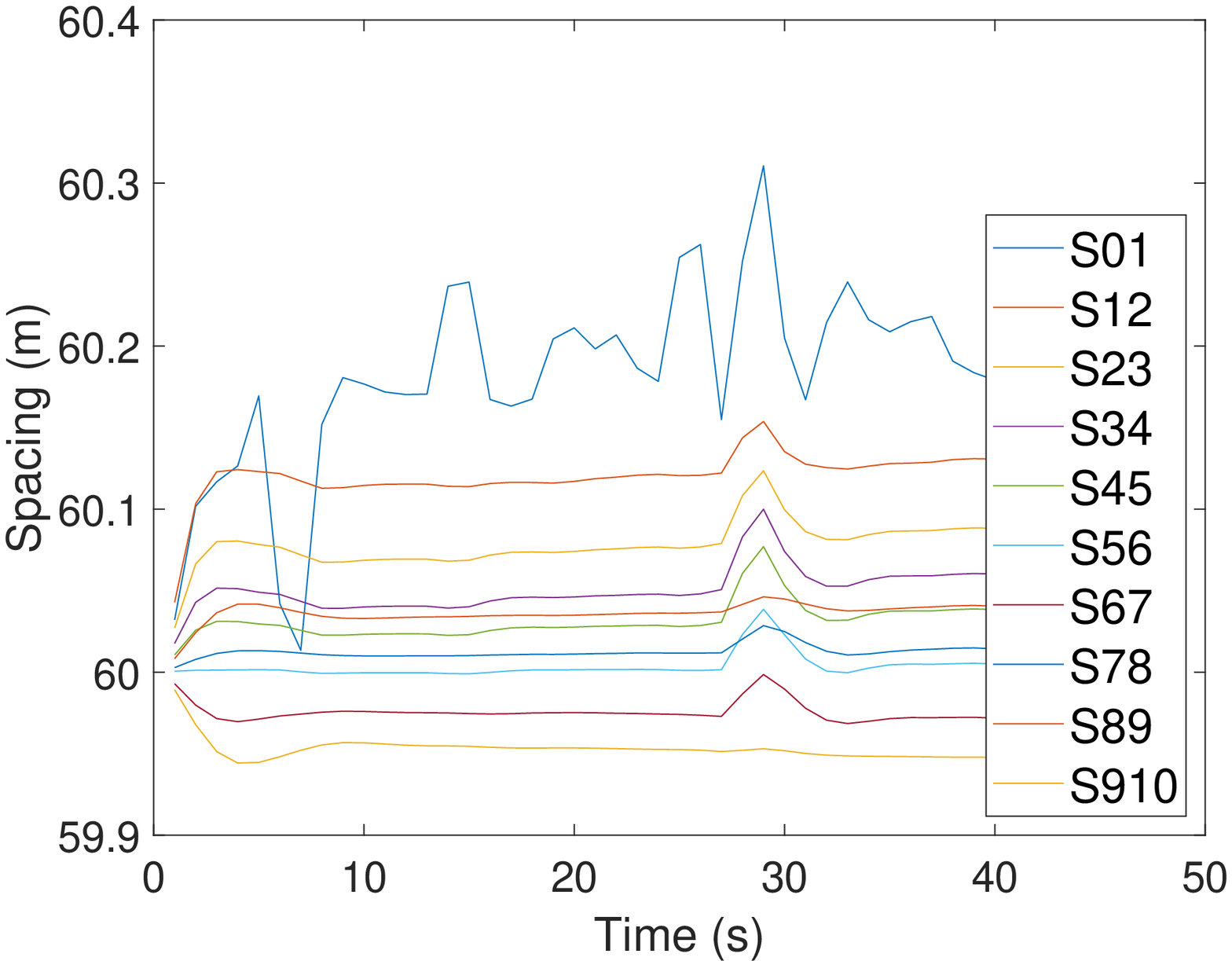}
}
\subfigure[Time history of vehicle speed. ]{ %\label{S1_p1m_speed}
\includegraphics[width=0.48\columnwidth]{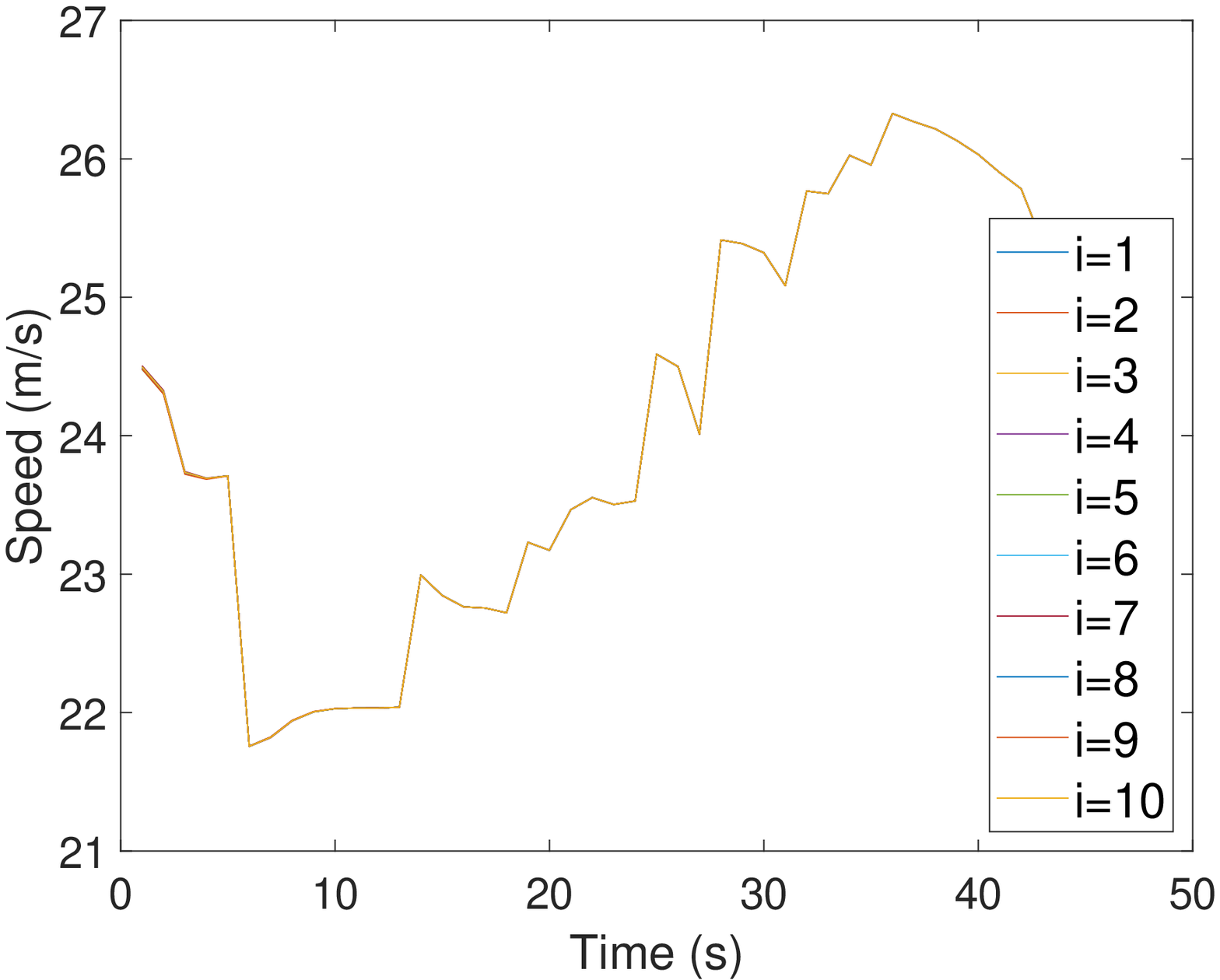}
}
\subfigure[Time history of vehicle speed.] { %%%\label{E11}
\includegraphics[width=0.48\columnwidth]{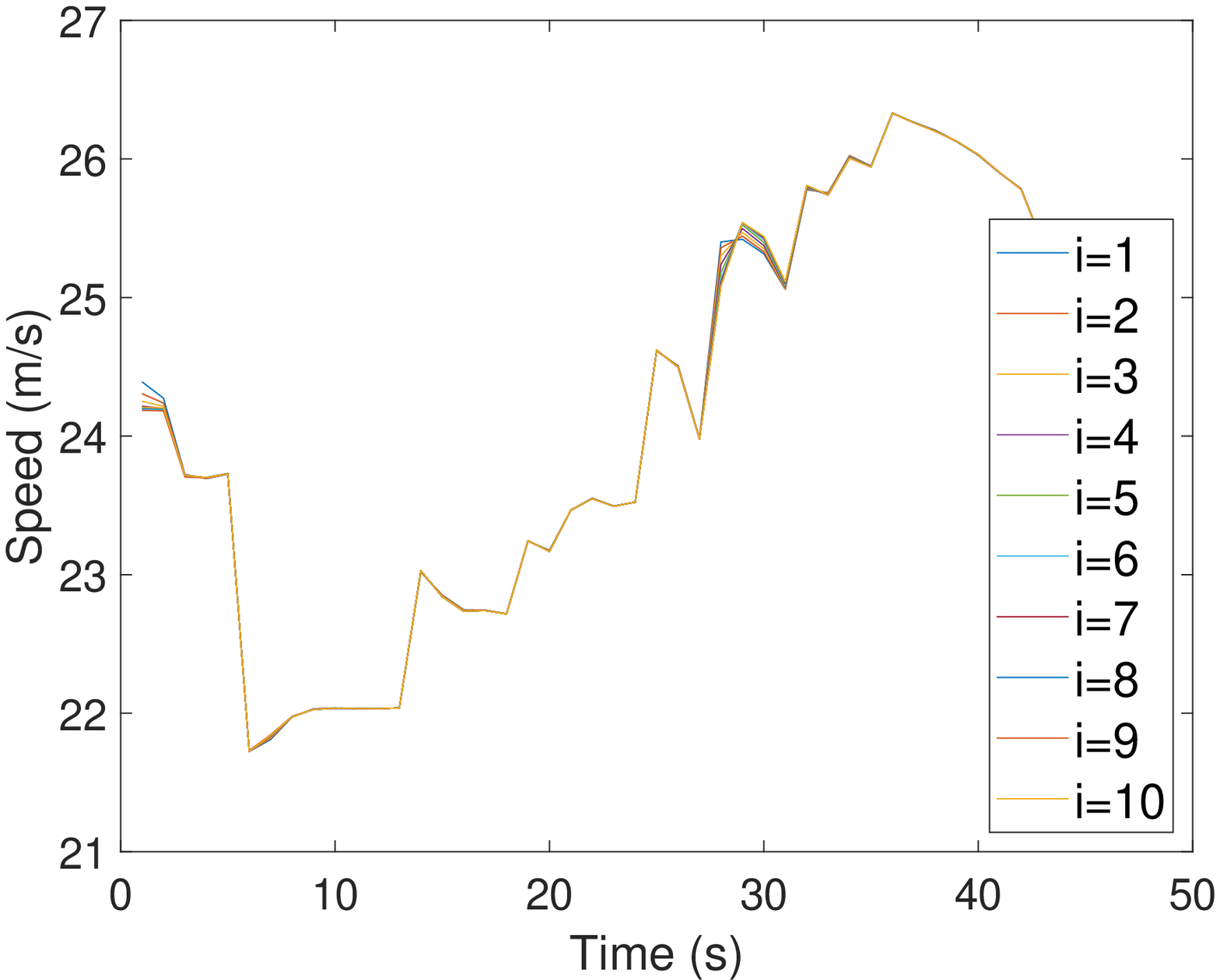}
}
\subfigure[Time history of control input.]{ %%%\label{E8AD}
\includegraphics[width=0.48\columnwidth]{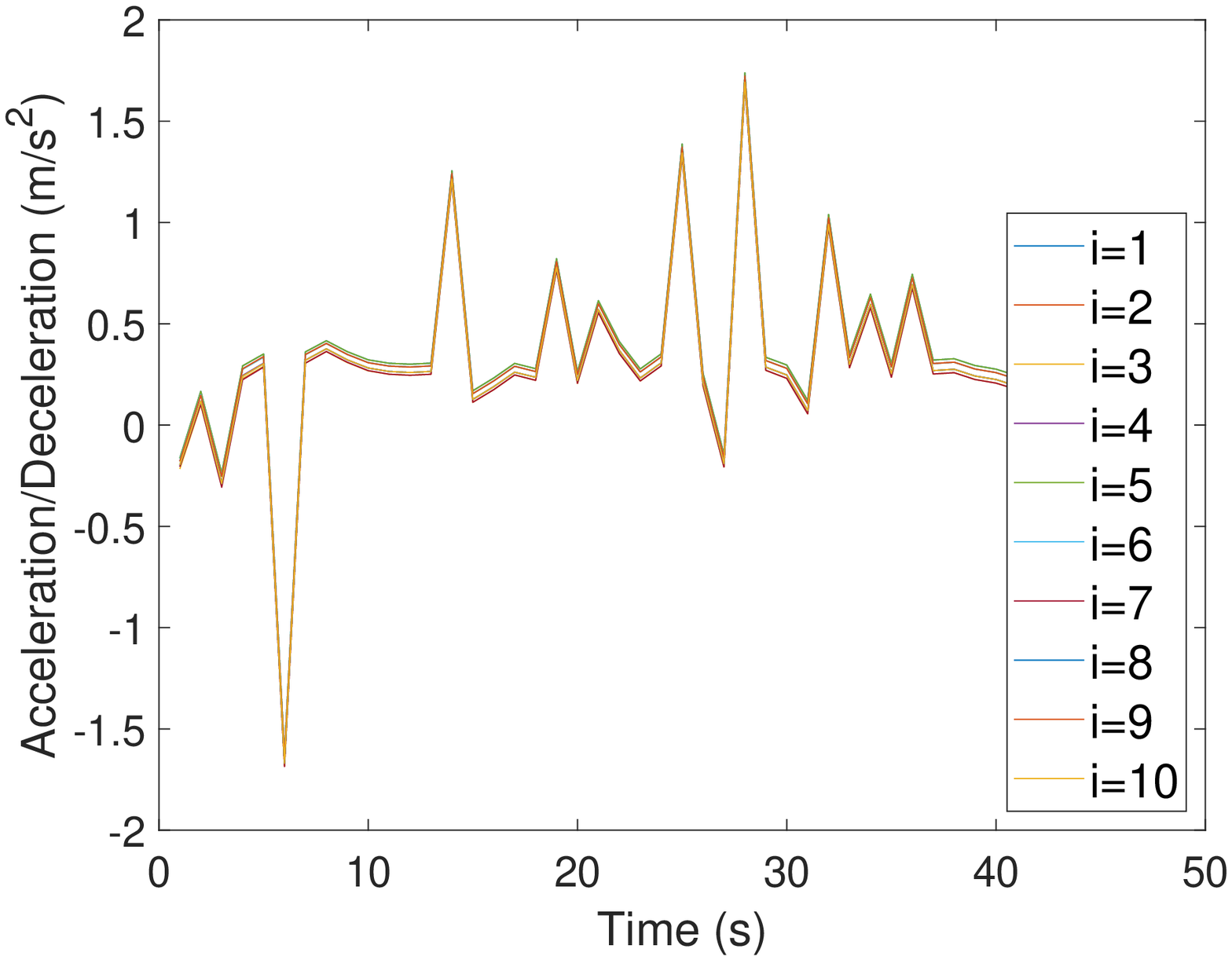}
}
\subfigure[Time history of control input] { %%%\label{E11AD}
\includegraphics[width=0.48\columnwidth]{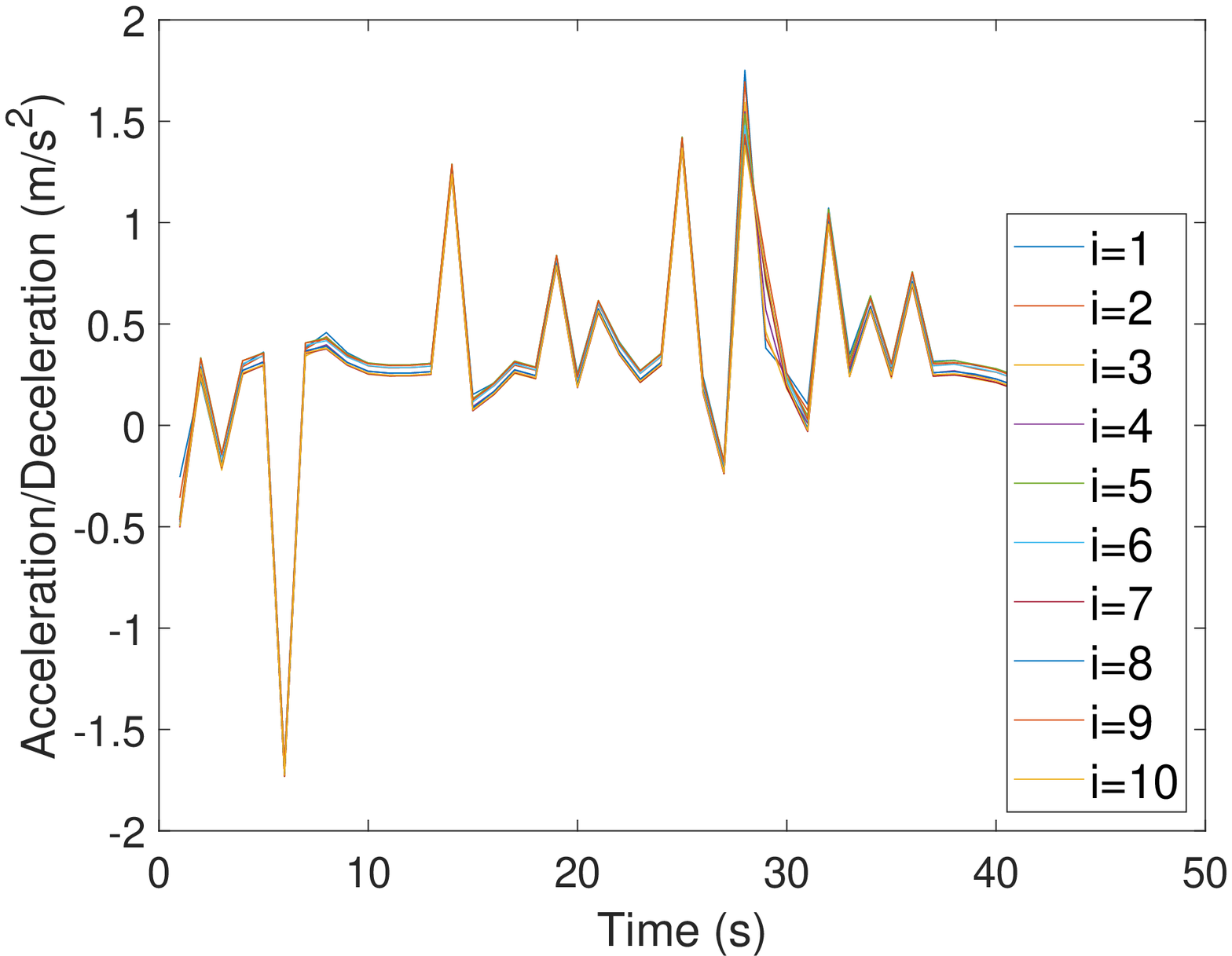}
}
\caption{Scenario 3 for the heterogeneous medium-size CAV platoon: platooning control with $p=1$ (left column) and $p=5$ (right column).}
\label{Fig:S3_medium}
\end{figure}

%%%%%%%%%%%%%%%%%%%%%%%%%%%%%%%%%

\begin{figure}[htbp]
\centering
\subfigure[Time history of spacing changes.]{ %\label{S1_p1max_spacing}
\includegraphics[width=0.48\columnwidth]{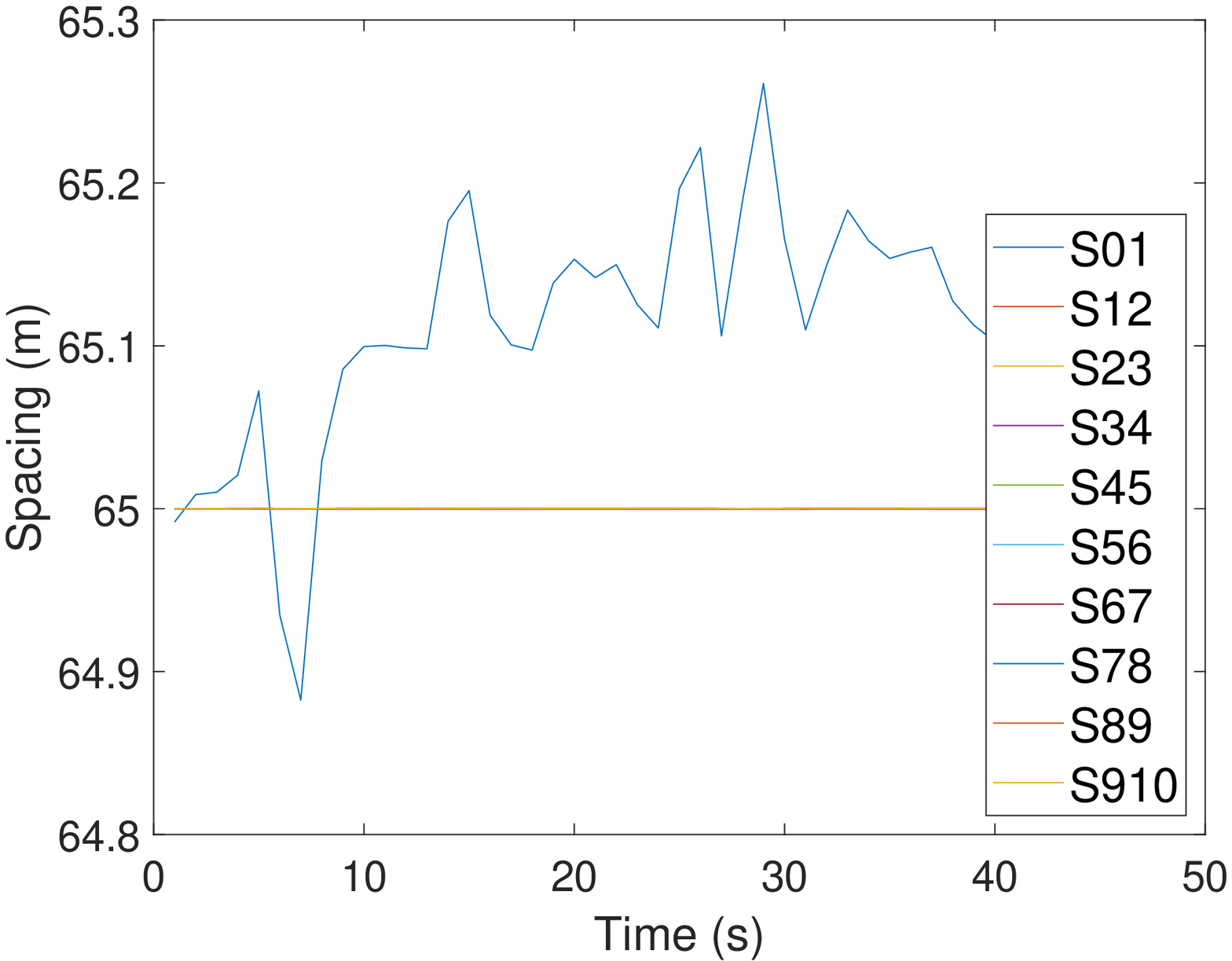}
}
\subfigure[Time history of spacing changes.] { %\label{S1_p5max_spacing}
\includegraphics[width=0.48\columnwidth]{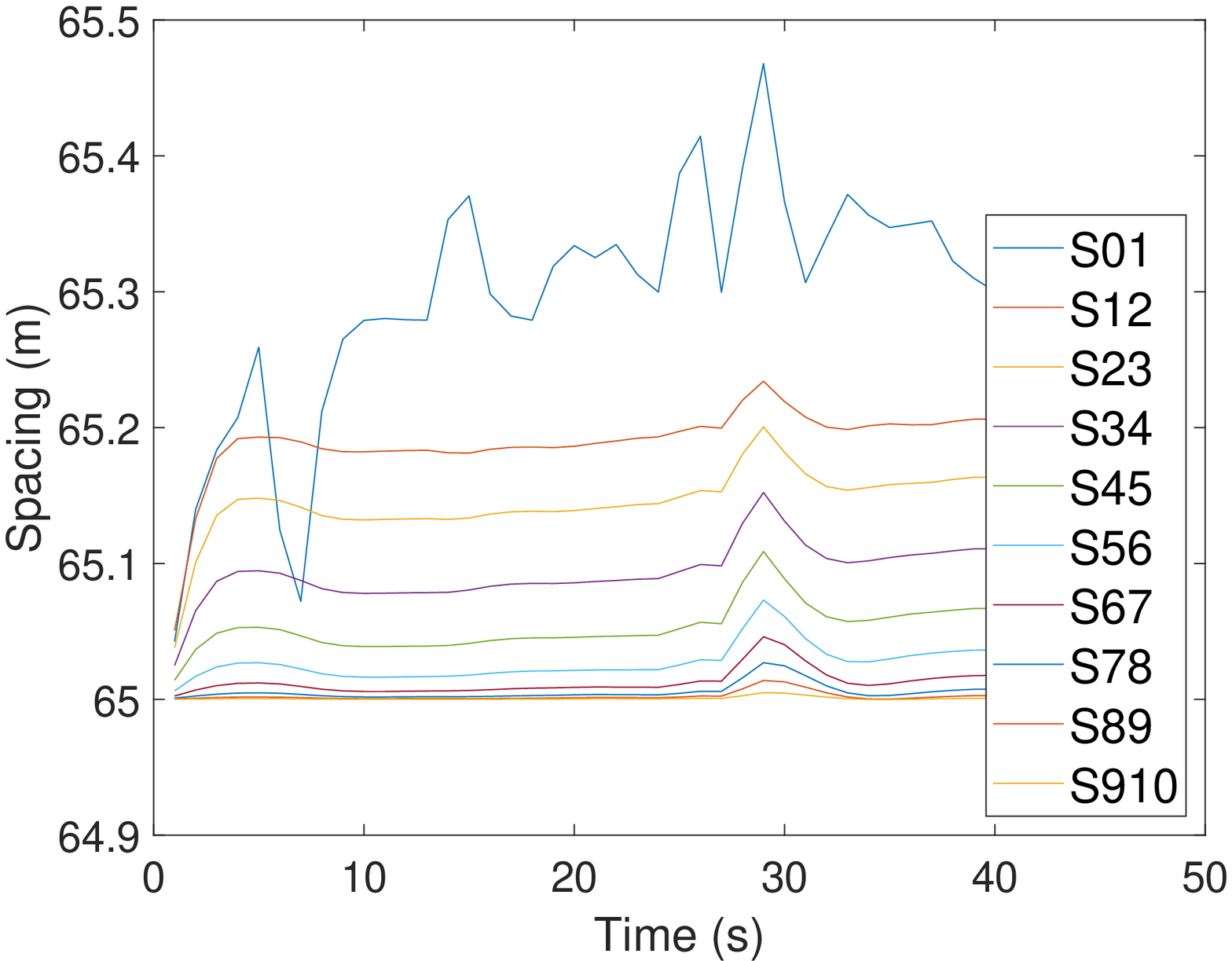}
}
\subfigure[Time history of vehicle speed. ]{ %\label{S1_p1max_speed}
\includegraphics[width=0.48\columnwidth]{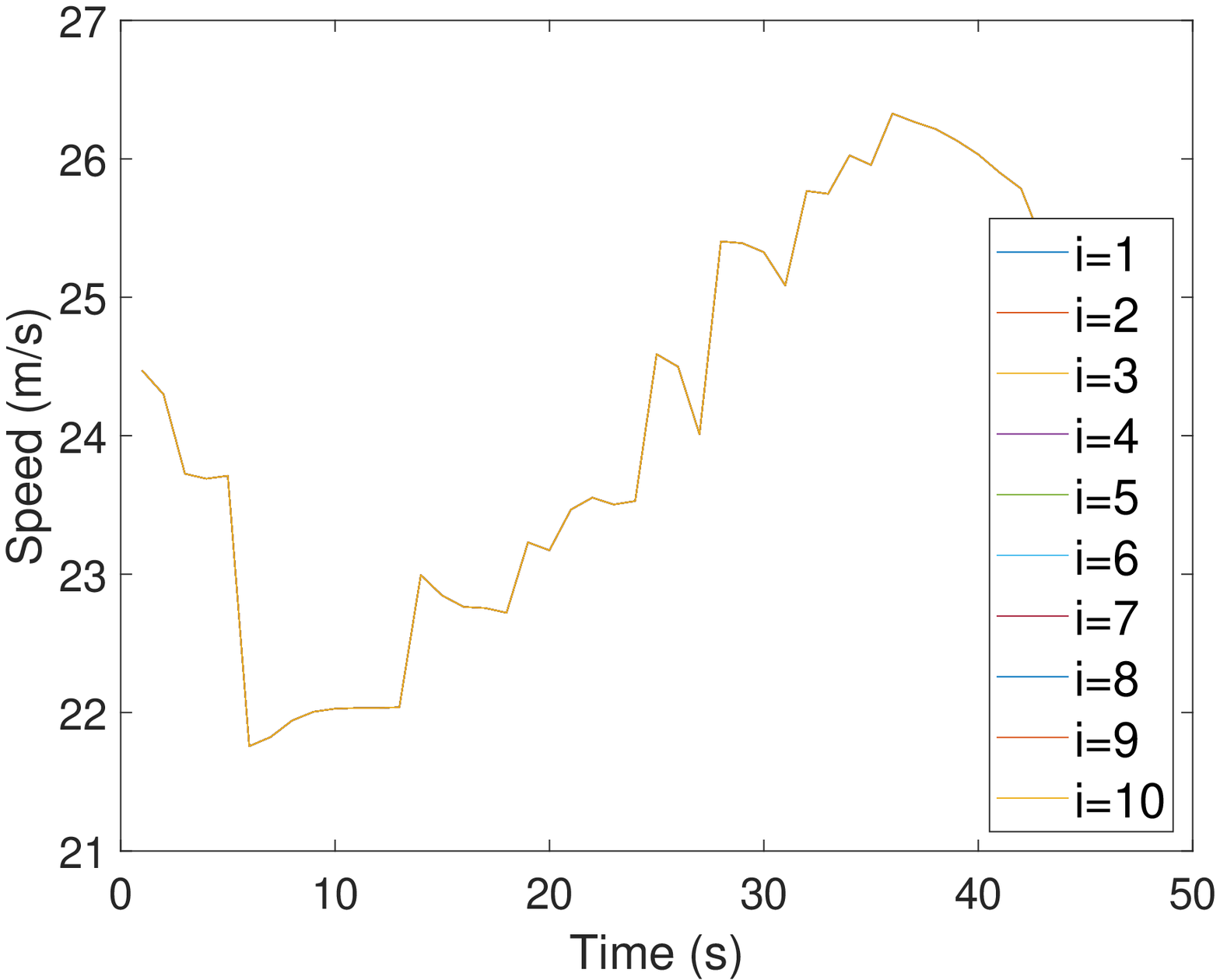}
}
\subfigure[Time history of vehicle speed.] { %%%\label{E11}
\includegraphics[width=0.48\columnwidth]{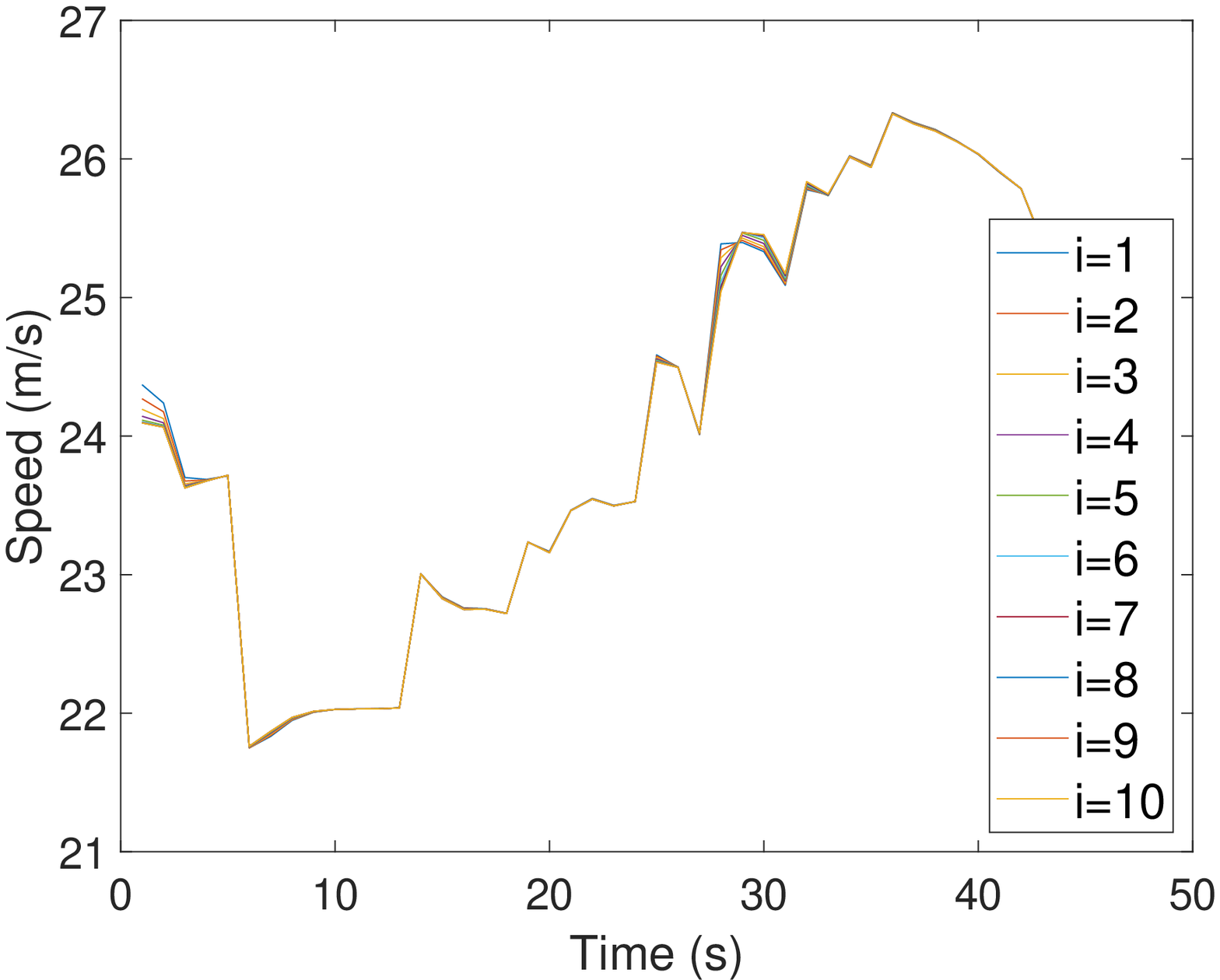}
}
\subfigure[Time history of control input.]{ %%%\label{E8AD}
\includegraphics[width=0.48\columnwidth]{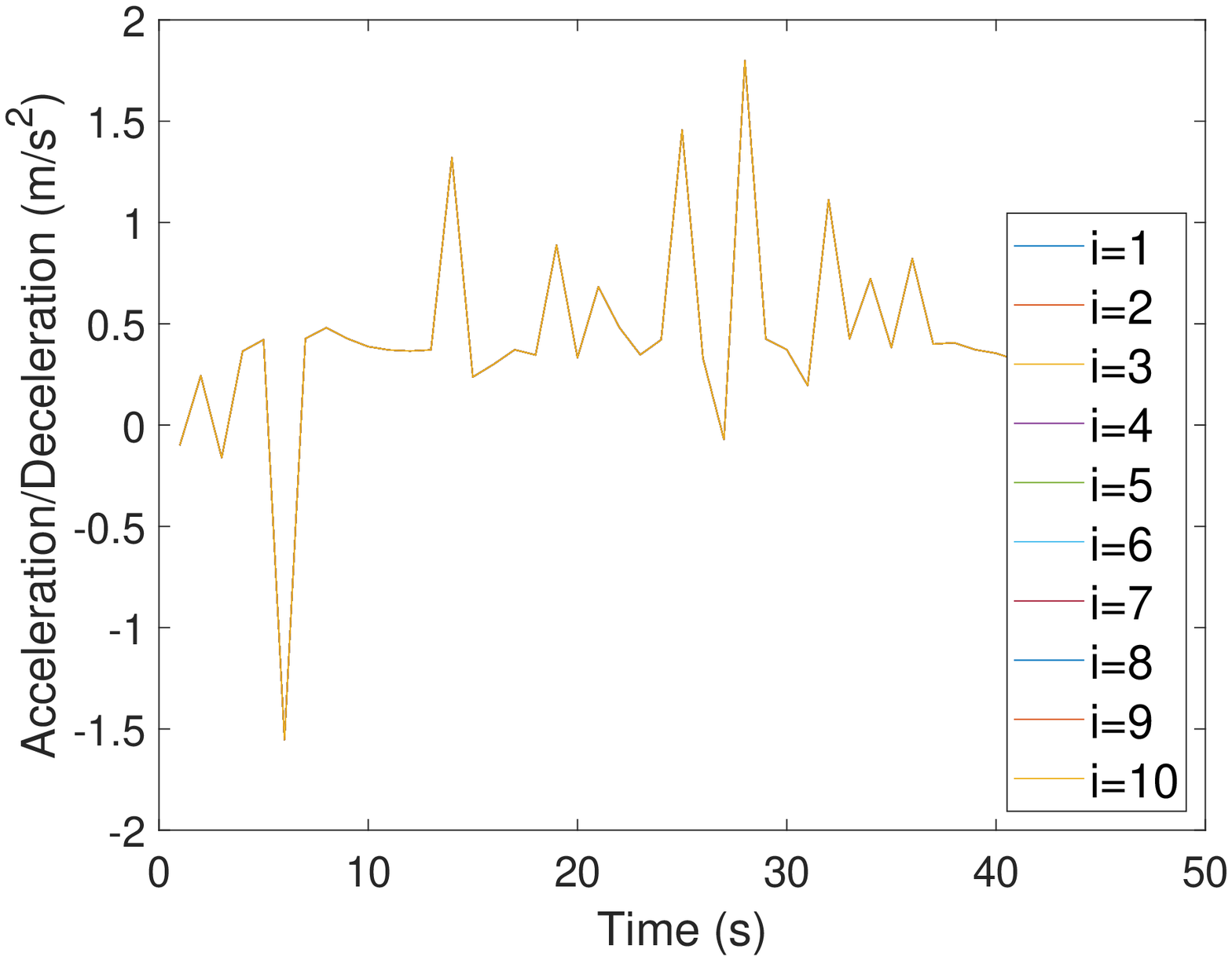}
}
\subfigure[Time history of control input] { %%%\label{E11AD}
\includegraphics[width=0.48\columnwidth]{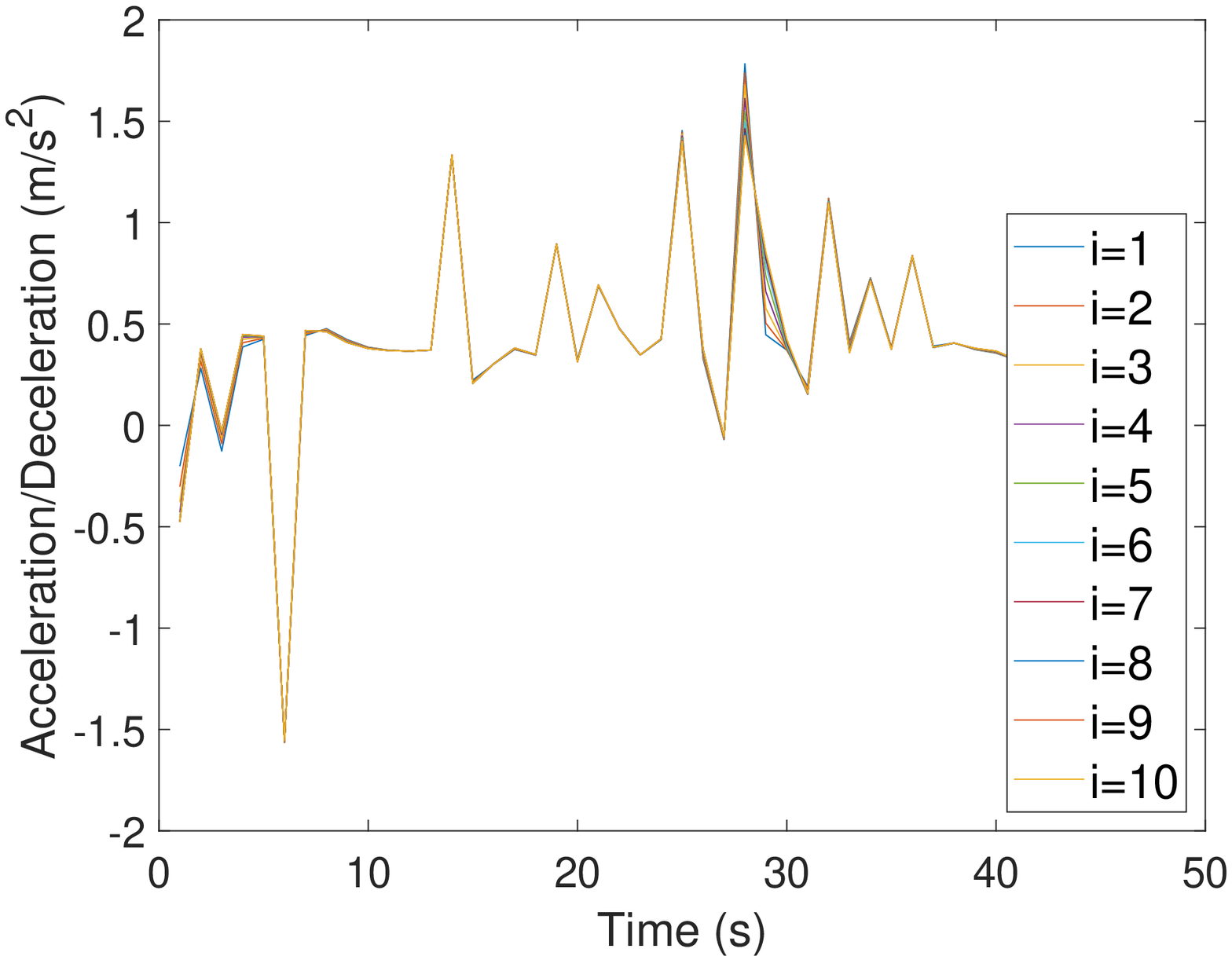}
}
\caption{Scenario 3 for the homogeneous large-size CAV platoon: platooning control with $p=1$ (left column) and $p=5$ (right column).}
\label{Fig:S3_large}
\end{figure}

%--------------------------------------------------------------------------------
%
\section{Conclusion} \label{sect:conclusion}

This paper develops a nonconvex, fully distributed  optimization based MPC scheme for CAV platooning control of a heterogeneous CAV platoon under the nonlinear vehicle dynamics. Different from the existing research on the linear vehicle dynamics, various new techniques are exploited to address several major challenges induced by the nonlinear vehicle dynamics, including distributed algorithm development for the coupled nonconvex MPC optimization problem, and stability analysis of time-varying nonlinear closed-loop dynamics. For the former, we apply locally coupled optimization and sequential convex programming for distributed algorithm development. For the latter, global implicit function theorems and Lyapunov theory for input-to-state stability, among many other techniques, are invoked for closed loop stability analysis.
Extensive numerical tests are conducted to illustrate the effectiveness of the proposed fully distributed schemes and CAV platooning control for homogeneous and heterogeneous CAV platoons in different scenarios.

\mycut{
Such schemes do not require centralized data processing or computation and are thus applicable to a wide range of vehicle communication networks.
%
%They will also help to address challenges of centralized computation.
%
New techniques are exploited to develop these schemes, including a new formulation of the MPC model, a decomposition method for a strongly convex quadratic objective function, formulating the underlying optimization problem as locally coupled optimization, and Douglas-Rachford method based distributed schemes. Control design and stability analysis of the closed loop dynamics is carried out for the new formulation of the MPC model. Numerical tests are conducted to illustrate the effectiveness of the proposed fully distributed schemes and CAV platooning control. Our future research will address nonlinear vehicle dynamics and robust issues under uncertainties, e.g., model mismatch, sensing errors, and communication delay.
}

%-----------------------------------------------------
%
\section*{Acknowledgements}

This research work is supported by the NSF grants  CMMI-1901994 and  CMMI-1902006.

{\small

\bibliographystyle{abbrv}
\bibliography{CAV_nonlin_bib01}
}

\end{document}